%% file: main.tex
\documentclass{article}

\usepackage{Settings}
\title{Product sets in sets of returns and positivity of symmetric ergodic averages}
\author{Vitaly Bergelson and Saúl Rodríguez-Martín}
\date{}

\begin{document}
\maketitle

\begin{abstract}
We study sets of (measurable) returns in countable groups $G$, namely sets of the form $\{g\in G:\mu(A\cap T_gA)>0\}$ arising from measure-preserving actions. Extending a result of \cite{Be85}, we show that sets of returns in $G\times G$ contain subsets of the form $B\times B$, where $B$ is large with respect to suitable notions of largeness that remain meaningful even for non-amenable groups. As a consequence, if $G$ is amenable, then every sufficiently large subset $A\subseteq G\times G$ satisfies $B\times B\subseteq AA^{-1}$ for some large set $B\subseteq G$.

We also investigate when sets of returns in $G$ contain product sets $BB$ with $B$ large. In contrast with the Cartesian-product phenomenon above, this problem is considerably subtler in non-abelian groups and is closely connected to `symmetric correlation functions', namely functions of the form
$g\mapsto \mu(T_g^{-1}A\cap T_gA)$. We use this connection to show that, for broad classes of amenable groups - including finitely generated nilpotent groups and certain solvable non-nilpotent groups, every sufficiently large set $A\subseteq G$ contains a large subset $B$ satisfying $BB\subseteq AA^{-1}$.

Finally, we establish polynomial analogues of these results for finitely generated nilpotent groups, extending earlier work of \cite{BeRu09}.

\end{abstract}

\tableofcontents

\input{S1-Introduction}

\input{S2-Preliminaries}

\input{S3-ErgThToCombi}
\input{S4-NilpotentGroups}
\input{S5-Solvable}

\input{S6-Counterexamples}

\input{S7-Questions}

\bibliographystyle{myalpha}
\bibliography{Bibliography}
\end{document}

%% file: S1-Introduction.tex
\section{Introduction}
\label{SecIntro}

The goal of this paper is to amplify and generalize, in more than one direction, the results about properties of sets of differences of ``large'' sets in $\mathbb{Z}$ and $\mathbb{Z}^2$ which were obtained in \cite{Be85}. For a set $A\subseteq\mathbb{Z}$, the \textit{upper density} of $A$ and the \textit{upper Banach density} of $A$ are defined respectively by
\begin{align}
\label{DefUDinZ}
\overline{d}(A)&=\overlim\limits_{N\to\infty}\frac{|A\cap\{-N,\dots,N\}|}{2N+1}\\
\label{DefUBDinZ}d^*(A)&=\sup_{(I_N)_{N\in\mathbb{N}}}\overlim\limits_{N\to\infty}\frac{|A\cap I_N|}{|I_N|},
\end{align}
where $(I_N)$ ranges over all sequences of intervals $I_N=[a_N,b_N]\cap\mathbb{Z}$ such that $\lim_{N\to\infty}(b_N-a_N)=\infty$. If $A\subseteq\mathbb{Z}^2$, then the upper Banach density of $A$ is given by
\begin{equation}
\label{DefUBDInZn}
d^*(A)=\sup_{(R_N)}\overlim\limits_{N\to\infty}\frac{|A\cap R_N|}{|R_N|},
\end{equation}
where $(R_N)$ ranges over all sequences of rectangles $R_N=([a_N,b_N]\times[c_N,d_N])\cap\mathbb{Z}^2$ such that $\lim_{N\to\infty}|b_N-a_N|=\lim_{N\to\infty}|d_N-c_N|=\infty$. 

Throughout the paper, when $(G,+)$ is an abelian group and $A,B\subseteq G$, we use the notation $
A+B=\{a+b;a\in A,b\in B\},
A-B=\{a-b;a\in A,b\in B\}$ (when using multiplicative notation $(G,\cdot)$, we write $AB=\{ab;a\in A,b\in B\}$ and $AB^{-1}=\{ab^{-1};a\in A,b\in B\}$). Here are the formulations of the results from \cite{Be85} which served as an impetus for this paper:

\begin{theorem}[{\cite[Corollary 3.1.1.]{Be85}}]
\label{BxBinA-AinZ}
Let  $A\subseteq\mathbb{Z}^2$ be a set with $d^*(A)>0$. Then there exists $B\subseteq\mathbb{Z}$ such that $\overline{d}(B)>0$ and $B\times B\subseteq A-A$.
\end{theorem}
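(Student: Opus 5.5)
The plan is to pass to dynamics via the Furstenberg correspondence principle, and then extract $B$ from an averaging argument for the commuting transformations.

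\textbf{Step 1 (correspondence).} Since $d^*(A)>0$ in $\mathbb{Z}^2$, Furstenberg's correspondence principle gives a probability space $(X,\mathcal{B},\mu)$, two commuting invertible measure-preserving transformations $T,S$ and a set $E\in\mathcal{B}$ with $\mu(E)=d^*(A)>0$ such that
\[
\mu(E\cap T^{-n}S^{-m}E)\le d^*\bigl(A\cap (A-(n,m))\bigr)\quad\text{for all }(n,m)\in\mathbb{Z}^2 .
\]
In particular, whenever $\mu(E\cap T^{-n}S^{-m}E)>0$ we have $(n,m)\in A-A$. It therefore suffices to find $B\subseteq\mathbb{Z}$ with $\overline{d}(B)>0$ such that $\mu(E\cap T^{-n}S^{-m}E)>0$ for all $n,m\in B$.

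\textbf{Step 2 (the key set $B$).} I would pass to the product system on $X\times X$ with the transformation $R=T\times S$ and the set $F=E\times E$, or work directly with the function $f_n=1_{T^{-n}E}$. The idea is to find many $n$ for which the sets $T^{-n}E$ and $S^{-n}E$ both lie "close to a common large set". Consider the vectors $u_n=1_{T^{-n}E}$ and $v_m=1_{S^{-m}E}$ in $L^2(\mu)$. The correlation $\mu(T^{-n}E\cap S^{-m}E)=\langle u_n,v_m\rangle$ equals $\mu(E\cap T^{n}S^{-m}E)$. This is the wrong sign, but the roles can be adjusted by replacing $S$ with $S^{-1}$, which gives exactly the needed quantity. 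So the goal becomes: find $B$ of positive upper density with $\langle u_n,v_m\rangle>0$ for all $n,m\in B$.

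Here I would use the elementary Hilbert-space lemma behind \cite{Be85}: if $w_n=u_n+v_n$, then $\|w_n\|_1=2\mu(E)$, and $w_n\ge0$. Consider the Cesàro averages of $1_{T^{-n}E}\cdot 1_{S^{-n}E}$. By van der Corput's method or the mean ergodic theorem for $T^{-1}S$ (which preserves $\mu$), the averages $\frac1N\sum_{n\le N}\mu(T^{-n}E\cap S^{-n}E)=\frac1N\sum\mu(E\cap (T^{-1}S)^{n}E)$ tend to $\|P1_E\|^2\ge\mu(E)^2$. Hence the set $C=\{n:\mu(T^{-n}E\cap S^{-n}E)>\mu(E)^2/2\}$ has positive upper density.

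The difficulty is passing from the diagonal ($n=m$) to all pairs $n,m\in B$. For this I would use the Bergelson intersectivity lemma: given sets $D_n=T^{-n}E\cap S^{-n}E$ with $\mu(D_n)\ge c>0$ for $n\in C$, there is $B\subseteq C$ with $\overline{d}(B)\ge\overline{d}(C)\cdot c>0$ such that $\mu(\bigcap_{n\in F}D_n)>0$ for every finite $F\subseteq B$. Then for $n,m\in B$ we get $\mu(T^{-n}E\cap S^{-m}E)\ge\mu(D_n\cap D_m)>0$, since $D_n\subseteq T^{-n}E$ and $D_m\subseteq S^{-m}E$. Translating back (with the sign choice from before) gives $\mu(E\cap T^{n}S^{-m}E)>0$. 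Replacing $A$ by a reflection in one coordinate if necessary, or replacing $S$ by $S^{-1}$ in the correspondence, this yields $(n,m)\in A-A$ for all $n,m\in B$.

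\textbf{Main obstacle.} The hardest step is the intersectivity lemma itself and the bookkeeping of signs and the density notion. The lemma is proved by applying a Furstenberg-type correspondence to the sequence of sets $D_n$, using a weak-$*$ limit along a Følner sequence $\{1,\dots,N_k\}$ that realizes $\overline{d}(C)$. The sets $B$ obtained there have positive upper density along $[1,N]$ rather than $[-N,N]$, but this only affects constants. I would prove the lemma by removing a null set of points lying in some $D_F$ of measure zero, and then choosing a generic point $x$ so that $B=\{n\in C: x\in D_n\}$ has the required density, by Fatou's lemma.
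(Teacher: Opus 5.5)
Your proposal is correct and is essentially the paper's argument: correspondence principle, then the sets $D_n=T^{-n}E\cap S^{n}E$ (after your sign fix), whose measures average to at least $\mu(E)^2$ by the mean ergodic theorem for the diagonal transformation, then the intersectivity lemma. This is exactly \Cref{TheBigTechnicalTheorem} with $\phi(g)=(g,1)$, $\psi(g)=(1,g)$ combined with \Cref{IntLemma}. The detour through the auxiliary set $C$ is unnecessary, since \Cref{IntLemma} can be applied directly to all the $D_n$ (with $F_N=\{-N,\dots,N\}$) to get $\overline{d}(B)\ge\mu(E)^2$.
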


\begin{theorem}[{\cite[Corollary 3.1.2.]{Be85}}]
\label{B+BinA-AinZ}
Let  $A\subseteq\mathbb{Z}$ be a set with $d^*(A)>0$. Then there exists $B\subseteq\mathbb{Z}$ such that $\overline{d}(B)>0$ and $B+B\subseteq A-A$.
\end{theorem}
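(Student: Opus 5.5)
The plan is to pass to a measure-preserving system via Furstenberg's correspondence principle and then use a symmetric-correlation argument. Given $A\subseteq\mathbb{Z}$ with $d^*(A)>0$, the correspondence principle yields an invertible probability measure-preserving system $(X,\mathcal{B},\mu,T)$ and a set $E\in\mathcal{B}$ with $\mu(E)=d^*(A)$ such that
\[
d^*\bigl(A\cap(A-n)\bigr)\ge\mu(E\cap T^{-n}E)\quad\text{for all }n\in\mathbb{Z}.
\]
In particular, the set of returns $R=\{n:\mu(E\cap T^{-n}E)>0\}$ is contained in $A-A$. It therefore suffices to find $B$ with $\overline{d}(B)>0$ and $B+B\subseteq R$.

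The key observation is that for $b,b'\in\mathbb{Z}$ we have
\[
\mu\bigl(E\cap T^{-(b+b')}E\bigr)=\mu\bigl(T^{b}E\cap T^{-b'}E\bigr).
\]
So we want to choose $B$ such that the sets $T^{b}E$, $b\in B$, and $T^{-b'}E$, $b'\in B$, all pairwise intersect in positive measure. We will therefore look at the symmetric correlation $n\mapsto\mu(T^{n}E\cap T^{-n}E)=\mu(E\cap T^{-2n}E)$. By the mean ergodic theorem (or simply Cauchy--Schwarz applied to the ergodic averages of $1_E$ along $2n$), its Ces\`aro averages over $\{-N,\dots,N\}$ converge to $\|P1_E\|^2\ge\mu(E)^2>0$, where $P$ is the projection onto $T^2$-invariant functions. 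Hence the set $C=\{n:\mu(T^nE\cap T^{-n}E)>\delta\}$, with $\delta=\mu(E)^2/2$, has positive density.

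We cannot take $B=C$ directly, because we need the cross-terms $b\ne b'$ as well. The plan is to extract $B\subseteq C$ of positive upper density such that the sets $F_b=T^bE\cap T^{-b}E$ for $b\in B$ have pairwise positive-measure intersections. Since $F_b\subseteq T^bE$ and $F_{b'}\subseteq T^{-b'}E$, this gives $\mu(T^bE\cap T^{-b'}E)>0$, i.e.\ $b+b'\in R$, and the case $b=b'$ is covered by $b\in C$.

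The main obstacle is this extraction step, which is Bergelson's intersectivity lemma from \cite{Be85}: if $(F_n)_{n\in C}$ are sets with $\mu(F_n)\ge\delta$ and $\overline{d}(C)>0$, then there is $B\subseteq C$ with $\overline{d}(B)\ge\delta\,\overline{d}(C)>0$ such that every finite subfamily of $(F_b)_{b\in B}$ has positive-measure intersection; in particular pairwise intersections are positive. I would prove it as in \cite{Be85}. First, pass to a separable factor, discard the measure-zero union of those finite intersections that are null, and define $f_N=\frac{1}{|I_N|}\sum_{n\in C\cap I_N}1_{F_n}$. Then use Fatou's lemma to find a point $x$ with $\overline{\lim} f_N(x)\ge\delta\,\overline{d}(C)$. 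Finally, set $B=\{n\in C:x\in F_n\}$. Every finite intersection of the $F_b$ with $b\in B$ contains $x$, which lies outside the discarded null set, so it has positive measure. Combining the three steps gives $B+B\subseteq R\subseteq A-A$ with $\overline{d}(B)>0$.
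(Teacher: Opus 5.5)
Your proposal is correct and follows essentially the same route as the paper. The paper proves the abelian generalization (Theorem~\ref{B+BinA-AAbelian}) by combining three ingredients: the correspondence principle, the bound $\overline{\lim}_{N}\frac{1}{|F_N|}\sum_{g\in F_N}\mu(Y\cap T_g^2Y)\ge\mu(Y)^2$ for the action $g\mapsto T_{2g}$, and the intersectivity lemma applied to the sets $T_gY\cap T_g^{-1}Y$ (Corollary~\ref{ProdsInRetsMeasurable}). The only difference is your thresholding to $C$, which is unnecessary: Lemma~\ref{IntLemma} uses the averaged measures $\frac{1}{|F_N|}\sum_{g\in F_N}\mu(Y_g)$ directly as its lower bound, so skipping that step gives $\overline{d}(B)\ge d^*(A)^2$ rather than your roughly $d^*(A)^3/4$.
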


\Cref{BxBinA-AinZ,B+BinA-AinZ}, which establish non trivial (and indeed, somewhat unexpected) properties of sets of differences of sets of positive upper density in $\mathbb{Z}$ and $\mathbb{Z}\times\mathbb{Z}$, bring to light some natural questions. 
For example, can the sets $B$ which appear in \Cref{BxBinA-AinZ,B+BinA-AinZ}, be guaranteed to have positive upper density with respect to any given in advance F{\o}lner sequence in $\mathbb{Z}$ (see \Cref{DefAmenableGroup})? What is a lower bound for $\overline{d}(B)$ in terms of $d^*(A)$? Can $B$ be chosen to have positive relative density in classical arithmetical sets, such as, for example, shifted primes or $\{\left[p_n^c\right];n\in\mathbb{N}\}$, where $p_n$ is the $n$-th prime and $c>1$ is not an integer? We have some interesting results pertaning to the above questions. This being said, a bigger goal of this paper is to extend \Cref{BxBinA-AinZ,B+BinA-AinZ} to more general groups. 
While these theorems can be rather routinely extended to the setup of general abelian groups (see \Cref{B+BinA-AAbelian,BxBinA-AAbelian}), the noncommutative case brings to life delicate issues.
In particular, whereas in the abelian setting, \Cref{B+BinA-AinZ} can be easily derived from \Cref{BxBinA-AinZ}, this is no longer true in general (see the discussion after \Cref{B+BinA-AAbelian}).

\msubsection{Product sets in sets of differences}

\Cref{BxBinA-AinZ}, which is of \textit{combinatorial} nature, is a consequence of a general, \textit{ergodic-theoretic} result about the structure of sets of returns for measure preserving actions of $\mathbb{Z}\times\mathbb{Z}$. This ergodic result is actually a special case of a general fact, \Cref{GeneralBxBinReturns} (proved in \Cref{SecErgToComb}), pertaining to measure preserving actions of cartesian squares of arbitrary groups. Before formulating it, we need to introduce a definition.

\begin{definition}
\label{Defupperdensity}
Let $G$ be a nonempty set. Let $E\subseteq G$ and $F=(F_N)$ a sequence of nonempty, finite subsets of $G$. We define the \textit{upper and lower densities} of $E$ along $F$ as 
\begin{align}
\label{DefUpperDensity}
\overline{d}_F(E)=\overline{\lim}_{N\to\infty}\frac{|E\cap F_N|}{|F_N|}.\\
\underline{d}_F(E)=\underline{\lim}_{N\to\infty}\frac{|E\cap F_N|}{|F_N|}.
\end{align}
If $\overline{d}_F(E)=\underline{d}_F(E)$, we denote their common value by $d_F(E)$ and call it the \textit{density} of $E$ along $F$. 
\end{definition}

\begin{remark}
\label{ExamplesFiniteSequences}
While the more familiar context for \Cref{Defupperdensity} is that of a countable amenable group $G$ with $(F_N)$ being a Følner sequence (see \Cref{DefAmenableGroup}), the generality of \Cref{Defupperdensity} allows for novel interesting applications. Some of the results in our paper, such as \Cref{CartProdInRetsFree} and \Cref{PrimesMinus1ArePIR}, will utilize the following special cases of \Cref{Defupperdensity}.
\begin{enumerate}[label=\alph*)]
    \item $G=\mathbb{Z}^3$, $F_N=\{p(1),\dots,p(N)\}$, where $p:\mathbb{Z}\to\mathbb{Z}^3$ is a non constant polynomial with $p(0)=0$.
    \item $G=\mathbb{N}$, $F_N=\{[p_1^c],\dots,[p_N^c]\}$ where $p_N$ is the $N^{\textup{th}}$ prime, $c>1$ and $[x]=\lfloor x+1/2\rfloor$ is the integer closest to $x$. \item\label{ExamplesFiniteSequences3} $\mathbf{F}_2$ is the group freely generated by $a,b$, and $B_N\subseteq\mathbf{F}_2$, $N\in\mathbb{N}$, are the closed balls of radius $N$, i.e. the set of reduced words of length $\leq N$ in the alphabet $\{a,a^{-1},b,b^{-1}\}$.
\end{enumerate}
\end{remark}

\begin{theorem}
\label{GeneralBxBinReturns}
Let $G$ be a countably infinite group and $\varepsilon,\delta>0$. Suppose a sequence $F=(F_N)$ of finite subsets of $G$ has the property that, for all m.p.s.\footnote{A measure preserving system (m.p.s.) is a quadruple $(X,\mathcal{B},\mu,(T_g)_{g\in G})$, where $(X,\mathcal{B},\mu)$ is a probability space and $(T_g)_{g\in G}$ is a left action of a group $(G,\cdot)$ on $X$ by measure-preserving maps $T_g:X\to X$, so that $T_{gh}=T_g\circ T_h$.} $(X,\mathcal{B},\mu,(T_g)_{g\in G})$ and all $Y\in\mathcal{B}$ satisfying $\mu(Y)\geq\varepsilon$, we have
\begin{equation}
\label{EqDefPositiveErgAvgs'}
\overlim_{N\to\infty}\frac{1}{|F_N|}\sum_{g\in F_N}\mu(Y\cap T_{g}Y)
\geq\delta.
\end{equation}
Then for all m.p.s. $(X,\mathcal{B},\mu,(T_{(g,h)})_{(g,h)\in G\times G})$, and all $Y\in\mathcal{B}$ with $\mu(Y)\geq\varepsilon$, there exists $B\subseteq G$ such that $\overline{d}_F(B)\geq\delta$ and 
\begin{equation}
\label{Form6hehehe}
B\times B\subseteq\left\{(g,h)\in G\times G;\mu\left(Y\cap T_{(g,h)}Y\right)>0\right\}. 
\end{equation}
\end{theorem}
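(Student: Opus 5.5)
The plan is to reduce the statement to a single family of sets indexed by $G$. The idea is to use the diagonal action of $G$ together with a Bergelson-type intersectivity argument.

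\textbf{Step 1: the auxiliary sets $Z_g$.} Fix a m.p.s.\ $(X,\mathcal{B},\mu,(T_{(g,h)})_{(g,h)\in G\times G})$ and $Y$ with $\mu(Y)\geq\varepsilon$. For $g\in G$ put
\[
Z_g=T_{(g,e)}^{-1}Y\cap T_{(e,g)}Y .
\]
Since $T_{(g,h)}=T_{(g,e)}T_{(e,h)}$ and the maps preserve $\mu$, for all $g,h\in G$ we get
\[
\mu\big(Y\cap T_{(g,h)}Y\big)=\mu\big(T_{(g,e)}^{-1}Y\cap T_{(e,h)}Y\big)\geq \mu(Z_g\cap Z_h),
\]
because $Z_g\subseteq T_{(g,e)}^{-1}Y$ and $Z_h\subseteq T_{(e,h)}Y$. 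So it suffices to find $B$ with $\overline{d}_F(B)\geq\delta$ such that $\mu(Z_g\cap Z_h)>0$ for all $g,h\in B$.

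\textbf{Step 2: the hypothesis applied to the diagonal action.} Taking $h=g$ above gives $\mu(Z_g)=\mu(Y\cap T_{(g,g)}Y)$. The maps $S_g=T_{(g,g)}$ define a measure preserving action of $G$, and $\mu(Y)\geq\varepsilon$. Hence the hypothesis \eqref{EqDefPositiveErgAvgs'}, applied to this system, yields
\[
\overlim_{N\to\infty}\frac{1}{|F_N|}\sum_{g\in F_N}\mu(Z_g)\geq\delta .
\]

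\textbf{Step 3: intersectivity.} This is the step that needs care. The obstacle is to upgrade "large average measure" to "pairwise positive intersections on a set of large upper density," for an arbitrary sequence $(F_N)$ that need not be Følner. I would use the pointwise trick.

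First, let $\mathcal{N}$ be the union of all intersections $Z_g\cap Z_h$ with $\mu(Z_g\cap Z_h)=0$. Since $G$ is countable, this is a countable union of null sets, so $\mu(\mathcal{N})=0$. Replace each $Z_g$ by $Z'_g=Z_g\setminus\mathcal{N}$. Then $\mu(Z'_g)=\mu(Z_g)$, and for all $g,h$ the set $Z'_g\cap Z'_h$ is either empty or has positive measure.

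Next, consider the functions $f_N=\frac{1}{|F_N|}\sum_{g\in F_N}1_{Z'_g}$, which are bounded by $1$. By reverse Fatou,
\[
\int\overlim_{N}f_N\,d\mu\;\geq\;\overlim_N\int f_N\,d\mu\;\geq\;\delta .
\]
So there is a point $x\in X$ with $\overlim_N f_N(x)\geq\delta$. Set $B=\{g\in G: x\in Z'_g\}$. Then $f_N(x)=|B\cap F_N|/|F_N|$, so $\overline{d}_F(B)\geq\delta$.

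Finally, for $g,h\in B$ we have $x\in Z'_g\cap Z'_h$. This set is therefore nonempty, hence of positive measure, so $\mu(Z_g\cap Z_h)>0$. By Step 1 this gives \eqref{Form6hehehe}.
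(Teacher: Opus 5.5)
Your proof is correct and is essentially the paper's proof. The paper takes $H=G\times G$, $\phi(g)=(g,1_G)$, $\psi(g)=(1_G,g)$ in its product-sets-in-returns theorem, so it works with the sets $T_{(g,1_G)}Y\cap T_{(1_G,g)}^{-1}Y$ (the mirror image of your $Z_g$), whose measures equal $\mu(Y\cap T_{(g,g)}Y)$; it then applies the hypothesis to the diagonal action and invokes the intersectivity lemma, of which your Step~3 is an inlined proof (the paper discards all null finite intersections rather than only pairwise ones, which makes no difference here).
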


\Cref{BxBinA-AinZ} follows from \Cref{GeneralBxBinReturns} and the following result, which we later generalize in \Cref{Returns=DifferencesIntro}:
for any $A\subseteq \mathbb{Z}^2$, there exists a m.p.s. $(X,\mathcal{B},\mu,(T_{(x,y)})_{(x,y)\in\mathbb{Z}^2})$ and $Y\in\mathcal{B}$ such that $\mu(Y)\geq d^*(A)$ and 
\begin{equation*}
\left\{(x,y)\in\mathbf{Z}^2;\mu\left(Y\cap T_{(x,y)}Y\right)>0\right\}\subseteq A-A.
\end{equation*}
Indeed, applying \Cref{GeneralBxBinReturns} with $G=\mathbb{Z}$ and $F_N=\{-N,\dots,N\}$, and using the classical corollary of von Neumann's ergodic theorem
, which states that for every m.p.s. $(X,\mathcal{B},\mu,(T_n)_{n\in\mathbb{Z}})$ and $Y\in\mathcal{B}$,
\begin{equation}
\label{BigCesaroAvgsInZFolner}
\lim_{N\to\infty}\frac{1}{2N+1}\sum_{n={-N}}^N\mu\left(Y\cap T^nY\right)\geq\mu(Y)^2,
\end{equation}
yields \Cref{BxBinA-AinZ}.
Below we use \Cref{GeneralBxBinReturns} to extend \cite[Corollary 3.1.1]{Be85} to all F{\o}lner sequences in countable\footnote{Throughout the paper, by `countable' we mean `in bijective correspondence with $\mathbb{N}$'. However, almost all results in the article, except some implications in \Cref{Returns=Differences}, also hold for finite groups $G$, where F{\o}lner sequences are sequences of sets $(F_N)$ such that $F_N=G$ for big enough $N$.} amenable groups (see \Cref{BxBinAA^-1}).

\begin{remark}
\label{BeRuCounterBxBxB}
In view of \Cref{Form6hehehe} from \Cref{GeneralBxBinReturns} it is natural to inquire under which conditions on $G$ and $(F_N)$ we have that, given a m.p.s. $(X,\mathcal{B},\mu,(T_{(g_1,g_2,g_3)})_{(g_1,g_2,g_3)\in G\times G\times G})$, and $Y\in\mathcal{B}$ with $\mu(Y)>0$, there exists $B\subseteq G$ such that $\overline{d}_F(B)>0$ and 
\begin{equation}
\label{BxBxBEq}
B\times B\times B\subseteq
\left\{(g_1,g_2,g_3)\in G\times G\times G;\mu\left(Y\cap T_{(g_1,g_2,g_3)}Y\right)>0\right\}.
\end{equation}
Somewhat surprisingly, even for $G=\mathbb{Z}$ and $F_N=\{1,\dots,N\}$, one can find a m.p.s. $(X,\mathcal{B},\mu,(T_g)_{g\in\mathbb{Z}^3})$ and $A\in\mathcal{B}$ such that $\mu(A)>0$ but no set $B\subseteq\mathbb{Z}$ with $\overline{d}_F(B)>0$ satisfies \Cref{BxBxBEq}; this follows from \cite[Theorem 1.5]{BeRu09} and from the correspondence between sets of differences and sets of returns (precisely stated in \Cref{Returns=Differences}).
\end{remark}

We have, for any $c>1$ non integer, an ergodic theorem along the sequence $([p_n^c])_{n\in\mathbb{N}}$, where $p_n$ is the $n$-th prime; a more general version is given in \cite{BKMST}. 

\begin{theorem}[Cf. {\cite[Theorem 3.1]{BKMST}}]
For all m.p.s. $(X,\mathcal{B},\mu,(T_n)_{n\in\mathbb{Z}})$, $c>1$ and $Y\in\mathcal{B}$, we have
\begin{equation*}
\lim_{N\to\infty}\frac{1}{N}\sum_{n=1}^N\mu\left(Y\cap T^{\left[p_n^c\right]}Y\right)\geq\mu(Y)^2.
\end{equation*}
\end{theorem}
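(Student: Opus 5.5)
The plan is to use the spectral theorem, which turns the correlation averages into integrals of exponential sums, and then to appeal to equidistribution of $([p_n^c])$ modulo one.

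Fix the system and let $f=\mathbf 1_Y$. Write $U$ for the Koopman operator $Uh=h\circ T^{-1}$. Then $\mu(Y\cap T^nY)=\langle U^n f,f\rangle=\int_{\mathbb{T}}e^{2\pi i n\theta}\,d\sigma_f(\theta)$, where $\sigma_f$ is the spectral measure of $f$ on $\mathbb{T}=[0,1)$. Hence
\begin{equation*}
\frac1N\sum_{n=1}^N\mu\left(Y\cap T^{[p_n^c]}Y\right)=\int_{\mathbb{T}}\frac1N\sum_{n=1}^N e^{2\pi i [p_n^c]\theta}\,d\sigma_f(\theta).
\end{equation*}
The integrands are bounded by $1$, so by dominated convergence it suffices to show two things. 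First, for every $\theta\notin\mathbb{Q}$ the inner average tends to $0$. Second, for rational $\theta=a/q$ the inner average converges to a limit $L_q(a)$ that we can control.

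For the irrational case, I would use the known result that $([p_n^c]\theta)_n$ is uniformly distributed mod $1$ whenever $c>1$ is non-integer and $\theta$ is irrational. This is a Piatetski-Shapiro-prime type result, proved via Vinogradov/van der Corput estimates as in \cite{BKMST}. I take it as the input analytic fact. This step, together with the rational case below, is the main obstacle. If only the mean ergodic theorem along this sequence were available, one could instead derive the irrational case from it.

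For the rational case I need the distribution of $[p_n^c]$ modulo $q$. The claim is that $[p_n^c]$ is equidistributed in residue classes mod $q$. This holds because $p_n^c/q$ is u.d. mod $1$ along primes for non-integer $c$, again by the same exponential sum estimates. So $L_q(a)=0$ for $a\not\equiv 0 \pmod q$, and the limit of the whole average is $\sigma_f(\{0\})$.

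Finally, $\sigma_f(\{0\})=\|Pf\|^2$, where $P$ is the projection onto $U$-invariant functions. By Cauchy–Schwarz,
\begin{equation*}
\|Pf\|^2\geq\langle Pf,1\rangle^2=\left(\int f\,d\mu\right)^2=\mu(Y)^2.
\end{equation*}
This gives both the existence of the limit and the bound $\lim\geq\mu(Y)^2$.

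If equidistribution mod $q$ turned out to be unavailable, a weaker fallback would suffice. The contribution of $\sigma_f$ on the rationals equals $\lim\langle U^{[p_n^c]}P_{\mathrm{rat}}f,f\rangle$, where $P_{\mathrm{rat}}$ projects onto the rational Kronecker factor. Since the lim-average of $e(n a/q)$ along the sequence would then be some average of roots of unity, I would need a positive-definiteness argument to show the rational part is at least $\sigma_f(\{0\})$. Under equidistribution this is immediate.
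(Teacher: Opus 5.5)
The paper gives no proof of its own for this statement; it quotes it from \cite{BKMST}. Your argument is correct for non-integer $c$, and it is the natural way to derive such a bound from equidistribution. The pointwise limit of $\frac1N\sum_{n\le N}e([p_n^c]\theta)$ is $\mathbf{1}_{\{0\}}(\theta)$, so dominated convergence gives both the existence of the limit and its value $\sigma_f(\{0\})=\|Pf\|^2\ge\mu(Y)^2$. That last step is the same projection estimate as \Cref{InnerProdWithProjection}.

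You can also reduce your two analytic inputs to one. Suppose $(\beta p_n^c)_n$ is u.d.\ mod $1$ for every real $\beta\neq0$ and non-integer $c>1$; this is the kind of input \cite{BKMST} provides.
\begin{itemize}
\item The residue-class statement then follows as you say.
\item The irrational case follows because $\theta[p_n^c]=\theta p_n^c-\theta(\{p_n^c+\tfrac12\}-\tfrac12)$. The pair $(\theta p_n^c,p_n^c)$ is u.d.\ in $\mathbb{T}^2$ by Weyl's criterion, since $h_1\theta+h_2\neq0$ for $(h_1,h_2)\neq(0,0)$.
\end{itemize}

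There are two corrections.

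\textbf{State $c\notin\mathbb{Z}$ explicitly.} You slip this hypothesis in, but the displayed statement says only $c>1$, and for integer $c\ge2$ it is false. Take the rotation $x\mapsto x+1$ on $\mathbb{Z}/2$ with $Y=\{0\}$. Then $p_n^c$ is odd for $n\ge2$, so the averages tend to $0<\tfrac14=\mu(Y)^2$. The sentence just before the theorem in the paper does say ``non integer''.

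\textbf{Drop the fallback paragraph.} It cannot work. In the same example the rational part of the spectral integral is $\sigma_f(\{0\})-\sigma_f(\{\tfrac12\})=0<\sigma_f(\{0\})$. So no positive-definiteness argument can replace the arithmetic input that $[p_n^c]$ is equidistributed modulo $q$. That input is exactly what separates non-integer from integer $c$.
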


Using \Cref{GeneralBxBinReturns}, we will deduce in \Cref{SecErgToComb} the following result:
\begin{cor}
\label{PrimesMinus1ArePIR}
Let $F_N=\{\left[p_1^c\right],\left[p_2^c\right],\cdots,\left[p_N^c\right]\}$. For any $A\subseteq\mathbb{Z}$ with $d^*(A)>0$, there exists $B\subseteq\mathbb{Z}$ such that $\overline{d}_F(B)\geq d^*(A)^2$ and $B\times B\subseteq A-A$.
\end{cor}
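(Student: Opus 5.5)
Apply \Cref{GeneralBxBinReturns} with $G=\mathbb{Z}$ and $F=(F_N)$, $F_N=\{[p_1^c],\dots,[p_N^c]\}$, then pass from returns to differences via the Furstenberg correspondence principle already used for \Cref{BxBinA-AinZ}. Set $\varepsilon=d^*(A)$ and $\delta=d^*(A)^2$.

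\textbf{Step 1: the hypothesis of \Cref{GeneralBxBinReturns}.} We need, for every m.p.s.\ $(X,\mathcal{B},\mu,(T_n)_{n\in\mathbb{Z}})$ and every $Y$ with $\mu(Y)\ge\varepsilon$,
\[
\overlim_{N\to\infty}\frac{1}{|F_N|}\sum_{g\in F_N}\mu(Y\cap T_gY)\ge\delta .
\]
The cited theorem of \cite{BKMST} gives $\lim_N\frac1N\sum_{n\le N}\mu(Y\cap T^{[p_n^c]}Y)\ge\mu(Y)^2\ge\varepsilon^2$. There is one subtlety: $F_N$ is a set, so it could have fewer than $N$ elements if the map $n\mapsto[p_n^c]$ failed to be injective. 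For $c>1$ it is injective: consecutive primes satisfy $p_{n+1}>p_n$, so $p_{n+1}^c-p_n^c\ge c\,p_n^{c-1}\ge c\cdot 2^{c-1}>1$, and rounding preserves strict order when the gap exceeds $1$. Hence $|F_N|=N$, and the set average equals the sequence average.

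\textbf{Step 2: the correspondence principle.} For $A\subseteq\mathbb{Z}^2$ with $d^*(A)>0$, the principle produces a $\mathbb{Z}^2$-system and $Y$ with $\mu(Y)\ge d^*(A)$ whose return set is contained in $A-A$. The statement, however, concerns $A\subseteq\mathbb{Z}$. I would handle this by the variant for $\mathbb{Z}$-actions: a $\mathbb{Z}$-system $(X,\mu,T)$ and $Y$ with $\mu(Y)\ge d^*(A)$ and $\{n:\mu(Y\cap T^nY)>0\}\subseteq A-A$. Define a $\mathbb{Z}\times\mathbb{Z}$ action by $T_{(g,h)}=T^{g-h}$ (or $T^{g+h}$). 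Then $\mu(Y\cap T_{(g,h)}Y)>0$ forces $g-h\in A-A$. Applying \Cref{GeneralBxBinReturns} gives $B$ with $\overline d_F(B)\ge d^*(A)^2$ and $B\times B$ contained in the return set.

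\textbf{Step 3: interpreting the conclusion.} If the intended statement is the $\mathbb{Z}^2$ version (i.e., $A\subseteq\mathbb{Z}^2$ with $B\times B\subseteq A-A$), then Step 2 with the $\mathbb{Z}^2$-correspondence applies verbatim and Step 1 finishes it. With $A\subseteq\mathbb{Z}$, the product interpretation reduces to $B-B\subseteq A-A$ (or $B+B\subseteq A-A$ via $T^{g+h}$). I therefore read the statement as the $\mathbb{Z}^2$ analogue of \Cref{BxBinA-AinZ} and argue with $A\subseteq\mathbb{Z}^2$ and $d^*$ as defined in the introduction.

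\textbf{Main obstacle.} The argument is essentially formal once \Cref{GeneralBxBinReturns} and the correspondence principle are in hand. The only genuine checks are the injectivity and cardinality point in Step 1 and matching the constants $\varepsilon,\delta$.
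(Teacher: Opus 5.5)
Your proposal is correct and follows the route the paper intends. You combine the $\mathbb{Z}^2$ correspondence principle with \Cref{GeneralBxBinReturns} for $\varepsilon=d^*(A)$, $\delta=d^*(A)^2$, verify its hypothesis via the ergodic theorem of \cite{BKMST}, and read $A\subseteq\mathbb{Z}^2$, since the $A\subseteq\mathbb{Z}$ in the statement is a typo; your injectivity check giving $|F_N|=N$ is a detail the paper leaves implicit, and you should also note that $c$ must be non-integer, as in the paper's introduction, because for integer $c$ both the cited ergodic theorem and the corollary fail for parity reasons.
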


We now present an example in which \Cref{BxBinAA^-1} applies to a non-amenable group. Let $\mathbf{F}_2$ be the free group on two generators and let $B_N\subseteq\mathbf{F}_2$, $N\in\mathbb{N}$, be the closed balls of radius $N$, as in \Cref{ExamplesFiniteSequences}\ref{ExamplesFiniteSequences3}. We deduce in \Cref{SecErgToComb} the following result from an ergodic theorem of Guivarc'h:

\begin{restatable}{theorem}{RestatableC}
\label{BigAvgsBallFreeGroup}
For all m.p.s. $(X,\mathcal{B},\mu,(T_g)_{g\in\mathbf{F}_2})$ and all $Y\in\mathcal{B}$, we have
\begin{equation*}
\lim_{N\to\infty}\frac{1}{|B_{2N}|}\sum_{g\in B_{2N}}\mu(Y\cap T_gY)\geq\mu(Y)^2.
\end{equation*}
\end{restatable}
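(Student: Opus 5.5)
We need to prove that $\lim_{N\to\infty} \frac{1}{|B_{2N}|}\sum_{g\in B_{2N}}\mu(Y\cap T_gY)\ge\mu(Y)^2$ for every action of $\mathbf F_2$. The plan is to rewrite the sum as an inner product and combine Guivarc'h's ergodic theorem for spherical averages with a positivity argument for even spheres.

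\emph{Reformulation.} Let $U_g f=f\circ T_g^{-1}$ be the Koopman representation on $L^2(\mu)$, and put $f=\mathbf 1_Y$. Then $\mu(Y\cap T_gY)=\langle U_gf,f\rangle$, so it suffices to study the operators $\frac{1}{|B_{2N}|}\sum_{g\in B_{2N}}U_g$. Let $S_n$ be the sphere of radius $n$ and $\sigma_n=\frac{1}{|S_n|}\sum_{g\in S_n}U_g$. These are self-adjoint, since $S_n$ is symmetric.

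\emph{Input from Guivarc'h.} I would use Guivarc'h's theorem in the following form: the even spherical averages $\sigma_{2n}$ converge in the strong operator topology to an orthogonal projection $P$. Its range consists of the vectors invariant under the index-two subgroup $\mathbf F_2^{\rm ev}$ of words of even length. The odd averages converge to $P\,U_a$, restricted appropriately, but they can contribute oscillation, which is exactly why the statement uses the radius $2N$.

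\emph{Reduction to spheres.} The ball $B_{2N}$ is the disjoint union of the spheres $S_0,\dots,S_{2N}$. Since $|S_n|=4\cdot 3^{n-1}$ grows geometrically, the ball average is a convex combination $\sum_{n\le 2N} w_n\sigma_n$ whose weights concentrate on the last few spheres. Concretely, $w_{2N-k}\to \tfrac{2}{3}3^{-k}$ as $N\to\infty$. Hence the ball averages converge strongly to $\sum_k \tfrac23 3^{-k}L_{2N-k}$, where $L_{\rm even}=P$ and $L_{\rm odd}=\lim\sigma_{2n+1}$. For the odd limit, the recursion $\sigma_1\sigma_n=\tfrac34\sigma_{n+1}+\tfrac14\sigma_{n-1}$ gives $L_{\rm odd}=\sigma_1P=P\sigma_1$. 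It follows that the limit is $Q=\tfrac34 P+\tfrac14 P\sigma_1P$, up to computing the constants precisely from the geometric series: even spheres carry weight $\tfrac{3}{4}$ and odd spheres carry weight $\tfrac{1}{4}$.

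\emph{Positivity.} The inequality $\langle Qf,f\rangle\ge\mu(Y)^2$ now comes down to two estimates.
\begin{itemize}
\item Since $P$ projects onto $\mathbf F_2^{\rm ev}$-invariant vectors and constants are invariant, $\langle Pf,f\rangle=\|Pf\|^2\ge\langle f,1\rangle^2=\mu(Y)^2$.
\item The operator $\sigma_1$ commutes with $P$. On the range of $P$, each $U_g$ with $g$ of odd length acts as a single unitary involution $V$, namely $U_a$, because $U_{a^{-1}}=U_a$ there via $a^{-2}\in\mathbf F_2^{\rm ev}$, and similarly $U_b=U_a$ there since $b^{-1}a$ is even. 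So $P\sigma_1 P=VP$, with $V$ self-adjoint, unitary and fixing constants. Writing $Pf=h_++h_-$ in the $\pm1$ eigenspaces of $V$, we get $\langle Qf,f\rangle=\|h_+\|^2+\tfrac12\|h_-\|^2$, which is at least $\|h_+\|^2\ge\langle f,1\rangle^2=\mu(Y)^2$, since the constant $1$ lies in the $+1$ eigenspace.
\end{itemize}

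\emph{Main obstacle.} The delicate step is invoking Guivarc'h's theorem correctly, in particular identifying the limits of the even and odd spheres, together with the weight bookkeeping for balls of radius $2N$. The positivity argument itself is routine once the limit operator $Q$ is written down.
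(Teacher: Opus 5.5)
Your proposal is correct and follows essentially the paper's route. Your projection $P$ onto $\mathbf{F}_2^{\mathrm{ev}}$-invariant vectors is exactly projection onto the paper's $I\oplus\mathcal{H}_1$, the $\pm 1$ eigenspaces of $V$ are $I$ and $\mathcal{H}_1$, and your value $\|h_+\|^2+\frac{1}{2}\|h_-\|^2$ is the paper's $\|\pi^{\mathcal{H}_0}_I(h_0)\|^2+\frac{1}{2}\|h_1\|^2$, obtained from the same $\frac{3}{4}$ versus $\frac{1}{4}$ weighting of even and odd spheres. Convergence of the odd spheres comes for free from Guivarc'h's theorem on $\mathcal{H}_0$ together with $\sigma_n=(-1)^n$ on $\mathcal{H}_1$; your recursion then only serves to identify that limit as $\sigma_1P$.
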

Using \Cref{GeneralBxBinReturns} and \Cref{BigAvgsBallFreeGroup}, we obtain:

\begin{cor}
\label{CartProdInRetsFree}
 For any m.p.s. $(X,\mathcal{B},\mu,(T_{(g,h)})_{(g,h)\in \mathbf{F}_2\times\mathbf{F}_2})$ and $Y\in\mathcal{B}$, there is $C\subseteq\mathbf{F}_2$ such that $\overline{d}_{(B_N)_{N\in\mathbb{N}}}(C)\geq\mu(Y)^2$ and 
 \begin{equation*}
 C\times C\subseteq\{(g,h)\in\mathbf{F}_2\times\mathbf{F}_2;\mu(Y\cap T_{(g,h)}Y)>0\}.
 \end{equation*}
\end{cor}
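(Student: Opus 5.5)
The plan is to apply \Cref{GeneralBxBinReturns} with $G=\mathbf{F}_2$ and a suitable sequence of finite sets, using \Cref{BigAvgsBallFreeGroup} to check its hypothesis. The natural first choice is $F_N=B_{2N}$. Fix a m.p.s. $(X,\mathcal{B},\mu,(T_{(g,h)}))$ of $\mathbf{F}_2\times\mathbf{F}_2$ and $Y$ with $\mu(Y)>0$. Set $\varepsilon=\mu(Y)$ and let $\delta<\varepsilon^2$ be arbitrary. For any m.p.s. of $\mathbf{F}_2$ and any $Y'$ with $\mu(Y')\geq\varepsilon$, \Cref{BigAvgsBallFreeGroup} gives
\[
\lim_{N\to\infty}\frac{1}{|B_{2N}|}\sum_{g\in B_{2N}}\mu(Y'\cap T_gY')\geq\mu(Y')^2\geq\varepsilon^2>\delta .
\]
So hypothesis \eqref{EqDefPositiveErgAvgs'} holds for $F=(B_{2N})_N$. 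In fact we may take $\delta=\varepsilon^2$ directly, since the hypothesis only needs $\geq\delta$. Then \Cref{GeneralBxBinReturns} yields $C\subseteq\mathbf{F}_2$ with $\overline{d}_{(B_{2N})}(C)\geq\mu(Y)^2$ and $C\times C$ contained in the set of returns.

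It remains to pass from the subsequence $(B_{2N})$ to the full sequence $(B_N)$. The upper density along a subsequence is at most the upper density along the whole sequence. Indeed,
\[
\overline{d}_{(B_N)_{N\in\mathbb{N}}}(C)=\overlim_{N\to\infty}\frac{|C\cap B_N|}{|B_N|}\geq\overlim_{N\to\infty}\frac{|C\cap B_{2N}|}{|B_{2N}|}\geq\mu(Y)^2 ,
\]
because the $\limsup$ over all $N$ dominates the $\limsup$ over even indices. This gives the corollary.

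The argument has no real obstacle, since all the analytic content sits in \Cref{BigAvgsBallFreeGroup} and \Cref{GeneralBxBinReturns}. The only delicate points are bookkeeping. First, the restriction to even radii: Guivarc'h-type averages over odd spheres can converge to a different limit, which is why only $B_{2N}$ appears in \Cref{BigAvgsBallFreeGroup}. Second, the step from the subsequence to the full ball sequence, which is harmless precisely because we only claim an \emph{upper} density bound.
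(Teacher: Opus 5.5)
Your proposal is correct and follows the paper's own route. The paper obtains the corollary from \Cref{GeneralBxBinReturns} with $F_N=B_{2N}$ and $\varepsilon=\mu(Y)$, $\delta=\mu(Y)^2$, checking the hypothesis via \Cref{BigAvgsBallFreeGroup}. It leaves implicit the passage from $(B_{2N})$ to $(B_N)$, which you handle correctly by monotonicity of $\limsup$ under passing to a subsequence; the degenerate case $\mu(Y)=0$, excluded since \Cref{GeneralBxBinReturns} needs $\varepsilon,\delta>0$, is settled by $C=\varnothing$.
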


In \Cref{SecGeneralDensities} we prove a generalization of \Cref{CartProdInRetsFree} which holds for any countable group, see \Cref{ThmConvsAreNiceAndHaveBxBinAA-1} and \Cref{RemarkConvsAndBallsFreeProds}.

In order to state other applications of \Cref{GeneralBxBinReturns}, we need to introduce some additional definitions.
Since the groups to which the following definitions apply are not necessarily abelian, we use multiplicative notation.

\begin{definition}
\label{DefAmenableGroup}
Let $(G,\cdot)$ be a countable group. A \textit{left F{\o}lner sequence} $F=(F_N)_{N\in\mathbb{N}}$ in $G$ is a sequence of finite sets $F_N\subseteq G$ such that, for all $g\in G$, $\lim_{N\to\infty}\frac{|gF_N\Delta F_N|}{|F_N|}=0$. We say $G$ is \textit{left amenable} if it has a left F{\o}lner sequence. 
Similarly, $(F_N)$ is \textit{right F{\o}lner} if $\lim_{N\to\infty}\frac{|(F_Ng)\Delta F_N|}{|F_N|}=0$; a group $G$ is left amenable iff it is right amenable, as for any left F{\o}lner sequence $(F_N)$, $\left(F_N^{-1}\right)$ is right F{\o}lner. 
\end{definition}

The notion of amenability admits several equivalent formulations. In particular, a countable group is amenable if and only if there are finitely additive probability measures on $(G,\mathcal{P}(G))$ which are invariant under the left translation maps $L_g:G\to G;x\mapsto gx$ for all $g\in G$.

\begin{definition}
\label{DefupperBanachdensity}
Let $G$ be a countable amenable group. The \textit{left upper Banach density} of a subset $E$ of $G$, $d^*_l(E)$, is defined by
\begin{equation*}
d^*_l(E)=\sup\left\{\overline{d}_F(E);F\text{ left F{\o}lner sequence in }G\right\}.
\end{equation*}
One can similarly define the right upper Banach density $d^*_r$ by using right F{\o}lner sequences. When $G$ is an abelian (or finite) group, then clearly $d^*_l=d^*_r$ and we just write $d^*$, or $d^{*}_{G}$ if the group $G$ is not clear from context.
\end{definition}

For $\mathbb{Z}^n$, the definitions of $d^*$ from \Cref{DefUBDInZn} and \Cref{DefupperBanachdensity} coincide; a general version of this fact for countable amenable groups is proved in \cite[Corollary 3.6]{BG}.

\begin{remark}
\label{Rmk113}
The densities $d^*_l$ and $d^*_r$ need not always coincide. They do coincide for groups with the property that all left invariant means are right invariant and viceversa. 
One can show that this class of groups coincides with the so called FC-groups, that is, groups in which every conjugacy class is finite (see \cite[(4.23iii)]{Pat}).
If a countable amenable group $G$ is not an FC-group, then one can construct a set $A\subseteq G$ satisfying $d^*_l(A)=1$ but $d^*_r(A)\leq\frac{1}{2}$ (see \Cref{UpperAndLowerUBDDiffer} for an elementary proof). The constant $\frac{1}{2}$ is optimal without further restrictions on $G$, as if we let $G$ be the dihedral group $D_\infty=\langle r,s|s^2=1,sr=r^{-1}s\rangle$, then one can check that all $A\subseteq G$ such that $d^*_l(A)=1$ must satisfy $d^*_r(A)\geq\frac{1}{2}$. A way to construct sets $A$ such that $d^*_l(A)=1$ but $d^*_r(A)<1$ is by letting $A=\bigcup_NF_N$, where $(F_N)$ is an adequate left F{\o}lner sequence that is not right F{\o}lner. We use this technique in the proof of \Cref{CounterBBinA-1A} to construct a subset $A$ of the Heisenberg group $G=\mathrm{UT}_3(\mathbb{Z})$, i.e. the group of unipotent upper triangular integer matrices, such that $d^*_l(A)=1$, $d_r^*(A)=0$. Another example of such a subset of $\mathrm{UT}_3(\mathbb{Z})$ appears in \cite[Theorem 3.4]{BHM}.
\end{remark}

We are going now to state an ergodic-theoretic version of \Cref{BxBinA-AinZ} in the setting of countable amenable groups. 
But first, we need to cite the following variant of von Neumann's mean ergodic theorem in the amenable setting (for a proof of this result see the discussion at the beginning of \Cref{SecPrelims}). 
\begin{prop}
\label{VonNeumannmuASquaredIntro}
Let $(F_N)$ be a left or a right F{\o}lner sequence in a countable amenable group $G$. Then, for any m.p.s. 
$(X,\mathcal{B},\mu,(T_g)_{g\in G})$ and $Y\in\mathcal{B}$, we have
\begin{equation}
\label{LinearAvgsAreBig'}
\lim_{N\to\infty}\frac{1}{|F_N|}\sum_{g\in F_N}\mu(Y\cap T_gY)\geq\mu(Y)^2.
\end{equation}
\end{prop}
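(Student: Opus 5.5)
The plan is to pass to the Koopman representation and apply a mean ergodic theorem for F{\o}lner averages. Let $U_g$ be the unitary operator on $L^2(X,\mu)$ given by $U_gf=f\circ T_g^{-1}$. With this convention $U_g1_Y=1_{T_gY}$, so
\[
\mu(Y\cap T_gY)=\langle U_g 1_Y,1_Y\rangle .
\]
Moreover $U_{gh}=U_gU_h$, since $T_{gh}^{-1}=T_h^{-1}T_g^{-1}$. Let $P$ be the orthogonal projection onto the closed subspace $H_{\mathrm{inv}}=\{f:U_gf=f\ \forall g\}$ of invariant vectors. The claim is that
\[
A_N f=\frac{1}{|F_N|}\sum_{g\in F_N}U_gf \longrightarrow Pf
\]
in norm, for every left F{\o}lner sequence $(F_N)$ and every $f\in L^2$. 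This would give
\[
\frac{1}{|F_N|}\sum_{g\in F_N}\mu(Y\cap T_gY)\to\langle P1_Y,1_Y\rangle=\|P1_Y\|^2 .
\]

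To prove the mean ergodic theorem, use the standard decomposition
\[
L^2=H_{\mathrm{inv}}\oplus\overline{\mathrm{span}}\{U_hv-v: h\in G,\ v\in L^2\}.
\]
This holds because $f\perp U_hv-v$ for all $h,v$ if and only if $U_h^*f=f$ for all $h$, and $U_h^*=U_{h^{-1}}$, so such $f$ are exactly the invariant vectors. On $H_{\mathrm{inv}}$ we have $A_N=\mathrm{Id}$. For a coboundary $U_hv-v$,
\[
A_N(U_hv-v)=\frac{1}{|F_N|}\Big(\sum_{g\in F_Nh}U_gv-\sum_{g\in F_N}U_gv\Big),
\]
whose norm is at most $\frac{|F_Nh\,\Delta\, F_N|}{|F_N|}\|v\|$. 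This is exactly where right-F{\o}lnerness enters. If instead $(F_N)$ is left F{\o}lner, write the coboundary as $U_hv-v$ but act on the other side, i.e. use $\sum_{g}U_{hg}$. Concretely:
\begin{itemize}
\item For left F{\o}lner sequences, note that $A_NU_h=\frac{1}{|F_N|}\sum_{g\in F_N}U_{gh}$ involves the set $F_Nh$, whereas $U_hA_N$ involves $hF_N$.
\item So in the left case, handle the weak limit instead. The inner product $\langle A_N f,f\rangle$ is real, and $\langle A_Nf,f\rangle=\langle f,A_N^*f\rangle$ with $A_N^*=\frac1{|F_N|}\sum_{g\in F_N^{-1}}U_g$.
\item Since $(F_N^{-1})$ is right F{\o}lner, the right case yields $A_N^*f\to Pf$ in norm. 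Hence $\langle A_Nf,f\rangle\to\langle f,Pf\rangle=\|Pf\|^2$.
\end{itemize}
A uniform bound $\|A_N\|\le1$ lets us pass from the dense span of coboundaries to its closure. This treats both cases, and the limit exists and equals $\|P1_Y\|^2$.

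The final step is the lower bound. Since constants are invariant, $P$ dominates the projection onto constants, so
\[
\|P1_Y\|^2\ge\Big|\Big\langle 1_Y,1\Big\rangle\Big|^2=\mu(Y)^2 .
\]
Equivalently, Cauchy--Schwarz gives $\|P1_Y\|\ge\langle P1_Y,1\rangle=\langle1_Y,P1\rangle=\langle 1_Y,1\rangle$.

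The main obstacle is bookkeeping rather than substance: matching the side of the F{\o}lner condition to the side on which $U_h$ acts, which depends on the left-action convention. The adjoint trick above resolves it.
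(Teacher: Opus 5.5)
Your proposal is correct and follows essentially the paper's route. Both arguments use a mean ergodic theorem to get convergence to $\langle 1_Y,P1_Y\rangle=\|P1_Y\|^2$, then the Cauchy--Schwarz bound $\|P1_Y\|\geq\langle 1_Y,1\rangle=\mu(Y)$, and switch between left and right F{\o}lner sequences via $F_N\mapsto F_N^{-1}$; your adjoint trick is the paper's identity $\mu(Y\cap T_gY)=\mu(Y\cap T_{g^{-1}}Y)$ in disguise. The only real difference is that you prove the mean ergodic theorem through the coboundary decomposition instead of citing it.

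There is one small slip. Your opening claim asserts norm convergence $A_Nf\to Pf$ for left F{\o}lner sequences. What your argument actually proves, and all that is needed, is norm convergence for right F{\o}lner sequences and convergence of $\langle A_Nf,f\rangle$ for left ones.
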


The following corollary is a consequence of \Cref{VonNeumannmuASquaredIntro} and \Cref{GeneralBxBinReturns} with $\delta=\varepsilon^2$:

\begin{restatable}[Cartesian products in sets of returns]{cor}{CartProdInRets} 
\label{CartProdInRets}
Let $G$ be a countable amenable group with a left or right F{\o}lner sequence $F=(F_N)_{N\in\mathbb{N}}$, a m.p.s. $(X,\mathcal{B},\mu,(T_{(g,h)})_{(g,h)\in G\times G})$ and $Y\in\mathcal{B}$. Then there is $B\subseteq G$ such that $\overline{d}_F(B)\geq\mu(Y)^2$ and $B\times B\subseteq\{(g,h)\in G\times G;\mu(Y\cap T_{(g,h)}Y)>0\}$.
\end{restatable}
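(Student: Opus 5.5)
The plan is to deduce \Cref{CartProdInRets} directly from \Cref{GeneralBxBinReturns}, using \Cref{VonNeumannmuASquaredIntro} to verify its hypothesis. Set $\varepsilon=\mu(Y)$ and $\delta=\varepsilon^2$, and dispose of the degenerate case first: if $\mu(Y)=0$, then the requirement $\overline{d}_F(B)\geq 0$ is vacuous and $B=\emptyset$ works trivially. So we may assume $\mu(Y)>0$, which gives $\varepsilon,\delta>0$ as \Cref{GeneralBxBinReturns} requires.

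Next we check the hypothesis \Cref{EqDefPositiveErgAvgs'} for the sequence $F$. Let $(X',\mathcal{B}',\mu',(S_g)_{g\in G})$ be any m.p.s.\ and let $Y'\in\mathcal{B}'$ satisfy $\mu'(Y')\geq\varepsilon$. Since $F$ is a left or right F{\o}lner sequence, \Cref{VonNeumannmuASquaredIntro} gives
\begin{equation*}
\overlim_{N\to\infty}\frac{1}{|F_N|}\sum_{g\in F_N}\mu'(Y'\cap S_gY')=\lim_{N\to\infty}\frac{1}{|F_N|}\sum_{g\in F_N}\mu'(Y'\cap S_gY')\geq\mu'(Y')^2\geq\varepsilon^2=\delta.
\end{equation*}
Hence $F$ satisfies the hypothesis of \Cref{GeneralBxBinReturns} with these values of $\varepsilon$ and $\delta$. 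We also need $G$ to be countably infinite. If $G$ is finite, we either use the footnote convention that $F_N=G$ for large $N$, or argue directly: take $B=G$ when every $(g,h)$ is a return time. In general, though, the finite case should just follow because the proof of \Cref{GeneralBxBinReturns} does not use infiniteness.

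Now apply \Cref{GeneralBxBinReturns} to the given $G\times G$-system and to $Y$, which satisfies $\mu(Y)=\varepsilon$. It produces $B\subseteq G$ with $\overline{d}_F(B)\geq\delta=\mu(Y)^2$ and $B\times B$ contained in the set of returns, which is exactly the claim. There is no real obstacle here. The only point needing care is that \Cref{VonNeumannmuASquaredIntro} must hold for one-sided F{\o}lner sequences of either side, and it is stated that way. All the substantive work lies in \Cref{GeneralBxBinReturns} itself.
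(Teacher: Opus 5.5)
Your proof is correct and is exactly the derivation the paper announces in the introduction (\Cref{VonNeumannmuASquaredIntro} plus \Cref{GeneralBxBinReturns} with $\delta=\varepsilon^2$). The written proof in \Cref{SecErgToComb} just inlines that argument: it applies \Cref{ProdsInReturnsk=1} with $\phi(g)=(g,1_G)$, $\psi(g)=(1_G,g)$ and invokes the mean ergodic theorem only for the diagonal system $S_g=T_{(g,g)}$, which is also how \Cref{GeneralBxBinReturns} itself is obtained. Your hedging about finite $G$ is unnecessary, since ``countable'' in this paper means countably infinite, and the underlying argument works for any $G$ anyway.
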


The purely combinatorial version of \Cref{CartProdInRets}, which generalizes \Cref{BxBinA-AinZ}, reads as follows.

\begin{theorem}
\label{BxBinAA^-1}
Let $G$ be a countable amenable group, let $A\subseteq G\times G$ satisfy $d^*_l(A)>0$. Then for any left or right F{\o}lner sequence $F$ in $G$ there exists $B$ such that $\overline{d}_F(B)\geq d^*_l(A)^2$ and $B\times B\subseteq AA^{-1}$.
\end{theorem}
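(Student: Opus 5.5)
The approach is to deduce the statement from \Cref{CartProdInRets} through a Furstenberg-type correspondence principle for the group $G\times G$. Fix $A\subseteq G\times G$ with $d^*_l(A)>0$ and a left or right F{\o}lner sequence $F$ in $G$. The plan has two steps.

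\emph{Step 1: correspondence.} We produce an m.p.s. $(X,\mathcal{B},\mu,(T_{(g,h)})_{(g,h)\in G\times G})$ and $Y\in\mathcal{B}$ with $\mu(Y)\geq d^*_l(A)$ and
\begin{equation*}
\{(g,h)\in G\times G;\mu(Y\cap T_{(g,h)}Y)>0\}\subseteq AA^{-1}.
\end{equation*}
This is the version of Furstenberg's correspondence principle for the countable amenable group $H=G\times G$, which the paper announces as \Cref{Returns=DifferencesIntro}. If needed, I would prove it directly as follows.
\begin{itemize}
\item Choose a left F{\o}lner sequence $(\Phi_N)$ in $H$ with $d_\Phi(A)=d^*_l(A)$; pass to a subsequence so the limit exists.
\item Let $X=\{0,1\}^H$ with the shift action $(T_k x)(k')=x(k'k)$, which is a left action, and let $Y=\{x;x(e)=1\}$.
\item Let $\mu$ be a weak-$*$ limit point of the averages $\frac{1}{|\Phi_N|}\sum_{k\in\Phi_N}\delta_{T_k1_A}$.
\end{itemize}
Invariance of $\mu$ follows from the F{\o}lner property; one has to check that left F{\o}lner matches the chosen action convention, and if not, use $k\mapsto T_{k^{-1}}$ appropriately or replace $\Phi_N$ by $\Phi_N^{-1}$. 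Then $\mu(Y)=\lim|A\cap\Phi_N|/|\Phi_N|=d^*_l(A)$. Also $\mu(Y\cap T_{k}^{-1}Y)>0$ forces some $k'$ with $k',k'k\in A$, so $k\in A^{-1}A$. Since $\mu(Y\cap T_kY)=\mu(T_k^{-1}Y\cap Y)$, matching the form $AA^{-1}$ requires choosing the action convention correctly, for instance defining $T_k$ through right translation by $k^{-1}$.

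\emph{Step 2: apply \Cref{CartProdInRets}.} Applying \Cref{CartProdInRets} to this system, with the given F{\o}lner sequence $F$ in $G$, yields $B\subseteq G$ with $\overline{d}_F(B)\geq\mu(Y)^2\geq d^*_l(A)^2$ and $B\times B\subseteq\{(g,h);\mu(Y\cap T_{(g,h)}Y)>0\}\subseteq AA^{-1}$. This completes the argument.

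The main obstacle is the left/right bookkeeping in the correspondence principle. The hypothesis is $d^*_l$, that is, density along left F{\o}lner sequences, while the desired conclusion is $AA^{-1}$ rather than $A^{-1}A$. So the shift action has to be set up so that left F{\o}lner averaging gives an invariant measure and the return set lands in $AA^{-1}$. I would first try the action $(S_kx)(k')=x(k^{-1}k')$, which is a left action under which left-F{\o}lner averages are asymptotically invariant. Under it, $\mu(Y\cap S_kY)>0$ yields $k'$ with $k'\in A$ and $k^{-1}k'\in A$, so $k=k'(k^{-1}k')^{-1}\in AA^{-1}$, as desired. The measure-theoretic details of Step 1 (cylinder sets and the weak-$*$ limit) are routine.
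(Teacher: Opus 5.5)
Your two-step structure is the paper's proof. The paper obtains the theorem in one line from \Cref{CartProdInRets} together with \Cref{ReturnsInDifferences}, which is the correspondence principle applied to the amenable group $G\times G$ and gives $\mu(Y)=d_l^*(A)$ and $R_\mu^T(Y)\subseteq AA^{-1}$. Your Step 2 is correct as written, so if you simply cite the correspondence principle you are done.

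Your direct proof of Step 1 has the left/right bookkeeping reversed, and the action you finally settle on proves the wrong statement.

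\emph{The first shift is the correct one.} Take $(T_kx)(k')=x(k'k)$. Then $T_l1_A=1_{Al^{-1}}$, so $1_Y(T_l1_A)=1_A(l)$ and $\mu(Y)=d_\Phi(A)=d_l^*(A)$. Since $T_k$ pushes $\delta_{T_l1_A}$ to $\delta_{T_{kl}1_A}$, left F{\o}lner averaging gives invariance. Moreover $T_kY=\{x;x(k^{-1})=1\}$, so $\mu(Y\cap T_kY)=d_\Phi(A\cap kA)$. Positivity yields $l\in A$ with $k^{-1}l\in A$, i.e.\ $k\in AA^{-1}$. Your computation ``$k',k'k\in A$'' misplaced the translate: positivity of $\mu(Y\cap T_k^{-1}Y)$ gives $l,kl\in A$, which again puts $k$ in $AA^{-1}$.

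\emph{The replacement shift fails on both counts.} With $(S_kx)(k')=x(k^{-1}k')$ you get $S_l1_A=1_{lA}$, hence $\mu(Y)=d_\Phi(A^{-1})$. This is the density of $A$ along the right F{\o}lner sequence $(\Phi_N^{-1})$, bounded only by $d_r^*(A)$, which can be strictly smaller than $d_l^*(A)$ (see \Cref{UpperAndLowerUBDDiffer}). Also $S_kY=\{x;x(k)=1\}$, so positivity of $\mu(Y\cap S_kY)$ gives $l^{-1},l^{-1}k\in A$, i.e.\ $k\in A^{-1}A$.

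That construction proves the right-handed statement \Cref{BxBinAA^-1right}, not the one asked for. The conventions genuinely cannot be mixed: \Cref{CounterBxBinA-1A} shows that $d_l^*$ density cannot be paired with $A^{-1}A$. So drop the switch and keep your original shift, or just cite the lemma as the paper does.
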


The equivalence between \Cref{BxBinAA^-1} and \Cref{CartProdInRets} is immediate from the following result:

\begin{prop}[Cf. \Cref{Returns=Differences}]
\label{Returns=DifferencesIntro}
Let $G$ be a countable amenable group. 
\begin{enumerate}
    \item For all $A\subseteq G$, there is a m.p.s. $(X,\mathcal{B},\mu,(T_g)_{g\in G})$ and $Y\in\mathcal{B}$ with $\mu(Y)=d^*_l(A)$ and 
\begin{equation*}
\{g\in G;\mu(Y\cap T_gY)>0\}\subseteq AA^{-1}
\end{equation*}
    \item For all m.p.s. $(X,\mathcal{B},\mu,(T_g)_{g\in G})$ and $Y\in\mathcal{B}$, there is a set $A\subseteq G$ with $d^*_l(A)\geq\mu(Y)$ and 
\begin{equation*}
AA^{-1}\subseteq\{g\in G;\mu(Y\cap T_gY)>0\}
\end{equation*}
\end{enumerate}
\end{prop}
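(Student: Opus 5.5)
For part (1) I would run Furstenberg's correspondence principle for amenable groups. First I would choose a left F{\o}lner sequence $(F_N)$ with $\lim_N |A\cap F_N|/|F_N| = d^*_l(A)$; this is possible by the definition of $d^*_l$ and a diagonal subsequence. Next, let $X=\{0,1\}^G$ with the shift action $(T_g x)(h)=x(hg)$, or whichever side convention makes $T_g$ a left action. Set $Y=\{x: x(e)=1\}$ and $\omega=1_A\in X$. Let $\mu$ be a weak-$*$ limit point of the averages $\frac{1}{|F_N|}\sum_{g\in F_N}\delta_{T_g\omega}$, with the sides adjusted so that the F{\o}lner property gives invariance. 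Then $\mu(Y)=\lim |A\cap F_N|/|F_N|=d^*_l(A)$. Since $Y$ is clopen, $Y\cap T_gY$ is clopen. If $\mu(Y\cap T_gY)>0$, then some point $T_h\omega$ lies in it, which means $h$ and a $g$-translate of $h$ both lie in $A$, so $g\in AA^{-1}$. The bookkeeping I must get right is matching the side of the F{\o}lner condition with the side of the action, so that both invariance holds and the return set is $AA^{-1}$ rather than $A^{-1}A$. If these conflict, I would replace $A$ by $A^{-1}$ together with right F{\o}lner sets, since $F_N^{-1}$ is right F{\o}lner.

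For part (2) I would use the converse argument. Fix a left F{\o}lner sequence $(F_N)$ and apply the mean ergodic theorem (\Cref{VonNeumannmuASquaredIntro}'s underlying statement), or simply Fubini, to get
\[
\int \frac{1}{|F_N|}\sum_{g\in F_N} 1_Y(T_g^{-1}x)\,d\mu=\mu(Y)\quad\text{for every }N.
\]
Hence for each $N$ the set of $x$ with $\frac{1}{|F_N|}\sum_{g\in F_N}1_Y(T_g^{-1}x)\ge \mu(Y)-\varepsilon_N$ has positive measure. A Fatou argument on $\limsup_N$ then gives a single point $x$, outside a prescribed null set, with
\[
\limsup_N \frac{1}{|F_N|}\sum_{g\in F_N}1_Y(T_g^{-1}x)\ge\mu(Y).
\]
Put $A=\{g: T_g^{-1}x\in Y\}$, so that $d^*_l(A)\ge\overline d_F(A)\ge\mu(Y)$. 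For $a,b\in A$, the point $T_b^{-1}x$ lies in $Y$, and $T_a^{-1}x=T_{ab^{-1}}^{-1}\,T_b^{-1}x$ also lies in $Y$. So $T_b^{-1}x\in Y\cap T_{ab^{-1}}Y$, possibly up to inverting, depending on conventions.

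The main obstacle is that this only shows $Y\cap T_{ab^{-1}}Y\neq\emptyset$, not that it has positive measure. To fix this, I would remove from $Y$ the null set $Y_0=\bigcup_{g}\bigl(Y\cap T_gY\bigr)$ taken over those $g$ with $\mu(Y\cap T_gY)=0$; this is a countable union since $G$ is countable. I would then choose $x$ outside the null set $\bigcup_h T_h Y_0$ and work with $Y'=Y\setminus Y_0$, which still has $\mu(Y')=\mu(Y)$. With this choice, membership of $T_b^{-1}x$ in $Y'\cap T_{ab^{-1}}Y'$ forces $\mu(Y\cap T_{ab^{-1}}Y)>0$, giving $AA^{-1}\subseteq\{g;\mu(Y\cap T_gY)>0\}$. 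Finally, I would get the positive-measure choice of $x$ with large upper density from the Fatou step applied to $1_{Y'}$.
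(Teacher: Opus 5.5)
Your route is the paper's. Part (1) is Furstenberg's correspondence principle run along a left F{\o}lner sequence that realizes $d^*_l(A)$; the paper cites \cite{Ro25} together with \Cref{FolnerProps}(a). Part (2) is the Fatou/intersectivity argument of \Cref{Returns=Differences}(3), with a full-measure set used to upgrade nonempty intersections to positive-measure ones. In (1) no side conflict actually arises. The map $(T_gx)(h)=x(hg)$ is a left action, and left F{\o}lner sets make limits of $\frac{1}{|F_N|}\sum_{g\in F_N}\delta_{T_g\omega}$ invariant. Moreover, $T_h\omega\in Y\cap T_gY$ means $h\in A$ and $g^{-1}h\in A$, so $g=h(g^{-1}h)^{-1}\in AA^{-1}$.

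There is, however, a wrong step in (2), and it is not a matter of convention. The identity $T_a^{-1}x=T_{ab^{-1}}^{-1}T_b^{-1}x$ is not valid in a nonabelian group, since $T_{ab^{-1}}^{-1}T_b^{-1}=T_{ba^{-1}b^{-1}}$. The correct relation is $T_a^{-1}x=T_{b^{-1}a}^{-1}T_b^{-1}x$. For your set $A=\{g;T_g^{-1}x\in Y\}$ this gives $T_b^{-1}x\in Y\cap T_{b^{-1}a}Y$, that is, only $A^{-1}A\subseteq R_\mu^T(Y)$.

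``Inverting'' does not repair this. $R_\mu^T(Y)$ is symmetric, but $A^{-1}A\neq AA^{-1}$ in general, and $AA^{-1}\subseteq R_\mu^T(Y)$ really can fail for this $A$. For example, let $S_3$ act on itself by left translation, with $Y=\{(12),(123)\}$ and $x=e$ (or take $S_3\times\mathbb{Z}$ acting through its first factor, for an infinite group). Then $A=Y^{-1}$ and $AA^{-1}=Y^{-1}Y=\{e,(23)\}$, while $R_\mu^T(Y)=YY^{-1}=\{e,(13)\}$.

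The fix is to use the other side: put $A=\{g;T_gx\in Y'\}$, with your $Y'=Y\setminus Y_0$. For $a,b\in A$, the point $z=T_bx$ lies in $Y'$ and $T_{ab^{-1}}z=T_ax\in Y$. Hence $z\in Y'\cap T_{ba^{-1}}Y$, which forces $\mu(Y\cap T_{ba^{-1}}Y)>0$. As $a,b$ range over $A$, this is exactly $AA^{-1}\subseteq R_\mu^T(Y)$. The density step is unchanged, since $\int 1_{Y'}(T_gx)\,d\mu(x)=\mu(Y)$ for every $g$ and the Fatou argument uses no invariance of $(F_N)$. With $Y'$ built into the definition of $A$, the extra condition $x\notin\bigcup_hT_hY_0$ also becomes unnecessary.

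The paper's written proof of \Cref{Returns=Differences}(3) uses the same set $\{g;x_0\in T_gY\}$ as you. It likewise only yields $g^{-1}g'\in R_\mu^T(Y)$, so the same correction is needed there.
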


It is worth noting that there is a natural analogue of \Cref{BxBinAA^-1} for right upper Banach density, which follows from the fact that $d^*_l(A)=d^*_r\left(A^{-1}\right)$ for all $g\in G$ (see \Cref{FolnerProps}):

\begin{theorem}
\label{BxBinAA^-1right}
Let $G$ be a countable amenable group, let $A\subseteq G\times G$ satisfy $d^*_r(A)>0$. Then for any left or right F{\o}lner sequence $F$ in $G$ there exists $B$ such that $\overline{d}_F(B)\geq d^*_r(A)^2$ and $B\times B\subseteq A^{-1}A$.
\end{theorem}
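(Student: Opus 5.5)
We reduce \Cref{BxBinAA^-1right} to \Cref{BxBinAA^-1} by passing to inverses. The key identity is $d^*_r(E)=d^*_l\left(E^{-1}\right)$ for every $E$ in a countable amenable group, here the group $G\times G$.

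This identity holds because inversion $x\mapsto x^{-1}$ exchanges left and right F{\o}lner sequences. If $(\Phi_N)$ is right F{\o}lner in $G\times G$, then
\begin{equation*}
\left|g\Phi_N^{-1}\,\Delta\,\Phi_N^{-1}\right|=\left|\Phi_N g^{-1}\,\Delta\,\Phi_N\right|,
\end{equation*}
so $\left(\Phi_N^{-1}\right)$ is left F{\o}lner. Moreover $|E\cap\Phi_N|=\left|E^{-1}\cap\Phi_N^{-1}\right|$, so $\overline{d}_{(\Phi_N)}(E)=\overline{d}_{(\Phi_N^{-1})}\left(E^{-1}\right)$. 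Since inversion is a bijection between the two classes of F{\o}lner sequences, taking suprema gives the identity. (This is the fact the paper attributes to \Cref{FolnerProps}; if needed, the argument above proves it directly.)

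Now let $A\subseteq G\times G$ with $d^*_r(A)>0$, and set $A'=A^{-1}$. Then $d^*_l(A')=d^*_r(A)>0$. Fix any left or right F{\o}lner sequence $F$ in $G$. Applying \Cref{BxBinAA^-1} to $A'$ and $F$ gives $B\subseteq G$ with $\overline{d}_F(B)\geq d^*_l(A')^2=d^*_r(A)^2$ and
\begin{equation*}
B\times B\subseteq A'A'^{-1}=A^{-1}\left(A^{-1}\right)^{-1}=A^{-1}A,
\end{equation*}
which is the claim. Note that $F$ lives in $G$, not $G\times G$, and is never inverted, so it may be left or right F{\o}lner exactly as \Cref{BxBinAA^-1} allows.

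There is no real obstacle. The only care needed is the bookkeeping $\left(A^{-1}\right)\left(A^{-1}\right)^{-1}=A^{-1}A$, and checking that inversion maps right F{\o}lner sequences in $G\times G$ to left F{\o}lner sequences while preserving densities.
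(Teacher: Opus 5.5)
Your proposal is correct and is essentially the paper's argument: the paper obtains this theorem by applying \Cref{BxBinAA^-1} to $A^{-1}$, using the identity $d^*_l(A^{-1})=d^*_r(A)$ from \Cref{FolnerProps}. Your direct check that inversion exchanges left and right F{\o}lner sequences while preserving densities is exactly the content of that lemma.
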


In particular, since abelian groups are amenable, we have the following enhanced form of \Cref{BxBinA-AinZ}:

\begin{theorem}
\label{BxBinA-AAbelian}
Let $(G,+)$ be a countable abelian group and let $A\subseteq G\times G$. Then for any F{\o}lner sequence $F$ in $G$ there is some $B\subseteq G$ such that $\overline{d}_F(B)\geq d^*(A)^2$ and $B\times B\subseteq A-A$. 
\end{theorem}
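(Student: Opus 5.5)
This is a direct specialization of \Cref{BxBinAA^-1}, so I plan to deduce it from that theorem rather than reprove anything. Write $(G,+)$ multiplicatively for the moment. Then $G\times G$ is a countable abelian group, hence amenable. Since the group is abelian, $AA^{-1}$ is exactly $A-A$. Moreover, every left F{\o}lner sequence is also a right F{\o}lner sequence and conversely, so $d^*_l=d^*_r=d^*$ on subsets of $G\times G$, as noted after \Cref{DefupperBanachdensity}.

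The key steps, in order, are the following. First, if $d^*(A)=0$ the statement is trivial: take $B=\emptyset$, which gives $\overline{d}_F(B)=0\geq 0$ and $B\times B=\emptyset\subseteq A-A$. Second, assume $d^*(A)>0$. Let $F$ be any F{\o}lner sequence in $G$; it is in particular a left F{\o}lner sequence. Apply \Cref{BxBinAA^-1} to $A\subseteq G\times G$ and $F$. This produces $B\subseteq G$ with $\overline{d}_F(B)\geq d^*_l(A)^2=d^*(A)^2$ and $B\times B\subseteq AA^{-1}=A-A$.

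Alternatively, one can go through the ergodic route directly. \Cref{Returns=DifferencesIntro}(1), applied in $G\times G$, gives a m.p.s.\ of $G\times G$ and a set $Y$ with $\mu(Y)=d^*(A)$ whose set of returns lies in $A-A$. \Cref{CartProdInRets} then gives $B$ with $\overline{d}_F(B)\geq\mu(Y)^2$ and $B\times B$ contained in that set of returns. The only points requiring any care are the trivial case $d^*(A)=0$ (where \Cref{BxBinAA^-1} formally assumes positivity) and the identification of left and right densities in the abelian case; neither is a real obstacle.
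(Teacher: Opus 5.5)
Your proposal is correct and follows the paper's route. The paper states \Cref{BxBinA-AAbelian} as an immediate specialization of \Cref{BxBinAA^-1} to abelian groups, where $AA^{-1}=A-A$ and $d^*_l=d^*$; your alternative ergodic route via \Cref{Returns=DifferencesIntro} and \Cref{CartProdInRets} is just how \Cref{BxBinAA^-1} itself is proved there, and your separate treatment of $d^*(A)=0$ with $B=\emptyset$ is a harmless detail the paper leaves implicit.
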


\begin{remark}
\label{Rmk120}
One may ask whether \Cref{BxBinAA^-1} remains valid with the inclusion
$B\times B\subseteq A^{-1}A$
in place of \(B\times B\subseteq AA^{-1}\), while retaining the left
upper Banach density \(d_l^*\). In general, it does not: a counterexample
in the Heisenberg group \(\mathrm{UT}_3(\mathbb{Z})\) is given in
\Cref{CounterBxBinA-1A}.

\Cref{CounterBBinA-1A} provides an analogous counterexample for product sets, with \(B_1B_2\) in place of the Cartesian square \(B\times B\): there is
a set \(A\subseteq\mathrm{UT}_3(\mathbb{Z})\) satisfying \(d_l^*(A)=1\)
for which there are no sets \(B_1,B_2\subseteq\mathrm{UT}_3(\mathbb{Z})\)
such that \(d_l^*(B_i)>0\) for \(i=1,2\) and
\[
B_1B_2\subseteq A^{-1}A.
\]
A detailed analysis of the distinct behaviour of the sets $A^{-1}A$ and $AA^{-1}$, for sets $A$ with $d^*_l(A)>0$, can be found in \cite{BHM}.
\end{remark}

Here is a two-sided version of \Cref{BxBinAA^-1}, which is proved in \Cref{SecErgToComb}.

\begin{restatable}{theorem}{BxBinAAminusonetwosided}
\label{BxBinAAminus1twosided}
Let $G$ be a countable amenable group, let $A_l,A_r\subseteq G\times G$. Then for any left or right F{\o}lner sequence $F$ in $G$, there exists $B\subseteq G$ such that $\overline{d}_F(B)\geq d^*_l(A_l)^2d^*_r(A_r)^2$ and $B\times B\subseteq A_lA_l^{-1}\cap A_r^{-1}A_r$.
\end{restatable}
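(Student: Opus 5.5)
The plan is to reduce the statement to a single application of \Cref{GeneralBxBinReturns}, using a product of two measure preserving systems, one built from $A_l$ and one from $A_r$.

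\emph{Step 1: two systems.} Apply \Cref{Returns=DifferencesIntro}(1) to $A_l\subseteq G\times G$, viewed as a subset of the countable amenable group $G\times G$. This gives a m.p.s. $(X_1,\mathcal{B}_1,\mu_1,(T_{(g,h)}))$ and $Y_1$ with $\mu_1(Y_1)=d^*_l(A_l)$ and returns of $Y_1$ contained in $A_lA_l^{-1}$. For $A_r$ we use the identity $d^*_r(A_r)=d^*_l(A_r^{-1})$ (\Cref{FolnerProps}). Applying \Cref{Returns=DifferencesIntro}(1) to $A_r^{-1}$ gives a system $(X_2,\mu_2,(S_{(g,h)}))$ and $Y_2$ with $\mu_2(Y_2)=d^*_r(A_r)$ and returns contained in $A_r^{-1}(A_r^{-1})^{-1}=A_r^{-1}A_r$.

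\emph{Step 2: product system.} Form the product system $X_1\times X_2$ with measure $\mu_1\otimes\mu_2$ and diagonal action $R_{(g,h)}=T_{(g,h)}\times S_{(g,h)}$, which is again a measure preserving left action of $G\times G$. Put $Y=Y_1\times Y_2$. Then
\[
(\mu_1\otimes\mu_2)(Y\cap R_{(g,h)}Y)=\mu_1(Y_1\cap T_{(g,h)}Y_1)\,\mu_2(Y_2\cap S_{(g,h)}Y_2).
\]
So the set of returns of $Y$ is the intersection of the two return sets, and hence it lies in $A_lA_l^{-1}\cap A_r^{-1}A_r$. Moreover $\mu(Y)=d^*_l(A_l)d^*_r(A_r)$.

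\emph{Step 3: apply the abstract theorem.} By \Cref{VonNeumannmuASquaredIntro}, any left or right F{\o}lner sequence $F$ in $G$ satisfies the hypothesis of \Cref{GeneralBxBinReturns} with $\varepsilon=\mu(Y)$ and $\delta=\varepsilon^2$. That theorem then yields $B$ with $\overline d_F(B)\geq d^*_l(A_l)^2d^*_r(A_r)^2$ and $B\times B$ contained in the returns of $Y$, which is what we need. If either density is $0$, the statement is trivial with $B=\emptyset$.

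\emph{Main obstacle.} The only delicate point is the right-density side: checking that $d^*_r(A_r)=d^*_l(A_r^{-1})$ holds in $G\times G$, and that inversion converts $A_r^{-1}(A_r^{-1})^{-1}$ into exactly $A_r^{-1}A_r$. This should follow from $(F_N)$ left F{\o}lner implying $(F_N^{-1})$ right F{\o}lner. If \Cref{Returns=DifferencesIntro} only provides $\mu(Y)\geq d^*_l$ rather than equality, the same argument goes through unchanged.
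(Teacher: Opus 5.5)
Your proof is correct and is essentially the paper's argument: the paper also takes correspondence-principle systems for $A_l$ and $A_r^{-1}$ (using $d^*_l(A_r^{-1})=d^*_r(A_r)$), forms the product system with the diagonal $G\times G$-action and $Y=Y_1\times Y_2$, and applies \Cref{CartProdInRets}, which is exactly \Cref{GeneralBxBinReturns} combined with \Cref{VonNeumannmuASquaredIntro} as in your Step 3. Your description of $R_\mu^T(Y)$ as the intersection of the two return sets is the right one (the paper's display writes it as a product, which is a slip), and your handling of the degenerate case $\mu(Y)=0$ with $B=\emptyset$ is a harmless addition.
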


Until now, we have restricted the discussion to densities $\overline{d}_F$ along sequences $F=(F_N)$ of finite sets in a countable group $G$. Introducing more general notions of density allows us to obtain amplifications of some of our results. We provide now an example of this phenomenon, which involves the notion of upper logarithmic density.

\begin{definition}
The upper logarithmic density of a set $A$ of natural numbers is defined as 
\begin{equation*}
\overline{d}_{\log}(A)
=
\overlim_{N\to\infty}\frac{1}{\log(N)}\sum_{n=1}^N\frac{1_A(n)}{n}
=
\overlim_{N\to\infty}\frac{1}{s_N}\sum_{n=1}^N\frac{1_A(n)}{n},
\end{equation*}
where $s_N=\sum_{n=1}^N\frac{1}{n}$, so that $\lim_{N\to\infty}\frac{\log(N)}{s_N}=1$, and $1_A:\mathbb{Z}\to\{0,1\}$ is the characteristic function of $A$.
\end{definition}

\begin{restatable}{prop}{RestatableD}
\label{B+BinA-AinZLogDensity}
Let $A\subseteq\mathbb{Z}^2$ be a set with $d^*(A)>0$. Then there exists $B\subseteq\mathbb{Z}$ such that $\overline{d}_{\log}(B)\geq d^*(A)^2$ and $B\times B\subseteq A-A$.
\end{restatable}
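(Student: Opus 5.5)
The plan is to reduce \Cref{B+BinA-AinZLogDensity} to the mechanism behind \Cref{GeneralBxBinReturns}. The only change is that uniform averages along $(F_N)$ are replaced by weighted averages with weights $w_N(n)=\frac{1}{n s_N}$ on $\{1,\dots,N\}$. First, by \Cref{Returns=DifferencesIntro} (applied to $G=\mathbb{Z}^2$), I obtain a m.p.s. $(X,\mathcal{B},\mu,(T_{(x,y)})_{(x,y)\in\mathbb{Z}^2})$ and $Y\in\mathcal{B}$ with $\mu(Y)=d^*(A)$ and
\begin{equation*}
\{(x,y)\in\mathbb{Z}^2;\mu(Y\cap T_{(x,y)}Y)>0\}\subseteq A-A.
\end{equation*}
It then suffices to find $B\subseteq\mathbb{N}$ with $\overline{d}_{\log}(B)\geq\mu(Y)^2$ and $B\times B$ contained in this set of returns.

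The second step is a logarithmic von Neumann estimate. For a single measure preserving $\mathbb{Z}$-action $S$ and $Z\in\mathcal{B}$, I claim
\begin{equation*}
\lim_{N\to\infty}\frac{1}{s_N}\sum_{n=1}^N\frac{1}{n}\mu(Z\cap S^nZ)\geq\mu(Z)^2 .
\end{equation*}
This follows from \eqref{BigCesaroAvgsInZFolner}, or rather its one-sided version $\frac1N\sum_{n=1}^N\mu(Z\cap S^nZ)\to\langle P1_Z,1_Z\rangle\geq\mu(Z)^2$ where $P$ is the projection onto invariant functions. Logarithmic averages are obtained from Cesàro averages by Abel summation: with $c_n=\mu(Z\cap S^nZ)$ and $C_N=\sum_{n\le N}c_n$, we have $\sum_{n\le N}\frac{c_n}{n}=\frac{C_N}{N}+\sum_{n<N}\frac{C_n}{n(n+1)}$, and this is $\sim L\log N$ whenever $C_n/n\to L$. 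Hence the log averages converge to the same limit $L\geq\mu(Z)^2$.

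The third step reproduces the proof of \Cref{GeneralBxBinReturns} with weights. I would work in the product/ultrapower-style construction used there, but here is a concrete route. Use the first coordinate action $S_1=T_{(1,0)}$ and the second $S_2=T_{(0,1)}$. For $\mu$-a.e. point the log-averages $\frac{1}{s_N}\sum_{n\le N}\frac1n 1_Y(S_1^{-n}x)$ make sense, but pointwise convergence is not needed. Instead, define $Y_n=Y\cap T_{(n,0)}Y$, so that $\int\frac{1}{s_N}\sum_{n\le N}\frac1n 1_{Y_n}\,d\mu\to L\geq\mu(Y)^2$. Then I apply Fatou's lemma in reverse (bounded functions) to get $\int\overline{\lim}_N\frac{1}{s_N}\sum_{n\le N}\frac1n1_{T_{(-n,0)}Y}\cdot 1_Y\,d\mu\geq\mu(Y)^2$. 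This yields a point $x\in Y$ for which the set $B_x=\{n;x\in T_{(n,0)}Y\}$ has $\overline{d}_{\log}(B_x)\geq\mu(Y)^2$, or more carefully, a positive-measure set of such points. The reverse Fatou argument replacing the Følner averaging is exactly the place where only the weighted nature of the averages matters, so it goes through verbatim.

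To obtain the Cartesian product, I follow the argument of \Cref{GeneralBxBinReturns}. For $n,m\in B_x$ we have $x\in T_{(n,0)}Y\cap T_{(m,0)}Y$. Applying $T_{(0,m)}$-type shifts, I need $\mu(Y\cap T_{(n,m)}Y)>0$ rather than a pointwise statement, so the choice must be made so that positivity of measure holds. The standard trick is to restrict to the set $Y'\subseteq Y$ of points $x$ such that $\mu$-positivity is witnessed: define $B$ via a positive-measure intersection, namely choose $B$ such that for every finite $F\subseteq B$, $\mu\big(\bigcap_{n\in F}T_{(n,0)}Y\cap\bigcap_{m\in F}T_{(0,m)}Y\big)>0$. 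For $n,m\in B$ this gives $\mu(T_{(n,0)}Y\cap T_{(0,m)}Y)>0$, that is, $\mu(Y\cap T_{(n,-m)}Y)>0$, and the sign is fixed by symmetry $A-A=-(A-A)$ together with replacing the second coordinate action by its inverse.

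The main obstacle is producing $B$ with this finite-intersection property and log density $\geq\mu(Y)^2$. I would use the diagonal action $T_{(1,1)}$-free combination $R_n=T_{(n,0)}$ and $T_{(0,n)}$ jointly: set $Z_n=T_{(n,0)}Y\cap T_{(0,n)}Y$, note that by the log von Neumann estimate for the action $T_{(1,-1)}$ we get $\frac{1}{s_N}\sum\frac1n\mu(Z_n)\gtrsim\mu(Y)^2$, and then apply the Bergelson intersectivity lemma in its weighted form (sets of measure on average $\geq\delta$ along weights admit an index set of upper weighted density $\geq\delta$ with all finite intersections positive). I expect the weighted intersectivity lemma to be the crux; its proof, which goes through a Fatou/maximal-density argument on the sets where too few $Z_n$ meet, should adapt to weights since it only uses linearity and $\overline{\lim}$ of normalized weighted sums.
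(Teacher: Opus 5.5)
Your final argument is correct and is essentially the paper's: the paper notes that the logarithmic weights $\nu_N(n)=\frac{1}{s_N n}$ form a Reiter sequence and then applies the weighted Cartesian-product theorem. That theorem amounts to the correspondence principle, the weighted intersectivity lemma applied to $T_{(n,0)}Y\cap T_{(0,n)}^{-1}Y$, and von Neumann's theorem for the diagonal action $T_{(n,n)}$; the one difference is that you verify the logarithmic mean ergodic estimate directly, by Abel summation from Ces\`aro averages. The detour through the points $x$ and sets $B_x$ is unnecessary, and your sign fix via $(x,y)\mapsto(x,-y)$ is equivalent to using $T_{(0,n)}^{-1}Y$ from the start.
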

\Cref{B+BinA-AinZLogDensity} is stronger than \Cref{BxBinA-AinZ} because every set with positive upper logarithmic density has positive upper density, but not the other way around. For example, if $A=\mathbb{N}\cap\bigcup_{N\in\mathbb{N}}\left[2^{2^N},N\cdot 2^{2^N}\right]$, then $\overline{d}\left(A\right)=1$ but $\overline{d}_{\log}(A)=0$.

The proof of \Cref{B+BinA-AinZLogDensity} is given in \Cref{SecGeneralDensities}, where we actually show that 
many of our results can be amplified by utilizing generalized densities (see \Cref{5terdf9ordgfl}) along sequences of probability measures $(\mu_N)_{N\in\mathbb{N}}$ on a given countable group $G$.

We now move our discussion to variants of \Cref{B+BinA-AinZ} for more general groups. The following result is proved in \Cref{SecErgToComb}:

\begin{theorem}[Products in sets of returns in $G$]
\label{ProdsInRetsMeasurableIntro}
Let $F=(F_N)$ be a sequence of finite subsets of a group $G$, and let $(X,\mathcal{B},\mu,(T_g)_{g\in G})$ be a m.p.s. and $Y\in\mathcal{B}$. Suppose that
\begin{equation}
\label{EqDefPositiveErgAvgsAlongSquares'}
0<\delta:=\overlim\limits_{N\to\infty}\frac{1}{|F_N|}\sum_{g\in F_N}\mu\left(T_{g}^{-1}Y\cap T_{g}Y\right)=\overlim\limits_{N\to\infty}\frac{1}{|F_N|}\sum_{g\in F_N}\mu\left(Y\cap T_{g}^2Y\right).
\end{equation}
Then there exists $B\subseteq G$ such that $\overline{d}_F(B)\geq\delta$ and 
\begin{equation}
\label{EqProdsInRetsMeasurableIntro}
BB\subseteq\left\{g\in G;\mu\left(Y\cap T_{g}Y\right)>0\right\}. 
\end{equation}
\end{theorem}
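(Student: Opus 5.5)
The approach is to mimic the proof of \Cref{GeneralBxBinReturns}, which is built on the classical ``intersectivity'' argument from \cite{Be85}. The key observation is that for $g\in G$ the set $T_g^{-1}Y\cap T_gY$ is nonempty-measure exactly when $\mu(Y\cap T_g^2Y)>0$, which gives the equality of the two averages in \Cref{EqDefPositiveErgAvgsAlongSquares'}. More importantly, if $x\in T_g^{-1}Y\cap T_hY$, then $T_gx\in Y$ and $T_h^{-1}x\in Y$. Hence $T_gx = T_{gh}(T_h^{-1}x)\in Y\cap T_{gh}Y$. We therefore define, for each $g\in G$, the sets
\begin{equation*}
C_g=T_g^{-1}Y,\qquad D_g=T_gY,\qquad E_g=C_g\cap D_g .
\end{equation*}
Then $\mu(C_g\cap D_h)>0$ implies $\mu(Y\cap T_{gh}Y)>0$, because $T_g(C_g\cap D_h)\subseteq Y\cap T_{gh}Y$. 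So it suffices to find $B$ with $\overline{d}_F(B)\ge\delta$ such that $\mu(C_g\cap D_h)>0$ for all $g,h\in B$. The case $g=h$ is included automatically.

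The sets are found with a Bergelson-type intersectivity lemma, which is presumably the same tool used for \Cref{GeneralBxBinReturns}. We have sets $E_g$ with $\overline{\lim}_N \frac{1}{|F_N|}\sum_{g\in F_N}\mu(E_g)=\delta$. The standard argument shows that for any $\eta>0$ there is $B$ with $\overline{d}_F(B)\ge\delta$ such that every finite subfamily $\{E_g\}_{g\in B'}$ has positive-measure intersection. Concretely, one passes to a subsequence $N_k$ realizing the limsup, and uses weak-$*$ limits of $f_k=\frac{1}{|F_{N_k}|}\sum_{g\in F_{N_k}}1_{E_g}$. After first discarding a countable family of null intersections, one takes $B$ to be the set of $g$ for which $E_g$ contains a fixed positive-measure ``generic'' region. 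The density bound $\ge\delta$ rather than $\ge\delta-\eta$ comes from the usual refinement: remove the null set $Z$ of points lying in some finite null intersection of the $E_g$'s, and pick a point $x$ where $\limsup_k f_k(x)\ge\delta$. Such a point exists by Fatou's lemma, since $\int\limsup f_k\ge\limsup\int f_k=\delta$. Then set $B=\{g: x\in E_g\}$. This gives $\overline{d}_F(B)\ge\limsup_k f_k(x)\ge\delta$.

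Pairwise intersection then suffices. For $g,h\in B$, the set $E_g\cap E_h$ has positive measure: it contains $x$, and $x\notin Z$, which forces the intersection to be non-null. Since $E_g\cap E_h\subseteq C_g\cap D_h$, we get $\mu(C_g\cap D_h)>0$, and the first paragraph gives $gh\in\{k:\mu(Y\cap T_kY)>0\}$, proving \Cref{EqProdsInRetsMeasurableIntro}.

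The main obstacle I expect is making the point-selection rigorous. The family $\{E_g\}$ is countable, so $Z$ (a countable union of null sets) is null and the Fatou step works pointwise for measurable representatives. One must also check that the limsup along the subsequence at $x$ controls $\overline{d}_F(B)$; it does, because $f_k(x)=|B\cap F_{N_k}|/|F_{N_k}|$ exactly. Noncommutativity, which is the point of \Cref{Rmk120}, plays no role here, since the identity $T_gx=T_{gh}T_h^{-1}x$ uses only the action property $T_{gh}=T_g\circ T_h$.
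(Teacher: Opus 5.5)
Your argument is correct and is essentially the paper's proof. The paper obtains this statement from \Cref{TheBigTechnicalTheorem} with $\phi=\psi=\mathrm{Id}_G$: it applies the intersectivity lemma (\Cref{IntLemma}, proved by the same Fatou/point-selection argument you end up using) to $Y_g=T_gY\cap T_g^{-1}Y$, and then uses $\mu(Y\cap T_{bb'}Y)=\mu(T_b^{-1}Y\cap T_{b'}Y)\geq\mu(Y_b\cap Y_{b'})>0$, which is exactly your $C_g\cap D_h$ step. One small repair is needed. $G$ is not assumed countable, so, as the paper does, restrict to $g\in\bigcup_N F_N$ before claiming that $Z$ is a countable union of null sets and before defining $B$.
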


We now list some applications of this theorem.

Firstly, suppose $G$ is an abelian group. In this case, if $(T_g)_{g\in G}$ is a measure preserving action of $G$, then so is $\left(T_g^2\right)_{g\in G}$. So by
\Cref{VonNeumannmuASquaredIntro}, for any m.p.s. $(X,\mathcal{B},\mu,(T_g)_{g\in G})$ and any $Y\in\mathcal{B}$ we have
\begin{equation}
\label{BigIntsAbelianSquares}
\lim_{N\in\mathbb{N}}\frac{1}{|F_N|}\sum_{g\in F_N}\mu\left(Y\cap T_g^2Y\right)\geq\mu(Y)^2.
\end{equation}

Therefore, \Cref{ProdsInRetsMeasurableIntro} and \Cref{Returns=DifferencesIntro} imply the following result.
\begin{restatable}{theorem}{RestatableA}
\label{B+BinA-AAbelian}
Let $(G,+)$ be a countable abelian group and let $A\subseteq G$. Then for any F{\o}lner sequence $F=(F_N)_N$ in $G$ there is some $B\subseteq G$ such that $\overline{d}_F(B)\geq d^*(A)^2$ and $B+B\subseteq A-A$. 
\end{restatable}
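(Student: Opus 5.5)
The plan is to pass from the combinatorial statement to a measurable one, invoke \Cref{ProdsInRetsMeasurableIntro}, and then return to $A-A$. If $d^*(A)=0$ the statement is trivial with $B=\emptyset$, so assume $d^*(A)>0$.

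\emph{Transfer to a system.} By part 1 of \Cref{Returns=DifferencesIntro}, applied to $G$ in additive notation (where $AA^{-1}$ reads $A-A$, and $d^*_l=d^*$ since $G$ is abelian), there are a m.p.s. $(X,\mathcal{B},\mu,(T_g)_{g\in G})$ and $Y\in\mathcal{B}$ with $\mu(Y)=d^*(A)$ and
\[
\{g\in G;\mu(Y\cap T_gY)>0\}\subseteq A-A .
\]

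\emph{Lower bound for the symmetric averages.} Since $G$ is abelian, $g\mapsto T_g^2=T_{2g}$ is again a measure preserving action of $G$: indeed $T_{g+h}^2=T_{2g+2h}=T_g^2T_h^2$. Applying \Cref{VonNeumannmuASquaredIntro} to this action with the F{\o}lner sequence $F$ gives \Cref{BigIntsAbelianSquares}, namely
\[
\lim_{N\to\infty}\frac{1}{|F_N|}\sum_{g\in F_N}\mu\left(Y\cap T_g^2Y\right)\geq \mu(Y)^2=d^*(A)^2>0 .
\]
Each $T_g$ preserves $\mu$, so $\mu(T_g^{-1}Y\cap T_gY)=\mu(Y\cap T_g^2Y)$. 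The quantity $\delta$ in \Cref{EqDefPositiveErgAvgsAlongSquares'} is therefore well defined and satisfies $\delta\geq d^*(A)^2>0$.

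\emph{Conclusion.} \Cref{ProdsInRetsMeasurableIntro} now supplies $B\subseteq G$ with $\overline{d}_F(B)\geq\delta\geq d^*(A)^2$ and
\[
B+B\subseteq\{g;\mu(Y\cap T_gY)>0\}\subseteq A-A .
\]
This is exactly the claim of \Cref{B+BinA-AAbelian}.

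\emph{Where the difficulty lies.} The only delicate point is that $g\mapsto T_g^2$ must be a genuine action, which is precisely where commutativity is used. Everything else is a direct combination of results already stated. The real content is hidden in the proof of \Cref{ProdsInRetsMeasurableIntro} itself: there one would consider the sets $E_g=T_g^{-1}Y\cap T_gY$, whose average measure along $F_N$ is at least $\delta$. A Bergelson-type intersectivity lemma then yields $B$ with $\overline{d}_F(B)\geq\delta$ and $\mu\bigl(\bigcap_{g\in S}E_g\bigr)>0$ for every finite $S\subseteq B$. For $g,h\in B$ we get
\[
\mu(T_g^{-1}Y\cap T_hY)>0 ,
\]
and applying $T_g$ gives $\mu(Y\cap T_{gh}Y)>0$, i.e. $gh$ is a return time. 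Since that theorem may be assumed, I do not expect any step here to be a genuine obstacle.
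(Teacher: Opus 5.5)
Your proposal is correct and follows the paper's proof essentially verbatim. You transfer $A$ to a set $Y$ with $\mu(Y)=d^*(A)$ whose returns lie in $A-A$, use commutativity to see that $(T_{2g})_{g\in G}$ is an action so the von Neumann bound gives $\mu(Y)^2$ as a lower bound for the averages of $\mu(Y\cap T_g^2Y)$, and then apply the products-in-returns result; the separate treatment of $d^*(A)=0$ is a harmless extra, needed only because the introductory formulation assumes $\delta>0$.
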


\begin{remark} 
We do not know whether in \Cref{BxBinAA^-1} the value $d^*(A)^2$ is optimal. For some comments and a question about this topic see \Cref{OptimalBForB+BinA-A?}.
\end{remark}

In the abelian setting, one may also derive \Cref{B+BinA-AAbelian} directly from its cartesian product counterpart, \Cref{BxBinA-AAbelian}.
Indeed, given a set $A\subseteq G$, one can check that the set $A_2:=\{(x,y)\in G\times G;x+y\in A\}$ satisfies $d^{*}_{G\times G}(A_2)=d^{*}_G(A)$. Therefore, by \Cref{BxBinA-AAbelian}, there exists $B\subseteq G$ such that $d^*(B)\geq d^{*}_G(A)^2$ and 
\begin{align*}
B\times B\subseteq A_2-A_2&=\{(x_1,y_1)-(x_2,y_2);x_i,y_i\in G,x_i+y_i\in A\}\\
&\subseteq\{(x,y)\in G\times G;x+y\in A-A\},
\end{align*}
so $B+B\subseteq A-A$.  
However, for general (noncommutative) amenable groups, an analogous reasoning does not work. Indeed, in \Cref{BxBinA-AAbelian} we obtain a set $B\subseteq G$ such that $B\times B\subseteq AA^{-1}$ and $\overline{d}_F(B)\geq d^*_l(A)^2$. However, as the following (admittedly degenerated) example shows, there is an amenable group $G$ and a set $A\subseteq G$ such that there exists no $B\subseteq G$ satisfying $BB\subseteq AA^{-1}$ and with upper density at least $d^*(A)^2$.
Let $Q_8=\{\pm1,\pm i,\pm j,\pm k\}$ be the quaternion group, and let $
A=\{1,i,j,k\}\subseteq Q_8$.
Then $d^{*}_{Q_8}(A)=\frac{|A|}{|Q_8|}=\frac{1}{2}$, but there is no $B\subseteq Q_8$ such that $d^{*}_{Q_8}(B)>\frac{1}{8}$ and $BB\subseteq AA^{-1}=\{1,\pm i,\pm j,\pm k\}$. To obtain an infinite example, consider the subset $A\times\mathbb{Z}$ of $Q_8\times\mathbb{Z}$.

The set $A=\{1,i,j,k\}\subseteq Q_8$ also provides a non abelian example where \Cref{BigIntsAbelianSquares} is not satisfied: if we equip $Q_8$ with the normalized counting measure $\mu$ and consider the action $(T_g)_{g\in Q_8}$ of $Q_8$ on $(Q_8,\mathcal{P}(Q_8),\mu)$ by left translations, then $\mu(A)=\frac{1}{2}$ but 
\begin{equation}
\label{54terfd90reodfpslk}
\frac{1}{8}\sum_{g\in Q_8}\mu(A\cap g^2A)=\frac{1}{8}<\frac{1}{4}=\mu(A)^2.
\end{equation}
A similar example in an infinite group is given by $A\times\mathbb{Z}\subseteq Q_8\times\mathbb{Z}$.

We now turn our attention to finitely generated nilpotent groups. In \Cref{HereIsrweoidsioklfsd} we prove the following:
\begin{restatable}{theorem}{BigAvgsAlongSquaresFGNilp}
\label{BigAvgsAlongSquaresFGNilp}
For every finitely generated nilpotent group $G$ there exists $\lambda_G>0$ such that, for all m.p.s. $(X,\mathcal{B},\mu,(T_g)_{g\in G})$, $Y\in\mathcal{B}$ and all left or right F{\o}lner sequences $(F_N)$ in $G$,
\begin{equation}
\label{9erwopdsc0ewpfdos}
\lim_{N\to\infty}\frac{1}{|F_N|}\sum_{g\in F_N}\mu(Y\cap T_g^2Y)\geq \lambda_G\cdot\mu(Y)^2.
\end{equation}
More precisely, we can let $\lambda_G=2^{\frac{-n(n-1)}{2}}$, where $n\in\mathbb{N}$ is such that $G$ is a homomorphic image of some subgroup of the group $\mathrm{UT}_n(\mathbb{Z})$ of unipotent upper triangular matrices of size $n$ over $\mathbb{Z}$. 
\end{restatable}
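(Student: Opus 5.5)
The plan is to prove \eqref{9erwopdsc0ewpfdos} first for finite-index-type subgroups of $\mathrm{UT}_n(\mathbb{Z})$, and then transfer it to homomorphic images.

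\emph{Reduction to subgroups of $\mathrm{UT}_n(\mathbb{Z})$.} Let $\pi:\Gamma\to G$ be surjective with $\Gamma\le \mathrm{UT}_n(\mathbb{Z})$, and let $N=\ker\pi$. An action $(T_g)_{g\in G}$ pulls back to an action $S_\gamma=T_{\pi(\gamma)}$ of $\Gamma$. Since $\pi(\gamma^2)=\pi(\gamma)^2$, we have $\mu(Y\cap S_\gamma^2Y)=\mu(Y\cap T_{\pi(\gamma)}^2Y)$. Given a left F{\o}lner sequence $(F_N)$ in $G$, I will build a F{\o}lner sequence $(\Phi_N)$ in $\Gamma$ of the form
\[
\Phi_N=\{s(f)k;\ f\in F_N,\ k\in K_N\},
\]
where $s$ is a section of $\pi$ and $(K_N)$ is a suitably fast F{\o}lner sequence in the finitely generated nilpotent group $N$. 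Each fiber then receives equal weight, so the averages over $\Phi_N$ coincide with those over $F_N$. Right F{\o}lner sequences are handled symmetrically, or via $F_N\mapsto F_N^{-1}$ together with $g^{-2}=(g^{-1})^2$ and $\mu(Y\cap T_g^{-2}Y)=\mu(Y\cap T_g^2Y)$. It therefore suffices to treat $G=\Gamma\le\mathrm{UT}_n(\mathbb{Z})$, with the constant $2^{-n(n-1)/2}$.

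\emph{Existence of the limit and operator form.} In Mal'cev coordinates, $g\mapsto g^2$ is a polynomial map. By Leibman's polynomial mean ergodic theorem for nilpotent groups (used along F{\o}lner sequences), the averages $\frac1{|F_N|}\sum_{g\in F_N}U_{g^2}$ converge weakly to an operator $P$ independent of the F{\o}lner sequence. Hence the limit in \eqref{9erwopdsc0ewpfdos} exists and equals $\langle P1_Y,1_Y\rangle$.

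\emph{Comparison with a finite-index subgroup.} Let $K=\Gamma\cap\Gamma(2)$, where $\Gamma(2)$ consists of the unipotent matrices whose off-diagonal entries are all even; then $[\Gamma:K]\le 2^{n(n-1)/2}$. The goal is the operator inequality
\[
P\ \ge\ [\Gamma:K]^{-1}P_K
\]
in the positive-semidefinite sense, where $P_K$ is the projection onto $K$-invariant vectors. This suffices, because $\langle P_K1_Y,1_Y\rangle=\|P_K1_Y\|^2\ge\mu(Y)^2$ by \Cref{VonNeumannmuASquaredIntro} applied to $K$. I would prove the inequality by induction along the upper central series $1=Z_0\le Z_1\le\dots\le Z_c=\Gamma$:
\begin{enumerate}
\item Decompose $L^2$ spectrally with respect to the central subgroup $Z_1\cap\Gamma$, on which $U$ acts by characters $\chi$.
\item Write $g=hz$ with $z$ central, so that $g^2=h^2z^2$.
\item Average over $z$ inside a F{\o}lner box: $\chi(z^2)=\chi^2(z)$ averages to $1_{\chi^2=1}$. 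This retains exactly the components where $\chi$ is trivial on $2Z_1$. That is where the factor $2^{-\mathrm{rank}(Z_1)}$ arises, by comparison with averaging over $2Z_1$.
\item Pass to the quotient $\Gamma/Z_1$ (or to $2Z_1$-invariant vectors) and apply the inductive hypothesis.
\end{enumerate}
Multiplying the factors over the successive central layers gives $2^{-\sum\mathrm{rank}}\ge 2^{-n(n-1)/2}$.

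\emph{Main obstacle.} The hardest step is the induction step: making the decomposition $g=hz$ compatible with an arbitrary F{\o}lner sequence, and controlling the cross terms between non-central coordinates in $g^2$. The group is not a direct product, and the square of a lift is only determined modulo commutator corrections. If the direct fiberwise argument fails, I would first try a van der Corput argument. This would show that components orthogonal to the $K$-invariant vectors of the relevant Kronecker-type factor contribute $0$ to $P$, which would reduce everything to a finite-dimensional computation on $\Gamma/\Gamma(2)$-type quotients.
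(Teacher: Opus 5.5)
Your reduction from homomorphic images to subgroups $\Gamma\le\mathrm{UT}_n(\mathbb{Z})$ is sound and matches what the paper does for groups with torsion: F{\o}lner sets $s(F_N)K_N$ give equal weight to each fibre, and right F{\o}lner sequences are handled via inversion. One minor point: existence and independence of the limit along \emph{arbitrary} F{\o}lner sequences of $\Gamma$ is Zorin-Kranich's theorem, not Leibman's $\mathbb{Z}^d$ result, because F{\o}lner sequences of $\Gamma$ are not F{\o}lner in Mal'cev coordinates.

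The genuine gap is your central claim $P\ge[\Gamma:K]^{-1}P_K$. It is false, so no induction can prove it. Take $\Gamma=\mathrm{UT}_3(\mathbb{Z})$ and $K=\Gamma(2)$, which has index $8$. Use the surjection $\Gamma\to Q_8$ sending the two standard generators to $i$ and $j$; their commutator goes to $-1$. Pull back the translation action of $Q_8$ on itself with uniform measure, and let $f=1_{\{1,i,j,k\}}-1_{\{-1,-i,-j,-k\}}$, so that $U_{-1}f=-f$. F{\o}lner averages equidistribute over this finite quotient, and $q^2=-1$ for six of the eight $q\in Q_8$. Hence $Pf=\tfrac28 f-\tfrac68 f=-\tfrac12 f$. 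On the other hand $K$ maps onto $\{\pm1\}$, so $P_Kf=0$. Thus $\langle Pf,f\rangle=-\tfrac12\|f\|^2<0=[\Gamma:K]^{-1}\langle P_Kf,f\rangle$.

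This is exactly a component that your step~3 keeps: its central character $y\mapsto(-1)^y$ is trivial on $2Z_1$. On it, the cocycle $(-1)^{xz}$ in $(x,y,z)^2=(2x,2y+xz,2z)$ makes the contribution negative, so this is not a technicality about cross terms. Your van der Corput fallback fails for the same reason. This finite-dimensional component is orthogonal to the $K$-invariant vectors, yet $P$ acts on it as $-\tfrac12$, not $0$.

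What is missing is that only correlations of \emph{indicator functions} matter, and their positivity must be used pointwise rather than spectrally. The paper uses $\mu(Y\cap T_g^2Y)\ge0$ for every $g$ to discard all cosets of $K$ except $K$ itself. Since $(F_N\cap K)$ is F{\o}lner in $K$ with relative density $1/[\Gamma:K]$, this costs exactly the factor $2^{-n(n-1)/2}$.

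On $K$ (even off-diagonal entries), the squaring map $I+T\mapsto I+2T+T^2$ is injective into the subgroup whose off-diagonal entries are divisible by $4$. The paper argues that it sends the boxes with entries $2a_{ij}$, $|a_{ij}|\le N^{10^{j-i}}$, almost onto the next box of that subgroup. Along this F{\o}lner sequence of $K$, the averages of $\mu(Y\cap T_g^2Y)$ therefore become linear return averages, which are bounded below by $\mu(Y)^2$. Zorin-Kranich's independence theorem then transfers the bound to $(F_N\cap K)$.

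No operator lower bound on $P$ is involved, and by the example above none is available.
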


\begin{cor}
\label{BBinAA-1FGNilpotent}
Let $(G,\cdot)$ be a finitely generated nilpotent group and let $A\subseteq G$ satisfy $d^*_l(A)>0$. Then for any left F{\o}lner sequence $F=(F_N)_N$ in $G$ there is some $B\subseteq G$ such that $\overline{d}_F(B)>0$ and $BB\subseteq AA^{-1}$. 
\end{cor}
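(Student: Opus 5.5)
The plan is to chain three earlier results: \Cref{Returns=DifferencesIntro} turns the combinatorial problem into a dynamical one, \Cref{BigAvgsAlongSquaresFGNilp} supplies positivity of the averages along squares, and \Cref{ProdsInRetsMeasurableIntro} extracts the set $B$.

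First, apply part (1) of \Cref{Returns=DifferencesIntro} to $A$. This gives a m.p.s. $(X,\mathcal{B},\mu,(T_g)_{g\in G})$ and $Y\in\mathcal{B}$ with
\begin{equation*}
\mu(Y)=d^*_l(A)>0
\quad\text{and}\quad
\{g\in G;\mu(Y\cap T_gY)>0\}\subseteq AA^{-1}.
\end{equation*}
It therefore suffices to find $B\subseteq G$ with $\overline{d}_F(B)>0$ and $BB\subseteq\{g\in G;\mu(Y\cap T_gY)>0\}$.

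Second, since $G$ is finitely generated nilpotent and $F$ is a left F{\o}lner sequence, \Cref{BigAvgsAlongSquaresFGNilp} applies. It gives a constant $\lambda_G>0$ with
\begin{equation*}
\lim_{N\to\infty}\frac{1}{|F_N|}\sum_{g\in F_N}\mu\left(Y\cap T_g^2Y\right)\geq\lambda_G\,\mu(Y)^2>0.
\end{equation*}

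To connect this with \Cref{ProdsInRetsMeasurableIntro}, note that $T_g$ is measure preserving. Hence $\mu(T_g^{-1}Y\cap T_gY)=\mu(Y\cap T_g^2Y)$, and the two expressions in \eqref{EqDefPositiveErgAvgsAlongSquares'} coincide. So the quantity $\delta$ there satisfies $\delta\geq\lambda_G\,d^*_l(A)^2>0$.

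Third, \Cref{ProdsInRetsMeasurableIntro} now yields $B\subseteq G$ with $\overline{d}_F(B)\geq\delta>0$ and $BB\subseteq\{g;\mu(Y\cap T_gY)>0\}\subseteq AA^{-1}$. This proves the statement, with the explicit bound $\overline{d}_F(B)\geq\lambda_G\,d^*_l(A)^2$.

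There is no genuine obstacle at this level; all the difficulty sits inside \Cref{BigAvgsAlongSquaresFGNilp}. The one point to check is that the correspondence in \Cref{Returns=DifferencesIntro} is the left-density version producing $AA^{-1}$, so that it matches the hypothesis $d^*_l(A)>0$ and the target inclusion $BB\subseteq AA^{-1}$.
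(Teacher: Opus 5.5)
Your argument is correct and is the derivation the paper intends. You transfer to a measurable set of returns via \Cref{Returns=DifferencesIntro}, use \Cref{BigAvgsAlongSquaresFGNilp} to make the averages of $\mu(Y\cap T_g^2Y)$ positive, and extract $B$ with \Cref{ProdsInRetsMeasurableIntro}, exactly as in the paper's proof of \Cref{B+BinA-AAbelian}, yielding the explicit bound $\overline{d}_F(B)\geq\lambda_G\,d^*_l(A)^2$ that the paper's alternative route through \Cref{NilpPolyBB} does not provide.
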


Our proof of \Cref{BigAvgsAlongSquaresFGNilp} combines elementary arguments with the following result, which follows from \cite[Theorem 1.1]{Zo16}, and from the fact that, in nilpotent groups $G$, the squaring map $g\mapsto g^2$ is a polynomial map (see \cite[Theorem 3.3]{LePoly} or \cite[Theorem 2.5]{ZK14}). 
\begin{theorem}
\label{ZK16MainThm}
If $G$ is a countable nilpotent group, $(X,\mathcal{B},\mu,(T_g)_{g\in G})$ is a m.p.s. and $Y\in\mathcal{B}$, the following limit exists and is the same for all left F{\o}lner sequences and for all right F{\o}lner sequences $(F_N)$:
\begin{equation}
\label{r9efdioslreiodflsk}
\lim_{N}\frac{1}{|F_N|}\sum_{g\in F_N}\mu(Y\cap T_g^2Y).
\end{equation}
\end{theorem}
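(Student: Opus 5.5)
The plan is to reduce the statement to a norm-convergence theorem for polynomial averages of unitary representations of nilpotent groups, namely \cite[Theorem 1.1]{Zo16}. Let $U_g f=f\circ T_g^{-1}$ be the Koopman representation of $G$ on $L^2(X,\mathcal{B},\mu)$. Then
\begin{equation*}
\mu(Y\cap T_g^2Y)=\langle 1_Y,U_{g^2}1_Y\rangle .
\end{equation*}
It therefore suffices to show that the averages $\frac{1}{|F_N|}\sum_{g\in F_N}U_{p(g)}1_Y$, with $p(g)=g^2$, converge in $L^2$ along every left F{\o}lner sequence. Pairing with $1_Y$ then gives existence of the scalar limit in \Cref{r9efdioslreiodflsk}.

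The first step is to check that $p:G\to G$, $g\mapsto g^2$, is a polynomial map in the sense of Leibman, i.e.\ that iterated discrete derivatives $D_hp(g)=p(g)^{-1}p(gh)$ eventually vanish. This is exactly \cite[Theorem 3.3]{LePoly} (or \cite[Theorem 2.5]{ZK14}) for nilpotent $G$. If needed, I would reduce to the finitely generated case by noting that each F{\o}lner set lies in a finitely generated subgroup. More robustly, I would use that Zorin-Kranich's result is stated for countable nilpotent groups and polynomial maps into them.

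Second, I would apply \cite[Theorem 1.1]{Zo16} with a single polynomial term. This gives norm convergence of $\frac{1}{|F_N|}\sum_{g\in F_N}U_{p(g)}f$ for every left F{\o}lner sequence $(F_N)$.

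Third, I would show the limit does not depend on the left F{\o}lner sequence by an interleaving argument. Given two left F{\o}lner sequences $(F_N)$ and $(F'_N)$, the sequence $F_1,F'_1,F_2,F'_2,\dots$ is again left F{\o}lner, so its averages converge. Hence the two subsequential limits coincide.

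For right F{\o}lner sequences $(F_N)$, the sequence $(F_N^{-1})$ is left F{\o}lner. Using $(g^{-1})^2=(g^2)^{-1}$ and measure invariance, $\mu(Y\cap T_{g^{-2}}Y)=\mu(T_{g^2}Y\cap Y)$. So the average along $F_N$ equals the average along $F_N^{-1}$, which converges to the common left limit.

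The main obstacle is the first two steps: matching the precise hypotheses and conventions of \cite{Zo16}. This includes the left versus right F{\o}lner convention, whether the acting group must be finitely generated, and the exact notion of polynomial map used there. If the left/right convention in \cite{Zo16} is opposite to ours, I would use the inversion trick above, which swaps the two cases.
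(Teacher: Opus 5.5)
Your proposal is correct and follows the paper's route: the paper also obtains this from \cite[Theorem 1.1]{Zo16}, applied directly to the polynomial map $g\mapsto g^2$ on a countable nilpotent $G$. Your aside about reducing to finitely generated subgroups would not work as stated, since a F{\o}lner sequence in a non-finitely generated group need not lie in any single finitely generated subgroup, but you do not need it.

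The one real difference is the right F{\o}lner case. The paper (see the proof of \Cref{PolyCesaroNilpotent}) uses that $g\mapsto p(g^{-1})$ is again polynomial, together with the existence of two-sided F{\o}lner sequences. Your exact symmetry $\mu(Y\cap T_{g^{-2}}Y)=\mu(Y\cap T_{g^{2}}Y)$ instead turns averages along a right F{\o}lner $(F_N)$ directly into averages of the same function along the left F{\o}lner $(F_N^{-1})$, which is slightly more economical for this particular statement.
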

If a nilpotent group $G$ is not finitely generated, we do not know whether the limit in \Cref{r9efdioslreiodflsk} is necessarily positive for every set $Y$ with $\mu(Y)>0$.

We move now to discussing some variants of \Cref{B+BinA-AAbelian} for solvable groups of exponential growth. First, we introduce the following definition.

\begin{definition}
\label{DefWeakEpsDeltaAvgs}
Let $G$ be a countable amenable group and $\varepsilon,\delta>0$. We say $G$ has the \textit{$(\varepsilon,\delta)$-SAR} (Symmetric Averaging Recurrence) property if for all m.p.s. $(X,\mathcal{B},\mu,T)$ and for all $A\in\mathcal{B}$ such that $\mu(A)\geq\varepsilon$, there exists a left F{\o}lner sequence $(F_N)$ in $G$ such that 
\begin{equation}
\label{EqDefWeakEpsDeltaAvgs}
\lim_{N\to\infty}\frac{1}{|F_N|}\sum_{g\in F_N}\mu(T_g^{-1}A\cap T_gA)
=
\lim_{N\to\infty}\frac{1}{|F_N|}\sum_{g\in F_N}\mu(A\cap T_g^2A)\geq\delta.
\end{equation}
\end{definition}
We say $G$ has the SAR property if for all $\varepsilon>0$ there is $\delta>0$ such that $G$ has the $(\varepsilon,\delta)$-SAR property.

For example, it follows from \Cref{BigIntsAbelianSquares} that abelian groups have the $(\varepsilon,\varepsilon^2)$-SAR property for all $\varepsilon>0$, and so they have the SAR property. \Cref{BigAvgsAlongSquaresFGNilp} implies that finitely generated nilpotent groups also have the SAR property.

Finally, recall that for any given groups $H,K$ and any homomorphism $\phi:K\to\textup{Aut}(H)$, sending each $k\in K$ to an automorphism $\phi_k:H\to H$, we denote by $G=H\rtimes_\phi K$ the semidirect product group with underlying set $H\times K$ and with operation 
\begin{equation}
\label{SemiprodOperation}
(h,k)\cdot(h',k')=(h\phi_k(h'),kk').
\end{equation}

In \Cref{SecSemiProds} we prove that the SAR property is preserved under certain semidirect products:
\begin{restatable}{theorem}{BigSqAvgsInSemiprods}
\label{BigSqAvgsInSemiprods}
Suppose $\varepsilon,\delta>0$ and $G=H\rtimes_\phi K$ is a semidirect product of two countable groups $H,K$. If $H$ is abelian and $K$ has the $(\varepsilon,\delta)$-SAR property, then $G$ has the $(\varepsilon,\delta^2)$-SAR property.
\end{restatable}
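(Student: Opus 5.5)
The strategy is to reduce the squaring map on $G=H\rtimes_\phi K$ to the squaring map on $K$, which the $(\varepsilon,\delta)$-SAR hypothesis controls, together with a mean ergodic theorem in the abelian group $H$. By \eqref{SemiprodOperation}, $(h,k)^2=(h\phi_k(h),k^2)$. Since $H$ is abelian and $\phi_k$ is an automorphism, the map $\psi_k:H\to H$, $\psi_k(h)=h\phi_k(h)$, is a group homomorphism for each $k\in K$.

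Fix a m.p.s. $(X,\mathcal{B},\mu,(T_g)_{g\in G})$ and $A\in\mathcal{B}$ with $\mu(A)\geq\varepsilon$. Write $U_h=T_{(h,e)}$ and $S_k=T_{(e,k)}$, so that $T_{(h,k)}=U_hS_k$ and $S_kU_hS_k^{-1}=U_{\phi_k(h)}$. Then
\[
\mu\bigl(A\cap T_{(h,k)}^2A\bigr)=\langle 1_A,\,U_{\psi_k(h)}S_{k^2}1_A\rangle .
\]
The plan has four steps.

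\textbf{Step 1.} Apply the $(\varepsilon,\delta)$-SAR property of $K$ to the restricted action $(S_k)_{k\in K}$ and the same set $A$. This gives a left F{\o}lner sequence $(K_N)$ in $K$ with $\lim_N\frac1{|K_N|}\sum_{k\in K_N}\mu(A\cap S_{k^2}A)\geq\delta$.

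\textbf{Step 2.} Build a left F{\o}lner sequence in $G$ of the form $F_N=\{(h,k);\,k\in K_N,\,h\in H_N\}$. Here $H_N\subseteq H$ is chosen after $K_N$ and is so invariant, under translations by a growing finite set and under the automorphisms $\phi_k$ for $k$ in a suitable finite set containing $K_N\cup K_N^{-1}$, that $F_N$ is F{\o}lner in $G$. I would also choose $H_N$ so that, for each fixed $k\in K_N$, the average over $h\in H_N$ of $\langle 1_A,U_{\psi_k(h)}S_{k^2}1_A\rangle$ is within $1/N$ of its mean ergodic limit. Such $H_N$ should exist because $H\rtimes\langle K_N\rangle$ is amenable. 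Since $\psi_k(H_N)$ is again F{\o}lner for the abelian group $\psi_k(H)$, that limit is $\langle P_k1_A,P_kS_{k^2}1_A\rangle$, where $P_k$ is the orthogonal projection onto the $U_{\psi_k(H)}$-invariant vectors. The existence of both limits in the definition of SAR should then follow from this diagonal choice, if necessary after passing to a subsequence.

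\textbf{Step 3.} Exploit the commutation relations. Because $S_kU_{\psi_k(h)}S_k^{-1}=U_{\psi_k(\phi_k h)}$, the projection $P_k$ commutes with $S_k$. Let $P_H$ be the projection onto the $H$-invariant vectors. Since $H$ is normal, $P_H$ commutes with every $S_k$, and $P_HP_k=P_H$ because $\psi_k(H)\subseteq H$. Writing $P_k1_A=P_H1_A+r_k$ with $r_k\perp\mathrm{Ran}(P_H)$, the cross terms vanish. The limit in Step 2 therefore becomes
\[
\langle f,S_{k^2}f\rangle+\langle r_k,S_{k^2}r_k\rangle,\qquad f=\mathbb{E}(1_A\mid\mathcal{I}_H)\geq 0,\quad \textstyle\int f=\mu(A).
\]

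\textbf{Step 4, the main obstacle.} Everything now reduces to showing
\[
\lim_N\frac1{|K_N|}\sum_{k\in K_N}\langle P_k1_A,S_{k^2}P_k1_A\rangle\geq\delta^2 .
\]
Two difficulties arise: the term $\langle r_k,S_{k^2}r_k\rangle$ may be negative, and the SAR hypothesis on $K$ is stated for sets, not for functions such as $f$. The first thing I would try is to apply the SAR property of $K$ not to $A$ but to an $\mathcal{I}_H$-measurable set derived from $f$, or to the $K$-system obtained by passing to the factor $\mathcal{I}_H$, which is $K$-invariant since $H\lhd G$. On that factor $r_k=0$, and the square loss $\delta\mapsto\delta^2$ should come from a Cauchy--Schwarz step comparing $\langle 1_A,\cdot\rangle$ with $\langle f,\cdot\rangle$. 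I expect this step to need the most care. If it fails, I would restructure the averaging so that the $h$-average is taken symmetrically, in the form $\mu(T_g^{-1}A\cap T_gA)$. The hope is that the remainder term $\langle r_k,S_{k^2}r_k\rangle$ then appears as a norm squared and is therefore nonnegative.
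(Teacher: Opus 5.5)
\textbf{There is a genuine gap in Step 4}, which you flag yourself as the main obstacle. Steps 1 and 2 are essentially the paper's setup. You apply the SAR property of $K$ to the restricted action $(S_k)_{k\in K}$ and to $A$ itself, then choose the $H$-part of the F{\o}lner set after $K_N$ so that the inner $h$-averages are close to their limits. Two minor points there:
\begin{itemize}
\item The paper uses $F_N=\bigcup_{k\in K_N}\phi_k(H_{M(N)})\times\{k\}$, which only needs $(H_M)$ to be F{\o}lner in $H$. Your product $H_N\times K_N$ additionally needs $H_N$ to be almost invariant under finitely many $\phi_{k_0}$.
\item The set $\psi_k(H_N)$ need not be F{\o}lner; for example, $\psi_k$ is trivial when $\phi_k$ is inversion. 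Your stated limit is nevertheless correct, by the mean ergodic theorem for the $H$-representation $h\mapsto U_{\psi_k(h)}$.
\end{itemize}
None of the routes you suggest for Step 4 closes it. Passing to the factor $\mathcal{I}_H$ kills $r_k$ only in the factor, not in the original system, so the possibly negative term $\langle r_k,S_{k^2}r_k\rangle$ stays uncontrolled. Also, the SAR hypothesis cannot be applied to the function $f$. The ``symmetric'' reformulation changes nothing, because $\mu(T_g^{-1}A\cap T_gA)=\mu(A\cap T_g^2A)$. In fact Step 3 works against you: it splits the nonnegative quantity $\langle P_k1_A,S_{k^2}P_k1_A\rangle$ into a piece SAR cannot see and a piece that can be negative.

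The missing idea is to intersect before averaging over $H$. For fixed $k$, put $A_k=A\cap S_{k^2}A$. Since $U_xA_k=U_xA\cap U_xS_{k^2}A$, we have $A_k\cap U_xA_k\subseteq A\cap U_xS_{k^2}A$ for every $x\in H$. Hence
\[
\mu\bigl(A\cap T_{(h,k)}^2A\bigr)=\mu\bigl(A\cap U_{\psi_k(h)}S_{k^2}A\bigr)\geq\mu\bigl(A_k\cap U_{\psi_k(h)}A_k\bigr)\qquad\text{for all }h\in H.
\]
The map $h\mapsto U_{\psi_k(h)}$ is a measure-preserving action of $H$. So the $h$-averages of the right-hand side converge to $\|P_k1_{A_k}\|^2\geq\mu(A_k)^2$, by \eqref{LinearAvgsAreBig}. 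In your notation this gives $\langle P_k1_A,S_{k^2}P_k1_A\rangle\geq\mu(A\cap S_{k^2}A)^2$. That is exactly what Step 4 requires, with no $P_H$ or $r_k$ involved. Averaging over $k\in K_N$ and applying Cauchy--Schwarz gives
\[
\frac1{|K_N|}\sum_{k\in K_N}\mu(A\cap S_{k^2}A)^2\geq\Bigl(\frac1{|K_N|}\sum_{k\in K_N}\mu(A\cap S_{k^2}A)\Bigr)^2,
\]
and the right-hand side has limit at least $\delta^2$ by Step 1. This is the paper's argument. The loss $\delta\mapsto\delta^2$ comes from this Cauchy--Schwarz step over $k$, not from comparing $1_A$ with $\mathbb{E}(1_A\mid\mathcal{I}_H)$.
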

As abelian groups have the $(\varepsilon,\varepsilon^2)$-SAR property for all $\varepsilon>0$, \Cref{BigSqAvgsInSemiprods} and \Cref{ProdsInRetsMeasurableIntro} imply the following:
\begin{cor}
Let $G$ be a semidirect product of two abelian groups and let $A\subseteq G$ satisfy $d^*_l(A)>0$. Then there is some $B\subseteq G$ such that $d^*_l(B)\geq d_l^*(A)^4$ and $BB\subseteq AA^{-1}$. 
\end{cor}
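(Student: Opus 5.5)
The corollary follows by chaining three results stated earlier: the correspondence \Cref{Returns=DifferencesIntro}(1), \Cref{BigSqAvgsInSemiprods} applied with the abelian group $H$ and the abelian group $K$, and \Cref{ProdsInRetsMeasurableIntro}.

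\emph{Step 1.} Let $A\subseteq G$ with $d^*_l(A)>0$, and put $\varepsilon:=d^*_l(A)$. By \Cref{Returns=DifferencesIntro}(1) there is a m.p.s. $(X,\mathcal{B},\mu,(T_g)_{g\in G})$ and $Y\in\mathcal{B}$ with $\mu(Y)=\varepsilon$ and
\begin{equation*}
\{g\in G;\mu(Y\cap T_gY)>0\}\subseteq AA^{-1}.
\end{equation*}

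\emph{Step 2.} Write $G=H\rtimes_\phi K$ with $H,K$ abelian. By \Cref{BigIntsAbelianSquares}, $K$ has the $(\varepsilon,\varepsilon^2)$-SAR property. \Cref{BigSqAvgsInSemiprods} then shows that $G$ has the $(\varepsilon,\varepsilon^4)$-SAR property. Since $\mu(Y)\geq\varepsilon$, \Cref{DefWeakEpsDeltaAvgs} provides a left F{\o}lner sequence $F=(F_N)$ in $G$ along which
\begin{equation*}
\lim_{N\to\infty}\frac{1}{|F_N|}\sum_{g\in F_N}\mu(T_g^{-1}Y\cap T_gY)=\lim_{N\to\infty}\frac{1}{|F_N|}\sum_{g\in F_N}\mu(Y\cap T_g^2Y)\geq\varepsilon^4 .
\end{equation*}
Call this limit $\delta$, so $\delta\geq\varepsilon^4>0$. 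The two averages agree termwise, because applying the measure-preserving map $T_g$ gives $\mu(T_g^{-1}Y\cap T_gY)=\mu(Y\cap T_g^2Y)$. Since the limits exist, they coincide with the upper limits appearing in \Cref{EqDefPositiveErgAvgsAlongSquares'}.

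\emph{Step 3.} Apply \Cref{ProdsInRetsMeasurableIntro} with this $F$ and $Y$. It yields $B\subseteq G$ with $\overline{d}_F(B)\geq\delta\geq d^*_l(A)^4$ and
\begin{equation*}
BB\subseteq\{g;\mu(Y\cap T_gY)>0\}\subseteq AA^{-1}.
\end{equation*}
Because $F$ is a left F{\o}lner sequence, $d^*_l(B)\geq\overline{d}_F(B)\geq d^*_l(A)^4$, which proves the corollary.

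The only point that needs care is that the SAR property supplies its own F{\o}lner sequence, which depends on $Y$, rather than working along an arbitrary one. This is why the conclusion is stated for $d^*_l(B)$ and not for $\overline{d}_F(B)$ along a prescribed $F$. One should also check that \Cref{Returns=DifferencesIntro}(1) gives $\mu(Y)$ exactly equal to $d^*_l(A)$, so that the threshold $\varepsilon$ in the SAR property is met. Apart from these points the argument is a direct chain of earlier results.
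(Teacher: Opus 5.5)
Your proposal is correct and is essentially the paper's own argument. The correspondence principle (\Cref{ReturnsInDifferences}) gives $Y$ with $\mu(Y)=d^*_l(A)$; \Cref{BigSqAvgsInSemiprods} upgrades the $(\varepsilon,\varepsilon^2)$-SAR property of the abelian factor $K$ to the $(\varepsilon,\varepsilon^4)$-SAR property of $G$; and \Cref{ProdsInRetsMeasurableIntro}, applied along the resulting left F{\o}lner sequence, gives $B$ with $d^*_l(B)\geq\overline{d}_F(B)\geq d^*_l(A)^4$. You are also right that this F{\o}lner sequence depends on $Y$, which is why only $d^*_l(B)$ is controlled.
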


Examples of semidirect products of two abelian groups which have exponential growth are the lamplighter group\footnote{The lamplighter group is the semidirect product $\left(\bigoplus_{n\in\mathbb{Z}}\mathbb{Z}_2\right)\rtimes_\phi\mathbb{Z}$, where for each $k\in\mathbb{Z}$, $\phi_k:\bigoplus_{n\in\mathbb{Z}}\mathbb{Z}_2\to\bigoplus_{n\in\mathbb{Z}}\mathbb{Z}_2$ is given by $\phi_k((a_n)_{n\in\mathbb{Z}})=(a_{n-k})_{n\in\mathbb{Z}}$.
} $\mathbb{Z}_2\wr\mathbb{Z}$, and the affine group over $\mathbb{Q}$, $\mathrm{Aff}_2(\mathbb{Q})$.

\Cref{BigSqAvgsInSemiprods} also allows us to prove a variant of \Cref{B+BinA-AAbelian} for some nilpotent matrix groups which are not finitely generated.
Let \(n\in\mathbb{N}\) and let \(R\) be a countable commutative ring, not necessarily unital; if \(R\) has no multiplicative identity, we view it as a subring of a unital ring with identity \(1\). Let \(\mathrm{UT}_n(R)\) denote the group of upper triangular matrices
\[
\begin{pmatrix}
1 & a_{1,2} & a_{1,3} & \cdots & a_{1,n-1} & a_{1,n} \\
0 & 1 & a_{23} & \cdots & a_{2,n-1} & a_{2,n} \\
0 & 0 & 1 & \cdots & a_{3,n-1} & a_{3,n} \\
\vdots & \vdots & \vdots & \ddots & \vdots & \vdots \\
0 & 0 & 0 & \cdots & 1 & a_{n-1,n} \\
0 & 0 & 0 & \cdots & 0 & 1
\end{pmatrix}, \ a_{ij} \in R.
\]

Groups of the form $\textup{UT}_n(R)$ also have the SAR property (see \Cref{SecSemiProds}):
\begin{restatable}{theorem}{HnRHasBiSquareAverages}
\label{HnRHasBiSquareAverages}
Let $R$ be any countable commutative ring, $G=\mathrm{UT}_n(R)$ for some $n\in\mathbb{N}$, and $A\subseteq G$. Then for any left F{\o}lner sequence $(F_N)$ in $G$ there exists $B\subseteq G$ such that $\overline{d}_F(B)\geq d^*_l(A)^{2^{n-1}}$ and $BB\subseteq AA^{-1}$.
\end{restatable}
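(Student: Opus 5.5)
The plan is to reduce the statement to \Cref{ProdsInRetsMeasurableIntro} via the correspondence \Cref{Returns=DifferencesIntro}. The key analytic input will be a lower bound for symmetric averages on $G=\mathrm{UT}_n(R)$: for every m.p.s. $(X,\mathcal{B},\mu,(T_g)_{g\in G})$, every $Y\in\mathcal{B}$ and every left F{\o}lner sequence $(F_N)$ in $G$,
\begin{equation*}
\overlim_{N\to\infty}\frac{1}{|F_N|}\sum_{g\in F_N}\mu(Y\cap T_g^2Y)\geq\mu(Y)^{2^{n-1}}.
\end{equation*}
Given this bound, take $A\subseteq G$ and use part 1 of \Cref{Returns=DifferencesIntro} to get $Y$ with $\mu(Y)=d^*_l(A)$ whose return set is contained in $AA^{-1}$. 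Since $\mu(T_g^{-1}Y\cap T_gY)=\mu(Y\cap T_g^2Y)$, \Cref{ProdsInRetsMeasurableIntro} applied with $F$ produces $B$ with $\overline{d}_F(B)\geq d^*_l(A)^{2^{n-1}}$ and $BB\subseteq AA^{-1}$.

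To prove the bound, I would use the semidirect product decomposition $\mathrm{UT}_n(R)\cong H\rtimes\mathrm{UT}_{n-1}(R)$. Here $H\cong R^{n-1}$ is the abelian normal subgroup of matrices whose only nonzero off-diagonal entries lie in the last column, and the complement $\mathrm{UT}_{n-1}(R)$ sits in the upper-left block. Then I argue by induction on $n$, following the proof of \Cref{BigSqAvgsInSemiprods}. For $n=2$ the group $R$ is abelian, and \Cref{BigIntsAbelianSquares} gives exponent $2=2^{1}$. In the inductive step, write $g=(h,k)$. Then
\begin{equation*}
g^2=(h\phi_k(h),k^2)\quad\text{and}\quad \mu(Y\cap T_g^2Y)=\mu\bigl(Y\cap T_{(h\phi_k(h),1)}T_{(1,k^2)}Y\bigr).
\end{equation*}
For fixed $k$, the map $h\mapsto h\phi_k(h)$ is a homomorphism of the abelian group $H$, so the inner average over $h$ is an ergodic average along a homomorphism. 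By the mean ergodic theorem, it is an inner product involving the projection onto the invariant vectors of the subgroup $\{h\phi_k(h)\}$. A Cauchy--Schwarz/van der Corput step should then bound it below by an average that the induction hypothesis controls at exponent $2^{n-2}$, and squaring gives $2^{n-1}$.

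The main obstacle is that \Cref{BigSqAvgsInSemiprods} only provides \emph{some} F{\o}lner sequence (the SAR property), whereas the statement asks for every left F{\o}lner sequence $F$. I would first try to circumvent this by using the fact that $\mathrm{UT}_n(R)$ is nilpotent. If the analogue of \Cref{ZK16MainThm} holds for countable nilpotent groups that are not finitely generated, the limit of the square averages is independent of the F{\o}lner sequence. The bound obtained along the special F{\o}lner sequences of product form $F_N=H_N\times K_N$ then transfers to arbitrary $F$.

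A second difficulty is that the subgroup $\{h\phi_k(h)\}$ need not be of finite index in $H$ (e.g.\ over rings where $2$ is not invertible), so the projection must be handled carefully. I would try the identity $\mu(Y\cap T_g^2Y)=\langle T_{g}^{-1}1_Y,T_g1_Y\rangle$ and average over cosets of $H$ first, using positivity of $\langle P1_Y,1_Y\rangle\geq\mu(Y)^2$ for orthogonal projections $P$ fixing constants.
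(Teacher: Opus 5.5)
Your proposal is correct and follows the paper's proof: iterate \Cref{BigSqAvgsInSemiprods} along $\mathrm{UT}_n(R)\cong R^{n-1}\rtimes\mathrm{UT}_{n-1}(R)$ (this is \Cref{TowersSemiprodsPositiveAvgs}) to get the bound $\mu(Y)^{2^{n-1}}$ along one F{\o}lner sequence, transfer it to every left F{\o}lner sequence via \Cref{ZK16MainThm} (the paper states it for all countable nilpotent groups, so your hedge is unnecessary), and finish with \Cref{ProdsInRetsMeasurableIntro} and the correspondence principle. Your ``second difficulty'' does not arise: the mean ergodic theorem applies to the $H$-action $h\mapsto T_{h\phi_k(h)}$ whatever the index of its image, and the inner average is at least $\mu(Y\cap T_{k^2}Y)^2$ because $Y_k\cap T_xY_k\subseteq Y\cap T_xT_{k^2}Y$ for $Y_k=Y\cap T_{k^2}Y$, after which Cauchy--Schwarz over $k$ closes the induction.
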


Here is a similar result involving some non-solvable groups, which will be proved in \Cref{SecSemiProds}.
\begin{theorem}
\label{BBinAA-1SomeNonSolvableMatrixGroups}
Let $Q$ be a countable field of finite characteristic, which is an algebraic extension over its prime field, and let $n\in\mathbb{N}$. Then $\textup{GL}_n(Q)$ is amenable, and there is a two-sided F{\o}lner sequence $(F_N)$ such that, for all m.p.s. $(X,\mathcal{B},\mu,(T_g)_{g\in \textup{GL}_n(Q)})$ and $Y\in\mathcal{B}$, we have
\begin{equation*}
\overlim_{N\to\infty}\frac{1}{|F_N|}\sum_{g\in F_N}\mu(Y\cap T_g^2Y)\geq\frac{\mu(Y)^2}{n!}.\\[-6pt]
\end{equation*}
Thus, for all $A\subseteq G$ there is $B\subseteq G$ such that $\overline{d}_F(B)\geq \frac{d^*_l(A)^2}{n!}$ and $BB\subseteq AA^{-1}$.
\end{theorem}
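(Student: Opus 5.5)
The plan is to exploit the fact that $Q$ is locally finite. Since $Q$ is a countable algebraic extension of $\mathbb{F}_p$, it is an increasing union of finite fields $Q_1\subseteq Q_2\subseteq\cdots$, so $\mathrm{GL}_n(Q)=\bigcup_N H_N$ with $H_N=\mathrm{GL}_n(Q_N)$ an increasing sequence of finite subgroups. I would take $F_N=H_N$. For a fixed $g$ we have $g\in H_N$ for all large $N$, hence $gF_N=F_N=F_Ng$. So $(F_N)$ is a two-sided F{\o}lner sequence, and $\mathrm{GL}_n(Q)$ is amenable. The combinatorial conclusion then follows from the ergodic inequality together with \Cref{ProdsInRetsMeasurableIntro} and \Cref{Returns=DifferencesIntro}, exactly as \Cref{B+BinA-AAbelian} was derived. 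It therefore suffices to show that for each finite group $H=\mathrm{GL}_n(\mathbb{F}_q)$ and each $Y$,
\begin{equation*}
\frac{1}{|H|}\sum_{g\in H}\mu(Y\cap T_g^2Y)\geq\frac{\mu(Y)^2}{n!}.
\end{equation*}

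Write $\varphi(h)=\mu(Y\cap T_hY)=\langle U_h1_Y,1_Y\rangle$, which is a positive definite function on $H$. Let $r(h)=|\{g\in H:g^2=h\}|$. Then the average above equals $\frac{1}{|H|}\sum_h r(h)\varphi(h)$. By Frobenius--Schur, $r=\sum_{\chi\in\widehat H}\nu(\chi)\chi$ with $\nu(\chi)\in\{-1,0,1\}$. Moreover, for irreducible $\chi$ the quantity $c_\chi=\frac{1}{|H|}\sum_h\chi(h)\varphi(h)$ is $\geq0$, since it is a multiple of $\|P_\chi 1_Y\|^2$ for the isotypic projection $P_\chi$. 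Hence the average is $\sum_\chi\nu(\chi)c_\chi$.

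If every $\nu(\chi)\geq0$, we can drop all terms except the trivial one. This gives the lower bound $c_1=\|P_H1_Y\|^2\geq\mu(Y)^2$, which is even better than $\mu(Y)^2/n!$. For $\mathrm{GL}_n(\mathbb{F}_q)$ this nonnegativity is a known theorem (Gow for odd $q$, Prasad in general): real-valued irreducible characters of $\mathrm{GL}_n(\mathbb{F}_q)$ have Frobenius--Schur indicator $1$. Citing it, the argument is complete.

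The main obstacle is avoiding or justifying this deep character-theoretic input; the $1/n!$ in the statement suggests the authors use a more elementary covering argument. My fallback is as follows. For any abelian subgroup $K\leq H$, the squaring map is a homomorphism on $K$ whose image $K^2$ is a subgroup, so
\begin{equation*}
\frac{1}{|K|}\sum_{g\in K}\varphi(g^2)=\frac{1}{|K^2|}\sum_{h\in K^2}\varphi(h)=\|P_{K^2}1_Y\|^2\geq\mu(Y)^2.
\end{equation*}
I would then try to cover a proportion of $H$ by conjugates of a few abelian subgroups with controlled overlap. Natural candidates are the centralizers of semisimple parts, in particular maximal tori, indexed by conjugacy classes in the Weyl group $S_n$, together with their unipotent parts. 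One would average the above inequality over these conjugates, using that $\varphi(g^2)\geq0$ everywhere so that uncovered elements only help. Controlling the multiplicities so that the loss is at most a factor $n!=|S_n|$ is the step I expect to require the real work.
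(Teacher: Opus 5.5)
Your proof is correct, but its core is different from the paper's. The reduction to $F_N=\mathrm{GL}_n(Q_N)$ and the passage to $BB\subseteq AA^{-1}$ are the same as in the paper. Your Frobenius--Schur computation is sound, since $c_\chi=\|P_{\bar\chi}1_Y\|^2/\chi(1)\ge 0$. So the known theorem that $\mathrm{GL}_n(\mathbb{F}_q)$ has no irreducible character of indicator $-1$ gives the bound $\mu(Y)^2$ at every finite level.

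The paper instead carries out your fallback. The multiplicity issue you anticipated disappears once you use only regular elements of \emph{split} tori. Let $E_N$ be the set of matrices diagonalizable over $Q_N$ with $n$ distinct eigenvalues. It is the disjoint union of the regular parts $E_N^B$ of the split tori $D_N^B\cong(Q_N^\times)^n$, because the eigenlines of such a matrix determine its torus. There are $|G_N|/(n!\,(|Q_N|-1)^n)$ such tori, and $|E_N^B|/|D_N^B|\to1$ uniformly because $|Q_N|\to\infty$. Hence $|E_N|/|G_N|\to 1/n!$. Discarding $G_N\setminus E_N$ is allowed since the summands are nonnegative, and the abelian bound on each $D_N^B$ then gives $\mu(Y)^2/n!$. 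No overlaps or unipotent parts have to be controlled.

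What each route buys:
\begin{itemize}
\item Your route gives the sharper constant $\mu(Y)^2$, hence $\overline{d}_F(B)\ge d_l^*(A)^2$ and the $(\varepsilon,\varepsilon^2)$-SAR property. It holds at each finite level without needing $|Q_N|\to\infty$, and it applies verbatim to any increasing union of finite groups without symplectic irreducible representations. The cost is that it rests entirely on a deep character-theoretic theorem about $\mathrm{GL}_n(\mathbb{F}_q)$.
\item The paper's route is self-contained, using only the abelian von Neumann bound and a count, at the cost of the factor $1/n!$.
\end{itemize}

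Incidentally, running the same covering over all maximal tori, not just the split ones, recovers $\mu(Y)^2$ asymptotically by elementary means. For fixed $n$, regular semisimple elements have proportion $1-O(1/q)$ in $\mathrm{GL}_n(\mathbb{F}_q)$. Each lies in a unique maximal torus, namely its centralizer, which is abelian. In every maximal torus the non-regular elements have proportion $O(1/q)$.
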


\Cref{BBinAA-1FGNilpotent} and \Cref{HnRHasBiSquareAverages,BBinAA-1SomeNonSolvableMatrixGroups} can be seen as natural analogues of \Cref{B+BinA-AAbelian} for some classes of non abelian groups. It is unclear whether such an analog holds for all countable amenable groups, which leads to the following question:

\begin{question}
\label{BigQuest}
Let $G$ be a countable amenable group, $(X,\mathcal{B},\mu,(T_g)_{g\in G})$ a m.p.s., $Y\in\mathcal{B}$ with $\mu(Y)>0$ and $(F_N)$ a left F{\o}lner sequence in $G$. Is it true that
\begin{equation}
\label{EqBigQuest}
\overlim_{N\in\mathbb{N}}\frac{1}{|F_N|}\sum_{g\in F_N}\mu\left(Y\cap T_g^2Y\right)>0?
\end{equation}
\end{question}

A positive answer to \Cref{BigQuest} would imply that for all countable amenable groups $G$, all $A\subseteq G$ with $d^*(A)>0$ and all F{\o}lner sequences $F=(F_N)$ in $G$, there is $B\subseteq G$ such that $\overline{d}_F(B)>0$ and $BB\subseteq AA^{-1}$. We do not have this result, but we have a positive answer to the following weaker version of \Cref{BigQuest}:

\begin{question}
\label{3rewds09poefd9soc}
Given a countable amenable group $G$, $A\subseteq G$ with $d^*_l(A)>0$ and a left F{\o}lner sequence $(F_N)$ in $G$, are there sets $B_1,B_2\subseteq G$ such that $\overline{d}_F(B_1),\overline{d}_F(B_2)>0$ and $B_1B_2\subseteq AA^{-1}$?
\end{question}
The following theorem, proved in \Cref{SecErgToComb}, shows that not only \Cref{3rewds09poefd9soc} has a positive answer, but one can also guarantee that the set $B_2$ from \Cref{3rewds09poefd9soc} is a left translate of $B_1$. 
Moreover, the result holds for arbitrary sequences $(F_N)$ of finite subsets of $G$, not just left F{\o}lner sequences. 

\begin{theorem}
\label{rewds9edwpspo}
Let $G$ be a group, $F=(F_N)$ a sequence of finite, nonempty subsets of $G$, $(X,\mathcal{B},\mu,(T_g)_{g\in G})$ a m.p.s., $Y\in\mathcal{B}$ and $\varepsilon>0$. Then there exists $h\in G$ and $B\subseteq G$ such that $\overline{d}_F(B)\geq\mu(Y)^2-\varepsilon$ and 
\begin{equation*}
h^2BB\subseteq\{g\in G;\mu(Y\cap T_gY)>0\}.
\end{equation*}
\end{theorem}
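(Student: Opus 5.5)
The plan is to combine Bergelson's intersectivity lemma with a suitable choice of the family of sets indexed by $g\in G$. Set $R=\{g\in G;\mu(Y\cap T_gY)>0\}$. Since $T$ is measure preserving, for all $x,y\in G$ we have
\begin{equation*}
\mu(Y\cap T_{xy}Y)=\mu\left(T_x^{-1}Y\cap T_yY\right).
\end{equation*}
Hence $h^2bb'\in R$ as soon as $\mu(T_{h^2b}^{-1}Y\cap T_{b'}Y)>0$. For a fixed $h$ I would therefore introduce, for every $g\in G$, the set
\begin{equation*}
C^h_g=T_{h^2g}^{-1}Y\cap T_gY.
\end{equation*}
Note that $C^h_b\cap C^h_{b'}\subseteq T_{h^2b}^{-1}Y\cap T_{b'}Y$. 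This inclusion also holds when $b=b'$, so the diagonal terms of $BB$ are covered as well.

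I would apply the intersectivity lemma in the form valid for an arbitrary sequence $(F_N)$ of finite nonempty sets. It says that if $\overline{\lim}_N\frac{1}{|F_N|}\sum_{g\in F_N}\mu(C_g)\geq a$, then for every $\eta>0$ there is $B$ with $\overline{d}_F(B)\geq a-\eta$ such that $\mu(C_{g_1}\cap\dots\cap C_{g_k})>0$ for all finite $\{g_1,\dots,g_k\}\subseteq B$. I would prove this variant by the standard argument: pass to a subsequence realizing the limsup and carry out the usual Furstenberg--Bergelson construction. This is the same lemma that presumably underlies \Cref{GeneralBxBinReturns} and \Cref{ProdsInRetsMeasurableIntro}. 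Only the case of pairs is needed, and that gives $h^2BB\subseteq R$ with $\overline{d}_F(B)\geq a-\eta$.

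Everything is thus reduced to choosing $h$ so that $\overline{\lim}_N\frac{1}{|F_N|}\sum_{g\in F_N}\mu(C^h_g)\geq\mu(Y)^2-\varepsilon/2$, and this is the step I expect to be the main obstacle. Unlike in the abelian case there is no reason for the diagonal averages $\mu(T_{h^2g}^{-1}Y\cap T_gY)$ to be large for a fixed $h$, since \Cref{BigQuest} is open. The freedom in $h$ has to be used. I would first fix a subsequence $(F_{N_j})$ along which the weak limit of $\frac{1}{|F_{N_j}|}\sum_{g\in F_{N_j}}1_{T_gY}$ exists; call it $f$, so that $\int f=\mu(Y)$. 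I would then compare $\frac{1}{|F_N|}\sum_{g}\mu(T_{h^2g}^{-1}Y\cap T_gY)$ with an average over $h$ in a long finite set. Interchanging the averages, the inner average over $h$ of $1_{T_{h^2g}^{-1}Y}$ should be close to a function whose integral against $f$ is at least $(\int f)^2=\mu(Y)^2$, by Cauchy--Schwarz. Pigeonholing over $h$ then produces a single $h$ with average at least $\mu(Y)^2-\varepsilon/2$.

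The delicate point is to arrange the change of variables so that the $h$-average genuinely produces the same function $f$, or an image of it under a measure-preserving map, in both factors. If the substitution through $h^2g$ does not decouple cleanly, my fallback is to reparametrize the product as $h^2bb'=(h\cdot hb)\,b'$. One can then choose the families indexed by $g$ so that both factors are averages of translates of $Y$ along the same sequence, and the inequality $\int f^2\geq(\int f)^2$ gives the bound. After that, the intersectivity step with $\eta=\varepsilon/2$ completes the proof.
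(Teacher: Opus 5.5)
Your reduction is correct. With $C^h_g=T_{h^2g}^{-1}Y\cap T_gY$ and the intersectivity lemma (\Cref{IntLemma} already gives $\overline{d}_F(B)\geq a$, so no subsequence or loss $\eta$ is needed), everything comes down to finding a single $h$ with $\limsup_{N}\frac{1}{|F_N|}\sum_{g\in F_N}\mu(C^h_g)\geq\mu(Y)^2-\varepsilon$. But that step is the entire content of the theorem, and your sketch of it does not work as stated. You fix a weak limit $f$ of averages of $1_{T_gY}$ and propose to average over $h$ in some ``long finite set''. With that $f$ nothing decouples, because in $\mu(T_{h^2g}^{-1}Y\cap T_gY)$ both sets move with $g$. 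Moreover, for an arbitrary group and an arbitrary sequence $(F_N)$, an average over an unrelated family of $h$'s has no reason to reproduce $f$. Your fallback $h^2bb'=(h\cdot hb)b'$ changes nothing: it yields the same sets $T_{hb}^{-1}T_h^{-1}Y\cap T_bY=C^h_b$.

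The missing idea is to apply $T_g$ inside the measure: $\mu(T_{h^2g}^{-1}Y\cap T_gY)=\mu(T_{h^{-2}}Y\cap T_g^2Y)$. So the average equals $\langle 1_{T_{h^{-2}}Y},f_N\rangle$ with $f_N=\frac{1}{|F_N|}\sum_{g\in F_N}1_{T_g^2Y}$. Now $g$ occurs in only one factor, the relevant functions are built from $T_g^2Y$ rather than $T_gY$, and $h^{-1}$ should be taken from the sets $F_N$ themselves. For instance, suppose $f_{N_j}\to f$ weakly. Then $\int f=\mu(Y)$ and $\langle f_{N_j},f\rangle\to\|f\|^2\geq\mu(Y)^2$. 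Hence for some $j$ and some $g_0\in F_{N_j}$ we get $\langle 1_{T_{g_0}^2Y},f\rangle>\mu(Y)^2-\varepsilon$, and so $h=g_0^{-1}$ satisfies $\limsup_N\langle 1_{T_{h^{-2}}Y},f_N\rangle\geq\mu(Y)^2-\varepsilon$. The paper does exactly this, with Lemma~\ref{SomeBigInnerProds} (a contradiction argument producing an almost orthogonal subsequence) in place of the weak limit. It then concludes via \Cref{ProdsInReturnsk=1}, which is your intersectivity step.
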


\begin{cor}
\label{BBhinAA-1}
Let  $F=(F_N)$ be a sequence of finite nonempty subsets of a countable amenable group $G$, let $A\subseteq G$ have $d^*_l(A)>0$ and let $\varepsilon>0$. Then there exist $B\subseteq G$ with $\overline{d}_F(B)>d^*(A)^2-\varepsilon$ and $h\in G$ such that $h^2BB\subseteq AA^{-1}$. 
\end{cor}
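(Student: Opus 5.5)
The plan is to deduce the corollary directly from \Cref{rewds9edwpspo}, using the correspondence principle of \Cref{Returns=DifferencesIntro} to pass between sets of differences and sets of returns. Throughout, $d^*(A)$ in the statement is read as $d^*_l(A)$, the quantity assumed positive in the hypothesis.

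First, apply part 1 of \Cref{Returns=DifferencesIntro} to the set $A\subseteq G$. This is legitimate because $G$ is a countable amenable group. It yields a m.p.s. $(X,\mathcal{B},\mu,(T_g)_{g\in G})$ and a set $Y\in\mathcal{B}$ such that $\mu(Y)=d^*_l(A)$ and
\begin{equation*}
R:=\{g\in G;\mu(Y\cap T_gY)>0\}\subseteq AA^{-1}.
\end{equation*}

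Next, apply \Cref{rewds9edwpspo} to this system, the set $Y$, the given sequence $F=(F_N)$, and the parameter $\varepsilon/2$ in place of $\varepsilon$. That theorem places no Følner or amenability condition on $F$; it only needs the $F_N$ to be finite and nonempty, which is exactly what the corollary assumes. We obtain $h\in G$ and $B\subseteq G$ such that $h^2BB\subseteq R$ and
\begin{equation*}
\overline{d}_F(B)\geq\mu(Y)^2-\tfrac{\varepsilon}{2}=d^*_l(A)^2-\tfrac{\varepsilon}{2}>d^*_l(A)^2-\varepsilon.
\end{equation*}
Using $\varepsilon/2$ rather than $\varepsilon$ is what turns the non-strict bound of the theorem into the strict inequality required in the corollary.

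Combining the two inclusions gives $h^2BB\subseteq R\subseteq AA^{-1}$, which proves the corollary. There is no real obstacle here: all the substance lies in \Cref{rewds9edwpspo} and in the correspondence principle. The only points to check are the hypotheses, namely countability and amenability of $G$ for \Cref{Returns=DifferencesIntro}, and the fact that the density bound $\mu(Y)=d^*_l(A)$ transfers exactly.
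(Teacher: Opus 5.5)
Your proof is correct and matches the paper's. The paper likewise combines the correspondence principle (\Cref{ReturnsInDifferences}, equivalently part 1 of \Cref{Returns=DifferencesIntro}) with the measurable result \Cref{hBBinMeasurableReturns}, which it states as $BB\subseteq h^2R_\mu^T(Y)$ (so one replaces $h$ by $h^{-1}$), whereas you invoke that result in its introductory form \Cref{rewds9edwpspo}; your use of $\varepsilon/2$ to get the strict inequality is a detail the paper leaves implicit.
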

The conclusion of \Cref{BBhinAA-1} remains true with $BBh^2$ in place of $h^2BB$, see \Cref{BBhinMeasurableReturns}. However, the corresponding conclusion with $BB$ in place of $h^2BB$ is false, even if $(F_N)$ is a left F{\o}lner sequence. For example, letting $A\subseteq Q_8$ be as in \Cref{54terfd90reodfpslk} and $A_1=A\times\mathbb{Z}\subseteq Q_8\times\mathbb{Z}$, we proved in the discussion after \Cref{B+BinA-AAbelian} that $d^*(A_1)=\frac{1}{2}$, but there is no $B\subseteq Q_8\times\mathbb{Z}$ such that $d^*(B)>\frac{1}{8}$ and $BB\subseteq A_1A_1^{-1}$.

The following result (proved in \Cref{SecErgToComb}) holds for any sequence $(F_N)$ of finite nonempty sets.

\begin{theorem}
\label{BkBkinAA-1}
Let $G$ be a countable amenable group, let $A\subseteq G$ have $d^*(A)>0$ and let $F=(F_N)$ be a sequence of finite, nonempty subsets of $G$. Then there exist $k\in\mathbb{N}$ and $B\subseteq G$ such that $\overline{d}_F(B)>0$ and $B^kB^k:=\{x^ky^k;x,y\in B\}\subseteq AA^{-1}$. We can take $k\leq\frac{1}{d^*(A)^2}+1$.
\end{theorem}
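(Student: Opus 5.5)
The plan is to pass to the measurable setting and find, by a pigeonhole argument along even powers, one exponent $k$ for which the ``symmetric'' correlations $\mu(T_{g^k}^{-1}Y\cap T_{g^k}Y)$ have positive upper average along $F$. The argument of \Cref{ProdsInRetsMeasurableIntro} will then go through with $g$ replaced by $g^k$.

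\textbf{Step 1 (reduction).} By \Cref{Returns=DifferencesIntro}(1), choose a m.p.s. $(X,\mathcal{B},\mu,(T_g)_{g\in G})$ and $Y\in\mathcal{B}$ with $\mu(Y)=d^*_l(A)=:\alpha>0$ and $\{g;\mu(Y\cap T_gY)>0\}\subseteq AA^{-1}$. It then suffices to find $k\le \alpha^{-2}+1$ and $B$ with $\overline{d}_F(B)>0$ such that $\mu(Y\cap T_{x^ky^k}Y)>0$ for all $x,y\in B$. Amenability is used only here; after this reduction $F$ is an arbitrary sequence of finite sets.

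\textbf{Step 2 (pigeonhole for each fixed $g$).} Fix an integer $m$ with $m+1>1/\alpha$; in fact we take $m\le \alpha^{-2}+1$, for instance $m=\lfloor 1/\alpha\rfloor$. For each $g\in G$, consider the $m+1$ sets $T_g^{2i}Y$, $i=0,\dots,m$, each of measure $\alpha$. By Cauchy--Schwarz,
\begin{equation*}
\int\Big(\sum_{i=0}^m 1_{T_g^{2i}Y}\Big)^2d\mu\ \ge\ (m+1)^2\alpha^2 .
\end{equation*}
Hence $\sum_{i\ne j}\mu(T_g^{2i}Y\cap T_g^{2j}Y)\ge (m+1)^2\alpha^2-(m+1)\alpha=:c_m>0$. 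By invariance, each term equals $\mu(Y\cap T_g^{2|i-j|}Y)$, and each difference $d\in\{1,\dots,m\}$ occurs at most $2m$ times. Therefore
\begin{equation*}
\sum_{d=1}^m \mu(Y\cap T_{g}^{2d}Y)\ \ge\ \frac{c_m}{2m}
\end{equation*}
uniformly in $g$. Averaging over $g\in F_N$ and taking $\limsup$ (the $\limsup$ of a finite sum is at most the sum of the $\limsup$s), we obtain some $k\in\{1,\dots,m\}$ with
\begin{equation*}
\delta:=\overlim_{N}\frac{1}{|F_N|}\sum_{g\in F_N}\mu\big(T_{g^k}^{-1}Y\cap T_{g^k}Y\big)\ \ge\ \frac{c_m}{2m^2}>0 .
\end{equation*}
Here we used $\mu(Y\cap T_g^{2k}Y)=\mu(T_{g^k}^{-1}Y\cap T_{g^k}Y)$.

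\textbf{Step 3 (intersectivity).} Put $Z_g:=T_{g^k}^{-1}Y\cap T_{g^k}Y$. Then $\overlim_N |F_N|^{-1}\sum_{g\in F_N}\mu(Z_g)=\delta$. By the Bergelson intersectivity lemma, which is the tool underlying \Cref{ProdsInRetsMeasurableIntro}, applied to the family $(Z_g)_{g\in G}$ along $F$, there is $B\subseteq G$ with $\overline{d}_F(B)\ge\delta$ and $\mu(Z_x\cap Z_y)>0$ for all $x,y\in B$. Equivalently, one runs the proof of \Cref{ProdsInRetsMeasurableIntro} verbatim with $g$ replaced by $g^k$, since that proof only uses the sets $T_g^{\pm1}Y$ and never the group structure of $g\mapsto T_g$. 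For $x,y\in B$ we then have
\begin{equation*}
\mu(Y\cap T_{x^ky^k}Y)=\mu\big(T_{x^k}^{-1}Y\cap T_{y^k}Y\big)\ge\mu(Z_x\cap Z_y)>0,
\end{equation*}
so $B^kB^k\subseteq AA^{-1}$, with $k\le m\le \alpha^{-2}+1$.

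The main obstacle is Step 2: we must arrange that the only correlations appearing are at even powers $T_g^{2d}$, so that they can be rewritten as symmetric correlations of the element $g^d$. We must also get a lower bound that is uniform in $g$, so that a single $k$ can be selected after averaging along the arbitrary sequence $F$. Using the even-indexed orbit $T_g^{2i}Y$ handles both points. The stated bound on $k$ is then just a matter of choosing $m$ as small as the Cauchy--Schwarz positivity $m+1>1/\alpha$ allows, and this choice is well within $\alpha^{-2}+1$.
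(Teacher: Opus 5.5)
Your proof is correct, and its architecture matches the paper's (which deduces the theorem from the ergodic \Cref{BkBkinAA-1ergodic}). Both arguments get a lower bound, uniform in $g$, for a short average of the even-power correlations $\mu(Y\cap T_g^{2d}Y)$. Both then pigeonhole a single $k$ after averaging along the arbitrary sequence $F$. Both finish with the intersectivity lemma, in the form of \Cref{ProdsInReturnsk=1} with $\phi(g)=\psi(g)=g^k$.

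The real difference is where the uniform bound comes from. The paper quotes Leibman's optimal estimate $\frac1N\sum_{n=0}^{N-1}\mu(Y\cap T^{-n}Y)\ge a^2/2$ and applies it to $T_g^2$. It then discards the $n=0$ term and takes $K=\lfloor 5/a\rfloor$, which gives $k\le 5/a$ and $\overline{d}_F(B)\ge a^2/4$. You instead run an elementary Cauchy--Schwarz count on the orbit $T_g^{2i}Y$, $0\le i\le m$. This is self-contained and gives $k\le\lfloor 1/\alpha\rfloor$, so it delivers the stated bound $k\le\alpha^{-2}+1$ for every $\alpha$. By contrast, the paper's $5/a$ lies below $a^{-2}+1$ only for small $a$ (about $a\le 0.2$).

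Your choice $m=\lfloor 1/\alpha\rfloor$ has one cost: the density bound $c_m/(2m^2)$ becomes tiny when $(m+1)\alpha$ is barely above $1$. This does not matter for $\overline{d}_F(B)>0$, and it disappears if you take $m=\lceil 2/\alpha\rceil-1$ instead. Then $(m+1)\alpha\ge 2$ gives $\sum_{d=1}^m\mu(Y\cap T_g^{2d}Y)\ge (m+1)\alpha/(2m)\ge \alpha/2$. So some $k\le m<2/\alpha\le\alpha^{-2}+1$ has upper average at least $\alpha/(2m)>\alpha^2/4$, and hence there is a set $B$ with $\overline{d}_F(B)>\alpha^2/4$. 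This beats the paper's quantitative conclusion on both $k$ and density, without using Leibman's theorem.
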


Recall that a set $R\subseteq\mathbb{N}$ is a \textit{set of measurable recurrence} if, for all m.p.s. $(X,\mathcal{B},\mu,(T^n)_{n\in\mathbb{Z}})$ and all $Y\in\mathcal{B}$ with $\mu(Y)>0$, there exists $r\in R$ such that $\mu(Y\cap T^{-r}Y)>0$. There are many nontrivial examples of sets of recurrence, including $\{n^2;n\in\mathbb{N}\}$ and $\{p-1;p\textup{ prime}\}$.

The technique which is used in the proof of \Cref{BkBkinAA-1} allows us to prove (in \Cref{SecErgToComb}) the following result. 
\begin{restatable}{prop}{RestatableB}
\label{BkBkinAA-1ConcreteCase}
Let $C$ be an infinite set of natural numbers, $R\subseteq\mathbb{N}$ a set of measurable recurrence and let $A\subseteq\mathbb{N}$ satisfy $d^*(A)>0$. Then there exists $r\in R$ and $B\subseteq C$ such that
\begin{equation*}
\overline{d}_C(B):=\overlim_{N\to\infty}\frac{|B\cap\{1,\dots,N\}|}{|C\cap\{1,\dots,N\}|}>0
\end{equation*} 
and $r(B+B)\subseteq A-A$.
\end{restatable}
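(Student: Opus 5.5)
We pass to the dynamical setting through the correspondence principle, \Cref{Returns=DifferencesIntro}(1) with $G=\mathbb{Z}$: it gives a m.p.s. $(X,\mathcal{B},\mu,T)$ and $Y\in\mathcal{B}$ with $\mu(Y)=d^*(A)>0$ and $\{n;\mu(Y\cap T^nY)>0\}\subseteq A-A$. The claim then reduces to the following. For some $r\in R$ there is $B\subseteq C$ with $\overline{d}_C(B)>0$ such that $\mu(Y\cap T^{r(b+b')}Y)>0$ for all $b,b'\in B$.

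We will mimic the argument behind \Cref{ProdsInRetsMeasurableIntro}, but along the sequence $F_N=C\cap\{1,\dots,N\}$, which is not F{\o}lner. The proof of \Cref{ProdsInRetsMeasurableIntro} works for arbitrary sequences of finite sets. It only needs the averages of $\mu(T^{-n}Y'\cap T^nY')$ along $F_N$ to have positive $\limsup$, for a suitable set $Y'$ of positive measure. Recall the mechanism. With $\delta=\overline{\lim}_N\frac{1}{|F_N|}\sum_{n\in F_N}\mu(T^{-n}Y'\cap T^{n}Y')>0$, a standard Bergelson intersectivity argument applied to the sets $T^{-n}Y'\cap T^nY'$ (or a point $x$ lying in many of them) produces $B\subseteq C$ with $\overline{d}_C(B)\geq\delta$ and $\mu\big(\bigcap_{b\in F}(T^{-b}Y'\cap T^{b}Y')\big)>0$ for every finite $F\subseteq B$. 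Taking $F=\{b,b'\}$ gives $\mu(T^{-b}Y'\cap T^{b'}Y')>0$, that is, $\mu(Y'\cap T^{b+b'}Y')>0$.

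We then apply this to the transformation $S=T^r$ in place of $T$, with $Y'=Y\cap T^{-r}Y$ or a similar set; this will yield $\mu(Y\cap T^{r(b+b')}Y)>0$. The main obstacle is therefore the following. We must find $r\in R$ for which the averages $\frac{1}{|F_N|}\sum_{n\in F_N}\mu(T^{-rn}Y\cap T^{rn}Y)$ have positive $\limsup$, even though $C$ is arbitrary. Averages along an arbitrary $C$ carry no ergodic-theoretic control, so the recurrence of $R$ has to supply the positivity for each individual $n\in C$, uniformly in a suitable sense.

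The approach, following the technique of \Cref{BkBkinAA-1}, is to pigeonhole over the ``doubled'' sets $Z_n:=T^{-2n}Y\cap Y$, or rather over the sets $T^{n}Y$ indexed by $n\in C$. Consider the sets $E_c:=T^{c}Y$ for $c\in C$, all of measure $\mu(Y)$. Build an auxiliary measure preserving $\mathbb{Z}$-system that records the configuration $(1_{T^{-c}Y})$. Concretely, define a coloring: a pair $c<c'$ in $C$ is \emph{good} if $\mu(T^{-c}Y\cap T^{c'}Y)>0$. We try to arrange that, for some $r\in R$, the set $Y_r$ of points returning, together with positivity along $C$ scaled by $r$, has positive mass.

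A cleaner route is to use recurrence of $R$ for the product action. Fix finitely many $c_1,\dots,c_k\in C$ and apply the recurrence of $R$ to the $\mathbb{Z}$-action $T$ and the set $Y$. This gives $r\in R$ with $\mu(Y\cap T^{-r}Y)>0$, which is not yet enough; we need $r$ such that $\mu(T^{-rc}Y\cap T^{rc}Y)>0$ for many $c\in C$. Here we use that a set of recurrence is also a set of recurrence for any power $T^{m}$ and for any finite family: $R$ is partition regular with respect to recurrence. In fact recurrence for $R$ implies that for each $m$ the set $mR$ meets every return set, since $\{n;\mu(Y\cap T^{mn}Y)>0\}\cap R\neq\emptyset$.

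Applying this with $S_c:=T^{2c}$ gives, for each $c\in C$, some $r_c\in R$ with $\mu(Y\cap T^{2cr_c}Y)>0$, and moreover $\mu(Y\cap T^{2cr_c}Y)\ge\eta>0$ can be obtained using a quantitative form. This quantitative form is available because sets of recurrence satisfy the uniform version: for all $\varepsilon$ there exist a finite $R_0\subseteq R$ and $\eta>0$ such that $\mu(Y)\geq\varepsilon$ implies $\max_{r\in R_0}\mu(Y\cap T^{r}Y)\geq\eta$. This uniform statement follows from the correspondence principle by a standard compactness argument, and establishing it is where the real work lies. Granted it, pigeonhole over the finite set $R_0$ gives a single $r\in R_0$ and $C'\subseteq C$ with $\overline{d}_C(C')\geq 1/|R_0|$ such that $\mu(T^{-rc}Y\cap T^{rc}Y)\geq\eta$ for all $c\in C'$. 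The averages along $F_N$ are then at least $\eta/|R_0|$ in $\limsup$, and the intersectivity step above yields $B\subseteq C'$ with $\overline{d}_C(B)>0$ and $r(B+B)\subseteq A-A$.
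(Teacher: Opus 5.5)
Your final argument is correct and is essentially the paper's proof. Uniform recurrence gives a finite $R_0\subseteq R$ and $\eta>0$; applying it to $T^{2c}$ for each $c\in C$ and pigeonholing over $R_0$ yields a single $r$ with $\overline{\lim}_{N}\frac{1}{|F_N|}\sum_{c\in F_N}\mu(T^{-rc}Y\cap T^{rc}Y)>0$ for $F_N=C\cap\{1,\dots,N\}$. (You pigeonhole by colouring $C$ via $c\mapsto r_c$ and using subadditivity of $\overline{d}_C$; the paper instead sums the upper limits over $r\in R_0$.) The intersectivity step, i.e.\ \Cref{ProdsInReturnsk=1} with $\phi(n)=\psi(n)=rn$, then finishes.

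As in the paper, the uniform recurrence statement can simply be cited rather than proved. The earlier detours (the ``good pairs'' colouring, the set $Y\cap T^{-r}Y$, the remark about partition regularity) are unnecessary and should be dropped; just take $Y'=Y$.
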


\msubsection{Polynomial results}

A polynomial variant of \Cref{BxBinA-AinZ} was obtained in \cite{BeRu09}:

\begin{theorem}[{\cite[Theorem 3.1]{BeRu09}}]
\label{AbelianCartesianProdPolys}
Let $A\subseteq\mathbb{Z}^2$ and suppose $\overline{d}_F(A)>0$, where $F=(\{-N,\dots,N\})_{N\in\mathbb{N}}$. Let $p_i,q_i\in\mathbb{Z}[x]$, $i=1,\dots,m$, satisfy $p_i(0)=q_i(0)=0$ for all $i$. Then there exists $B\subseteq\mathbb{Z}$ such that $\overline{d}_F(B)>0$ and
\begin{equation*}
\bigcup_{i=1}^m(p_i(B)\times q_i(B))\subseteq A-A,
\end{equation*}
Where $p_i(B):=\{p_i(b);b\in B\}$.
\end{theorem}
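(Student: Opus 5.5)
The plan is to transfer the problem to ergodic theory, reduce it to an intersectivity statement in an auxiliary probability space, and then supply the needed lower bound on averages from polynomial single recurrence.

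\textbf{Step 1 (correspondence).} Apply Furstenberg's correspondence principle for $\mathbb{Z}^2$ (the case $G=\mathbb{Z}^2$ of \Cref{Returns=DifferencesIntro}). This gives commuting invertible measure preserving maps $T,S$ of a probability space $(X,\mathcal{B},\mu)$ and a set $Y\in\mathcal{B}$ with $\mu(Y)>0$ such that
\[
\{(x,y)\in\mathbb{Z}^2:\ \mu(Y\cap T^{x}S^{y}Y)>0\}\subseteq A-A .
\]
Here we may take $\mu(Y)\geq d^*(A)\geq\overline{d}_F(A)$. It then suffices to find $B\subseteq\mathbb{Z}$ with $\overline{d}_F(B)>0$ such that, for every $i$ and all $b,b'\in B$,
\[
\mu\bigl(T^{-p_i(b)}Y\cap S^{q_i(b')}Y\bigr)>0 .
\]
This quantity equals $\mu(Y\cap T^{p_i(b)}S^{q_i(b')}Y)$ by invariance.

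\textbf{Step 2 (intersectivity in a product space).} Work in the product probability space $(X^m,\mu^{\otimes m})$. For $n\in\mathbb{Z}$ put
\[
E_n=\prod_{i=1}^m\bigl(T^{-p_i(n)}Y\cap S^{q_i(n)}Y\bigr).
\]
If $\mu^{\otimes m}(E_b\cap E_{b'})>0$, then each coordinate factor $T^{-p_i(b)}Y\cap S^{q_i(b')}Y$ has positive measure, which is exactly the requirement of Step 1. Now invoke Bergelson's intersectivity lemma, the same device behind \Cref{GeneralBxBinReturns}. It says that if $\overline{\lim}_N\frac{1}{|F_N|}\sum_{n\in F_N}\nu(E_n)\geq c>0$, then there is $B$ with $\overline{d}_F(B)\geq c$ such that every finite intersection of the sets $E_b$, $b\in B$, has positive measure. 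In particular pairwise intersections have positive measure, so such a $B$ satisfies the conclusion.

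\textbf{Step 3 (positivity of averages; the main obstacle).} It remains to show that
\[
\overline{\lim}_N\frac{1}{2N+1}\sum_{|n|\le N}\prod_{i=1}^m\mu\bigl(Y\cap T^{p_i(n)}S^{q_i(n)}Y\bigr)>0.
\]
The integrand equals $\nu\bigl(\tilde Y\cap R(n)\tilde Y\bigr)$, where $\nu=\mu^{\otimes m}$, $\tilde Y=Y^m$, and $R(n)=\prod_i \bigl(T^{p_i(n)}S^{q_i(n)}\bigr)$ acts coordinatewise. So $R(n)=\tilde T_1^{p_1(n)}\tilde S_1^{q_1(n)}\cdots$ is a product of commuting transformations raised to integer polynomials vanishing at $0$.

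I would handle this by polynomial single recurrence for commuting actions, in the spectral form. Restrict to $n$ in $k\mathbb{Z}$ for suitable $k$; the averages of $\langle 1_{\tilde Y},R(kn)1_{\tilde Y}\rangle$ converge to $\langle 1_{\tilde Y},P1_{\tilde Y}\rangle$, where $P$ is (approximately) the projection onto the rational-spectrum part. Since the $q$-torsion parts are invariant for $R(kn)$ once $q\mid k$, the contribution of the totally ergodic part vanishes by van der Corput / Weyl equidistribution. Choosing $k$ large makes the limit at least $\tfrac12\nu(\tilde Y)^2$. Then the full average along $\{-N,\dots,N\}$ is at least $\frac{1}{k}\cdot\tfrac12\mu(Y)^{2m}>0$.

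The delicate point is the spectral/van der Corput argument for several commuting transformations with polynomial exponents. Alternatively, one can cite the Bergelson–Leibman polynomial recurrence theorem directly, which already yields a positive lower limit for these averages.
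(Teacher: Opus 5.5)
Your proof is correct, and it takes a genuinely different route from the paper. The paper does not reprove this cited result separately. It recovers it as the case $G=H=\mathbb{Z}$ of \Cref{NilpPolyBxB}: after the correspondence principle, \Cref{TheBigTechnicalTheorem} is applied inside $X$ itself to $Y_n=\bigcap_j S^{q_j(n)}Y\cap T^{-p_j(n)}Y$. The required positivity is then a \emph{multiple} polynomial recurrence statement. It is supplied by \Cref{PolyCesaroNilpotent}, which rests on Zorin-Kranich's IP polynomial multiple recurrence in nilpotent groups, his existence-of-limits theorem, and the IP-system argument of \Cref{BadIPSystem}.

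Your passage to $X^m$ sidesteps all of this. Since $E_b\cap E_{b'}$ is a product set, positivity of its measure forces each $T^{-p_i(b)}Y\cap S^{q_i(b')}Y$ to have positive measure. Meanwhile $\nu(E_n)=\prod_i\mu(Y\cap T^{p_i(n)}S^{q_i(n)}Y)$ is a \emph{single} correlation of a commuting $\mathbb{Z}^{2m}$-action along one integer polynomial vanishing at $0$. So you only need polynomial single recurrence, which has an elementary spectral proof, plus \Cref{IntLemma}. Indexing the product by pairs $(i,j)$ would even give the cross terms $p_i(B)\times q_j(B)$, which \Cref{TheBigTechnicalTheorem} provides automatically. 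The paper's route buys generality: non-abelian nilpotent targets and finitely generated domains in one framework. Yours is essentially self-contained for the abelian Cartesian case.

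One claim in Step 3 is inaccurate, although the conclusion survives. For a $\mathbb{Z}^{2m}$-action the limit is not governed by the rational spectrum, and the orthocomplement of the rational-spectrum subspace need not contribute $0$. For example, if $p_1=q_1$ and $S=T^{-1}$ with $T$ an irrational rotation, then $T^{p_1(n)}S^{q_1(n)}=\mathrm{id}$ for all $n$.

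The correct bookkeeping is as follows. Let $\sigma$ be the spectral measure of $1_{\tilde Y}$ on $\mathbb{T}^{2m}$, and let $\mathbf{p}=(p_1,q_1,\dots,p_m,q_m)$.
\begin{itemize}
\item Weyl's theorem kills the frequencies $\theta$ for which $n\mapsto\mathbf{p}(n)\cdot\theta$ has an irrational non-constant coefficient.
\item The remaining set $\Theta$ is the increasing union of the closed subgroups $\Theta_{r!}$ on which all these coefficients are multiples of $1/r!$ modulo $1$.
\item On $\Theta_{r!}$ one has $\mathbf{p}(r!\,n)\cdot\theta\equiv 0 \pmod 1$ for every $n$; this is where integer coefficients and $\mathbf{p}(0)=0$ are used.
\end{itemize}
Hence the limit along $r!\,\mathbb{Z}$ is at least $\sigma(\{0\})-\sigma(\Theta\setminus\Theta_{r!})\geq\nu(\tilde Y)^2-\sigma(\Theta\setminus\Theta_{r!})$, which is positive for large $r$. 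Alternatively, as you note, citing Bergelson--Leibman settles Step 3 directly; apply it to both $\mathbf{p}(n)$ and $\mathbf{p}(-n)$ to cover $\{-N,\dots,N\}$.
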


The following general version of \Cref{AbelianCartesianProdPolys}, which is a special case of \Cref{NilpPolyBB} below, allows for a richer variety of combinatorial corollaries.

\begin{theorem}
\label{AbPolyBB}
Let $d,e,s\in\mathbb{N}$, for $i=1,\dots,s$ let $p_i,q_i:\mathbb{Z}^d\to\mathbb{Z}^e$ be polynomials\footnote{That is, $p(x_1,\dots,x_d)=\left(p_1(x_1,\dots,x_d),\dots,p_e(x_1,\dots,x_d)\right)$, where $p_i\in\mathbb{Q}[x_1,\dots,x_d]$.} satisfying $p(0)=q(0)=0$, and let $A\subseteq\mathbb{Z}^e$ satisfy $d^*(A)>0$. Then, for every F{\o}lner sequence $(F_N)_{N\in\mathbb{N}}$ in $\mathbb{Z}^d$, there is a set $B\subseteq\mathbb{Z}^d$ such that $\overline{d}_F(B)>0$ and 
\begin{equation*}
\bigcup_{i=1}^s(p_i(B)+q_i(B))\subseteq A-A.
\end{equation*}
\end{theorem}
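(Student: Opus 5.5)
We reduce \Cref{AbPolyBB} to an ergodic statement via \Cref{Returns=DifferencesIntro}, and then prove that statement by the same density argument that underlies \Cref{ProdsInRetsMeasurableIntro}.

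\textbf{Step 1: pass to a measure-preserving system.} Apply part (1) of \Cref{Returns=DifferencesIntro} in $G=\mathbb{Z}^e$. This gives a m.p.s. $(X,\mathcal{B},\mu,(T_v)_{v\in\mathbb{Z}^e})$ and $Y\in\mathcal{B}$ with $\mu(Y)=d^*(A)>0$ and
\[
R:=\{v\in\mathbb{Z}^e;\mu(Y\cap T_vY)>0\}\subseteq A-A.
\]
So it suffices to find $B\subseteq\mathbb{Z}^d$ with $\overline{d}_F(B)>0$ and $p_i(b)+q_i(b')\in R$ for all $b,b'\in B$ and all $i$.

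\textbf{Step 2: pick a good set $Y'$ and define $B$.} Put $S_i(x)=T_{p_i(x)}$ and $S'_i(x)=T_{q_i(x)}$. Since $\mathbb{Z}^e$ is abelian,
\[
\mu(Y\cap T_{p_i(b)+q_i(b')}Y)=\mu\big(T_{-p_i(b)}Y\cap T_{q_i(b')}Y\big).
\]
This suggests choosing a single set $Y'$ of positive measure, of the form $Y'=\bigcap_i (T_{-p_i(x)}Y\cap T_{q_i(x)}Y)$ for many $x$ simultaneously. We use the polynomial multiple recurrence theorem of Bergelson--Leibman, in its IP / F{\o}lner-uniform form for $\mathbb{Z}^d$ with $p_i(0)=q_i(0)=0$. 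It gives $c>0$ such that
\[
\overline{d}_F\Big(\Big\{x;\ \mu\Big(Y\cap\bigcap_i T_{p_i(x)}Y\cap\bigcap_i T_{-q_i(x)}Y\Big)>c\Big\}\Big)>0 .
\]
We then apply the standard \cite{Be85} intersectivity lemma: if sets $E_x$, $x\in\mathbb{Z}^d$, satisfy $\mu(E_x)\ge c$ along a set of positive $F$-density, then there is $D$ with $\overline{d}_F(D)\geq c$ such that every finite intersection $\bigcap_{x\in D_0}E_x$, $D_0\subseteq D$ finite, has positive measure. We apply it to
\[
E_x=\bigcap_i\big(T_{-p_i(x)}Y\cap T_{q_i(x)}Y\big)
\]
(or the translated version above) and take $B=D$.

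\textbf{Step 3: check the inclusion.} For $b,b'\in B$ the intersection $E_b\cap E_{b'}$ has positive measure, and it is contained in $T_{-p_i(b)}Y\cap T_{q_i(b')}Y$. Hence $p_i(b)+q_i(b')\in R\subseteq A-A$ for every $i$.

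\textbf{Main obstacle.} The hard part is Step 2's lower bound: the set of $x$ for which $\mu(E_x)$ is bounded below must have positive density along an \emph{arbitrary} F{\o}lner sequence in $\mathbb{Z}^d$, not merely be nonempty. We would get this from the uniform polynomial Szemerédi theorem (Bergelson--Leibman, with the limit along F{\o}lner sequences in $\mathbb{Z}^d$ being positive, or via Leibman's convergence for polynomial actions). If only a $\liminf$ of averages is available, we would apply Step 2's intersectivity lemma directly to the averages, as in the proof of \Cref{GeneralBxBinReturns}.
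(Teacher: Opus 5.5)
Your argument is the paper's proof specialized to polynomials $\mathbb{Z}^d\to\mathbb{Z}^e$. It uses the Furstenberg correspondence (\Cref{ReturnsInDifferences}) and then \Cref{TheBigTechnicalTheorem}, i.e.\ the intersectivity lemma applied to $Y_g=\bigcap_j T_{q_j(g)}Y\cap T_{-p_j(g)}Y$ (your $E_x$), with the F{\o}lner-uniform positivity you flag as the main obstacle supplied by \Cref{PolyCesaroNilpotent}; the paper obtains it from Zorin-Kranich's convergence theorem (limit independent of the F{\o}lner sequence) plus IP polynomial recurrence via \Cref{BadIPSystem}, rather than from Bergelson--Leibman directly, and there are two harmless slips: apply recurrence directly to the family $\{-p_i,q_i\}$ (your displayed expression has the signs flipped and an extra $Y$, so it does not bound $\mu(E_x)$), and the intersectivity lemma gives $\overline{d}_F(D)\geq c\,\overline{d}_F(S)$ (or at least the upper average), not $\geq c$, which still suffices.
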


The following examples demonstrate the versatility of \Cref{AbPolyBB}.

\begin{example}
For all $A\subseteq\mathbb{N}$ with $d^*(A)>0$, there exists $B\subseteq\mathbb{N}$ such that $\overline{d}(B)>0$ and $\{b_1^2+b_2^2;b_1,b_2\in B\}\subseteq A-A$.
\end{example}

\begin{example}
For all $A\subseteq\mathbb{Z}^4$ such that $d^*(A)>0$, there exists $B\subseteq\mathbb{Z}^2$ such that $\overline{d}_{\left(\{1,\dots,N\}\times\{1,\dots,N\}\right)_{N\in\mathbb{N}}}(B)>0$ and for all $(x,y),(z,w)\in B$ we have
\begin{equation*}
\left(x^{10}+z^{10},x^{10}+w^{10},y^{10}+z^{10},y^{10}+w^{10}\right)\in A-A.
\end{equation*}
\end{example}

Note that \Cref{B+BinA-AinZ} is a very special case of \Cref{AbPolyBB}, which can be obtained by letting $d=1,e=2$ and considering the polynomials $n\mapsto(p_i(n),0)$ and $n\mapsto(0,q_i(n))$. 

\begin{remark}
The example explained in \Cref{BeRuCounterBxBxB} implies that \Cref{AbPolyBB} cannot be strengthened to a statement of the form 
\begin{equation}
\label{er9fdoscplñ0dpos}
p(B)+q(B)+r(B)\subseteq A-A.
\end{equation}
Indeed, let $p,q,r:\mathbb{Z}\to\mathbb{Z}^3$ be given by $p(x)=(x,0,0)$, $q(y)=(0,y,0)$, $r(z)=(0,0,z)$. Then by \Cref{BeRuCounterBxBxB} and \Cref{Returns=DifferencesIntro} there exists $A\subseteq\mathbb{Z}^3$ such that $d^*(A)>0$, but any $B\subseteq\mathbb{Z}$ such that $\overline{d}(B)>0$ does not satisfy 
\begin{equation*}
p(B)+q(B)+r(B)=B\times B\times B\subseteq A-A.
\end{equation*}
\end{remark}

There is a natural notion of a polynomial map from an arbitrary group $G$ to any nilpotent group $H$, studied by Leibman in \cite{LePoly}. We define polynomial maps in \Cref{SecNilpotent}. For now, we list some of their basic properties which help motivate the role of polynomial maps in the sequel.
\begin{itemize}
    \item If $G=\mathbb{Z}^m$ and $H=\mathbb{Z}^n$ for some $n,m\in\mathbb{N}$, then a map $p:G\to H$ is polynomial iff it is a polynomial in the usual sense (details are provided in \Cref{LeibPolysZn}).
    \item If $p,q:G\to H$ are polynomial, then $g\mapsto p(g)q(g)$ and $g\mapsto p(g)^{-1}$ are polynomial maps from $G$ to $H$.
    \item The composition of two polynomial maps between nilpotent groups is polynomial.
    \item Homomorphisms are polynomial maps.
\end{itemize}
For example, if $G$ is a nilpotent group and $g_1,g_2\in G$, then $f_1:G\to G;g\mapsto g^2$ and $f_2:\mathbb{Z}\to G;f(n)=g_1^{n^2+n}g_2^{-3n^7}$ are polynomial maps. 

The following result is a  generalization of \Cref{AbPolyBB} to finitely generated nilpotent groups, to be proved in \Cref{SecNilpotent}.

\begin{restatable}{theorem}{NilpPolyBB}
\label{NilpPolyBB}
Let $G$ be a finitely generated group and $H$ a nilpotent group, with polynomial maps $p_j,q_j:G\to H$, $j=1,\dots,s$, satisfying $p_i(1_G)=q_i(1_G)=1_H$, and let $A\subseteq H$ satisfy $d_l^*(A)>0$. Then for every F{\o}lner sequence $F=(F_N)_{N\in\mathbb{N}}$ in $G$ there is a set $B\subseteq G$ such that $\overline{d}_F(B)>0$ and 
\begin{equation*}
\bigcup_{j=1}^sp_j(B) q_j(B)\subseteq AA^{-1}.
\end{equation*}
\end{restatable}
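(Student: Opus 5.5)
We will follow the scheme already used for \Cref{GeneralBxBinReturns} and \Cref{ProdsInRetsMeasurableIntro}, replacing the squaring map by the polynomial maps $p_j,q_j$.

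\textbf{Step 1 (reduction to a recurrence statement).} By \Cref{Returns=DifferencesIntro} (part 1), applied in the amenable group $H$ (nilpotent groups are amenable), we get a m.p.s. $(X,\mathcal{B},\mu,(T_h)_{h\in H})$ and $Y\in\mathcal{B}$ with $\mu(Y)=d^*_l(A)>0$ and $R:=\{h:\mu(Y\cap T_hY)>0\}\subseteq AA^{-1}$. It therefore suffices to find $B\subseteq G$ with $\overline{d}_F(B)>0$ and $p_j(x)q_j(y)\in R$ for all $x,y\in B$ and all $j$.

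\textbf{Step 2 (the key averaged inequality).} Consider the functions
\[
\Phi_N(x)=\mu\Big(\bigcap_{j=1}^s T_{p_j(x)}^{-1}Y\cap\bigcap_{j=1}^s T_{q_j(x)}Y\Big)\quad\text{or a suitable variant}.
\]
If both $T_{p_j(x)}^{-1}Y\cap Z$ and $T_{q_j(y)}Y\cap Z$ are large inside a common set $Z$, then $\mu(T_{p_j(x)}^{-1}Y\cap T_{q_j(y)}Y)>0$, i.e. $p_j(x)q_j(y)\in R$. To get this, we first note that the polynomial multiple recurrence theorem for nilpotent groups (Leibman, together with the limit theory of \cite{Zo16}/\cite{ZK14} behind \Cref{ZK16MainThm}) gives
\[
\overlim_{N}\frac{1}{|F_N|}\sum_{x\in F_N}\mu\Big(Y\cap\bigcap_{j}T_{p_j(x)}^{-1}Y\cap\bigcap_jT_{q_j(x)}Y\Big)=:\delta>0
\]
for any F{\o}lner sequence in the finitely generated group $G$. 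This uses $p_j(1_G)=q_j(1_G)=1_H$ and the fact that $x\mapsto p_j(x)^{-1}$ is polynomial. Call $E_x$ the set inside $\mu(\cdot)$. Then $\overlim\frac1{|F_N|}\sum_{x}\mu(E_x)\ge\delta$.

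\textbf{Step 3 (Bergelson intersectivity).} By the intersectivity lemma of \cite{Be85}, the sets $E_x\subseteq Y$ with average measure at least $\delta$ admit $B\subseteq G$ with $\overline{d}_F(B)\ge\delta$ such that every finite intersection $\bigcap_{x\in B'}E_x$ with $B'\subseteq B$ finite has positive measure. This is the same device used in \Cref{GeneralBxBinReturns}. For $x,y\in B$ we have $\mu(E_x\cap E_y)>0$, and $E_x\cap E_y\subseteq T_{p_j(x)}^{-1}Y\cap T_{q_j(y)}Y$. Applying $T_{p_j(x)}$ gives $\mu(Y\cap T_{p_j(x)q_j(y)}Y)>0$. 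Hence $p_j(B)q_j(B)\subseteq R\subseteq AA^{-1}$.

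\textbf{Main obstacle.} The heart of the argument is the positivity in Step 2. One needs Leibman's nilpotent polynomial Szemerédi-type recurrence (or a $\limsup$ version) for the finitely many polynomial maps $x\mapsto p_j(x)^{-1}$ and $x\mapsto q_j(x)$ from $G$ into $H$, averaged along an arbitrary F{\o}lner sequence of $G$. If only a IP/syndetic version is available, I would first try to pass to a single F{\o}lner sequence via a uniformity argument. Alternatively, I would restrict $T$ to the finitely generated subgroup of $H$ generated by the images, which is harmless since $R$ only involves those elements. All remaining steps are formal.
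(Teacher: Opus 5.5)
Your proposal follows the paper's proof: the correspondence principle (\Cref{ReturnsInDifferences}), then the intersectivity argument of \Cref{TheBigTechnicalTheorem} with $\phi_j=q_j$, $\psi_j=p_j$ (your Steps 1 and 3), with the positivity in Step 2 supplied by \Cref{PolyCesaroNilpotent} applied to the polynomials $q_j$ and $p_j^{-1}$ (your extra factor $Y$ is just the trivial polynomial $1_H$). The one correction concerns the source of that positivity: for an arbitrary finitely generated amenable $G$ and an arbitrary F{\o}lner sequence it is not directly given by Leibman's theorem (whose domain is $\mathbb{Z}^m$), nor by the limit-existence result of \cite{Zo16}; the paper instead derives it from the IP recurrence theorem of \cite{ZK14} (via \Cref{PolyTimesIPIsFVIP}) combined with \Cref{BadIPSystem}, which is the rigorous form of the IP-to-F{\o}lner transfer you anticipated.
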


Substituting $j=1$, $H=G$ and $p_1(g)=q_1(g)=g$ for all $g\in G$ in \Cref{NilpPolyBB}, we obtain a version of \Cref{BBinAA-1FGNilpotent} without the explicit bounds on $\overline{d}_F(B)$.

The cartesian product version of \Cref{NilpPolyBB} is a natural corollary of \Cref{NilpPolyBB} itself, because for any nilpotent group $G$ and any polynomial map $p:G\to G$, the maps $G\to G\times G$ given by $g\mapsto(1_G,p(g))$ and $g\mapsto(p(g),1_G)$ are polynomial.
\begin{restatable}{cor}{NilpPolyBxB}
\label{NilpPolyBxB}
Let $G$ be a finitely generated group and $H$ a nilpotent group, with polynomial maps $p_i,q_i:G\to H$, $j=1,\dots,s$, satisfying $p_i(1_G)=q_i(1_G)=1_H$, and let $A\subseteq H\times H$ satisfy $d^*_l(A)>0$. Then for every F{\o}lner sequence $F=(F_N)_{N\in\mathbb{N}}$ in $G$ there is a set $B\subseteq G$ such that $\overline{d}_F(B)>0$ and 
\begin{equation*}
\bigcup_{j=1}^sp_j(B)\times q_j(B)\subseteq AA^{-1}.
\end{equation*}
\end{restatable}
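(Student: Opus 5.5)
The plan is to deduce this from \Cref{NilpPolyBB}, applied to the nilpotent group $H\times H$ in place of $H$. First, $H\times H$ is nilpotent. Second, for a polynomial map $p:G\to H$, the maps $\tilde p_j(g)=(p_j(g),1_H)$ and $\tilde q_j(g)=(1_H,q_j(g))$ from $G$ to $H\times H$ are again polynomial. To see this, write each as the composition of $p_j$ (resp.\ $q_j$) with the inclusion homomorphism $h\mapsto(h,1_H)$ (resp.\ $h\mapsto(1_H,h)$). Homomorphisms are polynomial, and a composition of polynomial maps is polynomial. Here the first map goes from the possibly non-nilpotent group $G$ into $H$, so I will check that the composition property in Leibman's framework only requires the target groups to be nilpotent. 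Leibman's definition, via vanishing of iterated discrete derivatives, is stable under post-composition with homomorphisms for an arbitrary domain group, so this should be immediate. Finally, the normalisation is preserved: $\tilde p_j(1_G)=\tilde q_j(1_G)=(1_H,1_H)=1_{H\times H}$.

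With these maps in hand, apply \Cref{NilpPolyBB} to $G$, to $H\times H$, to the polynomial maps $\tilde p_j,\tilde q_j$ for $j=1,\dots,s$, and to $A\subseteq H\times H$ with $d_l^*(A)>0$. This gives, for the given F{\o}lner sequence $F$ in $G$, a set $B\subseteq G$ with $\overline{d}_F(B)>0$ such that $\tilde p_j(b_1)\tilde q_j(b_2)\in AA^{-1}$ for all $b_1,b_2\in B$ and all $j$. Multiplying in $H\times H$,
\begin{equation*}
\tilde p_j(b_1)\tilde q_j(b_2)=(p_j(b_1),1_H)(1_H,q_j(b_2))=(p_j(b_1),q_j(b_2)),
\end{equation*}
so $\bigcup_j p_j(B)\times q_j(B)\subseteq AA^{-1}$, which is the claim.

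Nothing here is a real obstacle. The only point to be careful about is the polynomiality of $\tilde p_j,\tilde q_j$ in the generality of \Cref{NilpPolyBB}, where $G$ is merely finitely generated. I expect the definition given later in \Cref{SecNilpotent} to make this trivial.
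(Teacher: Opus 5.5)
Your proposal is correct and is essentially the paper's own proof: apply \Cref{NilpPolyBB} to $H\times H$ with the maps $g\mapsto(p_j(g),1_H)$ and $g\mapsto(1_H,q_j(g))$, which are polynomial because post-composition with a homomorphism $\phi$ commutes with derivatives, $D_h(\phi\circ p)=\phi\circ D_hp$. Then observe that the resulting product sets are exactly $p_j(B)\times q_j(B)$; that identity also confirms your point that the composition step needs no nilpotency of $G$.
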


\Cref{NilpPolyBB} is deduced in \Cref{SecNilpotent} from \Cref{PolyCesaroNilpotent}, a variant of a polynomial multiple recurrence theorem for finitely generated nilpotent groups, which is a non trivial consequence of the work of Zorin-Kranich (\cite{ZK14,Zo16}). Before formulating \Cref{PolyCesaroNilpotent}, we have to introduce some definitions.

Let $\mathcal{P}_f(\mathbb{N})$ be the family of finite subsets of $\mathbb{N}$, and $\mathcal{F}=\mathcal{P}_f(\mathbb{N})\setminus\{\varnothing\}$. The \textit{(left) IP-system} generated by a sequence $(g_n)$ of elements of a group $G$ is the map $g:\mathcal{P}_f(\mathbb{N})\to G$ given by 
\begin{equation*}
g(\alpha)=\prod_{i\in\alpha}^<g_i:=g_{i_1}\cdots g_{i_k}\textup{ (so $g(\varnothing)=1_G$)},
\end{equation*}
where $\alpha=\{i_1,\dots,i_k\}$ and $i_1<i_2<\dots<i_k$. A (left) IP set in $G$ is the set $g(\mathcal{F})$ associated to some IP-system $g$, and we say a set $A\subseteq G$ is IP$^*$ in $G$ if it has nonempty intersection with every IP set in $G$.
\begin{restatable}{theorem}{PolyCesaroNilpotent}
\label{PolyCesaroNilpotent}
Let $G$ be a finitely generated group and let $H$ be a nilpotent group. For some $s\in\mathbb{N}$ let $p_1,\dots,p_s:G\to H$ be polynomial maps such that $p_j(1_G)=1_H$ for all $j=1,\dots,s$. Then for any m.p.s. $(X,\mathcal{B},\mu,(T_a)_{a\in H})$ and any $B\in\mathcal{B}$ with $\mu(B)>0$, 
the set of returns
\begin{equation}
\label{EqReturnsNilpPoly}
R=\left\{g\in G;\mu\left(\bigcap_{j=1}^s T_{p_j(g)}B\right)>0\right\}
\end{equation}
is IP$^*$ in $G$.
Moreover, if $G$ is amenable and $(F_N)$ is a left or right F{\o}lner sequence in $G$, then
\begin{equation}
\label{EqBigAvgReturnsPolyNil}
\lim_{N\to\infty}\frac{1}{|F_N|}\sum_{g\in F_N}\mu\left(\bigcap_{j=1}^s T_{p_j(g)}B\right)=:\lambda(B)>0,
\end{equation}
and the constant $\lambda(B)$ does not depend on the F{\o}lner sequence. 
\end{restatable}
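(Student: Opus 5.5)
Both parts of \Cref{PolyCesaroNilpotent} will be reduced to the polynomial multiple recurrence machinery of Zorin-Kranich \cite{ZK14,Zo16}. That machinery applies to polynomial maps into a nilpotent group acting by measure-preserving transformations.

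\textbf{Reduction.} Since $G$ is finitely generated, each $p_j(G)$ lies in the finitely generated subgroup $H'$ of $H$ generated by the images of finitely many generators under the polynomial maps. To see this, expand $p_j$ via Leibman's description: a polynomial map out of a finitely generated group takes values in the subgroup generated by finitely many ``coefficients''. Restricting the action to $H'$, we may assume $H$ is finitely generated nilpotent.

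\textbf{The IP$^*$ statement.} Fix an IP-system $g(\alpha)$ in $G$. Then $\alpha\mapsto p_j(g(\alpha))$ is an IP-polynomial (VIP) system in the nilpotent group $H$ in the sense of Leibman/Zorin-Kranich, with value $1_H$ at $\varnothing$. I would invoke the nilpotent IP polynomial Szemerédi theorem (Bergelson--Leibman, in the form proved by Zorin-Kranich for nilpotent groups) to get
\[
\operatorname{IP\text{-}lim}_\alpha\mu\Bigl(\bigcap_{j=1}^s T_{p_j(g(\alpha))}B\Bigr)>0
\]
along some sub-IP-ring. In particular some $\alpha\in\mathcal F$ gives a positive measure, so $R\cap g(\mathcal F)\neq\varnothing$.

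\textbf{The Følner average.} Existence of the limit in \eqref{EqBigAvgReturnsPolyNil}, and its independence of the left or right Følner sequence, comes from \cite[Theorem 1.1]{Zo16}. That theorem gives norm convergence of multiple polynomial averages $\frac{1}{|F_N|}\sum_{g\in F_N}\prod_j T_{p_j(g)}f_j$ along Følner sequences; integrating yields convergence of the scalar averages. Independence of $F$ follows by the standard interleaving trick: merging two Følner sequences gives another Følner sequence, along which the limit must still exist.

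Positivity is the main obstacle. Since $\lambda(B)$ is independent of $F$, it suffices to show $\lambda(B)>0$ for one Følner sequence, so I would argue by contradiction. If $\lambda(B)=0$, then the set $E=\{g:\mu(\bigcap_j T_{p_j(g)}B)>\eta\}$ has zero upper density along every Følner sequence, for every $\eta>0$, so $d^*(E)=0$. I then need a uniform version of the IP statement: there is $\eta>0$ such that $\{g:\mu(\cdots)>\eta\}$ is IP$^*$. This follows by applying the IP-limit result above, which gives a positive IP-limit $c(B)$, with $\eta<c(B)$ chosen uniformly. The uniformity is obtained by contradiction, building from a family of failing IP-systems a single IP-system along which the limit would vanish, via a diagonal/Hindman argument.

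An IP$^*$ set has positive upper Banach density, indeed it is syndetic, since the complement of a syndetic-free thick set contains an IP set. This contradicts $d^*(E)=0$. More carefully, IP$^*$ sets in an amenable group meet every thick set, so they are syndetic and hence have positive lower density along every Følner sequence. The averages are then bounded below by $\eta\,\underline d_F(E)>0$. Obtaining this uniform $\eta$ is the step I expect to require the most care; if needed I would instead cite the corresponding uniform IP$^*$ ($\eta$-) statement from \cite{ZK14} directly.
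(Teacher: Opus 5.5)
\textbf{The gap: the uniform constant $\eta$ in the positivity step.} Write $x(g)=\mu\bigl(\bigcap_{j}T_{p_j(g)}B\bigr)$. You need some $\eta>0$ for which $E_\eta=\{g\in G;x(g)>\eta\}$ is IP$^*$. You propose to get it by contradiction from the IP statement, merging IP-systems that fail for $\eta=1/n$ into a single IP-system along which $x$ tends to $0$.

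This does not work. The failing systems for different $n$ are unrelated, and a product of generators taken from two of them belongs to neither, so nothing controls $x$ there. In fact the implication fails at this level of generality. The IP statement amounts to $p\textup{-}\lim x>0$ for every idempotent ultrafilter $p$ on $G$. The statement ``$E_\eta$ is IP$^*$ for some $\eta>0$'' says these values have a positive infimum. Since idempotents do not form a closed subset of $\beta G$, the first does not imply the second. For a concrete example, on $\mathbb{N}$ split the exponents into infinitely many infinite classes $S_m$, and set $x(j)=1/m$ if the binary support of $j$ lies in $S_m$, $x(j)=1$ otherwise. Each $\{x\le 1/m\}$ contains an IP set, yet no idempotent contains all the sets $\{x<1/m\}$.

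Citing \cite{ZK14} does not close the gap either. The result used in the paper (\Cref{ZKThm5.32}) only gives a positive IP-limit along some sub-IP-ring, with no constant uniform in the IP-system.

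\textbf{The fix, which makes the uniform constant unnecessary.} You were one step from the paper's argument. Suppose the limit along a left F{\o}lner sequence $F$ is $0$. Then each $A_n=\{g;x(g)<1/n\}$ has $d_F(A_n)=1$. Because $F$ is left F{\o}lner, every left translate $h^{-1}A_n$, and every finite intersection of such sets, also has density $1$. So you can choose recursively $g_n\in\bigcap_{h\in P_{n-1}\cup\{1_G\}}h^{-1}A_n$, where $P_{n-1}$ is the finite set of ordered products of $g_1,\dots,g_{n-1}$. Then $x(g_\alpha)<1/\max\alpha$ for all $\alpha\in\mathcal{F}$, so $x(g_\alpha)\to0$ along all of $\mathcal{F}$, hence along every sub-IP-ring. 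This contradicts \Cref{SubIPRingWithBigLim}. This is \Cref{BadIPSystem}: the translation-invariant density-one filter is what makes the diagonal construction work, and it removes the need to pass through IP$^*$ sets and syndeticity.

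\textbf{Two smaller points.} First, interleaving only compares two left (or two right) F{\o}lner sequences. To identify left and right limits, the paper also uses that $g\mapsto p_j(g^{-1})$ is polynomial (\Cref{InvertingInsidePolyIsInocuous}) and that two-sided F{\o}lner sequences exist. Second, that $\alpha\mapsto p_j(g_\alpha)$ lies in an FVIP group, as \cite{ZK14} requires, is a genuine lemma rather than a formality. This is \Cref{PolyTimesIPIsFVIP}, applied to $g\mapsto p_j(g^{-1})$ because of the ordering convention in the IP-derivative.
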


\paragraph{Structure of the paper.} 
\Cref{SecPrelims}, provides background material required for the rest of the paper. 
In \Cref{SecErgToComb}, we prove our main technical result, \Cref{TheBigTechnicalTheorem}, and deduce from it \Cref{GeneralBxBinReturns,BigAvgsBallFreeGroup,B+BinA-AAbelian,BxBinAAminus1twosided}, \Cref{CartProdInRets}, and \Cref{BkBkinAA-1ConcreteCase}. 
In \Cref{SecGeneralDensities}, we prove \Cref{TheBigTechnicalTheoremWithProbMeasures}, a generalization of \Cref{TheBigTechnicalTheorem} in which upper density along a sequence \((F_N)\) of finite subsets is replaced by upper density with respect to a sequence \((\mu_N)\) of probability measures on \(G\). We then apply \Cref{TheBigTechnicalTheoremWithProbMeasures} to derive \Cref{B+BinA-AinZLogDensity} and establish \Cref{ThmConvsAreNiceAndHaveBxBinAA-1}, a version of \Cref{CartProdInRetsFree} for arbitrary countable groups.
In \Cref{SecNilpotent}, we establish our main results for finitely generated nilpotent groups, namely \Cref{NilpPolyBB,PolyCesaroNilpotent,BigAvgsAlongSquaresFGNilp}.
In \Cref{SecSemiProds}, we show that some classes of amenable groups, other than abelian and finitely generated nilpotent groups, have the SAR property; in particular, we prove \Cref{BigSqAvgsInSemiprods,HnRHasBiSquareAverages,BBinAA-1SomeNonSolvableMatrixGroups}. 
\Cref{SecCounterexamples} contains the counterexamples mentioned in 
\Cref{Rmk113,Rmk120}. Finally, in \Cref{SecQuests} we formulate some open questions.

%% file: S2-Preliminaries.tex
\section{Preliminaries}
\label{SecPrelims}
This section contains the background material needed for the remainder of the paper. 

We start with a few basic facts about F{\o}lner sequences.

\begin{lemma}
\label{FolnerProps}
Let $G$ be a countable amenable group and let $A\subseteq G$.
\begin{enumerate}[label=(\alph*)]
    \item
\label{UBDisamax} There exists a left F{\o}lner sequence $F$ in $G$ such that $d_F(A)=d_l^*(A)$.
    \item \label{FleftFolneriffF-1rightFolner}$(F_N)_{N\in\mathbb{N}}$ is a left F{\o}lner sequence in $G$ iff $\left(F_N^{-1}\right)_{N\in\mathbb{N}}$ is a right F{\o}lner sequence in $G$.
    \item\label{LeftAndRightUBD} For all $g\in G$ we have $d^*_l(gA)=d^*_l(Ag)=d^*_l(A)$, and $d_l^*(A)=d_r^*(A^{-1})$. 
    \item \label{AutosPreserveFolner} For any left F{\o}lner sequence $F=(F_N)$ in $G$ and any automorphism $\phi:G\to G$, $(\phi(F_N))_{N\in\mathbb{N}}$ is a left F{\o}lner sequence in $G$.
\end{enumerate}

\end{lemma}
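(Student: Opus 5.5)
We prove the four items separately, in the order (b), (d), (c), (a); (a) is the only one needing an actual construction.

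For \ref{FleftFolneriffF-1rightFolner}, inversion $x\mapsto x^{-1}$ is a bijection of $G$ satisfying $(gF)^{-1}=F^{-1}g^{-1}$. Hence
\[
\left|gF_N\Delta F_N\right|=\left|F_N^{-1}g^{-1}\Delta F_N^{-1}\right| \quad\text{and}\quad |F_N|=|F_N^{-1}|.
\]
As $g$ ranges over $G$, so does $g^{-1}$, which gives the equivalence.

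For \ref{AutosPreserveFolner}, apply the bijection $\phi$:
\[
\phi(F_N)\,\Delta\, g\phi(F_N)=\phi\bigl(F_N\Delta \phi^{-1}(g)F_N\bigr),
\]
and $\phi$ preserves cardinalities. Applying the left F{\o}lner property of $F$ to the element $\phi^{-1}(g)$ gives the claim.

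For \ref{LeftAndRightUBD}, start with the left translate. If $F$ is left F{\o}lner, then $(g^{-1}F_N)$ is also left F{\o}lner, because
\[
|hg^{-1}F_N\Delta g^{-1}F_N|=|(ghg^{-1})F_N\Delta F_N|.
\]
Moreover $|gA\cap F_N|=|A\cap g^{-1}F_N|$ and $|g^{-1}F_N|=|F_N|$, so $\overline d_F(gA)=\overline d_{g^{-1}F}(A)\le d^*_l(A)$. By symmetry (replacing $g$ by $g^{-1}$) we get equality.

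For the right translate, use instead that $|Ag\cap F_N|$ and $|A\cap F_N|$ differ by at most $|F_Ng^{-1}\Delta F_N|$. This is the wrong side for a left F{\o}lner sequence, so instead replace $F_N$ by $F_Ng^{-1}$. That sequence is still left F{\o}lner, since $h(F_Ng^{-1})\Delta F_Ng^{-1}=(hF_N\Delta F_N)g^{-1}$, and $|Ag\cap F_N|=|A\cap F_Ng^{-1}|$. This yields $d^*_l(Ag)\le d^*_l(A)$, and equality again follows by symmetry.

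Finally, $|A^{-1}\cap F_N^{-1}|=|A\cap F_N|$. Combined with \ref{FleftFolneriffF-1rightFolner}, this gives $d^*_l(A)=d^*_r(A^{-1})$ by taking suprema over corresponding sequences.

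For \ref{UBDisamax}, we use a diagonal argument, which is the main (mild) step.
\begin{enumerate}[label=(\roman*)]
\item Enumerate $G=\{g_1,g_2,\dots\}$.
\item For each $k$, choose a left F{\o}lner sequence $F^{(k)}$ with $\overline d_{F^{(k)}}(A)>d^*_l(A)-1/k$. If $d_l^*(A)=0$, any F{\o}lner sequence works; the supremum is over a nonempty family since $G$ is amenable.
\item Inside $F^{(k)}$, pick an index $N_k$ such that
\[
\frac{|A\cap F^{(k)}_{N_k}|}{|F^{(k)}_{N_k}|}>d^*_l(A)-\frac1k \quad\text{and}\quad \frac{|g_iF^{(k)}_{N_k}\Delta F^{(k)}_{N_k}|}{|F^{(k)}_{N_k}|}<\frac1k\ \text{ for } i\le k.
\]
This is possible because the upper limit exceeds the threshold infinitely often, while the F{\o}lner ratios eventually drop below $1/k$.
\item Set $F_k=F^{(k)}_{N_k}$. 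Then $(F_k)$ is left F{\o}lner, and $\liminf_k |A\cap F_k|/|F_k|\ge d^*_l(A)$.
\item Since $(F_k)$ is left F{\o}lner, $\limsup_k |A\cap F_k|/|F_k|\le d^*_l(A)$ by definition of $d^*_l$. So the density $d_F(A)$ exists and equals $d^*_l(A)$.
\end{enumerate}
The only care needed throughout is to keep the sides of the translations straight.
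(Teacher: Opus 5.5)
Your proof is correct and follows the paper's argument essentially step by step: the same diagonal construction for (a), inversion for (b), the identity $\phi(F_N)\Delta g\phi(F_N)=\phi\bigl(F_N\Delta\phi^{-1}(g)F_N\bigr)$ for (d), and right-translated F{\o}lner sequences together with (b) for (c). The one difference is in the left-translate case: the paper notes directly that $\overline{d}_F(gA)=\overline{d}_F(A)$ for every left F{\o}lner sequence $F$, while you pass to the left F{\o}lner sequence $(g^{-1}F_N)$, and both arguments are valid.
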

\begin{proof}

We start with proving \ref{UBDisamax}.
For each $k\in\mathbb{N}$ let $F_k=(F_{k,N})_{N\in\mathbb{N}}$ be a left F{\o}lner sequence satisfying $\overline{d}_{F_k}(A)> d^*_l(A)-\frac{1}{k}$. Let $(g_k)_{k\in\mathbb{N}}$ be a list of all elements of $G$. Then for each $k\in\mathbb{N}$ we can choose some big $N_k$ such that 
\begin{align*}
\frac{|A\cap F_{k,N_k}|}{|F_{k,N_k}|}&>d^*_l(A)-\frac{1}{k}\\
\frac{|g_jF_{k,N_k}\Delta F_{k,N_k}|}{|F_{k,N_k}|}&<\frac{1}{k}\textup{ for all }j=1,\dots,k.
\end{align*}
It follows that $(F_{k,N_k})_{k\in\mathbb{N}}$ is a left F{\o}lner sequence satisfying $\underline{d}_F(A)\geq d^*_l(A)$. By definition of $d^*_l(A)$, we also have $\overline{d}_F(A)\leq d^*_l(A)$, so we are done.

\Cref{FleftFolneriffF-1rightFolner} is immediate from the definitions.

The second part of \ref{LeftAndRightUBD} follows from \ref{FleftFolneriffF-1rightFolner}. We now prove the first part of \ref{LeftAndRightUBD}. If $g\in G$, then $d_{F}(A)=d_F(gA)$ for all left F{\o}lner sequences $F$, so $d^*_l(A)=d^*_l(gA)$. Moreover, for any left F{\o}lner sequence $(F_N)$, the sequence $(F_Ng)$ is also left F{\o}lner, and $d_{(F_N)}(A)=d_{(F_Ng)}(Ag)$, so taking the supremum for all F{\o}lner sequences we obtain $d^*_l(A)=d^*_l(Ag)$.

Finally, \ref{AutosPreserveFolner} follows from the fact that, for all $a\in G$ and $N\in\mathbb{N}$, 
\begin{equation*}
|\phi(F_N)\Delta a\phi(F_N)|=|F_N\Delta\phi^{-1}(a)F_N|.\qedhere
\end{equation*}
\end{proof}

We will recall now some basic facts from ergodic theory. Let $G$ be a countable amenable group, $(X,\mathcal{B},\mu,(T_g)_{g\in G})$ a m.p.s., and $\langle\cdot,\cdot\rangle$ the inner product in $L^2(X,\mu)$. The left action $(T_g)_{g\in G}$ on $X$ induces a right unitary antiaction on $L^2(X)=L^2(X,\mu)$, given by $T_gf=f\circ T_g$ (so $T_gT_hf=T_{hg}f$). 

Let $\pi:L^2(X)\to L^2(X)$ be orthogonal projection to the subspace of $T$-invariant functions (note that $1_X$ is in the image of $\pi$). Then, for all $Y\in\mathcal{B}$, we have
\begin{equation}
\label{InnerProdWithProjection}
\langle 1_Y,\pi1_Y\rangle=\langle\pi1_Y,\pi1_Y\rangle=\langle\pi1_Y,\pi1_Y\rangle\cdot\langle1_X,1_X\rangle\geq\langle\pi 1_Y,1_X\rangle^2=\langle1_Y,1_X\rangle^2=\mu(Y)^2.
\end{equation}
Von Neumann's ergodic theorem for amenable groups (see, for example, \cite[Theorem II.1.3]{OW}) implies that, for all right F{\o}lner sequences $(F_N)$ in $G$ and $f\in L^2(X)$, 
\begin{equation}
\label{VonNeumannAmenableL2}
\lim_{N\to\infty}\frac{1}{|F_N|}\sum_{g\in F_N}T_gf=\pi f.
\end{equation}
It follows from \Cref{VonNeumannAmenableL2} and \Cref{InnerProdWithProjection} that, if $(F_N)$ is a left F{\o}lner sequence,
\begin{equation}
\label{LinearAvgsAreBig}
\lim_{N\to\infty}\frac{1}{|F_N|}\sum_{g\in F_N}\mu(Y\cap T_gY)
=
\lim_{N\to\infty}\frac{1}{|F_N|}\sum_{g\in F_N}
\left\langle1_Y,T_g^{-1}1_Y\right\rangle
=
\left\langle1_Y,\pi1_Y\right\rangle
\geq\mu(Y)^2.
\end{equation}
\Cref{LinearAvgsAreBig} also holds for right F{\o}lner sequences, as $\mu(Y\cap T_gY)=\mu(Y\cap T_{g^{-1}}Y)$.

The following result is a slight generalization of \cite[Lemma 5.10]{Be3}; for the original version in $\mathbb{N}$ see \cite[Theorem 1.1]{Be85}. 
The general form of \Cref{IntLemma} ($G$ is a set and $(F_N)$ is a sequence of finite nonempty subsets of $G$) will allow us to get new results, such as Corollaries \ref{CartProdInRetsFree} and \ref{PrimesMinus1ArePIR}.

\begin{lemma}[Intersectivity lemma]
\label{IntLemma}
Let $G$ be a set, $F=(F_N)_{N\in\mathbb{N}}$ a sequence of finite, nonempty subsets of $G$, and $(Y_g)_{g\in G}$ measurable subsets of a probability space $(X,\mathcal{B},\mu)$. Then there exists a set $B\subseteq G$ such that the following holds:
\begin{enumerate}[label=(\arabic*)]
\item\label{IntLemmaItem1} For each finite set $B_0\subseteq B$, we have $\mu\left(\bigcap_{b\in B_0}Y_b\right)>0$.
    \item \label{IntLemmaItem2}$\displaystyle\overline{d}_F(B)\geq \lambda=\overlim\limits_{N\to\infty}\frac{1}{|F_N|}\sum_{g\in F_N}\mu(Y_g)$.
\end{enumerate}
\end{lemma}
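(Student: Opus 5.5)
We prove the Intersectivity lemma (\Cref{IntLemma}) by following Bergelson's original argument: first reduce to the case where the sets $Y_g$ can be replaced by sets with a finitary positivity property, and then use a Fatou/averaging argument to find a point $x$ lying in many $Y_g$.

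\textbf{Step 1: reduction.} The family $\{Y_g\}$ is countable in the relevant sense. Define $B_0=\bigcup_N F_N$, which is countable. For each finite $S\subseteq B_0$ with $\mu(\bigcap_{g\in S}Y_g)=0$, discard the null set $\bigcap_{g\in S}Y_g$. There are only countably many such $S$, so the union $Z$ of the discarded sets is null. Replace $Y_g$ by $Y_g'=Y_g\setminus Z$ for $g\in B_0$. This leaves the measures, and hence $\lambda$, unchanged. Now every finite intersection of the $Y'_g$ that is nonempty has positive measure, because its $\mu$-measure is that of the original intersection, and the original intersection was not discarded precisely when it had positive measure. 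Hence it suffices to find $B\subseteq B_0$ with $\bigcap_{b\in B_0'}Y'_b\neq\varnothing$ for every finite $B_0'\subseteq B$ and with $\overline{d}_F(B)\geq\lambda$. It is enough to find a single point $x\in X\setminus Z$ and set $B=\{g: x\in Y'_g\}$: every finite subfamily of $\{Y'_b\}_{b\in B}$ then contains $x$, so its intersection is nonempty and therefore has positive measure.

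\textbf{Step 2: choosing the point.} Consider $f_N(x)=\frac{1}{|F_N|}\sum_{g\in F_N}1_{Y'_g}(x)$, which takes values in $[0,1]$. Pass to a subsequence $N_k$ along which $\int f_{N_k}\,d\mu\to\lambda$. Then $\overline{f}:=\limsup_k f_{N_k}$ satisfies $\int\overline{f}\,d\mu\geq\limsup_k\int f_{N_k}\,d\mu=\lambda$. This is reverse Fatou, which applies because $0\leq f_N\leq1$ on a probability space. Hence there is $x\notin Z$ with $\overline{f}(x)\geq\lambda$; such an $x$ exists since the set where $\overline f\geq\lambda$ has positive measure, or else $\int\overline f<\lambda$. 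For $B=\{g: x\in Y'_g\}$ we have $|B\cap F_N|/|F_N|=f_N(x)$, so $\overline{d}_F(B)\geq\overline f(x)\geq\lambda$.

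\textbf{Main obstacle.} The only delicate point is the measure-zero bookkeeping in Step 1. The countability of $B_0$ is what makes the union of all discarded null intersections null. We must also confirm that $\lambda$ is computed with the modified sets, which holds because each $Y'_g$ differs from $Y_g$ by a null set. The case $\lambda=0$ is trivial: take $B=\varnothing$.
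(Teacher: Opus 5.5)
Your proposal is correct and is essentially the paper's proof: you discard the null set $Z$ formed by all null finite intersections (countably many, since one may assume $G=\bigcup_N F_N$), apply reverse Fatou to $\frac{1}{|F_N|}\sum_{g\in F_N}1_{Y_g}$, and take $B=\{g: x_0\in Y_g\}$ for a point $x_0\notin Z$ with $\limsup_N\frac{1}{|F_N|}\sum_{g\in F_N}1_{Y_g}(x_0)\geq\lambda$. The only differences are cosmetic: you replace the sets by $Y_g'=Y_g\setminus Z$ where the paper restricts to $X'=X\setminus Z$, and your passage to a subsequence $(N_k)$ is harmless but unnecessary, since Fatou applies directly to the full $\limsup$.
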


\begin{remark}
\label{FinAddRemark}
As explained in the discussion after \cite[Lemma 4.4]{BBF}, Lemma \ref{IntLemma} also holds for finitely additive probability spaces $(X,\mathcal{B},\mu)$, where $\mathcal{B}\subseteq\mathcal{P}(X)$ is an algebra and $\mu:\mathcal{B}\to[0,1]$ is a finitely additive probability measure.
\end{remark}

\begin{proof}[Proof of \Cref{IntLemma}]
We may assume $G=\bigcup_NF_N$, so that $G$ is countable. First let $\mathcal{F}$ be the family of finite subsets $G_0\subseteq G$ such that $\mu\left(\cap_{g\in G_0}Y_g\right)=0$, and let
\begin{equation*}
X'=X\setminus\bigcup_{G_0\in\mathcal{F}}\cap_{g\in G_0}Y_g,
\end{equation*}
so that $\mu(X')=1$. Let $f:X\to[0,1]$ be given by 
\begin{equation*}
f(x)=\overlim_{N\to\infty}\frac{1}{|F_N|}\sum_{g\in F_N}1_{Y_g}(x).
\end{equation*}
 By Fatou's lemma, we have
\begin{equation*}
\int_{X'}fd\mu=\int_Xfd\mu\geq\overlim_{N\to\infty}\int_X\frac{1}{|F_N|}\sum_{g\in F_N}1_{Y_g}d\mu=\lambda.
\end{equation*}
Thus, there is a point $x_0\in X'$ such that $f(x_0)\geq \lambda$. Let $B=\{g\in G;x_0\in Y_g\}$. For any finite $B_0\subseteq B$, we have $x\in X'\cap_{g\in B_0}Y_g$, so that, by definition of $X'$, \ref{IntLemmaItem1} must hold. The fact that $f(x_0)\geq\lambda$ means that \ref{IntLemmaItem2} holds too.
\end{proof}

We now recall the close relationship between sets of differences and sets of returns. If $G$ is a group, $Y\subseteq G$ and $F=(F_N)$ is a sequence  of finite, nonempty subsets of $G$, the set of \textit{left returns} of $A$ is given by
\begin{equation*}
R^l_F(A)=\{g\in G;\overline{d}_F(A\cap gA)>0\}\subseteq AA^{-1}.
\end{equation*}
Similarly, the \textit{right returns} of $A$ are given by
\begin{equation*}
R^r_F(A)=\{g\in G;\overline{d}_F(A\cap Ag)>0\}\subseteq A^{-1}A.
\end{equation*}

For a m.p.s. $(X,\mathcal{B},\mu,(T_g)_{g\in G})$, we denote the \textit{set of measurable returns} of $Y\in\mathcal{B}$ is given by 
\begin{equation}
\label{MeasurableReturns}
R_\mu^T(Y)=\{g\in G;\mu(Y\cap T_gY)>0\}.
\end{equation}

\begin{lemma}[Sets of returns vs sets of differences]
\label{Returns=Differences}
Let $G$ be an infinite countable amenable group with a left F{\o}lner sequence $F=(F_N)_{N\in\mathbb{N}}$.
\begin{enumerate}[label=(\arabic*)]
    \item\label{Returns=Differences2} For all $A\subseteq G$ there is a m.p.s. $(X,\mathcal{B},\mu,(T_g)_{g\in G})$ and $Y\in\mathcal{B}$ such that $\overline{d}_F(A)=\mu(Y)$ and $R^T_\mu(Y)\subseteq R^l_F(A)\subseteq AA^{-1}$. 
    
    In fact, for some subsequence $F'$ of $F$, $\overline{d}_F(A)=d_{F'}(A)$ and $d_{F'}(A\cap gA)=\mu(Y\cap T_gY)$ for all $g\in G$.
    
    \item\label{Returns=Differences1} For all m.p.s. $(X,\mathcal{B},\mu,(T_g)_{g\in G})$ and $Y\in\mathcal{B}$ there is $A\subseteq G$ such that $d_F(A)=\mu(Y)$ and $R^T_\mu(Y)=R^l_F(A)\subseteq AA^{-1}$.
    
    In fact, $d_F(A\cap gA)=\mu(Y\cap T_gY)$ for all $g\in G$.

    \item\label{Returns=Differences3}
    For all m.p.s. $(X,\mathcal{B},\mu,(T_g)_{g\in G})$ and $Y\in\mathcal{B}$ there is $A\subseteq G$ such that $\overline{d}_F(A)=\mu(Y)$ and $AA^{-1}\subseteq R^T_\mu(Y)$.
\end{enumerate}
\end{lemma}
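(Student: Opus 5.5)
We prove the three parts separately; parts (1) and (2) are versions of the Furstenberg correspondence principle, and part (3) follows from part (2) by adapting the intersectivity argument.

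\textbf{Part (1).} We work on $X=\{0,1\}^G$ with the shift action $(T_g x)(h)=x(hg)$, so that $T_{gh}=T_g\circ T_h$. Let $\omega=1_A\in X$ and $Y=\{x;x(1_G)=1\}$. Then $T_g^{-1}Y=\{x;x(g^{-1})=1\}$. Pass to a subsequence $F'$ of $F$ along which $|A\cap F_N|/|F_N|\to\overline{d}_F(A)$. By a diagonal argument over the countable algebra generated by the cylinder sets, we can further arrange that the averages $\frac{1}{|F'_N|}\sum_{h\in F'_N}\delta_{\sigma_h\omega}$ converge weak$^*$ to a probability measure $\mu$. Here $\sigma_h$ is chosen so that $\sigma_h\omega(k)=\omega(kh)$ or the analogous expression, matching the convention for a left F{\o}lner sequence.

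Invariance of $\mu$ follows from the F{\o}lner property of $F'$. Since $Y$ and $Y\cap T_gY$ are clopen, weak$^*$ convergence gives
\[
\mu(Y)=\lim_N \frac{|A\cap F'_N|}{|F'_N|}=\overline{d}_F(A),
\qquad
\mu(Y\cap T_gY)=d_{F'}(A\cap gA).
\]
The one delicate point is orienting the shift so that $T_gY$ corresponds exactly to $gA$, not $Ag$ or $g^{-1}A$. The left F{\o}lner condition is what makes the counting for $A\cap gA$ invariant. Once this holds, $\mu(Y\cap T_gY)>0$ gives $\overline{d}_F(A\cap gA)\ge d_{F'}(A\cap gA)>0$, so $g\in R^l_F(A)$. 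The inclusion $R^l_F(A)\subseteq AA^{-1}$ is trivial.

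\textbf{Part (2).} This is a generic-point argument. Apply the pointwise (mean) ergodic theorem to countably many functions, namely $1_Y$ and $1_{Y\cap T_gY}$ for each $g\in G$. Since pointwise convergence along an arbitrary F{\o}lner sequence can fail, we instead use a random construction. Choose $x$ at random and set $A=\{h;T_{h}^{-1}x\in Y\}$, with the convention chosen so that $A\cap gA$ corresponds to visits to $Y\cap T_gY$. The mean ergodic theorem gives $L^2$ convergence of the ergodic averages to conditional expectations, but we need the deterministic values $\mu(Y)$ and $\mu(Y\cap T_gY)$.

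The standard workaround is to pass to the product system or to use the fact that the statement is about the density of $A$ itself. The cleaner route is to build $A$ block by block. Enumerate $G=\{g_1,g_2,\dots\}$. On disjoint pieces of the F{\o}lner sets (using a sparse subsequence, and noting that densities along $F$ are only affected in the limit), choose points $x_N$ such that the empirical frequencies over $F_N$ of $Y$ and of $Y\cap T_{g_j}Y$, for $j\le N$, are within $1/N$ of their measures. Such $x_N$ exist because the averages of $\frac{1}{|F_N|}\sum_{h\in F_N}1_{Y\cap T_gY}(T_hx)$ integrate exactly to $\mu(Y\cap T_gY)$, by invariance.

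A single point in general only achieves the right average for one function, not all of them simultaneously. I therefore expect this step to be the main obstacle, and I would first try to resolve it with a concentration bound. Averaging over $x$ controls the mean but not the variance. A Chebyshev/$L^2$ estimate using the mean ergodic theorem in the ergodic case handles the variance; in the nonergodic case I would replace the system by the ergodic product $X\times Z$, where $Z$ is a Bernoulli action, or by an ergodic decomposition argument. The aim is to guarantee that the empirical correlations equal $\mu(Y\cap T_gY)$ rather than a fibre value. Gluing the blocks so that $F_N$ eventually lies in the region controlled by $x_N$ then gives $d_F(A)=\mu(Y)$ and $d_F(A\cap gA)=\mu(Y\cap T_gY)$ for all $g$. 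In particular $R^l_F(A)=R^T_\mu(Y)$.

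\textbf{Part (3).} Apply \Cref{IntLemma} to the sets $Y_g=T_gY$, with the convention chosen so that the lemma's point $x_0$ works. This yields $x_0$ and $A=\{h;x_0\in T_hY\}$ with $\overline{d}_F(A)\ge\mu(Y)$, and with every finite intersection of the sets $T_aY$, $a\in A$, of positive measure. For $a,b\in A$, the set $\mu(T_aY\cap T_bY)=\mu(Y\cap T_{a^{-1}b}Y)$ is positive. Using $\mu(Y\cap T_gY)=\mu(Y\cap T_{g^{-1}}Y)$ then gives $ab^{-1}\in R^T_\mu(Y)$, after choosing the side convention appropriately (replacing $A$ by $A^{-1}$ if needed and using a right F{\o}lner sequence accordingly).

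To get equality $\overline{d}_F(A)=\mu(Y)$ rather than $\ge$, take a subset: deleting elements preserves $AA^{-1}\subseteq R^T_\mu(Y)$, and upper density along $F$ can be lowered to any prescribed smaller value by thinning.
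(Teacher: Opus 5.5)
\textbf{Parts (1) and (3).} Your Part (1) is the standard correspondence principle and works. Take $(T_gx)(h)=x(hg)$ and let $\mu$ be a weak$^*$ limit of $\frac{1}{|F'_N|}\sum_{h\in F'_N}\delta_{T_h1_A}$. Then left F{\o}lner invariance gives $T$-invariance of $\mu$, and $T_h1_A\in Y\cap T_gY$ iff $h\in A\cap gA$. (Note that $\{x;x(g^{-1})=1\}$ is $T_gY$, not $T_g^{-1}Y$.)

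Part (3) is the paper's intersectivity argument, but fix the orientation properly. The set $A=\{h;x_0\in T_hY\}$ gives $a^{-1}b\in R^T_\mu(Y)$, i.e. $A^{-1}A\subseteq R^T_\mu(Y)$. The symmetry $R^T_\mu(Y)=R^T_\mu(Y)^{-1}$ only turns this into $b^{-1}a$, not $ab^{-1}$. Instead, apply the intersectivity lemma, with the given $F$, to $Y_h=T_{h^{-1}}Y$. Then $\mu(T_{a^{-1}}Y\cap T_{b^{-1}}Y)>0$ yields $ab^{-1}\in R^T_\mu(Y)$ directly.

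\textbf{The gap: Part (2).} The paper takes Part (2), like Part (1), from the converse correspondence principle \cite[Theorem 3.14]{Ro25}. You need a single set $A$ with $d_F(A)=\mu(Y)$ and $d_F(A\cap gA)=\mu(Y\cap T_gY)$ for all $g$. This must hold along the whole given sequence $F$ and for an arbitrary, possibly non-ergodic, system. Your sketch achieves neither.

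First, the points $x_N$ need not exist. That $x\mapsto\frac{1}{|F_N|}\sum_{h\in F_N}1_{Y\cap T_gY}(T_hx)$ integrates to $\mu(Y\cap T_gY)$ does not produce a point where it is close to that value, even for a single function. For example, let $G$ act trivially on a two-point space and let $Y$ be one point. Every orbit visits $Y$ with frequency $0$ or $1$, never $\mu(Y)=\frac12$. Your remedies do not repair this:
\begin{itemize}
\item $X\times Z$ with $Z$ Bernoulli is not ergodic when $X$ is not, since the sets $I\times Z$, $I$ invariant, are invariant.
\item A Chebyshev/$L^2$ bound only gives concentration around $\mathbb{E}(1_{Y\cap T_gY}\mid\mathcal{I})$.
\item The ergodic decomposition yields exactly the fibre values $\mu_e(Y\cap T_gY)$ that you want to avoid.
\end{itemize}

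Second, the gluing step fails independently of this. The sets of an arbitrary left F{\o}lner sequence may be nested or overlap heavily (e.g. $F_N=\{1,\dots,N\}$ in $\mathbb{Z}$). So $1_A$ cannot be prescribed separately on different $F_N$ by different points. Controlling $A$ on a sparse subsequence also says nothing about $|A\cap F_N|/|F_N|$ for the remaining $N$, whereas $d_F$ is a limit over all $N$.

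In the two-point example, the required $A$ has density $\frac12$ and satisfies $d_F(A\cap gA)=\frac12$ for every $g$. So it is asymptotically invariant yet fills only half of each $F_N$. No single orbit produces this; inside every $F_N$, $A$ must be assembled from large pieces copying the itineraries of both points. In general one needs some patching of this kind: itineraries $h\mapsto 1_Y(T_hx)$ of different points, weighted so that the averaged statistics reproduce $\mu$ rather than an ergodic component, and arranged to work for every $F_N$ simultaneously. You identify the obstacle but do not supply such a construction. Part (2) therefore remains unproved unless you carry this out or invoke the cited result.
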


\begin{remark}
We emphasize in \Cref{Returns=Differences} that $G$ is infinite because, if $G$ is finite, then in \Cref{Returns=Differences}.\ref{Returns=Differences2} we can only achieve the inequality $d_F(A)\geq\mu(Y)$ instead of $d_F(A)=\mu(Y)$, and similarly in \Cref{Returns=Differences}.\ref{Returns=Differences3}.
\end{remark}

\begin{proof}
\Cref{Returns=Differences2,Returns=Differences1} follow immediately from the version of Furstenberg's correspondence theorem stated in \cite[Theorem 3.14]{Ro25}. We now prove \ref{Returns=Differences3}. Let $Y_g=T_g(Y)$, so that $\mu(Y_g)=\mu(Y)$ for all $g$. As in the proof of \Cref{IntLemma}, there must be some $x_0\in X$ such that the set $A=\{g\in G;x_0\in Y_g\}$ satisfies $\overline{d}_F(A)\geq\mu(Y)$. Moreover, for all $g,g'\in A$ we have that $x_0\in T_gY\cap T_{g'}Y$, so $Y\cap T_{g^{-1}g'}Y\neq\varnothing$, which we may assume implies $\mu(Y\cap T_{g^{-1}g'}Y)>0$ (after choosing $x_0$ inside some appropriate set $X'\subseteq X$ of measure $0$, as in the proof of \Cref{IntLemma}), so $g^{-1}g'\in R^T_\mu(Y)$. By taking an appropriate subset $A'$ of $A$ we ensure $\overline{d}_F(A)=\mu(Y)$ instead of just $\overline{d}_F(A)\geq\mu(Y)$
\end{proof}

We can also apply \Cref{Returns=Differences} to the `opposite' group $G^{\textup{op}}$, which has the same underlying set as $G$ and binary operation $a\cdot_{\textup{op}}b=b\cdot a$. This yields a `right analogue' of \Cref{Returns=Differences}, obtained by changing the expressions \textit{left F{\o}lner}, $gA$, $AA^{-1}$ and $R_F^l$ to \textit{right F{\o}lner}, $Ag$, $A^{-1}A$ and $R_F^r$ respectively.

\begin{remark}
The following idea goes back to the proof of \cite[Theorem 1.2]{Be85}. 
Let $G$ be a countable amenable group and let $(F_N)$ be a left F{\o}lner sequence in $G$ such that for any m.p.s. $(X,\mathcal{B},\mu,(T_g)_{g\in G})$ and any $f\in L^\infty(\mu)$ the sequence $f_N=\frac{1}{|F_N|}\sum_{g\in F_N}f\circ T_g,N\in\mathbb{N}$, converges pointwise when $N\to\infty$ (it is well known that any countable amenable group admits such a F{\o}lner sequence).
Then the proof of \Cref{Returns=Differences}.\ref{Returns=Differences3} implies that $d_F(A)\geq\mu(Y)$, so that $A$ has a defined density. One can in fact obtain $d_F(A)=\mu(Y)$, using the (non trivial) 
 fact that for any F{\o}lner sequence $F=(F_N)$ in a countable amenable group $G$, any $A\subseteq G$ with density $d_F(A)=a$ and any $\lambda\in[0,a]$, there is some $E\subseteq A$ such that $d_F(E)=\lambda$.
\end{remark}

In the sequel we will need a version of \Cref{Returns=Differences}.\ref{Returns=Differences1} for upper Banach density:

\begin{lemma}
\label{ReturnsInDifferences}
Let $G$ be a countable amenable group and let $A\subseteq G$. There is a m.p.s. $(X,\mathcal{B},\mu,(T_g)_{g\in G})$ and $Y\in\mathcal{B}$ such that $\mu(Y)=d^*_l(A)$ and $R^T_\mu(Y)\subseteq AA^{-1}$.
\end{lemma}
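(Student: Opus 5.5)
The plan is to reduce \Cref{ReturnsInDifferences} to \Cref{Returns=Differences}.\ref{Returns=Differences2}, applied to a left F{\o}lner sequence along which $A$ attains its upper Banach density.

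First, by \Cref{FolnerProps}\ref{UBDisamax} we choose a left F{\o}lner sequence $F=(F_N)$ in $G$ with $d_F(A)=d^*_l(A)$; in particular $\overline{d}_F(A)=d^*_l(A)$. If $G$ is infinite, we then apply \Cref{Returns=Differences}.\ref{Returns=Differences2} to $A$ and this $F$. This yields a m.p.s. $(X,\mathcal{B},\mu,(T_g)_{g\in G})$ and $Y\in\mathcal{B}$ with
\begin{equation*}
\mu(Y)=\overline{d}_F(A)=d^*_l(A)\quad\text{and}\quad R^T_\mu(Y)\subseteq R^l_F(A)\subseteq AA^{-1},
\end{equation*}
which is exactly the claim. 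The last inclusion is immediate: if $\overline{d}_F(A\cap gA)>0$, then some $a\in A$ lies in $gA$, so $a=ga'$ with $a'\in A$, and hence $g=a a'^{-1}\in AA^{-1}$.

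The only point needing care is the finite case, which the footnote on countability allows. There we bypass the correspondence principle and build the system directly. Take $X=G$ with the normalized counting measure, let $T_g$ be left translation $x\mapsto gx$, and set $Y=A$. Since the only left F{\o}lner sequences eventually equal $G$, we have $d^*_l(A)=|A|/|G|=\mu(Y)$. Moreover, $\mu(Y\cap T_gY)>0$ means $A\cap gA\neq\varnothing$, which gives $g\in AA^{-1}$.

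I expect no real obstacle. The substance lies in the correspondence principle, which is already stated in \Cref{Returns=Differences}. The one thing to check is that the subsequence $F'$ produced there does not spoil the equality $\mu(Y)=d^*_l(A)$. It does not, because \Cref{Returns=Differences}.\ref{Returns=Differences2} gives $\mu(Y)=\overline{d}_F(A)$, and we chose $F$ so that $\overline{d}_F(A)=d^*_l(A)$.
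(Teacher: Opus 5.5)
Your proposal is correct and is exactly the paper's argument: choose a left F{\o}lner sequence $F$ with $d_F(A)=d^*_l(A)$ via \Cref{FolnerProps}\ref{UBDisamax}, then apply \Cref{Returns=Differences}.\ref{Returns=Differences2} to get $\mu(Y)=\overline{d}_F(A)=d^*_l(A)$ and $R^T_\mu(Y)\subseteq R^l_F(A)\subseteq AA^{-1}$. Your separate treatment of finite $G$ is not needed, since the paper's ``countable'' means countably infinite, but it does no harm, and the left-translation system you use there is correct.
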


\begin{proof}
It follows from \Cref{FolnerProps}.\ref{UBDisamax} and \Cref{Returns=Differences}.
\end{proof}

%% file: S3-ErgThToCombi.tex
\section{Main result and some quick applications}
\label{SecErgToComb}
In this section we state and prove \Cref{TheBigTechnicalTheorem}, a result about product sets in sets of measurable returns which, together with Furstenberg's correspondence principle, will allow us to obtain a variety of combinatorial corollaries.
In particular, we will use \Cref{TheBigTechnicalTheorem} to prove \Cref{GeneralBxBinReturns,BigAvgsBallFreeGroup,B+BinA-AAbelian,BxBinAAminus1twosided}, \Cref{CartProdInRets}, and \Cref{BkBkinAA-1ConcreteCase}.

\begin{theorem}[Product sets in sets of measurable returns]
\label{TheBigTechnicalTheorem}
Let $G$ be a set, $H$ a group, $\phi_1,\dots,\phi_k,\psi_1,\dots,\psi_k:G\to H$ functions, $(X,\mathcal{B},\mu,(T_h)_{h\in H})$ a m.p.s., $Y\in\mathcal{B}$ and $F=(F_N)_{N\in\mathbb{N}}$ a sequence of nonempty subsets of $G$. Let
\begin{equation*}
\lambda=\overlim\limits_{N\to\infty}\frac{1}{|F_N|}\sum_{g\in F_N}\mu\left(\bigcap_{i=1}^kT_{\phi_i(g)}Y\cap T_{\psi_i(g)}^{-1}Y\right).
\end{equation*}
Then there exists $B\subseteq G$ such that $\overline{d}_F(B)\geq \lambda$ and
\begin{equation*}
\bigcup_{i,j=1}^k\psi_i(B)\cdot \phi_j(B)\subseteq R_\mu^T(Y).
\end{equation*}
\end{theorem}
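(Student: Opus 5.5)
The plan is to apply the Intersectivity Lemma (\Cref{IntLemma}) to the family of sets
\begin{equation*}
Y_g=\bigcap_{i=1}^kT_{\phi_i(g)}Y\cap T_{\psi_i(g)}^{-1}Y,\qquad g\in G.
\end{equation*}
By definition of $\lambda$ we have $\overlim_{N}\frac{1}{|F_N|}\sum_{g\in F_N}\mu(Y_g)=\lambda$. Therefore \Cref{IntLemma} produces a set $B\subseteq G$ with $\overline{d}_F(B)\geq\lambda$ such that $\mu\left(\bigcap_{b\in B_0}Y_b\right)>0$ for every finite $B_0\subseteq B$. (The sets $F_N$ are assumed finite here, as in \Cref{IntLemma}.) Only the case $|B_0|\le 2$ will be needed.

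Next I verify the inclusion. Fix $b,b'\in B$ and $i,j\in\{1,\dots,k\}$. By the choice of $B$, $\mu(Y_b\cap Y_{b'})>0$. Since $Y_b\subseteq T_{\psi_i(b)}^{-1}Y$ and $Y_{b'}\subseteq T_{\phi_j(b')}Y$, we obtain
\begin{equation*}
\mu\left(T_{\psi_i(b)}^{-1}Y\cap T_{\phi_j(b')}Y\right)>0 .
\end{equation*}
Apply the measure-preserving map $T_{\psi_i(b)}$ and use $T_{\psi_i(b)}T_{\phi_j(b')}=T_{\psi_i(b)\phi_j(b')}$, which holds because the action is a left action. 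This gives $\mu\left(Y\cap T_{\psi_i(b)\phi_j(b')}Y\right)>0$, i.e.\ $\psi_i(b)\phi_j(b')\in R^T_\mu(Y)$. The case $b=b'$ is included, since then $Y_b\cap Y_b=Y_b$ already has positive measure.

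The only point needing care is the bookkeeping of images versus preimages. We need $T_h(T_h^{-1}Y)=Y$ up to null sets and $\mu(T_hE)=\mu(E)$. Both hold because each $T_h$ is invertible with inverse $T_{h^{-1}}$, coming from the group action. With that checked, the argument is a direct application of \Cref{IntLemma} and I expect no real obstacle.
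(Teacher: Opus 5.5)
Your proposal is correct and follows essentially the same argument as the paper. Both apply the intersectivity lemma to the sets $Y_g$ and then use $\mu(Y\cap T_{\psi_i(b)\phi_j(b')}Y)=\mu(T_{\psi_i(b)}^{-1}Y\cap T_{\phi_j(b')}Y)\geq\mu(Y_b\cap Y_{b'})>0$.
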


For a 
more general version of \Cref{TheBigTechnicalTheorem} where, instead of a sequence $(F_N)$ of finite subsets in $G$, we consider a finite sequence $(\nu_n)$ of probability measures on $G$, see \Cref{TheBigTechnicalTheoremWithProbMeasures}

\begin{proof}
For each $g\in G$ let $Y_g=\bigcap_{i=1}^kT_{\phi_i(g)}Y\cap T_{\psi_i(g)}^{-1}Y$. By \Cref{IntLemma} there is a set $B\subseteq G$ such that $\mu(Y_b\cap Y_b')>0$ for all $b,b'\in B$, and such that 
\begin{equation*}
\overline{d}_F(B)\geq\overlim\limits_{N\to\infty}\frac{1}{|F_N|}\sum_{g\in F_N}\mu(Y_g)=\lambda.
\end{equation*}
Moreover, for each $i,j\in\{1,\dots,k\}$ and $b,b'\in B$ we have
\begin{equation*}
\mu\left(T_{\psi_i(b)\phi_j(b')}Y\cap Y\right)=\mu\left(T_{\phi_j(b')}Y\cap T_{\psi_i(b)}^{-1}Y\right)\geq \mu(Y_{b'}\cap Y_{b})>0.
\end{equation*}
Thus, $\psi_i(b)\phi_j(b')\in R_\mu^T(Y)$.
\end{proof}

We state separately the case $k=1$ of \Cref{TheBigTechnicalTheorem}, as it will be repeatedly utilized in the sequel.

\begin{cor}
\label{ProdsInReturnsk=1}
Let $G$ be a set, $H$ a group, $\phi,\psi:G\to H$ functions, $(X,\mathcal{B},\mu,(T_h)_{h\in H})$ a m.p.s., $Y\in\mathcal{B}$ and $F=(F_N)_{N\in\mathbb{N}}$ a sequence of nonempty subsets of $G$. Let
\begin{equation*}
\lambda=\overlim\limits_{N\to\infty}\frac{1}{|F_N|}\sum_{g\in F_N}\mu\left(T_{\phi(g)}Y\cap T_{\psi(g)}^{-1}Y\right).
\end{equation*}
Then there exists $B\subseteq G$ such that $\overline{d}_F(B)\geq \lambda$ and
\begin{equation*}
\psi(B)\cdot \phi(B)\subseteq R_\mu^T(Y).
\end{equation*}
\end{cor}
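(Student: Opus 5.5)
This is the case $k=1$ of \Cref{TheBigTechnicalTheorem}, so the shortest route is to specialize: take $k=1$, $\phi_1=\phi$, $\psi_1=\psi$. The quantity $\lambda$ in \Cref{TheBigTechnicalTheorem} then becomes exactly the $\lambda$ in the statement. The union $\bigcup_{i,j=1}^k\psi_i(B)\phi_j(B)$ reduces to $\psi(B)\phi(B)$, and the conclusion follows.

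For a self-contained argument, I would repeat the proof of \Cref{TheBigTechnicalTheorem} directly, in two steps.

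First, set $Y_g=T_{\phi(g)}Y\cap T_{\psi(g)}^{-1}Y$ for $g\in G$. Apply the intersectivity lemma, \Cref{IntLemma}, to the family $(Y_g)_{g\in G}$ and the sequence $(F_N)$. We may assume the $F_N$ are finite and nonempty, as is implicit in the definition of $\overline{d}_F$. This gives $B\subseteq G$ with $\overline{d}_F(B)\geq\lambda$ and $\mu(Y_b\cap Y_{b'})>0$ for all $b,b'\in B$, using finite subsets $B_0=\{b,b'\}$ in item (1) of the lemma.

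Second, fix $b,b'\in B$. Measure preservation of $T_{\psi(b)}$ and the action identity $T_{\psi(b)}T_{\phi(b')}=T_{\psi(b)\phi(b')}$ give
\begin{equation*}
\mu\left(Y\cap T_{\psi(b)\phi(b')}Y\right)=\mu\left(T_{\psi(b)}^{-1}Y\cap T_{\phi(b')}Y\right)\geq\mu(Y_b\cap Y_{b'})>0.
\end{equation*}
Hence $\psi(b)\phi(b')\in R^T_\mu(Y)$.

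No step is a real obstacle. The only point needing care is the order of the factors: it must come out as $\psi(b)\phi(b')$, matching the left-action convention $T_{gh}=T_g\circ T_h$.
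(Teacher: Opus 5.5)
Your proposal is correct and takes the same approach as the paper, which states this corollary simply as the case $k=1$ of \Cref{TheBigTechnicalTheorem}. Your self-contained argument follows that theorem's proof exactly: apply \Cref{IntLemma} to $Y_g=T_{\phi(g)}Y\cap T_{\psi(g)}^{-1}Y$, then use measure preservation of $T_{\psi(b)}$ to get $\mu\left(Y\cap T_{\psi(b)\phi(b')}Y\right)\geq\mu(Y_b\cap Y_{b'})>0$.
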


Letting $H=G\times G$, $\phi(g)=(g,1_G)$ and $\psi(g)=(1_G,g)$ for $g\in G$, we obtain the following stronger version of \Cref{GeneralBxBinReturns}.

\begin{cor}
\label{CartProdsInReturnsk=1}
Let $G$ be a group,
$(X,\mathcal{B},\mu,(T_{g,h})_{(g,h)\in G\times G})$ a m.p.s. and $Y\in\mathcal{B}$. Then, for any sequence $F=(F_N)_{N\in\mathbb{N}}$ of nonempty subsets of $G$, letting
\begin{equation}
\label{re9fodisl9eriofdskl}
\lambda=\overlim\limits_{N\to\infty}\frac{1}{|F_N|}\sum_{g\in F_N}\mu\left(T_{(g,1_G)}Y\cap S_{(1_G,g)}^{-1}Y\right)
=
\overlim\limits_{N\to\infty}\frac{1}{|F_N|}\sum_{g\in F_N}\mu\left(T_{(g,g)}Y\cap Y\right),
\end{equation}
there exists $B\subseteq G$ such that $\overline{d}_F(B)\geq \lambda$ and
\begin{equation*}
B\times B\subseteq R_\mu^{T}(Y)=\left\{(g,h)\in G\times G;\mu(T_{(g,h)}Y\cap Y)>0\right\}.
\end{equation*}
\end{cor}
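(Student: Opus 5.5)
This is a direct specialization of \Cref{ProdsInReturnsk=1}, with the group $H=G\times G$. I will take $\phi,\psi:G\to G\times G$ to be $\phi(g)=(g,1_G)$ and $\psi(g)=(1_G,g)$, use the given action $(T_{(g,h)})$ and the set $Y$, and keep the same sequence $F$. (The symbol $S$ appearing in \eqref{re9fodisl9eriofdskl} I read as the same action $T$.)

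\Cref{ProdsInReturnsk=1} then gives a set $B\subseteq G$ with $\overline{d}_F(B)\geq\lambda$, where
\[
\lambda=\overlim_{N\to\infty}\frac{1}{|F_N|}\sum_{g\in F_N}\mu\bigl(T_{(g,1_G)}Y\cap T_{(1_G,g)}^{-1}Y\bigr),
\]
and with $\psi(B)\phi(B)\subseteq R^T_\mu(Y)$. For $b,b'\in B$ we compute $\psi(b)\phi(b')=(1_G,b)(b',1_G)=(b',b)$. Hence $\psi(B)\phi(B)=B\times B$, and this gives the inclusion $B\times B\subseteq R^T_\mu(Y)$.

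It remains to check the equality of the two expressions for $\lambda$ in \eqref{re9fodisl9eriofdskl}. Since $T_{(1_G,g)}$ preserves $\mu$, applying it to the intersection gives
\[
\mu\bigl(T_{(g,1_G)}Y\cap T_{(1_G,g)}^{-1}Y\bigr)=\mu\bigl(T_{(1_G,g)}T_{(g,1_G)}Y\cap Y\bigr)=\mu\bigl(T_{(g,g)}Y\cap Y\bigr),
\]
because $(1_G,g)(g,1_G)=(g,g)$ and $T$ is a left action. The averages agree term by term, so the two $\limsup$'s coincide.

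The only step needing care is the bookkeeping of the order of multiplication: that the product $\psi(b)\phi(b')$ produced by \Cref{ProdsInReturnsk=1} really covers all ordered pairs $(b',b)$ with $b,b'\in B$. It does, since $b$ and $b'$ range independently over $B$. There is no genuine obstacle beyond this.
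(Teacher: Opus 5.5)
Your proposal is correct and follows essentially the paper's route: the paper obtains this corollary directly from \Cref{ProdsInReturnsk=1} with $H=G\times G$, $\phi(g)=(g,1_G)$ and $\psi(g)=(1_G,g)$. Your checks that $\psi(b)\phi(b')=(b',b)$, and that the two expressions for $\lambda$ agree by invariance under $T_{(1_G,g)}$ together with $(1_G,g)(g,1_G)=(g,g)$, spell out details the paper leaves implicit; reading the stray $S$ as $T$ is also the right interpretation.
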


Taking $H=G$ and $\psi=\phi=\textup{Id}_G$ in \Cref{ProdsInReturnsk=1} yields the following result.

\begin{cor}[Products in sets of returns]
\label{ProdsInRetsMeasurable}
Let $G$ be a group,  $(X,\mathcal{B},\mu,(T_g)_{g\in G})$ a m.p.s., $Y\in\mathcal{B}$ and $F=(F_N)_{N\in\mathbb{N}}$ a sequence of nonempty subsets of $G$. Let
\begin{equation*}
\lambda=\overlim\limits_{N\to\infty}\frac{1}{|F_N|}\sum_{g\in F_N}\mu\left(T_{g}Y\cap T_{g}^{-1}Y\right)=\overlim\limits_{N\to\infty}\frac{1}{|F_N|}\sum_{g\in F_N}\mu\left(T_{g}^2Y\cap Y\right).
\end{equation*}
Then there exists $B\subseteq G$ such that $\overline{d}_F(B)\geq \lambda$ and
\begin{equation*}
BB\subseteq R_\mu^T(Y).
\end{equation*}
\end{cor}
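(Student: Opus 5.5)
This is a direct specialization of \Cref{ProdsInReturnsk=1}, so the plan is simply to verify the substitution $H=G$, $\phi=\psi=\mathrm{Id}_G$ and check the equality of the two expressions for $\lambda$.

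\textbf{Step 1: the substitution.} We apply \Cref{ProdsInReturnsk=1} with $H=G$, $\phi=\psi=\mathrm{Id}_G$, the given m.p.s.\ $(X,\mathcal{B},\mu,(T_g)_{g\in G})$, the set $Y$ and the sequence $F$. The quantity $\lambda$ appearing in \Cref{ProdsInReturnsk=1} then becomes
\[
\overlim_{N\to\infty}\frac{1}{|F_N|}\sum_{g\in F_N}\mu\left(T_gY\cap T_g^{-1}Y\right),
\]
which is the first expression for $\lambda$ in the statement. The conclusion of \Cref{ProdsInReturnsk=1} provides $B\subseteq G$ with $\overline{d}_F(B)\geq\lambda$ and $\psi(B)\phi(B)=BB\subseteq R^T_\mu(Y)$, exactly as required.

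\textbf{Step 2: the equality of the two expressions for $\lambda$.} This is the only point needing any check. For each $g\in G$, the map $T_g$ is measure preserving and bijective (its inverse is $T_{g^{-1}}$). Applying $T_g$ to the set $T_gY\cap T_g^{-1}Y$ gives $T_g^2Y\cap Y$, so
\[
\mu\left(T_gY\cap T_g^{-1}Y\right)=\mu\left(T_g^2Y\cap Y\right).
\]
Averaging this identity over $F_N$ and taking $\overlim$ yields the stated equality.

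I expect no real obstacle. The only care needed is the convention that $T_g^{-1}Y$ denotes the image $T_{g^{-1}}Y$, equivalently the preimage of $Y$ under $T_g$. This agrees with how the computation in the proof of \Cref{TheBigTechnicalTheorem} uses $\mu(T_{\psi(b)\phi(b')}Y\cap Y)=\mu(T_{\phi(b')}Y\cap T^{-1}_{\psi(b)}Y)$, so the identity in Step 2 holds under the paper's conventions.
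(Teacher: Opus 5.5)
Your proposal is correct and follows exactly the paper's route: the corollary is obtained by specializing \Cref{ProdsInReturnsk=1} to $H=G$ and $\phi=\psi=\mathrm{Id}_G$. Your Step 2 check that $\mu(T_gY\cap T_g^{-1}Y)=\mu(T_g^2Y\cap Y)$, obtained by applying the measure-preserving bijection $T_g$, is the routine verification the paper leaves implicit.
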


\CartProdInRets*
\begin{proof}
Let $H=G\times G$, let $\phi:G\to H;g\mapsto(g,1_G)$ and $\psi:G\to H;g\mapsto(1_G,g)$. Note that, letting $S_g=T_{(g,g)}$, we have $S_g\circ S_h=S_{gh}$ for all $g,h\in G$, so $(X,\mathcal{B},\mu,(S_g)_{g\in G})$ is a m.p.s. We have, using \Cref{LinearAvgsAreBig},
\begin{align*}
\overlim\limits_{N\to\infty}\frac{1}{|F_N|}\sum_{g\in F_N}\mu\left(T_{\phi(g)}Y\cap T_{\psi(g)}^{-1}Y\right)
&=\overlim\limits_{N\to\infty}\frac{1}{|F_N|}\sum_{g\in F_N}\mu\left(T_{(g,g)}Y\cap Y\right)\\
&=\overlim\limits_{N\to\infty}\frac{1}{|F_N|}\sum_{g\in F_N}\mu\left(S_{g}Y\cap Y\right)\geq\mu(Y)^2.
\end{align*}
So \Cref{ProdsInReturnsk=1} implies that there is $B\subseteq G$ such that $\overline{d}_F(B)\geq\mu(Y)^2$ and 
\begin{equation*}
B\times B=\psi(B)\cdot\phi(B)\subseteq R_\mu^T(Y).\qedhere
\end{equation*}
\end{proof}
\Cref{BxBinAA^-1} follows from \Cref{CartProdInRets} and \Cref{ReturnsInDifferences}.

\BxBinAAminusonetwosided*

\begin{proof}
Let $A_1=A_l$ and $A_2=A_r^{-1}$, so that $d_l^*(A_2)=d_r^*(A_r)$ and $A_r^{-1}A_r=A_2A_2^{-1}$. By \Cref{ReturnsInDifferences}, there are m.p.s. $(X_i,\mathcal{B}_i,\mu_i,(T_{i,g})_{g\in G})$ and $Y_i\in\mathcal{B}_i$, for $i=1,2$, such that $\mu_i(Y_i)=d_l^*(A_i)$ and $R_{\mu_i}^{T_i}(Y_i)\subseteq A_iA_i^{-1}$. We now consider the product m.p.s.
\begin{equation*}
(X,\mathcal{B},\mu,(T_g)_{g\in G})=(X_1\times X_2,\mathcal{B}_1\otimes\mathcal{B}_2,\mu_1\otimes\mu_2,(T_{1,g}\otimes T_{2,g})_{g\in G}),
\end{equation*}
and $Y=Y_1\times Y_2$. By \Cref{CartProdInRets} there is $B\subseteq G$ such that
\begin{align*}
B\times B&\subseteq R_\mu^T(Y)=R_{\mu_1}^{T_1}(Y_1)\times R_{\mu_2}^{T_2}(Y_2)\subseteq A_1A_1^{-1}\cap A_2A_2^{-1}
=A_lA_l^{-1}\cap A_r^{-1}A_r\\
\overline{d}_F(B)&\geq\mu(Y)^2=d_l^*(A_l)^2d_r^*(A_r)^2.\qedhere
\end{align*}
\end{proof}

\RestatableA*
\begin{proof}
By \Cref{ReturnsInDifferences} there is a m.p.s. $(X,\mathcal{B},\mu,(T_g)_{g\in G})$ and $Y\in\mathcal{B}$ such that $\mu(Y)=d_l^*(A)$ and $R_\mu^T(Y)\subseteq AA^{-1}$. As $G$ is abelian, $(T_{2g})_{g\in G}$ is an action of $G$, so by \Cref{LinearAvgsAreBig},
\begin{equation*}
\overlim\limits_{N\to\infty}\frac{1}{|F_N|}\sum_{g\in F_N}\mu\left(T_{g}^2Y\cap Y\right)\geq\mu(Y)^2.
\end{equation*}
The result then follows from \Cref{ProdsInRetsMeasurable}.
\end{proof}
\Cref{ProdsInRetsMeasurable} yields the inclusion \(BB\subseteq R_\mu^T(Y)\), for some set $B\subseteq G$ with $d_{(F_N)}(B)>0$. In \Cref{hBBinMeasurableReturns} below, we prove a similar result which has a somewhat weaker conclusion (we obtain $h^2BB\subseteq R_\mu^T(Y)$ for some $h\in G$) but much broader generality.

The proof relies on the following lemma.
\begin{lemma}
\label{SomeBigInnerProds}
Let $a\in(0,1)$, $\varepsilon>0$, $(X,\mathcal{B},\mu)$ a probability space and $f_N:X\to[0,1]$ measurable functions for all $N\in\mathbb{N}$, such that $\int f_Nd\mu\geq a$. Then there exists $M\in\mathbb{N}$ such that $\overlim_{N\to\infty}\langle f_M,f_N\rangle\geq a^2-\varepsilon$.
\end{lemma}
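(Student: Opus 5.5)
We argue by a Hilbert-space averaging argument on the sequence $(f_N)$, viewed as elements of $L^2(\mu)$. The functions $f_N$ take values in $[0,1]$, so they all lie in the bounded convex set $K=\{f\in L^2(\mu);0\leq f\leq 1\}$, which is weakly compact. Since $L^2(\mu)$ need not be separable, we avoid sequential compactness of the whole space. Instead we work in the closed linear span $V$ of $\{f_N;N\in\mathbb{N}\}$, which is separable, so the unit ball of $V$ is weakly sequentially compact.

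\textbf{Step 1 (weak limit).} We extract a subsequence $(f_{N_j})$ converging weakly to some $f\in V$. Since $\int f_N\,d\mu=\langle f_N,1_X\rangle\geq a$ for every $N$, weak convergence gives $\int f\,d\mu\geq a$. The set $K$ is convex and norm-closed, hence weakly closed, so $0\leq f\leq 1$ a.e. Cauchy--Schwarz then yields
\[
\|f\|_2^2\geq\left(\int f\,d\mu\right)^2\geq a^2 .
\]

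\textbf{Step 2 (approximating $f$ by some $f_M$).} The weak limit $f$ lies in the weak closure of $\{f_{N_j}\}$. By Mazur's lemma, $f$ is a norm limit of convex combinations of the $f_{N_j}$. We therefore choose a convex combination $g=\sum_{j\in J}c_jf_{N_j}$ with $J$ finite, $c_j\geq0$, $\sum_{j\in J} c_j=1$ and $\|g-f\|_2<\varepsilon$. For every $j$, using $\|f_{N_j}\|_2\leq1$,
\[
\langle g,f_{N_j}\rangle\geq\langle f,f_{N_j}\rangle-\varepsilon .
\]
Letting $j\to\infty$ and using weak convergence, $\langle f,f_{N_j}\rangle\to\|f\|_2^2\geq a^2$, so
\[
\lim_{j\to\infty}\langle g,f_{N_j}\rangle\geq a^2-\varepsilon .
\]
Since $g$ is a finite convex combination, $\langle g,f_{N_j}\rangle=\sum_{i\in J}c_i\langle f_{N_i},f_{N_j}\rangle$. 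By a pigeonhole argument on the finitely many indices $i\in J$, some $i\in J$ satisfies
\[
\overlim_{j\to\infty}\langle f_{N_i},f_{N_j}\rangle\geq a^2-\varepsilon .
\]
Indeed, otherwise each of the finitely many limsups is below $a^2-\varepsilon$, and the convex combination would have limsup below $a^2-\varepsilon$ as well, a contradiction. Taking $M=N_i$ and noting that a limsup along the full sequence dominates the limsup along a subsequence, we get $\overlim_{N\to\infty}\langle f_M,f_N\rangle\geq a^2-\varepsilon$.

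\textbf{Alternative route and main obstacle.} One could avoid Mazur's lemma with an elementary variance argument. The Cauchy--Schwarz bound
\[
\left\|\frac1L\sum_{l\leq L} f_{N_l}\right\|_2^2\geq\left(\int\frac1L\sum_{l\leq L} f_{N_l}\,d\mu\right)^2\geq a^2
\]
shows that the average of $\langle f_{N_l},f_{N_m}\rangle$ over pairs $l,m\leq L$ is at least $a^2$, but it does not directly control limsups in the second index. The step needing the most care is exactly this passage from an averaged or weak-limit bound to an individual $M$ that works along \emph{all large} $N$. The weak-limit argument in Step 2 handles it cleanly, because $\langle g,f_{N_j}\rangle$ converges in $j$.
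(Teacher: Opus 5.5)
Your proof is correct, but it takes a different route from the paper. The paper uses the Cauchy--Schwarz ``variance'' bound that you set aside, combined with a proof by contradiction. Suppose every $M$ had $\limsup_{N}\langle f_M,f_N\rangle<a^2-\varepsilon$. Then each $f_M$ has a threshold beyond which $\langle f_M,f_N\rangle<a^2-\varepsilon$. This lets one recursively extract a subsequence $(g_n)$ with $\langle g_n,g_m\rangle<a^2-\varepsilon$ for all $n\neq m$. It follows that $(Ka)^2\le\|\sum_{k\le K}g_k\|_2^2\le K+K^2(a^2-\varepsilon)$, which fails once $K>1/\varepsilon$.

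So the obstacle you flagged, passing from an averaged bound to a single good $M$, disappears once the conclusion is negated and the bad indices are selected greedily. That route is elementary and needs no weak compactness. It also has a finitary core: among any collection of roughly $1/\varepsilon$ of the $f_N$, some two have inner product at least $a^2-\varepsilon$.

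In your route, Mazur's lemma and the pigeonhole step are not needed. Since $f_{N_j}\rightharpoonup f$, for each fixed $i$ you have $\lim_j\langle f_{N_i},f_{N_j}\rangle=\langle f_{N_i},f\rangle$, and this tends to $\|f\|_2^2\ge a^2$ as $i\to\infty$. Hence every sufficiently large $i$ gives a valid $M=N_i$.

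What the compactness route buys is genuine limits along the subsequence and, with a little extra care, the case $\varepsilon=0$. If $\|f\|_2>a$, large $i$ give $\langle f_{N_i},f\rangle>a^2$. If $\|f\|_2=a$, equality in Cauchy--Schwarz forces $f=a$ almost everywhere, so $\langle f_{N_i},f\rangle=a\int f_{N_i}\,d\mu\ge a^2$.
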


\begin{proof}
Proof by contradiction. Suppose that for all fixed $k\in\mathbb{N}$ there is $N_k$ such that, for all $N>N_k$, $\langle f_k,f_N\rangle<a^2-\varepsilon$. Then we can construct by recursion a subsequence $(g_n)_{n\in\mathbb{N}}$ of $(f_N)$ such that $\langle g_n,g_m\rangle< a^2-\varepsilon$ if $n\neq m$. This leads to contradiction, as it would imply, for arbitrarily big $K\in\mathbb{N}$, that
\begin{multline*}
\left(Ka\right)^2=\left\langle1,\sum_{k=1}^Kg_k\right\rangle^2\leq
\left\|\sum_{k=1}^Kg_k\right\|^2\\
=\sum_{k=1}^K\|g_k\|^2+2\sum_{1\leq k<l\leq K}\langle g_k,g_l\rangle\leq K+K^2(a^2-\varepsilon),
\end{multline*}
so $K^2\varepsilon\leq K$, which must be false for big enough $K$.
\end{proof}

\begin{lemma}
Let $G$ be a group, $F=(F_N)$ a sequence of finite, nonempty subsets of $G$, $(X,\mathcal{B},\mu,(T_g)_{g\in G})$ a m.p.s., $Y\in\mathcal{B}$ and $\varepsilon>0$. Then there exists $h\in G$ such that
\begin{equation}
\label{5terdfoireodf9}
\overlim_{N\to\infty}\frac{1}{|F_N|}\sum_{g\in F_N}\mu\left(T_{h}^2Y\cap T_g^2Y\right)\geq\mu(Y)^2-\varepsilon.
\end{equation}
\end{lemma}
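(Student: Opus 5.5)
We will derive \Cref{5terdfoireodf9} directly from \Cref{SomeBigInnerProds}, applied to the characteristic functions of the sets $T_g^2Y$. The guiding observation is that $\mu(T_h^2Y\cap T_g^2Y)=\langle 1_{T_h^2Y},1_{T_g^2Y}\rangle$. Since each $T_g$ preserves $\mu$, every such set has measure $\mu(Y)$. So the quantity in \Cref{5terdfoireodf9} is an upper limit of averages of inner products between indicator functions, all of which have integral $\mu(Y)$.

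Step 1 converts the averages over $F_N$ into single functions. For $N\in\mathbb{N}$ we set
\begin{equation*}
f_N=\frac{1}{|F_N|}\sum_{g\in F_N}1_{T_g^2Y}.
\end{equation*}
This is a measurable function $X\to[0,1]$ with $\int f_N\,d\mu=\mu(Y)$. By linearity, for any $h\in G$,
\begin{equation*}
\frac{1}{|F_N|}\sum_{g\in F_N}\mu(T_h^2Y\cap T_g^2Y)=\langle 1_{T_h^2Y},f_N\rangle.
\end{equation*}
The task is therefore to find $h$ with $\overlim_N\langle 1_{T_h^2Y},f_N\rangle\geq\mu(Y)^2-\varepsilon$.

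Step 2 applies \Cref{SomeBigInnerProds} with $a=\mu(Y)$. The degenerate cases $\mu(Y)\in\{0,1\}$ are trivial: the right-hand side is then $\leq 0$, or every intersection has full measure. The lemma yields $M$ such that $\overlim_N\langle f_M,f_N\rangle\geq\mu(Y)^2-\varepsilon$.

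Step 3 passes from the average $f_M$ to a single element $h\in F_M$, and this is the main obstacle. Expanding,
\begin{equation*}
\langle f_M,f_N\rangle=\frac{1}{|F_M|}\sum_{h\in F_M}\langle 1_{T_h^2Y},f_N\rangle.
\end{equation*}
An upper limit of a finite average is at most the finite average of the upper limits, because $\overlim$ is subadditive. Hence
\begin{equation*}
\mu(Y)^2-\varepsilon\leq\frac{1}{|F_M|}\sum_{h\in F_M}\overlim_N\langle 1_{T_h^2Y},f_N\rangle.
\end{equation*}
Since $F_M$ is finite and nonempty, some $h\in F_M$ achieves at least the average. That $h$ gives \Cref{5terdfoireodf9}. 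The one point to get right is the direction of the subadditivity inequality, and it goes the way we need.
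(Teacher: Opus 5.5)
Your proof is correct and is essentially the paper's argument: the paper also forms $f_N=\frac{1}{|F_N|}\sum_{g\in F_N}1_{T_g^2Y}$, applies \Cref{SomeBigInnerProds} to obtain $M$ with $\limsup_{N}\langle f_M,f_N\rangle\geq\mu(Y)^2-\varepsilon$, and then picks some $h\in F_M$ by averaging. You also spell out two points the paper leaves implicit: the degenerate cases $\mu(Y)\in\{0,1\}$, which need separate handling because \Cref{SomeBigInnerProds} assumes $a\in(0,1)$, and the direction of the $\limsup$ subadditivity used in the averaging step.
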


\begin{proof}
We let $Y_g=T_{g}^2Y$, and for each $N\in\mathbb{N}$ we define 
\begin{equation*}
f_N=
\frac{1}{|F_N|}
\sum_{g\in F_N}1_{T_{g}^2Y}\in L^\infty(X).
\end{equation*}
By \Cref{SomeBigInnerProds} there is $M\in\mathbb{N}$ such that $\overlim_{N\to\infty}\langle f_{M},f_N\rangle\geq\mu(Y)^2-\varepsilon$, which implies that $\overlim_{N\to\infty}\langle 1_{T_{h}^2Y},f_N\rangle\geq\mu(Y)^2-\varepsilon$ for some $h\in F_M$, as we wanted.
\end{proof}

\begin{prop}
\label{hBBinMeasurableReturns}
Let $G$ be a group, $F=(F_N)$ a sequence of finite, nonempty subsets of $G$, $(X,\mathcal{B},\mu,(T_g)_{g\in G})$ a m.p.s., $Y\in\mathcal{B}$ and $\varepsilon>0$. Then there exists $h\in G$ and $B\subseteq G$ such that $\overline{d}_F(B)\geq\mu(Y)^2-\varepsilon$ and $BB\subseteq h^2R_\mu^T(Y)$.
\end{prop}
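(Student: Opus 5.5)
The plan is to apply \Cref{ProdsInReturnsk=1} with $H=G$ and the pair of maps $\phi(g)=g$ and $\psi(g)=h^{-2}g$. Here $h\in G$ is the element supplied by the preceding lemma, i.e. the one satisfying \eqref{5terdfoireodf9}.

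First, fix $\varepsilon>0$ and choose $h$ by the preceding lemma, so that
\begin{equation*}
\overlim_{N\to\infty}\frac{1}{|F_N|}\sum_{g\in F_N}\mu\left(T_h^2Y\cap T_g^2Y\right)\geq\mu(Y)^2-\varepsilon.
\end{equation*}

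Second, compute the quantity $\lambda$ appearing in \Cref{ProdsInReturnsk=1} for this choice of $\phi,\psi$. Since $(T_g)$ is a left action, $T_{\psi(g)}^{-1}=T_{g^{-1}h^2}=T_g^{-1}T_h^2$. Because $T_g$ preserves $\mu$, applying it to both sets gives
\begin{equation*}
\mu\left(T_{\phi(g)}Y\cap T_{\psi(g)}^{-1}Y\right)=\mu\left(T_gY\cap T_g^{-1}T_h^2Y\right)=\mu\left(T_g^2Y\cap T_h^2Y\right).
\end{equation*}
Averaging over $F_N$ and taking the $\limsup$, this identity together with the choice of $h$ gives $\lambda\geq\mu(Y)^2-\varepsilon$.

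Third, \Cref{ProdsInReturnsk=1} now provides $B\subseteq G$ with $\overline{d}_F(B)\geq\lambda\geq\mu(Y)^2-\varepsilon$ and $\psi(B)\phi(B)=h^{-2}BB\subseteq R_\mu^T(Y)$. Multiplying on the left by $h^2$ yields $BB\subseteq h^2R_\mu^T(Y)$, which is the claim.

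The only real content is choosing $\psi$ so that the symmetric correlation $\mu(T_g^2Y\cap T_h^2Y)$ controlled by the lemma is exactly what appears. The one point that needs care is the order of composition in $T_{h^{-2}g}^{-1}$ under the left-action convention; I expect that to be the only place an error could creep in.
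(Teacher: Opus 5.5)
Your proposal is correct and is exactly the paper's argument: choose $h$ via the preceding lemma, then apply \Cref{ProdsInReturnsk=1} with $H=G$, $\phi(g)=g$ and $\psi(g)=h^{-2}g$. Your handling of the composition order, $T_{h^{-2}g}^{-1}=T_g^{-1}T_h^2$, is right, and you also write out the identity $\mu(T_gY\cap T_g^{-1}T_h^2Y)=\mu(T_g^2Y\cap T_h^2Y)$, which the paper leaves implicit.
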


\begin{proof}
Let $h\in G$ satisfy \Cref{5terdfoireodf9}, so that 
\begin{align*}
\mu(Y)^2-\varepsilon
&\leq\overlim_{N\to\infty}\frac{1}{|F_N|}\sum_{g\in F_N}\mu\left(T_g^{-1}T_{h}^2Y\cap T_gY\right).
\end{align*}
So taking $G=H$, $\phi(g)=g$ and $\psi(g)=h^{-2}g$ in \Cref{ProdsInReturnsk=1}, we obtain $B\subseteq G$ such that $\overline{d}_F(B)\geq\mu(Y)^2-\varepsilon$ and $h^{-2}BB=\psi(B)\cdot\phi(B)\subseteq R_\mu^T(Y)$.
\end{proof}

\begin{cor}
\label{BBhinMeasurableReturns}
Let $G$ be a group, $F=(F_N)$ a sequence of finite, nonempty subsets of $G$, $(X,\mathcal{B},\mu,(T_g)_{g\in G})$ a m.p.s., $Y\in\mathcal{B}$ and $\varepsilon>0$. Then there exists $h\in G$ and $B\subseteq G$ such that $\overline{d}_F(B)\geq\mu(Y)^2-\varepsilon$ and $BBh^2\subseteq R_\mu^T(Y)$.
\end{cor}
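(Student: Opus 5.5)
We deduce \Cref{BBhinMeasurableReturns} from \Cref{hBBinMeasurableReturns} by passing to the opposite group, and then check the result by tracking the proof directly. Consider $G^{\mathrm{op}}$, with product $a\cdot_{\mathrm{op}}b=ba$. Define $S_g=T_g^{-1}=T_{g^{-1}}$. Then
\[
S_g\circ S_{g'}=T_{g^{-1}}T_{g'^{-1}}=T_{g^{-1}g'^{-1}}=T_{(g'g)^{-1}}=S_{g'g}=S_{g\cdot_{\mathrm{op}}g'},
\]
so $(X,\mathcal{B},\mu,(S_g)_{g\in G^{\mathrm{op}}})$ is a m.p.s. Its set of returns is
\[
R_\mu^S(Y)=\{g;\mu(Y\cap T_g^{-1}Y)>0\}=\{g;\mu(T_gY\cap Y)>0\}=R_\mu^T(Y),
\]
using that $T_g$ preserves $\mu$. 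The sequence $(F_N)$ is unchanged as a sequence of finite subsets of the same underlying set, so $\overline{d}_F$ is the same in $G$ and in $G^{\mathrm{op}}$.

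Next, apply \Cref{hBBinMeasurableReturns} to $G^{\mathrm{op}}$, the action $S$, the set $Y$, the sequence $F$, and the given $\varepsilon$. This yields $h$ and $B$ with $\overline{d}_F(B)\geq\mu(Y)^2-\varepsilon$ and $B\cdot_{\mathrm{op}}B\subseteq h^{\cdot_{\mathrm{op}}2}\cdot_{\mathrm{op}}R_\mu^S(Y)$. Translating back to $G$: $h^{\cdot_{\mathrm{op}}2}=h^2$, and $B\cdot_{\mathrm{op}}B=\{b'b;b,b'\in B\}=BB$. Also $x\cdot_{\mathrm{op}}r=rx$, so the right-hand side is $R_\mu^T(Y)h^2$. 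Hence $BB\subseteq R_\mu^T(Y)h^2$, i.e. $BBh^{-2}\subseteq R_\mu^T(Y)$.

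This is not yet the stated form $BBh^2\subseteq R_\mu^T(Y)$. Since $h$ is merely some element of $G$, we need $h^{-2}$ to be a square, which it is: $h^{-2}=(h^{-1})^2$. Replacing $h$ by $h^{-1}$ gives the statement.

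As a sanity check, one can run the proof of \Cref{hBBinMeasurableReturns} directly. Choose $h$ by the preceding lemma, applied with $T_g^{-2}Y$ in place of $T_g^2Y$; it holds verbatim, since $\mu(T_g^{-2}Y)=\mu(Y)$ and \Cref{SomeBigInnerProds} only uses the measures. Then apply \Cref{ProdsInReturnsk=1} with $\psi(g)=g$ and $\phi(g)=gh'$ for a suitable $h'$. The step needing care is exactly the bookkeeping of inverses under the opposite-group translation and verifying that $S$ is a left action. The reparametrisation $h\mapsto h^{-1}$ then resolves the sign of the exponent.
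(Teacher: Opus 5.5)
Your proof is correct and is essentially the paper's argument. The paper applies \Cref{hBBinMeasurableReturns} to $F^{-1}=(F_N^{-1})$, takes $B=C^{-1}$ and $h=a^{-1}$, and uses $R_\mu^T(Y)^{-1}=R_\mu^T(Y)$; this is exactly your opposite-group argument transported along the isomorphism $G^{\mathrm{op}}\to G$, $g\mapsto g^{-1}$, under which your action $S$ becomes $T$ and $F$ becomes $F^{-1}$ (and your direct check, rerunning the proof with $T_g^{-2}Y$ and $\phi(g)=gh^{-2}$, also goes through).
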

\begin{proof}
By \Cref{hBBinMeasurableReturns} applied to the sequence $F^{-1}:=(F_N^{-1})_{N\in\mathbb{N}}$, we obtain $a\in G$ and a set $C\subseteq G$ such that $\overline{d}_{F^{-1}}(C)\geq\mu(Y)^2-\varepsilon$ and $a^2CC\subseteq R_\mu^T(Y)$. Thus, letting $h=a^{-1}$, $B=C^{-1}$, 
\begin{equation*}
BBh^2=(a^2CC)^{-1}\subseteq R_\mu^T(Y)^{-1}=R_\mu^T(Y).\qedhere
\end{equation*}
\end{proof}

Of course, we have a more general version of \Cref{hBBinMeasurableReturns} in the style of \Cref{ProdsInReturnsk=1},
which follows from the same proof:

\begin{prop}
\label{hBBinMeasurableReturnsgeneral}
Let $G$ be a set, $H$ a group, $F=(F_N)$ a sequence of finite, nonempty subsets of $G$, $(X,\mathcal{B},\mu,(T_h)_{h\in H})$ a m.p.s., $Y\in\mathcal{B}$ and $\varepsilon>0$. Then there exists $g_0\in G$ and $B\subseteq G$ such that $\overline{d}_F(B)\geq\mu(Y)^2-\varepsilon$ and $\psi(B)\phi(B)\subseteq \psi(g_0)\phi(g_0)R_\mu^T(Y)$.\qed
\end{prop}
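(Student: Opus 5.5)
The plan is to mimic the proof of \Cref{hBBinMeasurableReturns}, replacing the squares $T_g^2$ by $T_{\psi(g)\phi(g)}$. The argument then reduces to \Cref{ProdsInReturnsk=1} with a suitably shifted $\psi$.

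First I choose $g_0$. For each $g\in G$ put $Z_g=T_{\psi(g)\phi(g)}Y$, so that $\mu(Z_g)=\mu(Y)$, and set
\begin{equation*}
f_N=\frac{1}{|F_N|}\sum_{g\in F_N}1_{Z_g},\qquad \int f_N\,d\mu=\mu(Y).
\end{equation*}
The degenerate cases are handled directly. If $\mu(Y)=0$ the claim is trivial with $B=\varnothing$. If $\mu(Y)=1$, then every $Z_g$ has full measure, and any $g_0$ works in the estimate below.

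Otherwise \Cref{SomeBigInnerProds} applies with $a=\mu(Y)$. It gives $M$ with $\overlim_N\langle f_M,f_N\rangle\geq\mu(Y)^2-\varepsilon$. Since $f_M$ is an average of finitely many indicators and $\overlim$ is subadditive, some $g_0\in F_M$ satisfies
\begin{equation*}
\overlim_{N\to\infty}\frac{1}{|F_N|}\sum_{g\in F_N}\mu\left(Z_{g_0}\cap Z_g\right)\geq\mu(Y)^2-\varepsilon.
\end{equation*}

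Next I rewrite this as the quantity $\lambda$ of \Cref{ProdsInReturnsk=1}. Let $c=\psi(g_0)\phi(g_0)$, and define $\psi'(g)=c^{-1}\psi(g)$ and $\phi'=\phi$. Using that the $T_h$ preserve $\mu$ and $T_{ab}=T_aT_b$, we get
\begin{equation*}
\mu\left(T_{\phi'(g)}Y\cap T_{\psi'(g)}^{-1}Y\right)=\mu\left(T_{\phi(g)}Y\cap T_{\psi(g)}^{-1}T_cY\right)=\mu\left(T_{\psi(g)\phi(g)}Y\cap T_cY\right)=\mu(Z_g\cap Z_{g_0}).
\end{equation*}
Hence the $\lambda$ of \Cref{ProdsInReturnsk=1} for the pair $(\phi',\psi')$ is at least $\mu(Y)^2-\varepsilon$.

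Finally, \Cref{ProdsInReturnsk=1} yields $B\subseteq G$ with $\overline{d}_F(B)\geq\mu(Y)^2-\varepsilon$ and $c^{-1}\psi(B)\phi(B)=\psi'(B)\phi'(B)\subseteq R_\mu^T(Y)$. This is exactly $\psi(B)\phi(B)\subseteq\psi(g_0)\phi(g_0)R^T_\mu(Y)$.

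The main point requiring care is the bookkeeping of the action identities in the displayed computation: the left action composes as $T_{ab}=T_aT_b$, and the shift $c^{-1}$ must be placed on the $\psi$ side so that $c$ ends up on the left of $R^T_\mu(Y)$. The remaining steps are direct applications of the two earlier results.
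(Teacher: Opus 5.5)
Your proof is correct and follows the paper's intended argument ("the same proof" as \Cref{hBBinMeasurableReturns}). You run that argument with $T_{\psi(g)\phi(g)}Y$ in place of $T_g^2Y$, use \Cref{SomeBigInnerProds} to pick $g_0\in F_M$, and then apply \Cref{ProdsInReturnsk=1} with $\psi$ replaced by $c^{-1}\psi$, where $c=\psi(g_0)\phi(g_0)$; your separate handling of $\mu(Y)\in\{0,1\}$, which \Cref{SomeBigInnerProds} formally excludes, is a small detail the paper omits.
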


\Cref{BBhinAA-1} follows from \Cref{hBBinMeasurableReturns}, \Cref{BBhinMeasurableReturns} and \Cref{ReturnsInDifferences}.
Similarly, \Cref{BkBkinAA-1} follows from the following ergodic version of itself:
\begin{prop}
\label{BkBkinAA-1ergodic}
Let $G$ be a countable amenable group, $(X,\mathcal{B},\mu,(T_g)_{g\in G})$ a m.p.s., $Y\in\mathcal{B}$, $a=\mu(Y)$ and $(F_N)_{N\in\mathbb{N}}$ a sequence of finite, nonempty subsets of $G$. Then there exist $B\subseteq G$ and $k\in\mathbb{N}$, $k\leq\frac{5}{a}$, such that $\overline{d}_F(B)\geq\frac{a^2}{4}$ and $B^kB^k:=\{b^k(b')^k;b,b'\in B\}\subseteq R_\mu^T(Y)$. 
\end{prop}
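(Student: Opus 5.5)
The plan is to reduce the statement to \Cref{ProdsInReturnsk=1} with $H=G$ and $\phi(g)=\psi(g)=g^k$ for a suitable $k$. With this choice,
\[
\mu\bigl(T_{\phi(g)}Y\cap T_{\psi(g)}^{-1}Y\bigr)=\mu\bigl(T_{g^k}Y\cap T_{g^k}^{-1}Y\bigr)=\mu\bigl(Y\cap T_{g^{2k}}Y\bigr),
\]
using $T_{g^k}^2=T_{g^{2k}}$ and measure preservation. So it suffices to find $k\le 5/a$ with
\[
c_k:=\overlim_{N\to\infty}\frac{1}{|F_N|}\sum_{g\in F_N}\mu\bigl(Y\cap T_{g^{2k}}Y\bigr)\ge \frac{a^2}{4}.
\]
Then \Cref{ProdsInReturnsk=1} gives $B$ with $\overline{d}_F(B)\ge a^2/4$ and $B^kB^k=\psi(B)\phi(B)\subseteq R^T_\mu(Y)$. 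We may assume $a>0$.

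To find such a $k$, fix $K\in\mathbb{N}$, to be chosen shortly, and a single $g\in G$. Consider the $K+1$ sets $Y_j=T_{g^{2j}}Y$ for $j=0,\dots,K$, each of measure $a$. By Cauchy--Schwarz applied to $\sum_j 1_{Y_j}$,
\[
(K+1)^2a^2\le \Bigl\|\sum_{j=0}^K1_{Y_j}\Bigr\|_2^2=(K+1)a+2\sum_{0\le i<j\le K}\mu(Y_i\cap Y_j).
\]
Since $\mu(Y_i\cap Y_j)=\mu(Y\cap T_{g^{2(j-i)}}Y)$, grouping the pairs by $d=j-i$ gives
\[
\sum_{d=1}^K(K+1-d)\,\mu\bigl(Y\cap T_{g^{2d}}Y\bigr)\ge \frac{(K+1)^2a^2-(K+1)a}{2}
\]
for every $g\in G$.

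Next, average this inequality over $g\in F_N$ and take $\overlim_N$. Subadditivity of $\overlim$ gives $\sum_{d=1}^K(K+1-d)c_d\ge \frac{(K+1)^2a^2-(K+1)a}{2}$. The weights satisfy $\sum_{d=1}^K(K+1-d)=K(K+1)/2$, so some $d\in\{1,\dots,K\}$ has
\[
c_d\ge \frac{(K+1)a^2-a}{K}=\frac{a\bigl((K+1)a-1\bigr)}{K}.
\]
Choose $K=\lceil 2/a\rceil$. Then $(K+1)a-1\ge1$, and $K\le 2/a+1\le 3/a$ because $a\le1$. Hence $c_d\ge a/K\ge a^2/3\ge a^2/4$, with $k:=d\le K\le 3/a\le 5/a$.

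The main point requiring care is this pigeonhole/Cauchy--Schwarz step. One must pass from the pointwise-in-$g$ inequality to the limsup averages using only subadditivity of $\overlim$, which goes in the right direction. One must also make sure that the chosen $d$ is independent of $g$, which is why the averaging is done before selecting $d$. No amenability or F{\o}lner property is actually used, so the argument works for an arbitrary sequence $(F_N)$ of finite nonempty sets.
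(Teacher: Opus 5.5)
Your proof is correct and has the same overall architecture as the paper's. Both reduce to \Cref{ProdsInReturnsk=1} with $\phi=\psi=(g\mapsto g^k)$, prove a lower bound on a combination of the correlations $\mu(Y\cap T_{g^{2d}}Y)$ that holds uniformly in $g$, average over $F_N$ using subadditivity of $\overlim$, and then pigeonhole a single $k$.

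The two arguments differ only in the uniform-in-$g$ estimate.
\begin{itemize}
\item The paper applies Leibman's optimal finite-$N$ bound (\Cref{LeibmannErgodicAvgsInfBound}) to the $\mathbb{Z}$-action generated by $T_g^2$. This bounds the unweighted Ces\`aro averages $\frac1K\sum_{k=0}^{K-1}\mu(Y\cap T_g^{-2k}Y)$ below by $a^2/2$. Discarding the $k=0$ term costs $a/K$, which forces $K=\lfloor 5/a\rfloor$ and yields only density $a^2/4$.
\item You instead expand $\bigl\|\sum_{j=0}^K 1_{T_{g^{2j}}Y}\bigr\|_2^2$ and apply Cauchy--Schwarz. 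This bounds a Fej\'er-type (triangularly weighted) average of the off-diagonal correlations below by $\frac{(K+1)a^2-a}{K}$, which is close to $a^2$ rather than $a^2/2$. Since only one good $d$ is needed, the weights cost nothing in the pigeonhole step.
\end{itemize}

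Your route is self-contained, needing no appeal to Leibman's theorem. It also gives the slightly sharper conclusion $k\le 3/a$ and $\overline{d}_F(B)\ge a^2/3$. The paper's tool is heavier than necessary here; it would matter only if one insisted on unweighted Ces\`aro averages, which this proposition does not require. Your remark that amenability is never used applies equally to the paper's argument.
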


To prove \Cref{BkBkinAA-1ergodic} we use a uniform lower bound for ergodic averages. The following, optimal bound was obtained by Leibman:
\begin{lemma}[{\cite[Theorem 1.2]{Le02}}]
\label{LeibmannErgodicAvgsInfBound}
For any m.p.s. $(X,\mathcal{B},\mu,T)$, $Y\in\mathcal{B}$ such that $\mu(Y)=:a\in[0,1]$ and $N\in\mathbb{N}$, we have
\begin{equation*}
\frac{1}{N}\sum_{n=0}^{N-1}\mu(Y\cap T^{-n}Y)\geq\sqrt{a^2+(1-a)^2}+a-1\geq\frac{a^2}{2}.
\end{equation*}
\end{lemma}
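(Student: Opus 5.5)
The plan is to reduce to a finite combinatorial problem on $0$--$1$ words, handle the inequality $\geq a^2/2$ (the only part the paper actually uses) by a convexity argument, and then say how I would attempt Leibman's sharp bound.

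\emph{Reduction.} Write $c_n=\mu(Y\cap T^{-n}Y)$ and $f=1_Y$. By invariance of $\mu$, for every $m\geq 0$,
\begin{equation*}
\sum_{n=0}^{N-1}c_n=\int f\circ T^m\cdot\sum_{n=0}^{N-1}f\circ T^{m+n}\,d\mu .
\end{equation*}
Averaging over $m=0,\dots,L-1$ gives
\begin{equation*}
\sum_{n=0}^{N-1}c_n=\frac{1}{L}\int P_L(x)\,d\mu(x),
\end{equation*}
where $P_L(x)$ counts the pairs $(m,m')$ with $0\leq m\leq m'<m+N$, $m<L$, and $f(T^mx)=f(T^{m'}x)=1$. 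So it suffices to bound $\int P_L\,d\mu$ from below in terms of the word $w_x=(f(T^ix))_{0\leq i<L+N}$. The word $w_x$ contains on average $aL$ ones among its first $L$ letters, up to an $O(N)$ error that becomes negligible as $L\to\infty$.

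\emph{The bound $a^2/2$.} Take $L=KN$ and cut $\{0,\dots,L-1\}$ into $K$ consecutive windows of length $N$. Let $r_j(x)$ be the number of ones of $w_x$ in the $j$-th window. Every ordered pair $m\leq m'$ lying in the same window satisfies $m'-m<N$, so $P_L(x)\geq\sum_j r_j(r_j+1)/2\geq\frac12\sum_j r_j^2$. Since $\int\sum_j r_j\,d\mu=aKN$, Cauchy--Schwarz (convexity of $t\mapsto t^2$), applied both in $j$ and in $x$, gives
\begin{equation*}
\int P_L\,d\mu\geq\frac{(aKN)^2}{2K}=\frac{a^2KN^2}{2},
\end{equation*}
so $\frac1N\sum_{n<N}c_n\geq\frac{1}{LN}\int P_L\,d\mu\geq a^2/2$. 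This bound is all that \Cref{BkBkinAA-1ergodic} needs.

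\emph{The sharp constant.} Here I would also count the pairs that straddle two adjacent windows, and average over the $N$ possible offsets of the window partition. This turns the problem into minimising a quadratic functional of local densities under a mass constraint. I expect the extremal configurations to be periodic blocks of ones separated by gaps; optimising the block length should produce $\sqrt{a^2+(1-a)^2}+a-1$. This optimisation is the main obstacle, and it is exactly the content of \cite[Theorem 1.2]{Le02}, so I would simply cite it there.

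\emph{The last inequality.} It is elementary. Since $1-a+a^2/2\geq 0$, it is equivalent after squaring to
\begin{equation*}
a^2+(1-a)^2\geq(1-a)^2+a^2(1-a)+a^4/4,
\end{equation*}
that is, to $a^3\geq a^4/4$, which holds for $a\in[0,1]$.
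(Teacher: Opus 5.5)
Your argument is correct. It differs from the paper mainly in what it actually proves. The paper gives no proof of this lemma: it quotes the middle inequality from \cite[Theorem 1.2]{Le02} and simply asserts the final comparison with $a^2/2$. You instead give a self-contained Cauchy--Schwarz proof of the weaker bound $\frac1N\sum_{n<N}\mu(Y\cap T^{-n}Y)\geq a^2/2$, which is the only part used later (in \Cref{BkBkinAA-1ergodic}, applied to $T_g^2$). You also correctly verify $\sqrt{a^2+(1-a)^2}+a-1\geq a^2/2$ by squaring. So your route makes everything the paper needs independent of \cite{Le02}, while the citation supplies the optimal constant, which the paper never uses.

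Two small remarks. First, splitting into $K$ windows is unnecessary. Expanding the square and using $N-n\leq N$,
\[
a^2N^2\leq\int\Bigl(\sum_{i<N}1_Y\circ T^i\Bigr)^2d\mu=Na+2\sum_{n=1}^{N-1}(N-n)\,\mu(Y\cap T^{-n}Y)\leq Na+2N\sum_{n=1}^{N-1}\mu(Y\cap T^{-n}Y),
\]
which already gives $\frac1N\sum_{n<N}\mu(Y\cap T^{-n}Y)\geq\frac{a^2}{2}+\frac{a}{2N}$. Second, your paragraph on the sharp constant is only a heuristic, so in your write-up the middle inequality rests on \cite{Le02}, exactly as in the paper. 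The heuristic is consistent with the extremal example: an arc of length $a$ under a slow circle rotation, with $N$ about $\sqrt{a^2+(1-a)^2}$ times the period, attains the bound in the limit.
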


\begin{proof}[Proof of \Cref{BkBkinAA-1ergodic}]
We may assume $a>0$. It follows from \Cref{LeibmannErgodicAvgsInfBound} that for all $g\in G$ we have, letting $K=\left\lfloor\frac{5}{a}\right\rfloor\geq\frac{4}{a}$,
\begin{equation*}
\frac{1}{K}\sum_{k=1}^K\mu(Y\cap T_g^{-2k}Y)\geq\frac{1}{K}\sum_{k=0}^{K-1}\mu(Y\cap T_g^{-2k}Y)-\frac{a}{K}\geq\frac{a^2}{2}-\frac{a}{K}\geq\frac{a^2}{4}.
\end{equation*}
So
\begin{align*}
\frac{1}{K}\sum_{k=1}^K\overlim_{N}\frac{1}{|F_N|}\sum_{g\in F_N}\mu(Y\cap T_g^{-2k}Y)
&\geq
\overlim_{N}\frac{1}{|F_N|}\sum_{g\in F_N}\frac{1}{K}\sum_{k=1}^K\mu(Y\cap T_g^{-2k}Y)\\
&\geq\overlim_{N}\frac{1}{|F_N|}\sum_{g\in F_N}\frac{a^2}{4}=\frac{a^2}{4}.
\end{align*}
Thus, there is $k\leq K$, which we fix from now, such that 
\begin{equation*}
\frac{a^2}{4}\leq\overlim_{N}\frac{1}{|F_N|}\sum_{g\in F_N}\mu(Y\cap T_g^{-2k}Y)=\overlim_{N}\frac{1}{|F_N|}\sum_{g\in F_N}\mu(T_{g^k}Y\cap T_{g^{k}}^{-1}Y).
\end{equation*}
Applying \Cref{ProdsInReturnsk=1} with $H=G$, $\phi(g)=\psi(g)=g^k$ finishes the proof.
\end{proof}

\RestatableB*
\begin{proof}
As $R$ is a set of measurable recurrence, by uniformity of recurrence (see \cite[Theorem 2.1]{Fo} or \cite[Proposition 1.3]{BH}) for all $\varepsilon>0$ there exist $\delta>0$ and finite $R_0\subseteq R$ such that, for all m.p.s. $(X,\mathcal{B},\mu,(T^n)_{n\in\mathbb{Z}})$ and all $Y\in\mathcal{B}$ with $\mu(Y)\geq\varepsilon$, there exists $r\in R_0$ with $\mu(Y\cap T^rY)\geq\delta$.

By \Cref{Returns=Differences} there is a m.p.s. $(X,\mathcal{B},\mu,(T^n)_{n\in\mathbb{Z}})$ and $Y\in\mathcal{B}$ such that $\lambda:=\mu(Y)=d^*(A)>0$ and $\{n\in\mathbb{Z};\mu(Y\cap T^nY)>0\}\subseteq A-A$. Let $\delta,R_0$ be as above for $\varepsilon=\lambda$. Then, letting $C_N=C\cap\{1,\dots,N\}$ we have 
\begin{align*}
\sum_{r\in R_0}\overlim_{N}\frac{1}{|C_N|}\sum_{c\in C_N}\mu(Y\cap T^{-2rc}Y)
&\geq
\overlim_{N}\frac{1}{|C_N|}\sum_{c\in C_N}\sum_{r\in R_0}\mu(Y\cap T^{-2rc}Y)\\
&\geq\lim_{N}\frac{1}{|C_N|}\sum_{c\in C_N}\delta=\delta.
\end{align*}
Thus, there is $r\in R_0$, fixed from now, such that 
\begin{equation*}
\frac{\delta}{|R_0|}
\leq
\overlim_{N\to\infty}\sum_{c\in C_N}\mu(Y\cap T^{-2rc}Y)
=
\overlim_{N\to\infty}\sum_{c\in C_N}\mu(T^{rc}Y\cap T^{-rc}Y).
\end{equation*}
Applying \Cref{ProdsInReturnsk=1} with $H=G=\mathbb{Z}$, $\phi(n)=\psi(n)=rn$ completes the proof.
\end{proof}

There are other ergodic-theoretic results which produce corollaries of \Cref{ProdsInReturnsk=1}.
For example, using arguments of Arnold and Krylov, Guivarc'h \cite{Gui} proved the following. Let $\mathbf{F}_2$ be the group freely generated by $S=\{a,b\}$ and let $R_{n}\subseteq \mathbf{F}_2$ be the set of reduced words of length $n$ in the alphabet $\{a,b,a^{-1},b^{-1}\}$.\\[-5pt]
\begin{theorem}[Cf. {\cite[Lemma 10.2]{Tem}}]
\label{GuivarchThmFreeGroups}
For any unitary action $(U_g)_{g\in G}$ of $\mathbf{F}_2$ on a Hilbert space $\mathcal{H}$, let $\mathcal{H}_1=\{h\in \mathcal{H};U_ah=U_bh=-h\}$, $\mathcal{H}_0=\mathcal{H}_1^\perp\subseteq \mathcal{H}$ and $I=\{h\in \mathcal{H};U_ah=U_bh=h\}\subseteq \mathcal{H}_0$. For $N\in\mathbb{N}$ we denote $\widehat{U}_N=\frac{1}{|R_{N}|}\sum_{g\in R_{N}}U_g$. Then for all $h\in \mathcal{H}_0$,
\begin{equation*}
\lim_{N\to\infty}\widehat{U}_N(h)=\pi^{\mathcal{H}_0}_I(h),
\end{equation*}
where $\pi^{\mathcal{H}_0}_I:\mathcal{H}_0\to I$ is orthogonal projection.
\end{theorem}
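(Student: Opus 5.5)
The plan is to reduce the statement to a one-variable spectral computation, using the fact that the spherical averages $\widehat{U}_N$ are polynomials in the single self-adjoint operator $\widehat{U}_1$. Write $S=\{a,b,a^{-1},b^{-1}\}$, so $|R_N|=4\cdot 3^{N-1}$ for $N\geq 1$, and $\widehat{U}_0=\mathrm{Id}$.

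\textbf{Step 1: a three-term recursion.} A reduced word $g\in R_N$ ($N\geq1$), multiplied on the left by a letter $s\in S$, gives a reduced word of length $N+1$ in three cases and cancels to a word of length $N-1$ in one case. Conversely, each word of length $N+1$ arises in exactly one way, and each word of length $N-1$ arises in exactly three ways. Counting therefore gives
\begin{equation*}
\widehat{U}_1\widehat{U}_N=\tfrac34\widehat{U}_{N+1}+\tfrac14\widehat{U}_{N-1}\quad (N\geq1),
\end{equation*}
together with $\widehat{U}_1\widehat{U}_0=\widehat{U}_1$. Hence $\widehat{U}_N=P_N(\widehat{U}_1)$, where $P_0=1$, $P_1(t)=t$ and $P_{N+1}(t)=\frac43 tP_N(t)-\frac13P_{N-1}(t)$. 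Since $S=S^{-1}$, $\widehat{U}_1$ is self-adjoint with spectrum in $[-1,1]$. By the spectral theorem,
\begin{equation*}
\widehat{U}_Nh=\int_{[-1,1]}P_N(t)\,dE(t)h .
\end{equation*}

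\textbf{Step 2: asymptotics of $P_N$ (the main obstacle).} I need three facts: $P_N(1)=1$, $P_N(-1)=(-1)^N$, and $P_N(t)\to0$ for $t\in(-1,1)$ with $\sup_{N,|t|\le1}|P_N(t)|<\infty$. The uniform bound is cheapest obtained by taking the regular representation, or any action, on a Hilbert space containing vectors of spectral type $t$: $\|\widehat{U}_N\|\le1$ gives $|P_N(t)|\le1$ on the spectrum. Alternatively, one can bound the explicit formula directly.

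For the decay, use the characteristic equation $3x^2-4tx+1=0$, whose roots have product $\frac13$.
\begin{itemize}
\item If $|t|<\sqrt3/2$, the roots are complex conjugates of modulus $3^{-1/2}$, so $P_N(t)=O(N3^{-N/2})$.
\item If $\sqrt3/2\le|t|<1$, the roots are real and the larger one in absolute value is $(2|t|+\sqrt{4t^2-3})/3<1$. Hence $P_N(t)\to0$ geometrically, and a double root at $|t|=\sqrt3/2$ causes no problem.
\item At $t=\pm1$ the roots are $\pm1$ and $\pm\frac13$, and the initial conditions give $P_N(\pm1)=(\pm1)^N$.
\end{itemize}
Dominated convergence then yields
\begin{equation*}
\widehat{U}_Nh-\big(E(\{1\})h+(-1)^NE(\{-1\})h\big)\to0 .
\end{equation*}

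\textbf{Step 3: identify the atoms.} $E(\{1\})$ is the projection onto $\ker(\widehat{U}_1-\mathrm{Id})$. Since $\widehat{U}_1h=h$ is an average of unitaries, $\|U_sh\|=\|h\|$ for each $s$, and strict convexity of the Hilbert norm forces $U_sh=h$ for all $s\in S$. So this kernel is exactly $I$. Similarly, $\ker(\widehat{U}_1+\mathrm{Id})$ consists of the $h$ with $U_sh=-h$ for all $s\in S$, which is exactly $\mathcal{H}_1$.

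Finally, take $h\in\mathcal{H}_0=\mathcal{H}_1^\perp$. Then $E(\{-1\})h=0$, and since $I\subseteq\mathcal{H}_0$, the projection $E(\{1\})h$ coincides with $\pi^{\mathcal{H}_0}_I(h)$. This gives $\lim_N\widehat{U}_Nh=\pi^{\mathcal{H}_0}_I(h)$, as claimed.
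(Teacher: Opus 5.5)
Your proposal is correct and is essentially the argument behind the cited result (Guivarc'h, via Tempelman), which the paper points to in its remark on $\mathbf{F}_r$: the three-term recursion $\widehat{U}_{N+1}=\frac43\widehat{U}_1\widehat{U}_N-\frac13\widehat{U}_{N-1}$, spectral analysis of the self-adjoint operator $\widehat{U}_1$, and identification of its $\pm1$ eigenspaces with $I$ and $\mathcal{H}_1$. Two small corrections: at $N=1$ the identity arises from four pairs $(s,g)$, not three, but the recursion still holds there because $|R_1|=4|R_0|$; and the uniform bound follows directly from $\sup_{t\in\sigma(\widehat{U}_1)}|P_N(t)|=\|\widehat{U}_N\|\le1$, which is all that dominated convergence needs (the regular representation would not help for $|t|>\sqrt3/2$ anyway).
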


\begin{cor}
\label{BigAvgsSphereFreeGroup}
For all m.p.s. $(X,\mathcal{B},\mu,(T_g)_{g\in \mathbf{F}_2})$ and all $Y\in\mathcal{B}$, we have
\begin{equation}
\label{EqBigAvgsSphereFreeGroup}
\lim_{N\to\infty}\frac{1}{|R_{2N}|}\sum_{g\in R_{2N}}\mu(Y\cap T_gY)\geq\mu(Y)^2.
\end{equation}
\end{cor}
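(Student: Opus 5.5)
We apply \Cref{GuivarchThmFreeGroups} to the Koopman representation. Given a m.p.s.\ $(X,\mathcal{B},\mu,(T_g)_{g\in\mathbf{F}_2})$, let $\mathcal{H}=L^2(X,\mu)$ and define $U_g f=f\circ T_g^{-1}$. This is a genuine (left) unitary action, and $\langle 1_Y,U_g1_Y\rangle=\mu(Y\cap T_gY)$. The quantity in \Cref{EqBigAvgsSphereFreeGroup} is therefore $\langle 1_Y,\widehat U_{2N}1_Y\rangle$. Since $R_n^{-1}=R_n$, the direction convention does not matter.

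The difficulty is that $1_Y$ need not lie in $\mathcal{H}_0$, so we split $1_Y=h_0+h_1$ with $h_0\in\mathcal{H}_0$ and $h_1\in\mathcal{H}_1$. We handle the two pieces as follows.

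\begin{enumerate}
\item Take $g\in R_n$. Then $g$ is a word of length $n$ in $a^{\pm1},b^{\pm1}$, and each generator acts by $-1$ on $\mathcal{H}_1$, so $U_gh_1=(-1)^nh_1$. For even length $2N$ this gives $\widehat U_{2N}h_1=h_1$. This is exactly why the statement restricts to even radii.
\item Both $\mathcal{H}_0$ and $\mathcal{H}_1$ are $U$-invariant: $\mathcal{H}_1$ clearly is, and then so is its orthogonal complement. Hence the cross terms vanish and
\begin{equation*}
\langle 1_Y,\widehat U_{2N}1_Y\rangle=\langle h_0,\widehat U_{2N}h_0\rangle+\|h_1\|^2 .
\end{equation*}
\item By \Cref{GuivarchThmFreeGroups}, $\langle h_0,\widehat U_{2N}h_0\rangle\to\|\pi_I h_0\|^2$. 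So the limit exists and equals $\|\pi_Ih_0\|^2+\|h_1\|^2$.
\end{enumerate}

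It remains to bound this from below by $\mu(Y)^2$. The constant function $1_X$ lies in $I\subseteq\mathcal{H}_0$. Hence $\langle 1_Y,1_X\rangle=\langle h_0,1_X\rangle=\langle \pi_Ih_0,1_X\rangle$, and Cauchy--Schwarz gives $\|\pi_Ih_0\|^2\ge\mu(Y)^2$, exactly as in \Cref{InnerProdWithProjection}. Dropping $\|h_1\|^2\ge0$ yields the claim.

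The main obstacle is bookkeeping rather than substance. We must confirm that \Cref{GuivarchThmFreeGroups} is being applied to a genuine unitary action, and that the invariance of $\mathcal{H}_0$ (needed so that $\widehat U_N h_0\in\mathcal{H}_0$ and the cross terms vanish) holds. Both follow from the fact that $U_a,U_b$ are unitary and preserve $\mathcal{H}_1$.
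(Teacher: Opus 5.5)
Your proof is correct and follows the same route as the paper: the Koopman representation $U_gf=f\circ T_g^{-1}$, Guivarc'h's theorem (\Cref{GuivarchThmFreeGroups}), the fact that $1_X\in I$, and Cauchy--Schwarz as in \Cref{InnerProdWithProjection}. Your explicit splitting $1_Y=h_0+h_1$ is actually more careful than the paper's proof of this corollary. That proof asserts $\widehat{U}_{2N}h\to\pi^{\mathcal{H}}_I(h)$ for every $h\in\mathcal{H}$, but $\widehat{U}_{2N}$ is the identity on $\mathcal{H}_1$ and $\mathcal{H}_1\perp I$. The correct limit is $\pi^{\mathcal{H}_0}_I(h_0)+h_1$, as the paper itself uses in the proof of \Cref{BigAvgsBallFreeGroup}, and the extra term $\|h_1\|^2\geq0$ is harmless, exactly as you observe.
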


\begin{proof}
Let $(U_g)_{g\in G}$ be the unitary action on $\mathcal{H}:=L^2(X)$ given by $U_gf(x)=f(T_{g^{-1}}x)$. Using the notation of \Cref{GuivarchThmFreeGroups}, for all $h\in\mathcal{H}$ we have $\lim_{N\to\infty}\widehat{U}_{2N}h=\pi^{\mathcal{H}}_I(h)$. The projection $\pi^{\mathcal{H}}_I$ satisfies $\pi^{\mathcal{H}}_I(1_X)=1_X$, so we can conclude the result via the same reasoning in \Cref{InnerProdWithProjection,VonNeumannAmenableL2,LinearAvgsAreBig}.
\end{proof}

Note that in \Cref{EqBigAvgsSphereFreeGroup} does not hold if $R_{2N}$ replaced by $R_{2N+1}$. For example, consider the m.p.s. given by $\mathbf{F}_2$ acting on $X=\mathbb{Z}_2$ and $T_ax=T_bx=x+1$. For $Y=\{0\}$, we have $\mu(Y)=\frac{1}{2}$ but $\mu(Y\cap T_gY)=0$ for all $g\in R_{2N+1}, N\in\mathbb{N}$. 

We now give an analog of \Cref{BigAvgsSphereFreeGroup} for the balls $B_N:=\bigcup_{n\leq N}R_n$ instead of the spheres $R_N$ in $\mathbf{F}_2$; balls are a closer analog to the F{\o}lner sequences we have been using in many of our results in this paper, as balls in finitely generated abelian groups form a F{\o}lner sequence.

\RestatableC*
\begin{proof}
Let $(U_g)_{g\in G}$ be the unitary action of $\mathbf{F}_2$ on $L^2(X)$ given by $U_gf(x)=f(T_g^{-1}x)$, and let $\widehat{U}_N,\mathcal{H}_0,\mathcal{H}_1,I,\pi^{\mathcal{H}_0}_I$ be as in \Cref{GuivarchThmFreeGroups}. It follows from \Cref{GuivarchThmFreeGroups} that, for all $h\in \mathcal{H}$, if we let $h=h_0+h_1$ with $h_i\in \mathcal{H}_i$, then $\lim_{N\to\infty}\widehat{U}_{2N}h=\pi^{\mathcal{H}_0}_I(h_0)+h_1$ and $\lim_{N\to\infty}\widehat{U}_{2N+1}h=\pi^{\mathcal{H}_0}_I(h_0)-h_1$. Note that, as $|R_{n+1}|=3|R_n|$ for $n\geq1$, we have $\lim_{N\to\infty}\sum_{n=1}^N
\frac{|R_{2n-1}|}{|B_{2N}|}=\frac{1}{4}$. So
\begin{align*}
\lim_{N\to\infty}\frac{1}{|B_{2N}|}\sum_{g\in B_{2N}}U_g(h)
&=
\lim_{N\to\infty}
\sum_{n=1}^N
\left(\frac{|R_{2n}|}{|B_{2N}|}\widehat{U}_{2n}+\frac{|R_{2n-1}|}{|B_{2N}|}\widehat{U}_{2n-1}\right)(h)\\
&=
\lim_{N\to\infty}
\sum_{n=1}^N
\frac{|R_{2n-1}|}{|B_{2N}|}\left(3\widehat{U}_{2n}(h)+\widehat{U}_{2n-1}(h)\right)\\
&=
\frac{1}{4}\left(3\left(\pi^{\mathcal{H}_0}_I(h_0)+h_1\right)+\left(\pi^{\mathcal{H}_0}_I(h_0)-h_1\right)\right)\\
&=\pi^{\mathcal{H}_0}_I(h_0)+\frac{h_1}{2}.
\end{align*}
Thus, letting $h=1_Y$, and using the same argument as in the proof of \Cref{InnerProdWithProjection}, we obtain
\begin{align*}
&\lim_{N\to\infty}\frac{1}{|B_{2N}|}\sum_{g\in B_{2N}}\mu(Y\cap T_gY)=\left\langle h,\pi^{\mathcal{H}_0}_I(h_0)+\frac{h_1}{2}\right\rangle
=\left\langle h_0,\pi^{\mathcal{H}_0}_I(h_0)\right\rangle+\left\langle h_1,\frac{h_1}{2}\right\rangle\\
&\geq\left\langle h_0,\pi^{\mathcal{H}_0}_I(h_0)\right\rangle
=\left\|\pi^{\mathcal{H}_0}_I(h_0)\right\|^2=\left\|\pi^{\mathcal{H}}_I(1_Y)\right\|^2\geq\mu(Y)^2.\qedhere
\end{align*}
\end{proof}

\begin{remark}
The analog of \Cref{GuivarchThmFreeGroups} for the free group $\mathbf{F}_r$ of rank $r>2$ holds with the same proof, using the expression $\widehat{U}_{n+1}=\frac{2r}{2r-1}\widehat{U}_1\widehat{U}_n-\frac{1}{2r-1}\widehat{U}_{n-1}$ instead of $\widehat{U}_{n+1}=\frac{4}{3}\widehat{U}_1\widehat{U}_n-\frac{1}{3}\widehat{U}_{n-1}$. So, \Cref{BigAvgsBallFreeGroup} and \Cref{CartProdInRetsFree} extend to $\mathbf{F}_r$ for $r>2$.
\end{remark}

\section{Density along a sequence of probability measures}
\label{SecGeneralDensities}
In this section, we prove \Cref{TheBigTechnicalTheoremWithProbMeasures}, a general version of \Cref{TheBigTechnicalTheorem} in which upper density along a sequence $(F_N)$ of finite subsets is replaced by upper density along a sequence $(\mu_N)$ of probability measures on $G$. We then use \Cref{TheBigTechnicalTheoremWithProbMeasures} to deduce \Cref{B+BinA-AinZLogDensity} and prove \Cref{ThmConvsAreNiceAndHaveBxBinAA-1}, which is a version of \Cref{CartProdInRetsFree} for general countable groups.

\begin{definition}
\label{5terdf9ordgfl}
Let $(\nu_N)_{N\in\mathbb{N}}$ be a sequence of probability measures on a measurable space $(X,\mathcal{B})$. The \textit{upper density} of a set $A\in\mathcal{B}$ along $(\nu_N)$ is defined as 
\begin{equation*}
\overline{d}_{(\nu_N)}(A)=\overlim_{N\to\infty}\nu_N(A).
\end{equation*}
\end{definition}

Note that the previously defined upper density along a sequence of finite nonempty sets $(F_N)$, $\overline{d}_{(F_N)}$, is a special case of \Cref{5terdf9ordgfl}. Indeed, letting $\nu_N(A)=\frac{|F_N\cap A|}{|F_N|}$ for $A\subseteq G$, we have $\overline{d}_{(F_N)}=\overline{d}_{(\nu_N)}$.

\begin{theorem}
\label{TheBigTechnicalTheoremWithProbMeasures}
Let $G$ be a countable set, $H$ a group, $\phi_1,\dots,\phi_k,\psi_1,\dots,\psi_k:G\to H$ functions, $(X,\mathcal{B},\mu,(T_h)_{h\in H})$ a m.p.s., $Y\in\mathcal{B}$ and $(\nu_N)_{N\in\mathbb{N}}$ a sequence of probability measures on $(G,\mathcal{P}(G))$. Let
\begin{equation*}
\lambda=\overlim\limits_{N\to\infty}\sum_{g\in G}\nu_N(g)\mu\left(\bigcap_{i=1}^kT_{\phi_i(g)}Y\cap T_{\psi_i(g)}^{-1}Y\right).
\end{equation*}
Then there exists $B\subseteq G$ such that $\overline{d}_{(\nu_N)}(B)\geq \lambda$ and
\begin{equation}
\bigcup_{i,j=1}^k\psi_i(B)\cdot \phi_j(B)\subseteq R_\mu^T(Y).
\end{equation}
\end{theorem}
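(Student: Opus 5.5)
The plan is to copy the proof of \Cref{TheBigTechnicalTheorem}, once the intersectivity lemma (\Cref{IntLemma}) is available for sequences of probability measures. So the first step is a measure-theoretic version of \Cref{IntLemma}. Let $G$ be countable, let $(\nu_N)$ be probability measures on $G$, and let $(Y_g)_{g\in G}$ be measurable sets. We claim there is $B\subseteq G$ such that $\mu(\bigcap_{b\in B_0}Y_b)>0$ for every finite $B_0\subseteq B$, and
\[
\overline{d}_{(\nu_N)}(B)\geq\overlim_{N\to\infty}\sum_{g\in G}\nu_N(g)\mu(Y_g).
\]

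To prove this, remove the null set $\bigcup_{G_0}\bigcap_{g\in G_0}Y_g$, where $G_0$ ranges over the countably many finite sets with $\mu(\bigcap_{g\in G_0}Y_g)=0$. Call the remaining full-measure set $X'$. Next set
\[
f(x)=\overlim_{N\to\infty}\sum_{g\in G}\nu_N(g)1_{Y_g}(x).
\]
Each partial expression $\sum_g\nu_N(g)1_{Y_g}$ is a countable nonnegative sum bounded by $1$, so it is measurable. By Fubini/Tonelli, its integral equals $\sum_g\nu_N(g)\mu(Y_g)$. Reverse Fatou, applied to functions bounded by $1$ on a probability space, then gives $\int f\,d\mu\geq\lambda$. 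Hence there is $x_0\in X'$ with $f(x_0)\geq\lambda$. Put $B=\{g;x_0\in Y_g\}$. Then $\nu_N(B)=\sum_g\nu_N(g)1_{Y_g}(x_0)$, so $\overline{d}_{(\nu_N)}(B)=f(x_0)\geq\lambda$. Any finite intersection over $B$ contains $x_0\in X'$, so it has positive measure.

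The second step is to apply this with
\[
Y_g=\bigcap_{i=1}^kT_{\phi_i(g)}Y\cap T_{\psi_i(g)}^{-1}Y.
\]
By hypothesis the weighted average of $\mu(Y_g)$ has limsup equal to $\lambda$. For $b,b'\in B$ we get $\mu(Y_b\cap Y_{b'})>0$, and
\[
\mu(T_{\psi_i(b)\phi_j(b')}Y\cap Y)=\mu(T_{\phi_j(b')}Y\cap T^{-1}_{\psi_i(b)}Y)\geq\mu(Y_{b'}\cap Y_b)>0.
\]
Therefore $\psi_i(b)\phi_j(b')\in R^T_\mu(Y)$ for all $i,j$.

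The only point needing care is the exchange of limsup and integral in the first step, where the sum is now infinite. This is handled by the uniform bound $0\leq\sum_g\nu_N(g)1_{Y_g}\leq1$ together with Tonelli, so no real obstacle is expected.
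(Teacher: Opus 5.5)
Your proposal is correct and follows the first route the paper indicates: prove a version of the intersectivity lemma (\Cref{IntLemma}) for a sequence of probability measures by the same Fatou-type argument, then argue exactly as in the proof of \Cref{TheBigTechnicalTheorem}. Using Tonelli together with the reverse Fatou lemma (justified by the bound $0\leq\sum_g\nu_N(g)1_{Y_g}\leq 1$) supplies the details the paper leaves implicit; the paper also mentions an alternative reduction to the finite-set case, which you do not need.
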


\begin{proof}
One way of proving \Cref{TheBigTechnicalTheoremWithProbMeasures} is to establish first (using essentially the same argument) a version of \Cref{IntLemma} for a sequence $(\nu_N)$ of probability measures, and then to proceed along the lines of the proof of \Cref{TheBigTechnicalTheorem}.

Alternatively, one can directly deduce \Cref{TheBigTechnicalTheoremWithProbMeasures} from \Cref{TheBigTechnicalTheorem} by utilizing the fact that for every countable set $G$ and every sequence of probability measures $(\nu_N)$ on $(G,\mathcal{P}(G))$, there exists a countable set $G_2$, a sequence $(F_N)$ of finite sets in $G_2$ and a map $f:G_2\to G$ such that
\begin{equation*}
\lim_{N\to\infty}\sum_{g\in G}
\left|\nu_N(g)-\nu_N'(g)
\right|=0,
\end{equation*}
where $\nu_N'$ is the probability measure on $G$ given by $\nu_N'(A)=\frac{|\{g\in F_N;f(g)\in A\}|}{|F_N|}$.
\end{proof}
We now restate, for a future reference, some of the corollaries of \Cref{TheBigTechnicalTheorem} in this more general form (their proofs are the same as in \Cref{SecErgToComb}).

\begin{cor}
Let $G$ be a countable set, $H$ a group, $\phi,\psi:G\to H$ functions, $(X,\mathcal{B},\mu,(T_h)_{h\in H})$ a m.p.s., $Y\in\mathcal{B}$ and $(\nu_N)_{N\in\mathbb{N}}$ a sequence of probability measures on $(G,\mathcal{P}(G))$. Let
\begin{equation}
\lambda=\overlim\limits_{N\to\infty}\sum_{g\in G}\nu_N(g)\mu\left(T_{\phi(g)}Y\cap T_{\psi(g)}^{-1}Y\right).
\end{equation}
Then there exists $B\subseteq G$ such that $\overline{d}_{(\nu_N)}(B)\geq \lambda$ and
\begin{equation}
\psi(B)\cdot\phi(B)\subseteq R_\mu^T(Y).
\end{equation}
\end{cor}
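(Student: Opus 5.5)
The corollary is the case $k=1$ of \Cref{TheBigTechnicalTheoremWithProbMeasures}, so the quickest route is to specialize that theorem: take $\phi_1=\phi$ and $\psi_1=\psi$. The quantity $\lambda$ defined there then becomes exactly the $\lambda$ in the corollary. The union $\bigcup_{i,j=1}^1\psi_i(B)\phi_j(B)$ reduces to $\psi(B)\cdot\phi(B)$, which gives the conclusion at once.

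For a self-contained argument that does not rely on the sketched proof of \Cref{TheBigTechnicalTheoremWithProbMeasures}, I would instead repeat the proof of \Cref{TheBigTechnicalTheorem} with weights. The first step is a weighted intersectivity lemma. For $g\in G$ set $Y_g=T_{\phi(g)}Y\cap T_{\psi(g)}^{-1}Y$. Let $X'$ be $X$ with the countably many null sets $\bigcap_{g\in G_0}Y_g$ removed, where $G_0$ ranges over finite subsets of $G$ with $\mu(\bigcap_{g\in G_0}Y_g)=0$; this removal is possible because $G$ is countable. Define
\begin{equation*}
f(x)=\overlim_{N\to\infty}\sum_{g\in G}\nu_N(g)1_{Y_g}(x).
\end{equation*}
Each function $x\mapsto\sum_g\nu_N(g)1_{Y_g}(x)$ is measurable, takes values in $[0,1]$, and has integral $\sum_g\nu_N(g)\mu(Y_g)$; the interchange of sum and integral is justified by Tonelli. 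Reverse Fatou (all functions are bounded by $1$) then gives $\int f\,d\mu\geq\lambda$. Hence some $x_0\in X'$ satisfies $f(x_0)\geq\lambda$.

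Now put $B=\{g:x_0\in Y_g\}$. By construction $\nu_N(B)=\sum_g\nu_N(g)1_{Y_g}(x_0)$, so $\overline{d}_{(\nu_N)}(B)=f(x_0)\geq\lambda$. Moreover, for $b,b'\in B$ the point $x_0$ lies in $X'\cap Y_b\cap Y_{b'}$, so $\mu(Y_b\cap Y_{b'})>0$ by the choice of $X'$.

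The final step is the computation already done in the proof of \Cref{TheBigTechnicalTheorem}:
\begin{equation*}
\mu\left(T_{\psi(b)\phi(b')}Y\cap Y\right)=\mu\left(T_{\phi(b')}Y\cap T_{\psi(b)}^{-1}Y\right)\geq\mu(Y_{b'}\cap Y_b)>0,
\end{equation*}
which shows $\psi(b)\phi(b')\in R_\mu^T(Y)$. There is no real obstacle in either route. The only points needing care are the measurability of $f$ and the Fatou step for countable rather than finite sums, and both are routine because everything is bounded by $1$ and $G$ is countable.
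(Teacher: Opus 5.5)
Your proposal is correct and follows the paper's approach: the paper obtains this corollary as the $k=1$ case of \Cref{TheBigTechnicalTheoremWithProbMeasures}. Your self-contained route is the first proof the paper sketches for that theorem, namely a weighted version of \Cref{IntLemma} followed by the same computation as in the proof of \Cref{TheBigTechnicalTheorem}.
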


\begin{cor}
\label{CartProdsInReturnsk=1Reiter}
Let $G$ be a group,
$(X,\mathcal{B},\mu,(T_{g,h})_{(g,h)\in G\times G})$ a m.p.s. and $Y\in\mathcal{B}$. Then, for any sequence $(\nu_N)$ of probability measures on $(G,\mathcal{P}(G))$, letting
\begin{equation}
\lambda=
\overlim\limits_{N\to\infty}\sum_{g\in G}\nu_N(g)\mu\left(T_{(g,g)}Y\cap Y\right),
\end{equation}
there exists $B\subseteq G$ such that $\overline{d}_{(\nu_N)}(B)\geq \lambda$ and
\begin{equation}
B\times B\subseteq R_\mu^{T}(Y)=\left\{(g,h)\in G\times G;\mu(T_{(g,h)}Y\cap Y)>0\right\}.
\end{equation}
\end{cor}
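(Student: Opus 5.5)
This corollary is the special case of the preceding corollary (the $(\nu_N)$-version of \Cref{ProdsInReturnsk=1}) in which $H=G\times G$, $\phi(g)=(g,1_G)$ and $\psi(g)=(1_G,g)$. I would derive it exactly as \Cref{CartProdsInReturnsk=1} was derived from \Cref{ProdsInReturnsk=1}, but now invoking \Cref{TheBigTechnicalTheoremWithProbMeasures} with $k=1$.

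First I would compute the integrand. The action is a left action, and $(g,1_G)(1_G,g)=(g,g)$. Hence
\begin{equation*}
\mu\left(T_{\phi(g)}Y\cap T_{\psi(g)}^{-1}Y\right)=\mu\left(T_{\psi(g)}T_{\phi(g)}Y\cap Y\right)=\mu\left(T_{(g,g)}Y\cap Y\right),
\end{equation*}
using measure preservation of $T_{\psi(g)}$ and $T_{(1_G,g)}T_{(g,1_G)}=T_{(g,g)}$. Therefore the quantity $\lambda$ in the statement coincides with the $\lambda$ of the preceding corollary for this choice of $\phi,\psi$.

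That corollary then yields $B\subseteq G$ with $\overline{d}_{(\nu_N)}(B)\geq\lambda$ and $\psi(B)\phi(B)\subseteq R_\mu^T(Y)$. Finally, $\psi(b)\phi(b')=(1_G,b)(b',1_G)=(b',b)$, so $\psi(B)\phi(B)=B\times B$, which gives the claimed inclusion.

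One point needs checking: $G$ must be countable so that the measures $\nu_N$ on $(G,\mathcal{P}(G))$ fit the hypotheses of \Cref{TheBigTechnicalTheoremWithProbMeasures}. If $G$ is uncountable, I would restrict to the countable union of the supports of the $\nu_N$. The sum $\sum_g\nu_N(g)\cdots$ only sees that set, and a set $B$ found inside it works in $G$ as well. Apart from this reduction, there is no real obstacle; the proof is pure bookkeeping.
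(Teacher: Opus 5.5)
Your proposal is correct and follows the paper's route. The paper states that this corollary is proved exactly as \Cref{CartProdsInReturnsk=1}: specialize the probability-measure version of \Cref{ProdsInReturnsk=1} (that is, \Cref{TheBigTechnicalTheoremWithProbMeasures} with $k=1$) to $H=G\times G$, $\phi(g)=(g,1_G)$ and $\psi(g)=(1_G,g)$. Your reduction to the countable set $\bigcup_N\{g:\nu_N(g)>0\}$ also settles the countability hypothesis, which the paper passes over silently; it implicitly uses that each $\nu_N$ is discrete, as the notation $\sum_{g}\nu_N(g)$ already presupposes.
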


The following definition may be viewed as an analogue of asymptotic invariance of F{\o}lner sets in the setting of sequences of probability measures on a countable group.
\begin{definition}
A sequence of probability measures $(\mu_N)_{N\in\mathbb{N}}$ on a group $G$ is a \textit{left Reiter sequence} if, for all $g\in G$, we have
\begin{equation}
\label{rewfdsc9zoilk}
\lim_{N\to\infty}\sum_{h\in G}|\mu_N(h)-\mu_N(gh)|=0.
\end{equation}
\end{definition}
Similarly one defines a right Reiter sequence by switching $\mu_N(gh)$ to $\mu_N(hg)$ in \Cref{rewfdsc9zoilk}.
The following results are proved similarly to \Cref{BxBinAA^-1,B+BinA-AAbelian}, using a version of von Neumann's ergodic theorem (see \Cref{VonNeumannAmenableL2}) for Reiter sequences, which has the same proof as with F{\o}lner sequences.
\begin{theorem}
\label{BxBinAA^-1Reiter}
Let $G$ be a countable amenable group, let $A\subseteq G\times G$ satisfy $d^*_l(A)>0$. Then for any left or right Reiter sequence $(\nu_N)$ on $G$ there exists $B$ such that $\overline{d}_{(\nu_N)}(B)\geq d^*_l(A)^2$ and $B\times B\subseteq AA^{-1}$.
\end{theorem}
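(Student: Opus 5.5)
We follow the route used for \Cref{BxBinAA^-1}: \Cref{ReturnsInDifferences} reduces the combinatorial statement to an ergodic one, and \Cref{CartProdsInReturnsk=1Reiter} handles the ergodic side. Write $\Gamma=G\times G$, which is countable and amenable. By \Cref{ReturnsInDifferences} applied to $\Gamma$ and $A\subseteq\Gamma$, there are a m.p.s. $(X,\mathcal{B},\mu,(T_{(g,h)})_{(g,h)\in\Gamma})$ and $Y\in\mathcal{B}$ with $\mu(Y)=d^*_l(A)$ and $R_\mu^T(Y)\subseteq AA^{-1}$. It is therefore enough to find $B\subseteq G$ with $\overline{d}_{(\nu_N)}(B)\geq\mu(Y)^2$ and $B\times B\subseteq R^T_\mu(Y)$.

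To produce $B$, apply \Cref{CartProdsInReturnsk=1Reiter}. It yields such a $B$ with $\overline{d}_{(\nu_N)}(B)\geq\lambda$, where
\begin{equation*}
\lambda=\overlim_{N\to\infty}\sum_{g\in G}\nu_N(g)\,\mu\left(S_gY\cap Y\right),\qquad S_g:=T_{(g,g)}.
\end{equation*}
Since $g\mapsto(g,g)$ is a homomorphism, $(S_g)_{g\in G}$ is a measure preserving action of $G$. The remaining task is to show $\lambda\geq\mu(Y)^2$. This requires a Reiter version of \eqref{LinearAvgsAreBig}: for any left or right Reiter sequence $(\nu_N)$ on $G$ and any action $S$,
\begin{equation*}
\lim_{N\to\infty}\sum_{g\in G}\nu_N(g)\mu(Y\cap S_gY)=\langle 1_Y,\pi 1_Y\rangle\geq\mu(Y)^2,
\end{equation*}
where $\pi$ is the projection onto the $S$-invariant functions. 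The inequality is exactly \eqref{InnerProdWithProjection}.

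The main work is the mean ergodic theorem $\sum_g\nu_N(g)U_gf\to\pi f$ in $L^2$ for a Reiter sequence; the paper asserts this has the same proof as for Følner sequences. I would argue via the splitting $L^2=\mathrm{Inv}\oplus\overline{\mathrm{span}}\{U_hf-f\}$. On invariant vectors the averages are the identity. On a coboundary $U_hf-f$, the average equals $\sum_g(\nu_N(g)-\nu_N(gh^{-1}))U_gf$ (or $\nu_N(h^{-1}g)$, according to the side convention of the antiaction), whose norm is at most $\|f\|\sum_g|\nu_N(g)-\nu_N(gh^{-1})|\to0$ by the Reiter condition. Here I must match the left/right convention of the antiaction $T_gf=f\circ T_g$ against the type of Reiter sequence; this parallels the note after \eqref{LinearAvgsAreBig}. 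Uniform boundedness of the averages (norm $\leq1$) extends the convergence to the closure.

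For the wrong-sided case, pass to $\tilde\nu_N(g)=\nu_N(g^{-1})$. This swaps left and right Reiter sequences. It leaves the sum unchanged because $\mu(Y\cap S_gY)=\mu(Y\cap S_{g^{-1}}Y)$. So both left and right Reiter sequences give $\lambda\geq\mu(Y)^2=d^*_l(A)^2$, which finishes the proof. The hardest step is only the side-bookkeeping in the Reiter ergodic theorem; everything else is a direct combination of earlier results.
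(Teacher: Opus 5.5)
Your proposal is correct and follows essentially the paper's own route. You apply \Cref{ReturnsInDifferences} to $G\times G$, then \Cref{CartProdsInReturnsk=1Reiter} with the diagonal action $S_g=T_{(g,g)}$, and a Reiter-sequence mean ergodic theorem to get $\lambda\geq\mu(Y)^2$; the paper only asserts that this ergodic theorem ``has the same proof'' as for F{\o}lner sequences, and your coboundary-splitting argument together with the inversion $\tilde\nu_N(g)=\nu_N(g^{-1})$ for the other-sided case supply exactly those omitted details.
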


\begin{theorem}
\label{B+BinA-AAbelianReiter}
Let $(G,+)$ be a countable abelian group and let $A\subseteq G$. Then for any Reiter sequence $(\nu_N)$ on $G$ there is some $B\subseteq G$ such that $\overline{d}_{(\nu_N)}(B)\geq d^*(A)^2$ and $B+B\subseteq A-A$. 
\end{theorem}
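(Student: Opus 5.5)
The plan is to follow the proof of \Cref{B+BinA-AAbelian}, replacing F{\o}lner sequences by Reiter sequences and \Cref{ProdsInRetsMeasurable} by its probability-measure version. First, by \Cref{ReturnsInDifferences} we fix a m.p.s. $(X,\mathcal{B},\mu,(T_g)_{g\in G})$ and $Y\in\mathcal{B}$ with $\mu(Y)=d^*(A)$ and $R_\mu^T(Y)\subseteq A-A$. Since $G$ is abelian, $S_g:=T_{2g}$ is again a measure preserving action of $G$, and $\mu(T_gY\cap T_g^{-1}Y)=\mu(Y\cap S_gY)$.

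The second step is the Reiter version of \Cref{LinearAvgsAreBig}. For $f\in L^2(X)$ let $A_Nf=\sum_{g\in G}\nu_N(g)\,f\circ S_g$. I would verify von Neumann's theorem in this setting, namely $A_Nf\to\pi f$ in $L^2$, where $\pi$ is the projection onto $S$-invariant functions. On invariant functions this holds trivially. On the closed span of coboundaries $f\circ S_h-f$ we have
\begin{equation*}
\|A_N(f\circ S_h-f)\|\le\sum_{g}|\nu_N(g-h)-\nu_N(g)|\,\|f\|\to0
\end{equation*}
by the Reiter property (abelian, so left and right agree), and uniform boundedness $\|A_N\|\le1$ extends this to the closure. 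The orthogonal complement of the coboundaries is the space of invariant vectors, so the claim follows. Consequently
\begin{equation*}
\lim_{N}\sum_{g}\nu_N(g)\mu(Y\cap S_gY)=\langle 1_Y,\pi1_Y\rangle\ge\mu(Y)^2,
\end{equation*}
by \Cref{InnerProdWithProjection}. The sign convention ($S_g$ versus $S_g^{-1}$) is harmless, since $\mu(Y\cap S_gY)=\mu(Y\cap S_{-g}Y)$ and $(\nu_N(-\cdot))$ is also Reiter.

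Finally, we apply the Corollary following \Cref{TheBigTechnicalTheoremWithProbMeasures}, with $H=G$, $\phi=\psi=\mathrm{Id}$ and the sequence $(\nu_N)$. This gives $\lambda\ge\mu(Y)^2=d^*(A)^2$ and a set $B$ with $\overline{d}_{(\nu_N)}(B)\ge d^*(A)^2$ and $B+B\subseteq R_\mu^T(Y)\subseteq A-A$.

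The only nonroutine point is the mean ergodic theorem for Reiter sequences, specifically the coboundary estimate above, and that step is straightforward. One minor check remains: the Corollary requires $G$ to be countable, which holds by hypothesis.
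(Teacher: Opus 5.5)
Your proposal is correct and follows the same route as the paper. The paper proves this by the argument for the Følner case (the correspondence principle, the fact that $g\mapsto T_{2g}$ is an action because $G$ is abelian, and the probability-measure version of the product-sets-in-returns corollary with $\phi=\psi=\mathrm{Id}$), merely asserting that von Neumann's theorem for Reiter sequences ``has the same proof''; your coboundary estimate supplies exactly that step.
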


\RestatableD*

\begin{proof}
Note that logarithmic density is $\overline{d}_{(\nu_N)}$, where $\nu_N$ are the probability measures on $\mathbb{N}$ given by $\nu_N(n)=\frac{1}{\sum_{j=1}^N\frac{1}{j}}\cdot\frac{1}{n}$ for all $n=1,\dots,N$. Moreover, $(\nu_N)$ is a Reiter sequence, so we are done by \Cref{BxBinAA^-1Reiter}.
\end{proof}

Even if a countable group $G$ is not amenable, we now give an ample family of sequences $(\nu_N)$ of probability measures on $G$ that satisfy a version of \Cref{BxBinAA^-1}. Before stating our result, we need some definitions.

\begin{definition}
Let $G$ be a group and $\nu_1,\nu_2$ probability measures on $(G,\mathcal{P}(G))$. 

The \textit{convolution} $\nu_1*\nu_2$ is the probability measure on $(G,\mathcal{P}(G))$ given by
\begin{equation*}
\nu_1*\nu_2(A)=\nu_1\otimes\nu_2(\{(g,h)\in G\times G;gh\in A\}).
\end{equation*}
Equivalently, $\nu_1*\nu_2$ is the pushforward of the product measure $\nu_1\otimes\nu_2$ by the group operation $G\times G\to G$. We denote $\nu^{n}=\nu*\cdots*\nu$ $n$ times.
\end{definition}
For a unitary action $(U_g)_{g\in G}$ of a group $G$ on a Hilbert space and a probability measure $\nu$ on $G$, denote $U_\nu=\sum_g\nu(g)U_g$. One can readily check that, if $\nu_1,\nu_2$ are probability measures on $G$, then $U_{\nu_1*\nu_2}=U_{\nu_1}\circ U_{\nu_2}$. This fact lies at the heart of several ergodic theorems for convolutions of measures. In the sequel we will utilize the following variant of von Neumann's ergodic theorem.
\begin{theorem}[{\cite[Theorem 3.1]{JRT}}]
\label{CesarosOfConvsAreNice}
If $\nu$ is a probability measure on a countable group $G$, and $(X,\mathcal{B},\mu,(T_g)_{g\in G})$ is a m.p.s., then for all $f\in L^2(X)$ we have
\begin{equation}
\label{refds9oieriodfi}
\lim_{N\to\infty}\frac{1}{N}\sum_{n=1}^N\left(\sum_{g\in G}\nu^n(g)T_gf\right)=\pi(f)
\end{equation}
in $L^2$ norm, where $\pi$ is the orthogonal projection to the subspace
\begin{equation}
\{h\in L^2(X);T_gh=h\textup{ for all }g\in G\textup{ such that }\nu(g)>0\}. 
\end{equation}
\end{theorem}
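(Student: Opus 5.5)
The proof is a spectral/Hilbert-space argument: decompose the space into pieces on which the Markov operator $P:=U_\nu=\sum_g\nu(g)T_g$ behaves differently. By the identity $U_{\nu_1*\nu_2}=U_{\nu_1}\circ U_{\nu_2}$, the inner sum in \Cref{refds9oieriodfi} is exactly $P^n f$. So the claim becomes the assertion that the Cesàro averages $\frac1N\sum_{n=1}^N P^n f$ converge in $L^2$ to the projection onto the joint fixed space of the $T_g$ with $\nu(g)>0$.

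\textbf{Step 1: mean ergodic theorem for $P$.} Each $T_g$ is an isometry of $L^2(X)$ (composition with a measure-preserving map), so $P$ is a contraction. For a contraction on a Hilbert space, the Riesz/von Neumann mean ergodic theorem gives
\begin{equation*}
\frac1N\sum_{n=1}^N P^n f\to\pi_P f \quad\text{in } L^2,
\end{equation*}
where $\pi_P$ is the orthogonal projection onto $\ker(P-I)$. I would prove this in the standard way:
\begin{itemize}
\item \emph{Fixed part.} For $f$ with $Pf=f$, the averages are identically $f$.
\item \emph{Coboundary part.} For $f=h-Ph$, the averages telescope to $\frac1N(Ph-P^{N+1}h)$, which tends to $0$.
\item \emph{Closure.} The closure of the range of $I-P$ is $\ker(I-P^*)^{\perp}$.
\item \emph{Identifying the kernels.} For a contraction, $\ker(I-P^*)=\ker(I-P)$: if $P^*f=f$, then $\langle Pf,f\rangle=\|f\|^2$, and since $\|Pf\|\le\|f\|$ this forces $\|Pf-f\|^2\le 0$.
\end{itemize}
Together these give $L^2=\ker(I-P)\oplus\overline{\mathrm{Ran}(I-P)}$, and a uniform-boundedness (density) argument extends convergence to all $f$.

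\textbf{Step 2: identifying $\ker(P-I)$.} This is the crux. Suppose $Pf=f$. Then
\begin{equation*}
\|f\|=\Big\|\sum_g\nu(g)T_gf\Big\|\le\sum_g\nu(g)\|T_gf\|=\|f\|,
\end{equation*}
so equality holds throughout. A Hilbert space is strictly convex, so equality in this convex combination of vectors all of norm $\|f\|$ forces every $T_gf$ with $\nu(g)>0$ to equal the same vector. That common vector is the average, namely $f$ itself. The argument extends to countably supported $\nu$ by considering $\sum_g\nu(g)\|T_gf-f\|^2$: expanding and using $\langle Pf,f\rangle=\|f\|^2$ shows this quantity equals $0$. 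Conversely, any $f$ fixed by all such $T_g$ is fixed by $P$. Hence $\ker(P-I)$ is exactly the stated subspace, and $\pi=\pi_P$.

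I expect no real obstacle here. The only care points are the order of composition of the antiaction (it does not affect $\ker(P-I)$) and whether $P$ should be defined with $T_g$ or $T_{g^{-1}}$. The equality-in-norm argument above handles either convention identically.
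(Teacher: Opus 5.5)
Your proof is correct and complete. The paper gives no proof of this statement: it is quoted from \cite[Theorem 3.1]{JRT} and used only as a black box to derive \Cref{BigAvgsAlongConvs}, so there is no in-paper route to compare with. Your argument is the standard self-contained proof and removes the dependence on the citation. It combines the Riesz--von Neumann mean ergodic theorem for the contraction $P=\sum_g\nu(g)T_g$ with the identification of $\ker(P-I)$ through
\[
\sum_{g}\nu(g)\|T_gf-f\|^2=2\|f\|^2-2\,\mathrm{Re}\langle Pf,f\rangle ,
\]
which vanishes when $Pf=f$ because each $T_g$ is an isometry.

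Two small precisions. First, the identity $U_{\nu_1*\nu_2}=U_{\nu_1}\circ U_{\nu_2}$ quoted in the paper is for a genuine representation. For the antiaction $T_gf=f\circ T_g$ one gets $T_{\nu_1*\nu_2}=T_{\nu_2}\circ T_{\nu_1}$ instead. So the order issue bears on the reduction $\sum_g\nu^n(g)T_gf=P^nf$ itself, not only on $\ker(P-I)$ as you suggest. It is harmless because all $n$ factors are the same measure: the coefficient of $T_k$ in $P^n$ is $\sum_{g_n\cdots g_1=k}\nu(g_1)\cdots\nu(g_n)$, which equals $\nu^n(k)$ after reversing the indices.

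Second, once you have the displayed identity, the strict-convexity argument for finitely supported $\nu$ is redundant. The identity already handles arbitrary countable support directly.
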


\begin{remark}
In the setup of \Cref{CesarosOfConvsAreNice}, let $S=\{g\in G;\nu(g)>0\}$. In \cite[Theorem 2]{Ose} it is proved that, if the subgroups of $G$ generated by $S$ and by $SS^{-1}$ are the same, then one has a stronger form of \Cref{refds9oieriodfi}:
\begin{equation}
\lim_{N\to\infty}\sum_{g\in G}\nu^{N}(g)T_gf=\pi(f).
\end{equation}
\end{remark}

Using the same reasoning as in the derivation of \Cref{LinearAvgsAreBig}, we deduce from \Cref{CesarosOfConvsAreNice} the following:
\begin{cor}
\label{BigAvgsAlongConvs}
Let $G$ be a countable group, $(X,\mathcal{B},\mu,(T_g)_{g\in G})$ a m.p.s. and $\nu$ a probability measure on $(G,\mathcal{P}(G))$. Then for all $Y\in\mathcal{B}$ we have
\begin{equation*}
\lim_{N\to\infty}
\frac{1}{N}
\sum_{n=1}^N\sum_{g\in G}\nu^{n}(g)\cdot\mu(Y\cap T_gY)\geq\mu(Y)^2.
\end{equation*}
\end{cor}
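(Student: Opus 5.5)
We follow the derivation of \Cref{LinearAvgsAreBig} from \Cref{VonNeumannAmenableL2} and \Cref{InnerProdWithProjection}, with \Cref{CesarosOfConvsAreNice} in place of von Neumann's theorem. First, rewrite each summand as an inner product: $\mu(Y\cap T_gY)=\langle 1_Y,T_{g^{-1}}1_Y\rangle$, or equivalently $\langle 1_Y,T_g1_Y\rangle$ after accounting for the antiaction convention $T_gf=f\circ T_g$. If the inverse causes a mismatch with the convention of \Cref{CesarosOfConvsAreNice}, apply that theorem to the reflected measure $\check\nu(g)=\nu(g^{-1})$. Its convolution powers satisfy $\check\nu^n=(\nu^n)^\vee$, and its invariant subspace coincides with that of $\nu$, since invariance under $T_g$ is equivalent to invariance under $T_{g^{-1}}$.

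Next, exchange the finite average with the inner product and pass to the limit. Define
\[
A_N=\frac{1}{N}\sum_{n=1}^N\sum_{g\in G}\nu^n(g)\,\mu(Y\cap T_gY)=\Big\langle 1_Y,\ \frac{1}{N}\sum_{n=1}^N\sum_{g}\nu^n(g)T_g1_Y\Big\rangle .
\]
Interchanging the sum over $g$ with the inner product is justified because the series converges absolutely in $L^2$, each $T_g$ being an isometry and $\sum_g\nu^n(g)=1$. By \Cref{CesarosOfConvsAreNice} the vectors inside the inner product converge in norm to $\pi(1_Y)$, so $A_N\to\langle 1_Y,\pi 1_Y\rangle$. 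In particular the limit exists.

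Finally, bound the limit below exactly as in \Cref{InnerProdWithProjection}. Since $\pi$ is an orthogonal projection, $\langle 1_Y,\pi1_Y\rangle=\|\pi1_Y\|^2$. The constant function $1_X$ lies in the range of $\pi$, because it is invariant under every $T_g$. Hence $\langle\pi1_Y,1_X\rangle=\langle 1_Y,1_X\rangle=\mu(Y)$, and Cauchy--Schwarz gives $\|\pi1_Y\|^2\ge\mu(Y)^2$.

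The main obstacle is bookkeeping rather than analysis: the inversion $g\mapsto g^{-1}$ must match the convention under which \Cref{CesarosOfConvsAreNice} is stated. The reflection argument handles this, since $\mu(Y\cap T_gY)=\mu(Y\cap T_{g^{-1}}Y)$ by measure preservation.
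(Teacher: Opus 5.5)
Your proposal is correct and follows essentially the paper's own route, which derives this corollary in one line from \Cref{CesarosOfConvsAreNice} ``using the same reasoning as in the derivation of \Cref{LinearAvgsAreBig}'': write the summands as inner products, pass to the limit $\langle 1_Y,\pi 1_Y\rangle$, and bound $\|\pi1_Y\|^2\geq\mu(Y)^2$ as in \Cref{InnerProdWithProjection}. Your reflection step through $\check\nu$ is harmless but unnecessary, because with the convention $T_gf=f\circ T_g$ one already has $\mu(Y\cap T_gY)=\mu(Y\cap T_g^{-1}Y)=\langle 1_Y,T_g1_Y\rangle$, which matches the theorem directly.
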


\Cref{CartProdsInReturnsk=1Reiter} and \Cref{BigAvgsAlongConvs} imply the following:
And we conclude this `$B\times B$ in returns' result for non amenable groups
\begin{theorem}
\label{ThmConvsAreNiceAndHaveBxBinAA-1}
Let $G$ be a countable group, and let $\nu$ be a probability measure on $(G,\mathcal{P}(G))$. Then, for any m.p.s. $(X,\mathcal{B},\mu,(T_{(g,h)})_{(g,h)\in G\times G})$ and any 
$Y\in\mathcal{B}$, there exists $B\subseteq G$ such that $\overline{d}_{\left(\nu^{N}\right)}(B)\geq\mu(Y)^2$ and 
\begin{equation*}
B\times B\subseteq\{(g,h)\in G\times G;\mu\left(Y\cap T_{(g,h)}Y\right)>0\}.
\end{equation*}
\end{theorem}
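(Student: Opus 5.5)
The plan is to reduce to \Cref{CartProdsInReturnsk=1Reiter}, applied to the Ces\`aro averages of the convolution powers $\nu^n$ rather than to the powers themselves, and then pass from those averages back to the sequence $(\nu^N)$.

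\textbf{Diagonal action.} As in the proof of \Cref{CartProdInRets}, set $S_g=T_{(g,g)}$ for $g\in G$. Since $S_gS_h=T_{(g,g)}T_{(h,h)}=T_{(gh,gh)}=S_{gh}$, the quadruple $(X,\mathcal{B},\mu,(S_g)_{g\in G})$ is a m.p.s. of the countable group $G$. Applying \Cref{BigAvgsAlongConvs} to this system gives
\begin{equation*}
\lim_{N\to\infty}\frac{1}{N}\sum_{n=1}^N\sum_{g\in G}\nu^{n}(g)\,\mu\left(Y\cap T_{(g,g)}Y\right)\geq\mu(Y)^2 .
\end{equation*}

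\textbf{Ces\`aro measures.} Define probability measures $\rho_N=\frac{1}{N}\sum_{n=1}^N\nu^n$ on $(G,\mathcal{P}(G))$. Then the displayed quantity is exactly
\begin{equation*}
\lim_{N\to\infty}\sum_{g\in G}\rho_N(g)\,\mu\left(T_{(g,g)}Y\cap Y\right).
\end{equation*}
\Cref{CartProdsInReturnsk=1Reiter} holds for an arbitrary sequence of probability measures, with no Reiter or F{\o}lner hypothesis. Applied to $(\rho_N)$, it yields $B\subseteq G$ with
\begin{equation*}
\overline{d}_{(\rho_N)}(B)\geq\mu(Y)^2
\quad\text{and}\quad
B\times B\subseteq\{(g,h)\in G\times G;\mu(Y\cap T_{(g,h)}Y)>0\}.
\end{equation*}

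\textbf{Back to $(\nu^N)$.} It remains to show $\overline{d}_{(\nu^N)}(B)\geq\overline{d}_{(\rho_N)}(B)$. Put $a_n=\nu^n(B)\in[0,1]$, so that $\rho_N(B)=\frac1N\sum_{n\le N}a_n$. The elementary fact that the limsup of Ces\`aro means of a bounded sequence is at most the limsup of the sequence gives
\begin{equation*}
\overlim_{N\to\infty}\frac{1}{N}\sum_{n=1}^N a_n\leq\overlim_{N\to\infty}a_N .
\end{equation*}
Hence $\overline{d}_{(\nu^N)}(B)\geq\mu(Y)^2$, as required.

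The only point needing care is this last transfer. \Cref{BigAvgsAlongConvs} controls only Ces\`aro averages of the $\nu^n$, while the conclusion concerns upper density along the powers $\nu^N$ themselves. The transfer works precisely because the conclusion involves an upper density, for which the Ces\`aro-limsup inequality goes in the needed direction; no stronger convergence statement, such as the one in \cite{Ose}, is required.
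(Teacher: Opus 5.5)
Your proof is correct and uses exactly the paper's ingredients: the diagonal action $S_g=T_{(g,g)}$, \Cref{BigAvgsAlongConvs}, \Cref{CartProdsInReturnsk=1Reiter}, and the fact that the limsup of Ces\`aro means is at most the limsup of the sequence. The only difference is the order of the steps. The paper applies the Ces\`aro inequality to the numbers $\sum_{g}\nu^n(g)\mu(Y\cap S_gY)$ and then invokes \Cref{CartProdsInReturnsk=1Reiter} directly for $(\nu^N)$, whereas you invoke it for the Ces\`aro measures $\rho_N$ and transfer to $(\nu^N)$ afterwards, which as a small bonus yields the formally stronger bound $\overline{d}_{(\rho_N)}(B)\geq\mu(Y)^2$.
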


\begin{proof}
Note that by \Cref{BigAvgsAlongConvs}, for all $Y$ of positive measure in a m.p.s. $(X,\mathcal{B},\mu,(S_g)_{g\in G})$,
\begin{equation*}
\overlim_{N\to\infty}\sum_{g\in G}\nu^{N}(g)\mu(Y\cap T_gY)
\geq\lim_{N\to\infty}
\frac{1}{N}
\sum_{n=1}^N
\sum_{g\in G}\nu^{n}(g)
\int\mu(Y\cap T_gY)\geq\mu(Y)^2.
\end{equation*}
So we are done by \Cref{CartProdsInReturnsk=1Reiter}.
\end{proof}

\begin{remark}
\label{RemarkConvsAndBallsFreeProds}
\Cref{CartProdInRetsFree} can be seen as a consequence of \Cref{ThmConvsAreNiceAndHaveBxBinAA-1}. Indeed, let $\nu$ be the probability measure on $\mathbf{F}_2=\langle a,b\rangle$ which gives mass $\frac{1}{5}$ to each of the elements $1,a,b,a^{-1},b^{-1}$. Then a careful analysis shows that, for all $A\subseteq\mathbf{F}_2$, we have
\begin{equation*}
\overline{d}_{(B_N)}(A)\geq
\overline{d}_{(\nu^N)}(A).
\end{equation*}
We have chosen to keep a separate proof of \Cref{CartProdInRetsFree} because that way we can ensure the existence of the limit in \Cref{BigAvgsBallFreeGroup} (this does not follow from the proof of \Cref{ThmConvsAreNiceAndHaveBxBinAA-1}), which is a result of independent interest.
\end{remark}

%% file: S4-NilpotentGroups.tex
\section{Polynomials, finitely generated nilpotent groups}
\label{SecNilpotent}
In this section, we prove \Cref{NilpPolyBB,PolyCesaroNilpotent,BigAvgsAlongSquaresFGNilp}, which were formulated in the introduction. In subsection \ref{SecPrelimsNilp} we review some pertinent preliminary material. \Cref{NilpPolyBB,PolyCesaroNilpotent} are proved in subsection \ref{Sec5.2}. Finally, the proof of \Cref{BigAvgsAlongSquaresFGNilp} is given in \ref{Sec5.3}.

\msubsection{Preliminary material}
\label{SecPrelimsNilp}

At the beginning of this subsection we review some basic facts about polynomial mappings between nilpotent groups. We then introduce some technical definitions and results which will be instrumental for the proof of \Cref{PolyCesaroNilpotent} in the next subsection. In doing so, we draw extensively on \cite{BMc,ZK14}, with some modifications.

\begin{definition}[Polynomial mappings between groups, {cf. \cite[Section 1.1]{LePoly}}]
\label{DefPolyLeib}
Let $G,F$ be groups. Given $d\in\mathbb{N}$ and a function $\varphi:G\to F$ and $h\in G$, we define the derivative $D_h\phi:G\to F$ by $D_h\varphi(g)=\varphi(g)^{-1}\varphi(gh)$. 
We say $\varphi:G\to F$ is a polynomial map of degree $\leq d$ if for all $h_1,\dots,h_{d+1}\in G$, $D_{h_1}\cdots D_{h_{d+1}}\varphi=1_F$.\footnote{In \cite{LePoly}, Leibman defines polynomials with respect to an arbitrary generating set $S$ of $G$; in \Cref{DefPolyLeib} we just take $S=G$. This does not affect which functions to nilpotent groups are polynomials, see \cite[Proposition 3.5]{LePoly}}

The degree of $\varphi$ is defined to be $-\infty$ if $\varphi(g)=1_F$ for all $g\in G$, and is otherwise the smallest $d\in\mathbb{N}$ such that $\varphi$ has degree $\leq d$.
\end{definition}

Homomorphisms $\varphi:G\to F$ are polynomials of degree $\leq 1$, and if $F$ is nilpotent, then the family of all polynomials $p:G\to F$ forms a group under pointwise product (see \cite[Theorems 3.2, 3.4]{LePoly} or \cite[Theorem 2.5]{ZK14}). 

\begin{example}
If $F$ is nilpotent, then the maps $p,q:F\to F;p(g)=g^{-1}$ and $q(g)=ag^7bg^{-2}c$, where $a,b,c\in F$, are polynomial maps. Moreover, using \cite[Theorems 3.4]{LePoly} one can check that the degree of $p,q$ is at most the nilpotency class of $F$, as they are in the group generated by the identity $g\mapsto g$ and constant polynomials.
\end{example}

The following result is formulated without proof in \cite[Section 1.8]{LePoly}; we include a proof for convenience of the reader.
\begin{prop}
\label{LeibPolysZn}
A map $p:\mathbb{Z}^n\to\mathbb{Z}^m$ is polynomial in the sense of \Cref{DefPolyLeib} if and only if it is an ordinary polynomial, i.e. it is of the form 
\begin{equation*}
p(x_1,\dots,x_n)=(p_1,\dots,p_n)(x_1,\dots,x_m),\textup{ where }p_i\in\mathbb{Q}[x_1,\dots,x_m],p_i(\mathbb{Z}^m)\subseteq\mathbb{Z}.\footnote{Note that $p_i$ need not be in $\mathbb{Z}[x_1,\dots,x_m]$ (consider for example $p:\mathbb{Z}\to\mathbb{Z}$; $p(n)=\frac{n(n-1)}{2}$).}
\end{equation*}
Moreover, the degree of $p$ in the sense of \Cref{DefPolyLeib} is the ordinary degree of $p$, i.e. the maximal degree of a monomial in $p$ with nonzero coefficient.
\end{prop}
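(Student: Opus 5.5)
Since the statement is about maps $\mathbb{Z}^n\to\mathbb{Z}^m$ and $\mathbb{Z}^m$ is abelian, I would first note that both the derivative and the degree condition act coordinatewise. Writing additively, $D_h p(x)=p(x+h)-p(x)$, and the condition $D_{h_1}\cdots D_{h_{d+1}}p\equiv 0$ holds if and only if it holds for each coordinate $p_i:\mathbb{Z}^n\to\mathbb{Z}$. So it suffices to treat $m=1$ (modulo the index swap in the displayed formula, which is a typo), and the degree of $p$ is the maximum of the degrees of its coordinates in both senses.

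For the implication ``ordinary polynomial $\Rightarrow$ polynomial of degree $\leq\deg p$'', take $q\in\mathbb{Q}[x_1,\dots,x_n]$ with $q(\mathbb{Z}^n)\subseteq\mathbb{Z}$ and argue by induction on the ordinary degree $d$. For each $h$, the difference $q(x+h)-q(x)$ is again an integer-valued rational polynomial, and its degree is at most $d-1$, since the top homogeneous part cancels. When $d=0$, $q$ is constant and any derivative of it vanishes. Hence $d+1$ derivatives kill $q$.

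For the converse, suppose $D_{h_1}\cdots D_{h_{d+1}}q\equiv0$ for all $h_i$. I would induct on $n$ using the binomial basis. Restrict first to one variable. Let $\Delta$ be the forward difference in direction $e_1$ and use the Newton expansion
\begin{equation*}
q(x_1,\bar x)=\sum_{k=0}^{d}\binom{x_1}{k}\,(\Delta^kq)(0,\bar x),
\end{equation*}
which holds for $x_1\geq0$ because $\Delta^{d+1}q=0$, and for negative $x_1$ by running the same recursion backwards. Each coefficient $c_k(\bar x)=(\Delta^kq)(0,\bar x)$ is integer-valued on $\mathbb{Z}^{n-1}$ and is killed by $d-k+1$ derivatives in the remaining directions. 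This is because derivatives commute in an abelian group, so we can apply $D_{h_1}\cdots D_{h_{d-k+1}}$ with $h_i\in\{0\}\times\mathbb{Z}^{n-1}$ together with $k$ copies of $D_{e_1}$. By induction, $c_k$ is a rational polynomial of degree $\leq d-k$. Therefore $q$ is a rational polynomial of ordinary degree $\leq d$, and it is integer-valued by hypothesis.

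Combining the two directions, the Leibman degree and the ordinary degree bound each other, so they coincide. The zero map has degree $-\infty$ in both conventions. The only mildly delicate step is the Newton expansion for negative arguments together with the degree bookkeeping for the coefficients $c_k$. I expect this to be routine, handled by uniqueness of solutions of the recursion $\Delta^{d+1}f=0$ given $d+1$ initial values.
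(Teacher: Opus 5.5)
Your argument is correct, but the converse direction is organized differently from the paper's. The paper inducts on the degree $d$. It applies the inductive hypothesis to the first derivatives $D_{e_i}p$, which are then ordinary polynomials of degree $\le d-1$. It then recovers $p$ by telescoping along the coordinate axes, $p(x)=p(0)+\sum_{i_1=0}^{x_1-1}D_{e_1}p(i_1,0,\dots,0)+\cdots+\sum_{i_n=0}^{x_n-1}D_{e_n}p(x_1,\dots,x_{n-1},i_n)$, and uses Faulhaber's formula to show that each summation raises the degree by at most one. You instead induct on the number of variables. You expand in the binomial basis in $x_1$ and feed the coefficient functions $c_k=(\Delta^kq)(0,\cdot)$ back into the hypothesis on $\mathbb{Z}^{n-1}$; their Leibman degree is at most $d-k$ because $k$ of the $d+1$ available derivatives are already spent in direction $e_1$.

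Your route has three advantages:
\begin{itemize}
\item It avoids Faulhaber entirely. The paper only needs rationality of those coefficients, and its remark that they lie in $\frac{1}{k+1}\mathbb{Z}$ is not accurate, e.g.\ $\sum_{j=1}^N j^2=\frac{N^3}{3}+\frac{N^2}{2}+\frac{N}{6}$.
\item It treats negative arguments by a clean uniqueness argument for the recursion $\Delta^{d+1}f=0$, rather than by a summation convention.
\item Since every $c_k$ is integer-valued, iterating it gives the sharper classical statement that $q$ is a $\mathbb{Z}$-linear combination of products $\binom{x_1}{k_1}\cdots\binom{x_n}{k_n}$ with $k_1+\cdots+k_n\le d$.
\end{itemize}
The paper's route, in turn, is quicker to write down and uses only first-order derivatives along basis vectors, leaving the one-variable work to a standard summation formula.
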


\begin{proof}
We prove by induction on $d\in\mathbb{N}$ that the family $\mathcal{P}_{\leq d}$ of polynomials $\mathbb{Z}^n\to\mathbb{Z}^m$ of degree $\leq d$ in the sense of \Cref{DefPolyLeib} coincides with the class $\mathcal{P}_{\leq d}'$ of ordinary polynomials of ordinary degree $\leq d$. The inclusion $\mathcal{P}_{\leq d}'\subseteq\mathcal{P}_{\leq d}$ is easy to check, we focus on the other inclusion. For $d=0$, both $\mathcal{P}_{\leq d}'$ and $\mathcal{P}_{\leq d}$ are just the constant maps, so assume $p\in \mathcal{P}_{\leq d}$ has degree $>0$. Letting $e_1,\dots,e_n$ form the usual basis of $\mathbb{Z}^n$, the derivatives $p_i:=D_{e_i}p(x)=p(x+e_i)-p(x)$ are in $\mathcal{P}_{\leq d-1}$, so $p_i:\mathbb{Z}^n\to\mathbb{Z}^m$ are ordinary polynomials of degree $\leq d-1$ by induction hypothesis. Now note that, for all $x_1,\dots,x_n\in\mathbb{Z}$, by definition of $p_i$ we have
\begin{equation*}
p(x_1,\dots,x_n)=p(0,\dots,0)+\sum_{i_1=0}^{x_1-1}p_1(i_1,0,\dots,0)+\cdots+\sum_{i_n=0}^{x_n-1}p_n(x_1,\dots,x_{n-1},i_n).
\end{equation*}
Here, if $k$ is negative we define $\sum_{i=0}^kx_i=-\sum_{i=-k}^{-1}x_i$.
By Faulhaber's formula, for all $k<d$, the map $f:\mathbb{Z}\to\mathbb{Z};N\mapsto\sum_{j=1}^N j^k$ is a polynomial in $\mathbb{Q}[N]$ of degree $k+1$ with coefficients in $\frac{1}{k+1}\mathbb{Z}$. So, applying Faulhaber's formula to each of the monomials of $p_1,\dots,p_n$, we conclude that $p(x_1,\dots,x_n)$ is a polynomial in $\mathbb{Q}[x_1,\dots,x_n]$ of degree $\leq d$.
\end{proof}

Recall that any torsion free, finitely generated nilpotent group $G$ has a \emph{basis}, i.e. some elements $g_1,\dots,g_t$ ($t\geq0$) of $G$ such that the map $\alpha:\mathbb{Z}^t\to G;(a_1,\dots,a_d)\mapsto g_1^{a_1}\cdots g_d^{a_d}$ is a bijection. We call $\alpha$ the \textit{parameterization} associated to the basis $(g_1,\dots,g_t)$. The proposition below shows that polynomial mappings between finitely generated torsion-free nilpotent groups are essentially ordinary polynomials.

\begin{prop}[{\cite[3.12]{LePoly}}]
\label{PolysTFFGNil}
Let $G,F$ be torsion free, finitely generated nilpotent groups with parameterizations $\beta:\mathbb{Z}^s\to G$, $\alpha:\mathbb{Z}^t\to F$. Then a mapping $\varphi:G\to F$ is polynomial if and only if the mapping $\alpha^{-1}\circ\psi\circ\beta:\mathbb{Z}^{s}\to\mathbb{Z}^t$ is polynomial.
\end{prop}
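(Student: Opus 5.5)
The plan is to reduce everything to two facts: the parameterizations $\alpha,\beta$ are polynomial, and their inverses are polynomial. Once these are known, the statement follows from closure of polynomial maps under composition. This closure is listed among the basic properties in the introduction; see \cite{LePoly}. Indeed, if $\varphi$ is polynomial, then $\alpha^{-1}\circ\varphi\circ\beta$ is a composition of three polynomial maps between nilpotent groups, hence polynomial. Conversely, if $\alpha^{-1}\circ\varphi\circ\beta$ is polynomial, then
\begin{equation*}
\varphi=\alpha\circ\left(\alpha^{-1}\circ\varphi\circ\beta\right)\circ\beta^{-1}
\end{equation*}
is polynomial. Here $\mathbb{Z}^s,\mathbb{Z}^t$ are abelian, hence nilpotent, and by \Cref{LeibPolysZn} it makes no difference whether ``polynomial'' for maps $\mathbb{Z}^s\to\mathbb{Z}^t$ is read in Leibman's sense or in the ordinary sense.

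That $\alpha$ is polynomial is easy. The map $\alpha$ is the pointwise product $(a_1,\dots,a_t)\mapsto \prod_i g_i^{a_i}$ of the homomorphisms $\mathbb{Z}^t\to F$, $a\mapsto g_i^{a_i}$. Homomorphisms are polynomial of degree $\leq1$, and polynomial maps into a nilpotent group form a group under pointwise multiplication. The same argument applies to $\beta$.

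The substantive step is to show that $\alpha^{-1}:F\to\mathbb{Z}^t$ is polynomial. For this I will use that the basis is a Mal'cev basis, i.e.\ each $F_i:=\langle g_i,\dots,g_t\rangle$ is normal in $F$ with $F_i/F_{i+1}\cong\mathbb{Z}$ generated by $g_iF_{i+1}$. This is the standard meaning of ``basis'' in \cite{LePoly}. If one insists on an arbitrary basis, this is the point I expect to be the main obstacle, and I would first try to reduce to the Mal'cev case via Hall's theory of polynomial group laws in Mal'cev coordinates. In the Mal'cev case I argue by induction on $t$. The first coordinate $a_1:F\to\mathbb{Z}$ is the composition $F\to F/F_2\cong\mathbb{Z}$, a homomorphism, hence polynomial. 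Next, the map
\begin{equation*}
r:F\to F_2,\qquad r(g)=g_1^{-a_1(g)}g,
\end{equation*}
is a pointwise product of the polynomial map $g\mapsto g_1^{-a_1(g)}$ (the composition of $a_1$ with the homomorphism $n\mapsto g_1^{-n}$) and the identity. Hence $r$ is polynomial into $F$, and it takes values in $F_2$. Since the derivatives $D_h$ are computed by products in the target, $r$ is also polynomial as a map into $F_2$. The elements $g_2,\dots,g_t$ form a Mal'cev basis of the torsion-free nilpotent group $F_2$. By induction, the coordinate map $\alpha_2^{-1}:F_2\to\mathbb{Z}^{t-1}$ is polynomial, so $(a_2,\dots,a_t)=\alpha_2^{-1}\circ r$ is polynomial. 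A map into $\mathbb{Z}^t$ is polynomial exactly when each coordinate is polynomial, since $D_h$ acts coordinatewise. It follows that $\alpha^{-1}=(a_1,\dots,a_t)$ is polynomial, and the same argument applies to $\beta^{-1}$. This completes the plan.
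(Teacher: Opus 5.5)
The paper gives no proof of this statement; it only cites \cite[3.12]{LePoly}. For Mal'cev bases your argument is a correct, self-contained proof, and every step checks out:
\begin{itemize}
\item $\alpha$ and $\beta$ are polynomial, being pointwise products of homomorphisms.
\item The induction giving polynomiality of $\alpha^{-1}$ is sound. The first coordinate is the quotient map $F\to F/F_2\cong\mathbb{Z}$, the map $r$ is polynomial with values in $F_2$, and then $(a_2,\dots,a_t)=\alpha_2^{-1}\circ r$.
\item Both implications then follow from closure under composition.
\end{itemize}
Your induction only ever uses $F_{i+1}\trianglelefteq F_i$, so weak Mal'cev bases work as well.

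The one caveat concerns the word ``basis''. Just before the statement, the paper defines a basis only by requiring that $\alpha$ be a bijection, and this is strictly weaker than being a Mal'cev basis. Take $\mathrm{UT}_3(\mathbb{Z})$ with generators $x,y$ and central $z=[x,y]$. The triple $(xz,x,y)$ gives the bijection $\alpha(a,b,c)=x^{a+b}y^{c}z^{a}$. However, $\langle x,y\rangle$ is the whole group, and the first coordinate $x^Xy^Yz^Z\mapsto Z$ is not a homomorphism, so your induction does not even start.

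For such bases, both directions of your argument rest on the polynomiality of $\alpha^{-1}$ and $\beta^{-1}$, and ``reducing via Hall's theory'' is not yet an argument. Hall's polynomial group laws are formulated in Mal'cev coordinates. What you would actually need is that the polynomial bijection $\gamma^{-1}\circ\alpha$ between coordinate lattices, with $\gamma$ a Mal'cev parameterization, has a polynomial inverse. That is not a formal consequence of Hall's theorem and would need its own proof; in the example above it happens to be linear and unimodular. So either state the Mal'cev hypothesis explicitly, as you effectively did, or supply that extra step.
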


A \textit{prefiltration} $G_\bullet$ is a sequence of nested groups 
\begin{equation*}
G_0\geq G_1\geq G_2\geq\cdots\textup{ such that }[G_i,G_j]\subseteq G_{i+j}\textup{ for all }i,j\in\mathbb{N}.
\end{equation*}
A filtration on a group $G$ is a prefiltration $G_\bullet$ such that $G_0=G_1=G$. A prefiltration is said to have length $d\in\mathbb{N}$ if $G_{d+1}$ is the trivial group but $G_d$ is not trivial, or $d=-\infty$ if $G_0$ is the trivial group; a group $G$ has a filtration of finite length iff $G$ is nilpotent, in which case the lower central series $G_0=G_1=G,G_{n+1}=[G,G_n]$ forms a filtration of $G$. If $G_\bullet$ is a prefiltration and $i\in\mathbb{N}$, the sequence $G_{\bullet+i}$ given by $(G_{\bullet+i})_t=G_{i+t}$ is a prefiltration. For \(d\in\mathbb N\), we will also use the following prefiltration:
\begin{equation}
\label{BigFiltration}
G_0 \ge 
\underbrace{G_1 \ge \cdots \ge G_1}_{d\text{ times}}
\ge
\underbrace{G_2 \ge \cdots \ge G_2}_{d\text{ times}}
\ge \cdots \ge
\underbrace{G_s \ge \cdots \ge G_s}_{d\text{ times}}
\ge\cdots
\end{equation}

Let $\mathcal{P}_f(\mathbb{N})$ be the family of finite subsets of $\mathbb{N}$, and $\mathcal{F}=\mathcal{P}_f(\mathbb{N})\setminus\{\varnothing\}$. We give $\mathcal{F}$ an order $<$, defined by $\alpha<\beta$ iff $\max(\alpha)<\min(\beta)$; we also define $\varnothing<\alpha$ and $\alpha<\varnothing$ for all $\alpha\in\mathcal{P}_f(\mathbb{N})$ (so $<$ is not quite an order in $\mathcal{P}_f(\mathbb{N})$).

For a prefiltration $G_\bullet$, we define a map $g:\mathcal{P}_f(\mathbb{N})\to G_0$ to be an \emph{IP-$G_\bullet$-polynomial} as follows, by recursion on the length $d$ of $G_\bullet$. 
If $d=-\infty$, then the only map $g:\mathcal{P}_f(\mathbb{N})\to G_0$ is defined to be an IP-$G_\bullet$-polynomial. If $d\geq0$, then we say $g:\mathcal{P}_f(\mathbb{N})\to G_0$ is a IP-$G_\bullet$-polynomial if for all $\beta\in\mathcal{P}_f(\mathbb{N})$ there exists a $G_{\bullet+1}$-IP polynomial, which we denote by $D_\alpha g$, such that for all $\alpha>\beta$,
\begin{equation}
\label{DerivExpression}
D_\beta g(\alpha)=g(\alpha)^{-1}g(\beta\cup\alpha)\textup{ for all }\alpha\in\mathcal{P}_f(\mathbb{N}), \alpha>\beta.
\end{equation}
\begin{remark}\label{Beta>AlphaRemark}
The definition of IP-polynomial in \cite{ZK14} is slightly different, in that in \Cref{DerivExpression} it uses the condition $\alpha\cap\beta=\varnothing$ instead of $\alpha>\beta$ (see \cite[Equation 2.16]{ZK14}). 
We stick to the condition $\alpha>\beta$, in line with the original definition of FVIP systems (in \cite[pp. 162 -- 163]{Mc}, McCutcheon defines $D_\beta g(\alpha)$ as a function with domain $\{\alpha\in\mathcal{F};\alpha>\beta\}$).
The definition of IP-polynomial in \cite{ZK14} is a priori stronger than the one given here, and in our arguments using IP-$G_\bullet$-polynomials, we only use the equation $D_\beta g(\alpha)=g(\alpha)^{-1}g(\beta\cup\alpha)$ when $\alpha>\beta$. Some proofs in \cite{ZK14}, e.g. see Proposition 2.22 or Lemma 4.20, seem to implicitly use our definition. More details about why the arguments in \cite{ZK14} work with the condition $\alpha>\beta$ are given below. 
\end{remark}

Note that $D_\beta g$ denotes any function $f:\mathcal{P}_f(\mathbb{N})\to G_0$ satisfying \Cref{DerivExpression}. This condition does not determine $f$ uniquely, since any change of the value $f(\{1\})$ does not affect \Cref{DerivExpression}.

 By \cite[Theorem 2.5]{ZK14}, the IP-$G_\bullet$-polynomials form a group under pointwise multiplication $(g\cdot f)(\alpha)=g(\alpha)\cdot f(\alpha)$, which we denote $P(\mathcal{P}_f(\mathbb{N}),G_\bullet)$. We denote by $\textup{VIP}(G_\bullet)=\{g\in P(\mathcal{P}_f(\mathbb{N}),G_\bullet);g(\varnothing)=1_{G_0}\}$ the set of polynomials vanishing at $0$.

\begin{example}
Let $G_\bullet$ be a filtration. It follows\footnote{In \Cref{PolyTimesIPIsFVIP} let $G=F$ and $p:G\to G;g\mapsto g^{-1}$; note that the first part of the proof does not require $G$ to be finitely generated.} from \Cref{PolyTimesIPIsFVIP} that, for any sequence $(g_n)$ of elements of $G_0$, the IP-system $g:\mathcal{P}(\mathbb{N})\to G_0$ given by 
\begin{equation*}
g(\alpha)=\prod_{i\in\alpha}^<g_i:=g_{i_1}\cdots g_{i_k}\textup{ (so $g(\varnothing)=1_G$)},
\end{equation*}
is in $\textup{VIP}(G_\bullet)$.
\end{example}

Let $F$ be a subgroup of $\textup{VIP}(G_\bullet)$. We say $F$ is a \emph{VIP group} if it is closed under conjugation by constant functions $\mathcal{F}\to G_0$, and closed under taking derivatives in the sense that for all $g\in F,\beta\in\mathcal{F}$ there is some $f\in F\cap\textup{VIP}(G_{\bullet+1})$, which we denote $\tilde{D}_\beta g$, such that 
\begin{equation}
\label{SymDerivExpression}
\tilde{D}_\beta g(\alpha)=g(\alpha)^{-1}g(\beta\cup\alpha)g(\beta)^{-1}\textup{ for all }\alpha\in\mathcal{P}_f(\mathbb{N}), \alpha>\beta.
\end{equation}
An \emph{FVIP group} is a finitely generated VIP group. An \emph{FVIP system} is an element of an FVIP group. In particular, any FVIP system is an IP-$G_\bullet$-polynomial.

Finally, we recall some basics about IP rings and IP limits. Given a strictly increasing chain $\alpha_1<\alpha_2<\cdots$ of elements of $\mathcal{F}$, the \emph{IP ring} generated by $(\alpha_n)_{n\in\mathbb{N}}$ is the family $\mathcal{F}((\alpha_n)_{n\in\mathbb{N}}):=\left\{\bigcup_{n\in\beta}\alpha_n;\beta\in\mathcal{F}\right\}\subseteq\mathcal{F}$. For any IP ring $\mathcal{F}'\subseteq\mathcal{F}$, any sequence $(x_\alpha)_{\alpha\in\mathcal{F}'}$ in a Hausdorff space $X$ and $x\in X$, we say that $x$ is the IP limit of $(x_\alpha)_{\alpha\in\mathcal{F}'}$ and write
\begin{equation*}
\IPlim_{\alpha\in\mathcal{F}'}x_\alpha=x
\end{equation*}
if for any neighborhood $U$ of $x$ there exists $\beta\in\mathcal{F}$ such that $x_\alpha\in U$ for all $\alpha\in\mathcal{F'}$ with $\alpha>\beta$. 

We will need the following result: 
\begin{theorem}
\label{SimplerZKThm32}
Let $G$ be a nilpotent group with a filtration $G_\bullet$ and a left action $(T_g)_{g\in G}$ on a probability space $(X,\mathcal{A},\mu)$. Let $S_0,\dots,S_t:\mathcal{F}\to G$ be arbitrary $G_\bullet$-FVIP systems and $A\in\mathcal{A}$ with $\mu(A)>0$. Then there exists an IP ring $\mathcal{F}'\subseteq F$ such that 
\begin{equation*}
\IPlim_{\alpha\in\mathcal{F}'}\mu\left(\bigcap_{i=0}^tT_{S_i(\alpha)}A\right)>0.
\end{equation*}

\end{theorem}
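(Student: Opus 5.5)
This statement is a simplified form of the IP multiple recurrence theorem of Zorin-Kranich \cite[Theorem 3.2]{ZK14}. I would therefore deduce it from that result rather than reprove it from scratch. The plan is to check that the hypotheses of \cite[Theorem 3.2]{ZK14} hold in our setting, and that its conclusion specialises to the one stated here.

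\textbf{Setting up the system.} Let $F$ be an FVIP group containing $S_0,\dots,S_t$; one exists because each $S_i$ lies in some FVIP group, and we can take the VIP group generated by all of them. Conjugation-invariance and closure under the symmetric derivatives $\tilde D_\beta$ are preserved under such joins, and finite generation is preserved by taking the subgroup generated by the finitely many generators together. For the result to apply we may also need $1_G$ among the $S_i$. This is harmless: inserting the constant system $S_{t+1}\equiv 1_G$ only shrinks the intersection.

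\textbf{Invoking the theorem.} Zorin-Kranich's theorem then gives, for any IP ring $\mathcal F''$, a sub-IP ring $\mathcal F'\subseteq\mathcal F''$ along which
\begin{equation*}
\IPlim_{\alpha\in\mathcal{F}'}\mu\Bigl(\bigcap_{i}T_{S_i(\alpha)}A\Bigr)
\end{equation*}
exists and is positive. The argument behind that theorem proceeds as follows:
\begin{enumerate}
\item use the Hilbert-space IP van der Corput lemma and an induction on the weight of the FVIP group (a PET-type induction) to reduce to the compact part;
\item handle that part through the structure theory of primitive extensions, with an IP Szemer\'edi-type colouring argument;
\item use Furstenberg--Katznelson/Bergelson--McCutcheon limit passages along sub-IP rings.
\end{enumerate}
I would simply cite this.

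\textbf{The main obstacle: our weaker definition of IP-polynomial.} As noted in \Cref{Beta>AlphaRemark}, our condition uses $\alpha>\beta$ rather than $\alpha\cap\beta=\varnothing$. I would verify that every step of \cite{ZK14} invoking derivatives evaluates them only at $\alpha>\beta$. The places to check are the van der Corput step, which differentiates along $\beta$ and then restricts to $\alpha$ in a sub-IP ring above $\beta$, and the weight-reduction lemma. Alternatively, after passing to an IP ring generated by $\alpha_1<\alpha_2<\cdots$, one can reindex so that disjoint unions of generators become ordered unions. The reparametrised systems $n$-set $\mapsto S_i(\bigcup_{n\in\gamma}\alpha_n)$ are again FVIP in the stronger sense only along ordered pairs. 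Either way, this bookkeeping, rather than the dynamics, is where the care is needed.
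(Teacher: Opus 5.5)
Your overall route is the paper's: put $S_0,\dots,S_t$ in one FVIP group, apply Zorin-Kranich's FVIP recurrence theorem, and check that his proof survives the weaker ``$\alpha>\beta$'' definition of IP-polynomial. Several details need fixing, though.

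\begin{itemize}
\item The result the paper invokes is \cite[Theorem 5.32]{ZK14} with $m=1$. It is stated for a \emph{right} action and the sets $AS_i(\vec\alpha)^{-1}$. To obtain the stated conclusion literally, you must pass to the right action $xg:=T_g^{-1}x$, for which $AS_i(\alpha)^{-1}=T_{S_i(\alpha)}A$. Your proposal never matches the two formulations.
\item The common FVIP group is \cite[Lemma 4.22]{ZK14}. Your one-line claim that joins of VIP groups are again VIP skips the product rule for $\tilde D_\beta$, which produces conjugations and commutators with constants, so cite that lemma instead.
\item Adjoining $S_{t+1}\equiv 1_G$ is unnecessary. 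If you do it, you also need a further sub-IP ring so that the IP-limit of the original expression exists.
\end{itemize}

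The real gap is compatibility with the $\alpha>\beta$ definition. This is the only nontrivial content of the deduction, and you leave it as ``I would verify''.

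Your alternative route cannot work. The map $\gamma\mapsto\bigcup_{n\in\gamma}\alpha_n$ preserves both disjointness and the order $<$. Every IP ring contains disjoint but incomparable elements, e.g.\ $\alpha_1\cup\alpha_3$ and $\alpha_2$. So the reparametrised systems again satisfy only the weaker condition, as you yourself concede, and nothing is gained.

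The direct check is also not just a confirmation that derivatives are evaluated only at $\alpha>\beta$. The steps you flag (differentiate along $\beta$, then restrict to $\alpha>\beta$) are harmless by your own description. But in \cite[Lemma 5.7]{ZK14} the contradiction comes from
$0=\textup{w-}\IPlim_{\alpha,\beta}\tilde D_\beta g(\alpha)H=\IPlim_{\alpha,\beta}g(\alpha)^{-1}g(\alpha\cup\beta)g(\beta)^{-1}H=H$.
Here the double IP-limit in the sense of \cite{BMc} runs over pairs with $\alpha<\beta$, which is exactly where the weaker definition gives no formula for $\tilde D_\beta g(\alpha)$. The paper repairs this by reading the last equality over triples $\gamma<\beta<\alpha$, which still yields the contradiction. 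Without some such modification your plan does not go through.
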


We now show that \Cref{SimplerZKThm32} follows from \cite[Theorem 5.32]{ZK14}, with the definition of IP-polynomials changed as in \Cref{DerivExpression}. For the definitions of $F^{\otimes m}$, $\mathcal{F}^{m}_<$, $\lim_{\vec{\alpha}\in\mathcal{F}^m_<}$ see \cite[Definition 2.19]{ZK14} and \cite[Page 13]{BMc}.

\begin{theorem}[{\cite[Theorem 5.32]{ZK14}}]
\label{ZKThm5.32}
Let $G$ be a nilpotent group and $F\leq\textup{VIP}(G_\bullet)$ an FVIP group. Consider a right measure-preserving action of $G$ on a probability space $(X,\mathcal{A},\mu)$. Let $S_0,\dots,S_t\in F^{\otimes m}$ be arbitrary polynomial expressions and $A\in\mathcal{A}$ with $\mu(A)>0$. Then there exists an IP ring $\mathcal{F}'\subseteq\mathcal{F}$ such that 
\begin{equation*}
\IPlim_{\vec{\alpha}\in(\mathcal{F}')^m_<}\mu\left(\cap_{i=0}^tAS_i(\vec{\alpha})^{-1}\right)>0.
\end{equation*}
\end{theorem}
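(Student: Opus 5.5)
Since \Cref{ZKThm5.32} is quoted from \cite{ZK14}, I would not re-prove it from scratch. The plan is to audit Zorin-Kranich's proof and check that it goes through under our weaker notion of IP-$G_\bullet$-polynomial. In our notion, the identity $D_\beta g(\alpha)=g(\alpha)^{-1}g(\beta\cup\alpha)$ is only required for $\alpha>\beta$, rather than for all $\alpha$ disjoint from $\beta$.

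The first step is to recall the architecture of the proof in \cite{ZK14}, which follows the Bergelson--McCutcheon IP polynomial Szemer\'edi theorem \cite{BMc}. It has four ingredients:
\begin{enumerate}
\item the algebraic theory of FVIP groups: closure under products, conjugation by constants and symmetric derivatives $\tilde D_\beta$, together with a notion of weight for finite families of polynomial expressions in $F^{\otimes m}$;
\item an IP version of the van der Corput lemma;
\item Milliken--Taylor/Hindman-type theorems, used to pass to sub-IP rings $\mathcal{F}'$ along which all relevant IP limits exist;
\item the Furstenberg--Zimmer structure theory: the action is a transfinite tower of primitive extensions, and positivity of the IP limit is lifted up the tower. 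In compact directions this uses an IP Hales--Jewett colouring argument; in relatively weak mixing directions it uses PET induction on weight, powered by the van der Corput lemma.
\end{enumerate}

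The second step is to go through these ingredients in order and record each place where a derivative identity is invoked.
\begin{itemize}
\item In the van der Corput lemma and in PET induction, the differencing is taken as $\IPlim_\beta\IPlim_\alpha$ with $\alpha>\beta$. The expressions $g(\alpha)^{-1}g(\beta\cup\alpha)$ therefore only ever appear with $\alpha>\beta$, exactly as in McCutcheon's FVIP systems \cite{Mc}.
\item In the algebraic lemmas, the proof that products and conjugates of IP-polynomials are IP-polynomials is a pointwise computation at a fixed pair $\alpha>\beta$. It should go through verbatim.
\item The multi-index expressions in $F^{\otimes m}$ are evaluated on $\vec\alpha\in\mathcal{F}^m_<$, i.e.\ on $\alpha_1<\dots<\alpha_m$. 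Again only ordered unions are used.
\end{itemize}
For each lemma I would either confirm that only ordered pairs appear, or rewrite it by first restricting to an IP ring on which the needed ordering is automatic.

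The main obstacle is the algebraic closure statement that the symmetric derivative $\tilde D_\beta g$ of an element of an FVIP group is again an IP-polynomial of lower degree (in $\textup{VIP}(G_{\bullet+1})$). This, together with the weight-decrease lemma for PET induction, is where \cite{ZK14} may use the full disjointness condition, for instance by splitting $\beta\cup\alpha$ in the "wrong" order. My first attempt would be an induction on the length of $G_\bullet$. I would define all derivatives only on $\{\alpha>\beta\}$, note that for $\gamma>\alpha>\beta$ we have $\beta\cup\alpha>\ldots$ only in the ordered sense needed, and verify the cocycle identity
\[
D_\gamma D_\beta g(\alpha)=\bigl(g(\alpha)^{-1}g(\beta\cup\alpha)\bigr)^{-1}g(\gamma\cup\alpha)^{-1}g(\beta\cup\gamma\cup\alpha)
\]
for $\beta<\gamma<\alpha$, using the filtration condition $[G_i,G_j]\subseteq G_{i+j}$ to control the commutators that arise. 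If some step in \cite{ZK14} genuinely needs the unordered version, I would instead restrict every IP-limit to an IP ring generated by an increasing chain. On such a ring, any two disjoint elements that arise can be arranged to be comparable, so the argument can be run with our definition. Once the algebraic lemmas and the PET step are secured, the structure-theoretic part of the proof carries over unchanged.
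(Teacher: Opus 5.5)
Your overall strategy is the same as the paper's: it does not re-prove \cite[Theorem 5.32]{ZK14} but audits that proof to confirm the derivative identities are only invoked for $\alpha>\beta$. However, your sketch puts the difficulty in the wrong place and misses the one step that actually needs modification.

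The algebraic closure results you flag as the main obstacle are harmless. The paper observes that \cite[Proposition 2.22]{ZK14} and \cite[Lemma 4.20]{ZK14} already effectively use the ordered definition, and your cocycle computation for $\beta<\gamma<\alpha$ is correct.

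The problematic step is \cite[Lemma 5.7]{ZK14}. Its contradiction comes from
\[
0=\textup{w-}\IPlim_{\alpha,\beta}\tilde D_\beta g(\alpha)H=\IPlim_{\alpha,\beta}g(\alpha)^{-1}g(\alpha\cup\beta)g(\beta)^{-1}H=H .
\]
Here the double IP-limit is in the sense of \cite{BMc}, i.e.\ over pairs with $\alpha<\beta$. That is the opposite of the order $\alpha>\beta$ for which the weakened definition guarantees $\tilde D_\beta g(\alpha)=g(\alpha)^{-1}g(\beta\cup\alpha)g(\beta)^{-1}$. So two of your claims fail at exactly this point: that the differencing is always taken with $\alpha>\beta$, and that evaluating on $\mathcal{F}^m_<$ means only ``ordered unions'' occur. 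The pairs are ordered, but in the wrong direction.

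Your fallback does not repair this. In an IP ring $\mathcal{F}((\alpha_n)_{n\in\mathbb{N}})$, disjoint elements need not be comparable: $\alpha_1\cup\alpha_3$ and $\alpha_2$ are disjoint and incomparable. More to the point, the defect in Lemma 5.7 is the direction of the order, not incomparability, so passing to a sub-IP ring changes nothing.

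What is needed is to rerun the contradiction with the limit taken in the other order. Read the second equality as follows: for every neighbourhood $U$ of $H$ in $L^2$ there is $\gamma\in\mathcal{F}$ such that $g(\alpha)^{-1}g(\alpha\cup\beta)g(\beta)^{-1}H\in U$ for all $\gamma<\beta<\alpha$. This is the range where the symmetric-derivative identity is available, and one must check that the contradiction with the first equality survives. This is the fix the paper records.

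With it, the remaining uses you anticipated do only need $\alpha>\beta$, as the paper confirms. These include the iterated limits $\IPlim_{\beta,\alpha}$ of $\tilde D_\beta S_i(\alpha)$ in \cite[Theorem 5.18]{ZK14}, the substitution in the proof of \cite[Theorem 3.3]{ZK14}, and the derivatives in \cite[Theorem 4.2]{ZK14}.
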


\begin{proof}[Proof of \Cref{SimplerZKThm32} from \Cref{ZKThm5.32}]
Define a right action of $G$ on $(X,\mathcal{B},\mu)$ given by, for $x\in X$ and $g\in G$, $xg:=T_g^{-1}x$. By \cite[Lemma 4.22]{ZK14}, $S_0,\dots,S_t$ are all contained in some fixed FVIP group $F$, with their inverses $(S_i^{-1}(\alpha))_{\alpha\in\mathcal{F}}$ also being in $F$. So \Cref{ZKThm5.32} with $m=1$ (so $F^{\otimes1}=F$) says that there exists an IP ring $\mathcal{F}'\subseteq\mathcal{F}$ such that 
\begin{equation*}
0<\IPlim_{\alpha\in\mathcal{F}'}\mu\left(\cap_{i=0}^tAS_i(\alpha)^{-1}\right)
=
\IPlim_{\alpha\in\mathcal{F}'}
\mu\left(\cap_{i=0}^tT_{S_i(\alpha)}A\right).\qedhere
\end{equation*}
\end{proof}

\paragraph{Checking the results in \cite{ZK14} with the condition $\alpha>\beta$.}
For the convenience of the reader, we briefly explain why the modification elucidated in \Cref{Beta>AlphaRemark} does not affect the arguments in \cite{ZK14} leading to the proof of \cite[Theorem 5.32]{ZK14}. Rather than checking every occurrence of the derivatives $D,\tilde{D}$ individually, we focus on the places in \cite{ZK14} where \Cref{DerivExpression,SymDerivExpression} are used in a potentially problematic way, and verify that they are only invoked for $\alpha>\beta$.

\label{CheckingZKChange}
\paragraph{\cite[Section 2]{ZK14}} The proofs of all results still work with our definition of IP polynomial; in fact, our definition seems to be used in the proof of \cite[Proposition 2.22]{ZK14}, as the last equation giving the expression of $\tilde{D}_\gamma\tilde{h}(\beta)$ only works for $\beta>\gamma$.

\paragraph{\cite[Section 3]{ZK14}} The arguments in this section need no change. For example, in the proof of \cite[Theorem 3.3]{ZK14}, \Cref{SymDerivExpression} is only used in the middle of \cite[Page 96]{ZK14}, after the phrase `By choice of $n_j$ and $s_j$, we have', when substituting 
    \begin{equation*}
    \tilde{D}_{n_{i+1}\cup\cdots\cup n_{j-1}}g(n_j)=g(n_j)^{-1}g(n_{i+1}\cdots n_j)g(n_{i+1}\cdots n_{j-1})^{-1}.
    \end{equation*}
    And by the definitions of the objects in the proof, $n_j>n_{i+1}\cup\cdots\cup n_{j-1}$.

\paragraph{\cite[Section 4]{ZK14}} The only results which involve the definition of IP-polynomial are \cite[Results 4.2, 4.8, 4.9, 4.10]{ZK14} and some results in \cite[Section 4.4]{ZK14}, and their proofs still work with our definition. In fact, the proof of \cite[Lemma 4.20]{ZK14}(similarly with \cite[Lemma 4.27]{ZK14}), which gives examples of FVIP systems, seems to use our definition of polynomials, as the derivative $\tilde{D}_\beta v(\alpha)$ is only computed when $\alpha>\beta$. 
    
In the proof of \cite[Theorem 4.2]{ZK14}, whenever we compute a derivative $(\hat{D}_\alpha g)_\beta$, either $\beta>\alpha$ or we are taking an IP-limit along $\beta$, so only the values of $\beta>\alpha$ are relevant.

\paragraph{\cite[Section 5]{ZK14}}
The change in definition does not affect how most results from this section follow, as in \cite{ZK14}, from analogous results in \cite{BMc}. \cite[Lemma 5.7]{ZK14} arrives at a contradiction using the following equation:
\begin{equation}
\label{r4ewodfislk}
0\stackrel{(1)}{=}\textup{w-}\IPlim_{\alpha,\beta}\tilde{D}_\beta g(\alpha)H=
\IPlim_{\alpha,\beta}
g(\alpha)^{-1}
g(\alpha\cup\beta)
g(\beta)^{-1}H\stackrel{(2)}{=}H.
\end{equation}
The limits of the form $\IPlim_{\alpha,\beta}$ used later in \cite{ZK14} (for the definition see \cite[Page 13]{BMc}) only involve pairs $(\alpha,\beta)$ such that $\alpha<\beta$, which is a problem if we use our definition directly.
However, we need only use \Cref{SymDerivExpression} when $\alpha>\beta$ in this argument. Indeed, we still reach a contradiction if we interpret equality (2) in \Cref{r4ewodfislk} to mean that for any neighborhood $U$ of $H$ in $L^2$ there is $\gamma\in\mathcal{F}$ such that, for all $\gamma<\beta<\alpha$ in $\mathcal{F}$, $g(\alpha)^{-1}
g(\alpha\cup\beta)
g(\beta)^{-1}H\in U$. 

In the proof of \cite[Theorem 5.18]{ZK14}, we take limits of the form $\IPlim_{\beta,\alpha}$ (so $\alpha>\beta$) of expressions involving derivatives $\tilde{D}_\beta S_i(\alpha)$, and the big proof, the proof of \cite[Theorem 5.26]{ZK14}, only uses \Cref{SymDerivExpression} implicitly when applying previous results which we have already checked work with our definition.\\

Zorin-Kranich gives in \cite[Remark 2.11]{ZK14} a definition of a polynomial map $p:G\to F$ when $G,F$ are groups and $F$ is nilpotent, equivalent to that of \cite{LePoly}: let $F_\bullet$ be a prefiltration with length $d$. If $d=-\infty$, define the class of $F_\bullet$-polynomials $P(G,F_\bullet)$ to only contain the constant map $1_F$. If $d\geq0$, say a function $\varphi:G\to F_0$ is in 
$P(G,F_\bullet)$ if for all $h\in G$, the derivative $D_h\varphi$ (see \Cref{DefPolyLeib}) is in $P(G,F_{\bullet+1})$. For convenience of the reader, we show in the following proposition that the definitions of polynomial map in \cite{LePoly} and \cite{ZK14} coincide.

\begin{prop}
If $G,F$ are groups and $F$ is nilpotent, then a function $p:G\to F$ is polynomial in the sense of \cite{LePoly} if and only if there is a filtration $F_\bullet$ of $F$ such that $p\in P(G,F_\bullet)$.
\end{prop}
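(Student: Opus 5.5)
We must show: $p:G\to F$ with $F$ nilpotent is Leibman-polynomial (some $d$ with all $(d+1)$-fold derivatives trivial) iff $p\in P(G,F_\bullet)$ for some filtration $F_\bullet$ of $F$.

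The direction ($\Leftarrow$) is immediate by induction on the length of the prefiltration. We prove: if $F_\bullet$ has length $d$ and $\varphi\in P(G,F_\bullet)$, then $D_{h_1}\cdots D_{h_{d+1}}\varphi=1_F$ for all $h_1,\dots,h_{d+1}\in G$. For $d=-\infty$, $\varphi\equiv 1_F$ and there is nothing to prove. For $d\ge 0$, each $D_{h_1}\varphi$ lies in $P(G,F_{\bullet+1})$, and $F_{\bullet+1}$ has length $d-1$ (or $-\infty$). The induction hypothesis then kills $d$ further derivatives, so $\varphi$ has Leibman degree $\le d$. Since a filtration of a nilpotent group of class $c$ has finite length, this direction is done.

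For ($\Rightarrow$), suppose $p$ has Leibman degree $\le d$, and let $c$ be the nilpotency class of $F$ with lower central series $\gamma_i$. We take as candidate filtration the ``stretched'' filtration \Cref{BigFiltration}, built from the lower central series with each $\gamma_i$ repeated $d$ times. Concretely, put $F_0=F$ and $F_j=\gamma_{\lceil j/d\rceil}$ for $j\ge1$. First we check that this is a filtration: $[F_i,F_j]=[\gamma_{\lceil i/d\rceil},\gamma_{\lceil j/d\rceil}]\subseteq\gamma_{\lceil i/d\rceil+\lceil j/d\rceil}\subseteq\gamma_{\lceil (i+j)/d\rceil}$, using $\lceil a\rceil+\lceil b\rceil\ge\lceil a+b\rceil$. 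Its length is finite, at most $cd$.

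We then prove $p\in P(G,F_\bullet)$ by induction on the nilpotency class $c$, passing to quotients. The key observation is Leibman's: degree is preserved under homomorphisms, and the restriction of $p$ to an abelian quotient is an ordinary polynomial of degree $\le d$. The inductive claim we aim for is the following. If $\varphi:G\to F_0$ has Leibman degree $\le d-k$ modulo $\gamma_2$ (that is, in $F/\gamma_2$), and $\varphi$ takes values in $F_k$ appropriately, then $D_h\varphi$ has the corresponding property with respect to $F_{\bullet+1}$.

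This bookkeeping is the main obstacle. The derivative $D_h\varphi$ of a Leibman polynomial is again Leibman of degree one less, but only modulo the commutator corrections that arise in non-abelian $F$. Those corrections land in deeper terms of the lower central series, and this is precisely why each $\gamma_i$ must be repeated $d$ times.

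To avoid redoing this analysis by hand, the cleanest route is to cite Leibman's theorem \cite[Theorem 3.2/Proposition 3.5]{LePoly}: a map is polynomial iff its image in each quotient $\gamma_i/\gamma_{i+1}$, after suitable correction, has degree $\le d$. Combined with \cite[Remark 2.11]{ZK14}, this gives $p\in P(G,F_\bullet)$ for the filtration \Cref{BigFiltration}. We then verify the membership inductively on $c$. For $c=1$ (abelian $F$), membership reduces directly to the degree condition, since $F_j=F$ for $j\le d$ and $F_j$ is trivial for $j>d$. For general $c$, we apply the inductive step to $F/\gamma_c$ and handle the central layer $\gamma_c$ using that central factors commute past all the commutator corrections.
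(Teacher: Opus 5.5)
Your ($\Leftarrow$) direction is the paper's argument. For ($\Rightarrow$) you choose the same stretched lower central series $F_j=\gamma_{\lceil j/d\rceil}$ and correctly verify $[F_i,F_j]\subseteq F_{i+j}$. However, you never actually prove $p\in P(G,F_\bullet)$. The ``inductive claim'' is not formulated precisely, and the layer-by-layer criterion you attribute to \cite{LePoly} is neither stated precisely nor needed. Citing \cite[Remark 2.11]{ZK14} amounts to citing the very equivalence you are asked to prove, and the induction on the class via $F/\gamma_c$, with ``central factors commuting past the corrections'', is only gestured at. More importantly, the obstacle you identify does not exist. Leibman's degree is defined by iterated derivatives computed in $F$ itself. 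So if all $(d+1)$-fold derivatives of $p$ are trivial, then all $d$-fold derivatives of $D_hp$ are trivial, exactly and with no commutator corrections. Non-commutativity enters only when differentiating products such as $D_h(qr)$, not when iterating $D$.

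The missing step is a one-liner. Unwinding the recursive definition, $p\in P(G,F_\bullet)$ for a finite-length filtration says precisely this: for every $k\ge 0$ and $h_1,\dots,h_k\in G$, the iterated derivative $D_{h_k}\cdots D_{h_1}p$ takes values in $F_k$, which is trivial beyond the length. For $k\le d$ this is automatic, since $F_k=F$ (take $d\ge 1$, which is harmless). For $k\ge d+1$ the iterated derivative is identically $1_F$ by the degree hypothesis, because $D_h$ of the constant map $1_F$ is again $1_F$; so it lies in every $F_k$. Thus your $c=1$ argument works verbatim for every $c$. The repetition of each $\gamma_i$ plays no role in checking membership; it is needed only to make $F_\bullet$ a filtration while keeping $F_1=\cdots=F_d=F$, which you already did. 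A minor point in ($\Leftarrow$): finiteness of the length is built into the definition of $P(G,F_\bullet)$ and is not forced by nilpotency, since $F_j=F$ for all $j$ is a filtration of infinite length.
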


\begin{proof}
For any filtration $F_\bullet$ of $F$ with length $d$, if $p\in P(G,F_{\bullet})$ then for any $h_1,\dots,h_{d+1}\in G$ we must have $D_{h_1}\cdots D_{h_{d+1}}p\in P(G,F_{\bullet+d+1})$, so as $F_{d+1}$ is trivial, $D_{h_1}\cdots D_{h_{d+1}}p=1_{F}$. Reciprocally, any polynomial map $p:G\to F$ of degree $d$ is a $F_\bullet$-polynomial, where $F_\bullet$ is obtained from the lower central series of $F$ by repeating every element $d$ times, as in \Cref{BigFiltration}.
\end{proof}
The following fact will be used in the next subsection: for a fixed group $H$ and any filtration $G_\bullet$, $P(G,F_\bullet)$ forms a nilpotent group with the operation being pointwise product (see \cite[Theorem 2.5, Corollary 2.9]{ZK14}; a similar result was obtained by Leibman in \cite[Sections 3.3, 3.4]{LePoly}).

\msubsection{Proofs of \Cref{PolyCesaroNilpotent,NilpPolyBB}}
\label{Sec5.2}
\begin{lemma}
\label{PolyTimesIPIsFVIP}
Let $G,F$ be finitely generated groups, where $F$ is nilpotent and has a filtration $F_\bullet$, and consider an IP-system
\begin{equation}
\label{34re9fosdireoifdsl}
g:\mathcal{P}_f(\mathbb{N})\to G;\alpha\mapsto g_\alpha:=\prod_{i\in\alpha}^<g_i.\textup{ (So, $g(\varnothing)=1_G$).}
\end{equation}
Let $P(g,F_\bullet)$ be the family of all maps $h:\mathcal{P}_f(\mathbb{N})\to F$ of the form $h(\alpha)=p(g_\alpha^{-1})$, where $p\in P(G,F_\bullet)$, and let $P_0(g,F_\bullet)=\{p\in P(g,F_\bullet);p(\varnothing)=1_K\}$. Then any $h\in P(g,F_\bullet)$ is an
$F_\bullet$-FVIP system. 
\end{lemma}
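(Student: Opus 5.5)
We would show that $P(g,F_\bullet)$ itself, or a finitely generated subgroup of it containing the given $h$, is an FVIP group. The argument has three parts: the IP-polynomial property, the VIP-group axioms, and finite generation.

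First, fix $p\in P(G,F_\bullet)$ and set $h(\alpha)=p(g_\alpha^{-1})$. For $\alpha>\beta$ the ordered product gives $g_{\beta\cup\alpha}=g_\beta g_\alpha$, so $g_{\beta\cup\alpha}^{-1}=g_\alpha^{-1}g_\beta^{-1}$. Hence
\begin{equation*}
h(\alpha)^{-1}h(\beta\cup\alpha)=p(g_\alpha^{-1})^{-1}p(g_\alpha^{-1}g_\beta^{-1})=(D_{g_\beta^{-1}}p)(g_\alpha^{-1}).
\end{equation*}
Since $D_{g_\beta^{-1}}p\in P(G,F_{\bullet+1})$, the map $\alpha\mapsto (D_{g_\beta^{-1}}p)(g_\alpha^{-1})$ lies in $P(g,F_{\bullet+1})$. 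Induction on the length of the prefiltration then shows that every element of $P(g,F_\bullet)$ is an IP-$F_\bullet$-polynomial. The base case, length $-\infty$, is trivial. This is exactly why the inverse $g_\alpha^{-1}$ appears: it makes the derivative with respect to $\beta$ a Leibman derivative of $p$ evaluated at $g_\alpha^{-1}$.

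Second, we check the VIP-group axioms on $P_0(g,F_\bullet)$, or on the group it generates. It is a group under pointwise product because $P(G,F_\bullet)$ is a group. It is closed under conjugation by constants $c\in F$, since $c\,p(\cdot)c^{-1}\in P(G,F_\bullet)$. For the symmetric derivative with $\alpha>\beta$,
\begin{equation*}
\tilde D_\beta h(\alpha)=p(g_\alpha^{-1})^{-1}p(g_\alpha^{-1}g_\beta^{-1})p(g_\beta^{-1})^{-1},
\end{equation*}
which equals $q(g_\alpha^{-1})$ with $q(x)=p(x)^{-1}p(xg_\beta^{-1})p(g_\beta^{-1})^{-1}$. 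The map $q$ is the product of $D_{g_\beta^{-1}}p\in P(G,F_{\bullet+1})$ with a constant from $F_1$. We need that constant to lie in the correct level so that $q\in P(G,F_{\bullet+1})$, and also $q(1_G)=1$. Since $p(1_G)=1$, we have $q(1_G)=1$. So $q\in P_0(G,F_{\bullet+1})$, once we know that constant maps with value in $F_1$ belong to $P(G,F_{\bullet+1})$, which holds because their derivatives are trivial. An arbitrary $h\in P(g,F_\bullet)$ is a constant times an element of $P_0$, so this reduces to the vanishing case. The VIP definition only concerns elements vanishing at $\varnothing$; for general $h$ we interpret "FVIP system" as an element of the group generated by an FVIP group and constants, as in \cite{ZK14}.

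Third, and this is the main obstacle, we need finite generation. $P(G,F_\bullet)$ is a nilpotent group by \cite[Theorem 2.5, Corollary 2.9]{ZK14}. For $G,F$ finitely generated, it should be finitely generated as well. The first step is to show that $P(G,F_\bullet)$ embeds into a finitely generated nilpotent group, via the values on a finite set of words determining the polynomial, in the spirit of Leibman's results \cite{LePoly}. Subgroups of finitely generated nilpotent groups are finitely generated, so the subgroup $P_0(g,F_\bullet)$ is then finitely generated, and the image under $p\mapsto (\alpha\mapsto p(g_\alpha^{-1}))$ is a homomorphic image of it. If the embedding argument is awkward, the fallback is to invoke \cite[Lemma 4.20/4.27]{ZK14}, which produce FVIP groups from polynomial maps evaluated along IP-systems, and to check that our $h$ falls under those lemmas.
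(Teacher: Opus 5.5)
Your first two parts match the paper's proof: the identity $h(\alpha)^{-1}h(\beta\cup\alpha)=(D_{g_\beta^{-1}}p)(g_\alpha^{-1})$ for $\alpha>\beta$ with induction on the length, and the observation that the symmetric derivative is $(\tilde D_{g_\beta^{-1}}p)\circ g^{-1}$. That map is $D_{g_\beta^{-1}}p$ times a constant in $F_1=F$, so it stays in $P_0$ and in $P(G,F_{\bullet+1})$.

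The gap is in your third part, which is the only step with real content. You assert that $P(G,F_\bullet)$ ``should be'' finitely generated and only gesture at an embedding ``via the values on a finite set of words''; the fallback citation to \cite{ZK14} is not checked. Nilpotency of $P(G,F_\bullet)$ does not give finite generation by itself. The paper does not prove this global statement either: the remark after the lemma states it only as a suspicion. The paper instead works locally. It takes the subgroup $\mathcal{P}_p\le P(G,F_\bullet)$ generated by three kinds of elements: $p$ itself, the constants with values in a finite generating set of $F$, and the iterated derivatives $D_{s_1}\cdots D_{s_k}p$ with $s_i$ in a finite symmetric generating set $S$ of $G$. There are finitely many of the latter, since they vanish once $k$ exceeds the length. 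The paper then shows that $\mathcal{P}_p$ is closed under every $D_x$, $x\in G$, using $D_x(qr)=r^{-1}(D_xq)r\,D_xr$, $D_x(q^{-1})=q(D_xq)^{-1}q^{-1}$ and $D_{xy}q=D_xq\cdot D_yq\cdot D_xD_yq$. The required FVIP group is then $\{q\circ g^{-1}: q\in\mathcal{P}_p,\ q(1_G)=1_F\}$.

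That said, the idea you hint at can be completed in a few lines, and it would give more than the paper proves. Let $F_\bullet$ have length $\ell$, and let $W$ be the finite set of products of at most $\ell$ elements of $S$. Then any $q,q'\in P(G,F_\bullet)$ that agree on $W$ are equal. Indeed, for $s\in S$ the maps $D_sq$ and $D_sq'$ lie in $P(G,F_{\bullet+1})$, which has length $\ell-1$. They agree on products of at most $\ell-1$ elements of $S$, since $D_sq(w)=q(w)^{-1}q(ws)$ with $w,ws\in W$. So $D_sq=D_sq'$ by induction. Then $q(xs)=q(x)\,D_sq(x)$ together with $q(1_G)=q'(1_G)$ gives $q=q'$. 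Since the group law on $P(G,F_\bullet)$ is pointwise, restriction to $W$ is an injective homomorphism into $F^{W}$, which is a finitely generated nilpotent group. Hence $P(G,F_\bullet)$ and its subgroup $P_0(G,F_\bullet)$ are finitely generated. Combined with your Part 2, this makes the whole of $P_0(g,F_\bullet)$ an FVIP group, which is exactly what the paper's remark conjectures. As written, however, this argument is missing from your proposal. Without it, or the paper's local construction, finite generation, and hence the FVIP conclusion, is not established.
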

In the proof of \Cref{PolyTimesIPIsFVIP} we use the following notation, for a polynomial map $p$ between groups:
\begin{equation*}
\tilde{D}_hp(g)=p(g)^{-1}p(gh)p(h)^{-1}.
\end{equation*}

\begin{proof}
We first prove by induction on the length of $F_\bullet$, that the maps in $P(g,F_\bullet)$ are IP-$F_\bullet$ polynomials. The base case is obvious, so suppose $F_\bullet$ has length $\geq0$. For all $\beta\in\mathcal{F}$ and $h=p\circ\left(g^{-1}\right)$ where $p\in P(G,F_\bullet)$, we have $D_\beta h=\left(D_{g_\beta^{-1}}p\right)\circ\left(g^{-1}\right)$, as for $\alpha>\beta$
\begin{equation*}
D_\beta h(\alpha)=h(\alpha)^{-1}h(\beta\cup\alpha)=p\left(g_\alpha^{-1}\right)^{-1}p\left(g_\alpha^{-1}g_\beta^{-1}\right)=D_{g_\beta^{-1}}p\left(g_\alpha^{-1}\right).
\end{equation*}
So $D_\beta h$ is in $P(g,F_{\bullet+1})$, which by inductive hypothesis implies $D_\beta h$ is an IP-$F_{\bullet+1}$ polynomial, so $h$ is an IP-$F_{\bullet}$ polynomial.

We now show that $P_0(g,F_\bullet)$ is a VIP group; this is obvious if $F_\bullet$ has length $-\infty$. If not, then $P(g,F_\bullet)$ contains all constant maps $\mathcal{F}\to F$, so $P_0(g,F_\bullet)$ is closed under conjugation by constants. Moreover, $P_0(g,F_\bullet)$ is closed under taking symmetric derivatives, as for all $\beta\in\mathcal{F}$ and $h=p\circ g$ where $p\in P(G,F_\bullet)$, we have 
\begin{equation}
\label{3r9ewdos9ewr0odis}
\tilde{D}_\beta h=\left(\tilde{D}_{g_\beta^{-1}}p\right)\circ g.
\end{equation}

It remains to verify that if $h\in P_0(g,F_\bullet)$, then $h$ is contained in some $F_\bullet$-FVIP group. Say $h(\alpha)=p(g_\alpha^{-1})$ where $p\in P(G,F_\bullet)$. Let $G_0,F_0$ be finite sets of generators for $F,G$ respectively such that $G_0=G_0^{-1}$, and consider the subgroup $\mathcal{P}_p\subseteq P(G,F_\bullet)$ generated by $p$, the constant maps $\alpha\mapsto f$ for $f\in F_0$ and the iterated derivatives $D_{g_1}D_{g_2}\cdots D_{g_k}p$, with $k\in\mathbb{N}$ and $g_1,\dots,g_k\in G_0$. So $\mathcal{P}_p$ nilpotent (as it has a filtration by \cite[Corollary 2.9]{ZK14}) and finitely generated, because $D_{g_1}D_{g_2}\cdots D_{g_k}p=1_F$ if $k$ is bigger than the length of $F$. Also let $\mathcal{P}_p^0=\{p\in \mathcal{P}_p;p(1_G)=1_K\}$. 

It will be enough to prove that $\mathcal{P}_p$ is closed under taking derivatives; if so, its subgroup $\mathcal{P}_p^0$ is nilpotent and finitely generated (as a subgroup of the nilpotent finitely generated group $\mathcal{P}_p$), and it is closed under taking symmetric derivatives and conjugation by constants, so the group $\{p\circ\left(g^{-1}\right);p\in \mathcal{P}_p^0\}$ is FVIP.

Note first that if $g_0\in G_0$, then $D_gq\in \mathcal{P}_p$ for all $q\in\mathcal{P}_p$; indeed, the set of elements $q\in\mathcal{P}_p$ such that $D_{g_0}q\in \mathcal{P}_p$ contains the generating set of $\mathcal{P}_p$ specified above, and it is closed under taking products and inverses, because for $g\in G$ and $q,r\in\mathcal{P}_p$,
\begin{equation*}
D_g(qr)=r^{-1}D_g(q)rD_g(r)\textup{, and }D_g\left(q^{-1}\right)=q(D_gq)^{-1}q^{-1}.
\end{equation*}
More generally, if $q\in\mathcal{P}_p$ and $g\in G$, then $D_gq\in \mathcal{P}_p$, as this is true if $g\in G_0=G_0^{-1}$ and, for all $g,h\in G$,
\begin{equation*}
D_{gh}q=D_gq\cdot D_hq\cdot D_gD_hq.\qedhere
\end{equation*}
\end{proof}

\begin{remark}
Although \Cref{PolyTimesIPIsFVIP} will be sufficient for our purposes, we suspect that $P(G,F_\bullet)$ is finitely generated, which would imply by the proof of \Cref{PolyTimesIPIsFVIP} that the entire $P_0(g,F_\bullet)$ is an FVIP group.
\end{remark}

\begin{cor}
\label{SubIPRingWithBigLim}
Let $G,F$ be finitely generated groups, with $F$ being nilpotent, let $p_1,\dots,p_s:G\to F$ be polynomial maps satisfying $p_i(1_G)=1_F$. Then, for any m.p.s. $(X,\mathcal{B},\mu,(T_g)_{g\in F})$, $B\in\mathcal{B}$ satisfying $\mu(B)>0$ and any IP-system $g:\mathcal{F}\to G;\alpha\mapsto g_\alpha=\prod_{i\in\alpha}^<g_i$, there is an IP ring $\mathcal{F}'\subseteq\mathcal{F}$ such that 
\begin{equation}
\label{rew9dsoe9wodspl}
\lim_{\alpha\in\mathcal{F}'}\mu\left(\bigcap_{i=1}^sT_{p_i(g_\alpha)}B\right)>0.
\end{equation}
\end{cor}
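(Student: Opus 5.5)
The plan is to reduce \Cref{SubIPRingWithBigLim} to \Cref{SimplerZKThm32}, using \Cref{PolyTimesIPIsFVIP} to verify that the maps $\alpha\mapsto p_i(g_\alpha)$ are FVIP systems. The one subtlety is that \Cref{PolyTimesIPIsFVIP} is phrased for maps of the form $\alpha\mapsto p(g_\alpha^{-1})$, not $p(g_\alpha)$. I will absorb the inversion into the polynomial. Since $F$ is nilpotent, $q_i(x):=p_i(x^{-1})$ is again a polynomial map $G\to F$, because $x\mapsto x^{-1}$ is a polynomial on $G$. If one prefers not to rely on that (as $G$ itself need not be nilpotent), I would check directly that $D_h q_i(x)=p_i(x^{-1})^{-1}p_i(h^{-1}x^{-1})$, so iterated derivatives of $q_i$ are iterated left-derivatives of $p_i$. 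For a nilpotent target, left and right polynomiality coincide by Leibman's theory, so $q_i$ is polynomial. Also $q_i(1_G)=1_F$, and $p_i(g_\alpha)=q_i(g_\alpha^{-1})$.

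Next I fix a single filtration $F_\bullet$ of $F$ with every $q_i\in P(G,F_\bullet)$. If $d$ bounds the degrees, I use the lower central series with each term repeated $d$ times, as in \Cref{BigFiltration}, exactly as in the proof that the Leibman and Zorin-Kranich definitions coincide. \Cref{PolyTimesIPIsFVIP} then says each $h_i(\alpha):=q_i(g_\alpha^{-1})=p_i(g_\alpha)$ is an $F_\bullet$-FVIP system. Since $q_i(1_G)=1_F$, each $h_i$ lies in $P_0(g,F_\bullet)$ and vanishes at $\varnothing$.

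Then I apply \Cref{SimplerZKThm32} with $S_0\equiv 1_F$ (the trivial FVIP system) and $S_i=h_i$ for $i=1,\dots,s$, to the action $(T_a)_{a\in F}$ and the set $B$. This yields an IP ring $\mathcal{F}'$ along which
\[
\IPlim_{\alpha\in\mathcal{F}'}\mu\Bigl(B\cap\bigcap_{i=1}^sT_{p_i(g_\alpha)}B\Bigr)>0 .
\]
The limit in \Cref{rew9dsoe9wodspl} involves only $\bigcap_{i=1}^s T_{p_i(g_\alpha)}B$, without the extra $B$. I can either apply the theorem directly with $S_0,\dots,S_{s-1}$ equal to $h_1,\dots,h_s$, or use monotonicity: the quantity with $B$ omitted is at least the one with $B$ included. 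Either way the liminf along $\mathcal{F}'$ is positive. If an actual IP limit is required rather than a positive lower bound, I would pass to a further sub-IP-ring using the compactness of $[0,1]$ for IP limits (Hindman's theorem), which only preserves positivity.

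The main obstacle is the bookkeeping around inverses. First, the convention in \Cref{PolyTimesIPIsFVIP} uses $g_\alpha^{-1}$, which must be matched with $p_i(g_\alpha)$ via $q_i$. Second, I must confirm that the FVIP-ness in that lemma is stated for a common FVIP group, or at least for systems that \Cref{SimplerZKThm32} accepts simultaneously. \Cref{SimplerZKThm32} allows arbitrary FVIP systems, and the common-group issue is handled inside its proof via \cite[Lemma 4.22]{ZK14}, so nothing further should be needed.
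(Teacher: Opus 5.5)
Your plan matches the paper's proof: replace $p_i$ by $q_i(x)=p_i(x^{-1})$, choose a filtration as in \Cref{BigFiltration} containing every $q_i$, use \Cref{PolyTimesIPIsFVIP} to get that $\alpha\mapsto q_i(g_\alpha^{-1})=p_i(g_\alpha)$ are FVIP systems, and finish with \Cref{SimplerZKThm32}, applied directly to $h_1,\dots,h_s$ so that no extra $S_0$ or further sub-IP-ring is needed. The one thing to fix is your first justification that $q_i$ is polynomial: inversion $G\to G$ is not a polynomial map unless $G$ is nilpotent (for a free group its iterated derivatives are conjugates of nontrivial iterated commutators), so you must use a fallback, and the paper makes this precise in \Cref{InvertingInsidePolyIsInocuous} by factoring $p_i=q\circ\pi$ through a homomorphism $\pi$ into a nilpotent group (Leibman, Prop.~3.21) and inverting there.
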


\begin{proof}
By \Cref{InvertingInsidePolyIsInocuous}, the maps $q_i:G\to F$; $q_i(g)=p_i(g^{-1})$ are polynomial, so we may choose a filtration $F_\bullet$ (as in \Cref{BigFiltration} for big enough $d$) such that $q_1,\dots,q_s\in P(G,F_\bullet)$, and by \Cref{PolyTimesIPIsFVIP}, the maps $\alpha\mapsto q_i(g_\alpha^{-1})=p_i(g_\alpha)$ are $F_\bullet$-FVIP systems. The result then follows from \Cref{SimplerZKThm32}.
\end{proof}

\begin{lemma}
\label{InvertingInsidePolyIsInocuous}
If $p:G\to F$ is a polynomial mapping from a group $G$ to a nilpotent group $F$, then $g\mapsto p(g^{-1})$ is a polynomial too.
\end{lemma}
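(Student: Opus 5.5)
Write $\iota:G\to G$, $\iota(g)=g^{-1}$, and $q=p\circ\iota$, so that $q(g)=p(g^{-1})$. The plan is to argue directly from \Cref{DefPolyLeib}, by induction on the degree of $p$. The key computation expresses the derivatives of $q$ through derivatives of $p$ evaluated at inverted arguments. For $h\in G$,
\begin{equation*}
D_hq(g)=p(g^{-1})^{-1}p(h^{-1}g^{-1}).
\end{equation*}
Set $x=g^{-1}$. Then
\begin{equation*}
D_hq(g)=p(x)^{-1}p(h^{-1}x),
\end{equation*}
which is a \emph{left} difference of $p$, whereas \Cref{DefPolyLeib} uses right differences. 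To bridge this, I would use the alternative characterization of Leibman polynomials: a map into a nilpotent group is polynomial iff it is polynomial with respect to left differences, and the two notions agree.

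Rather than rely on that equivalence, a cleaner route is to use the group structure of polynomial maps together with compositions. The fact I would invoke is that polynomial maps into nilpotent groups are closed under pointwise products and inverses, and under precomposition with polynomial maps $G\to G$, whenever this makes sense (\cite{LePoly}, Theorems 3.2--3.4, and \cite[Theorem 2.5]{ZK14}). The problem is that $G$ itself need not be nilpotent, so $\iota$ need not be a polynomial on $G$. So I would argue directly instead.

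The direct argument proves by induction on $d$ that if $p\in P(G,F_\bullet)$, then the map $\tilde p(g)=p(g^{-1})$ lies in $P(G,F_\bullet)$; here $F_\bullet$ is a filtration, using the Zorin-Kranich characterization established in the proposition just above. For $d=-\infty$ there is nothing to prove. For the inductive step, compute
\begin{equation*}
D_h\tilde p(g)=p(g^{-1})^{-1}p(h^{-1}g^{-1}).
\end{equation*}
Write $h^{-1}g^{-1}=g^{-1}\cdot(g h^{-1}g^{-1})$; this is awkward because the increment depends on $g$. Instead I would use the identity
\begin{equation*}
p(h^{-1}x)=p(x\cdot x^{-1}h^{-1}x),
\end{equation*}
or better, note that $D_h\tilde p(g)$ is the conjugate of an inverse of a right difference:
\begin{equation*}
p(x)^{-1}p(h^{-1}x)=\bigl(p(h^{-1}x)^{-1}p(h^{-1}x\cdot x^{-1}hx)\bigr)^{-1}.
\end{equation*}
This still has a varying increment. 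So the real route I would take is to show that $P(G,F_\bullet)$ is invariant under \emph{left} translation $p\mapsto p(a\,\cdot)$ and under left differences $p\mapsto p(\cdot)^{-1}p(a\,\cdot)$, which lands in $P(G,F_{\bullet+1})$. Granting this, $D_h\tilde p=\widetilde{L_{h^{-1}}p}$, where $L_a p(x)=p(x)^{-1}p(ax)$, and induction applies. The left-difference claim would itself be proved by induction using the commutation $D_k(L_a p)(x)=L_a(\ldots)$: left and right translations commute, so $D_k L_a p$ can be expressed via $L_a D_k p$ up to conjugations by values in $F$, which stay in the correct filtration level by \cite[Theorem 2.5]{ZK14}.

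\textbf{Main obstacle.} The step I expect to be hardest is this left/right interchange. I would first try to cite Leibman's result that the notion of polynomial is independent of the choice of left versus right differences (\cite[Section 3]{LePoly}), and fall back on the induction sketched above if that citation is not available in the needed form.
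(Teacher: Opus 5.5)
Your identity $D_h\tilde p=\widetilde{L_{h^{-1}}p}$ is correct, with $\tilde f(g)=f(g^{-1})$ and $L_af(x)=f(x)^{-1}f(ax)$. It reduces the lemma, even in the sharper form $\tilde p\in P(G,F_\bullet)$, to the claim that $p\in P(G,F_\bullet)$ implies $L_ap\in P(G,F_{\bullet+1})$. But that claim carries the entire content of the lemma. Iterating your identity gives $D_{h_1}\cdots D_{h_m}\tilde p=\widetilde{L_{h_1^{-1}}\cdots L_{h_m^{-1}}p}$, so the lemma is precisely the statement that right-polynomial maps are left-polynomial. Your proposal neither pins down a citation for this nor proves it, and the induction you sketch is circular. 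Pointwise one has
\begin{equation*}
D_k(L_ap)=[L_ap,\,D_kp]\cdot L_a(D_kp),\qquad [u,v]=u^{-1}v^{-1}uv.
\end{equation*}
The factor $L_a(D_kp)$ lies in $P(G,F_{\bullet+2})$ by your inductive hypothesis applied to $D_kp\in P(G,F_{\bullet+1})$. The other factor records the conjugation of $D_kp$ by $L_ap$ itself, and conjugation only preserves the level $P(G,F_{\bullet+1})$. To gain a level you need $[L_ap,D_kp]\in P(G,F_{\bullet+2})$. The commutator property $[P(G,F_{\bullet+i}),P(G,F_{\bullet+j})]\subseteq P(G,F_{\bullet+i+j})$ from \cite{ZK14} gives this only if you already know $L_ap\in P(G,F_{\bullet+1})$, which is what you are proving. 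What you actually have is $L_ap=p^{-1}\cdot p(a\,\cdot)\in P(G,F_\bullet)$, which lands you one level short. The argument can be rescued by inducting on the order $j$ of derivatives rather than on the length: show that all $j$-fold derivatives of $L_ap$ take values in $F_{j+1}$, proving an order-truncated version of the commutator estimate alongside by induction on $j$. None of this is in the proposal.

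The idea you are missing addresses exactly the obstacle you identified, that inversion on $G$ need not be polynomial: you never need inversion on $G$. By \cite[Prop.~3.21]{LePoly}, $p=q\circ\pi$ for a homomorphism $\pi\colon G\to G'$ into a nilpotent group $G'$ and a polynomial $q\colon G'\to F$. Then $p(g^{-1})=q(\pi(g)^{-1})$ is the composition of $\pi$, inversion on the nilpotent group $G'$ (which is polynomial), and $q$. It is therefore polynomial by \cite[Prop.~3.22]{LePoly}, and this is the paper's proof. Your route, once repaired, would give the sharper statement that $\tilde p$ stays in the same class $P(G,F_\bullet)$ as $p$, but only by redoing part of the Leibman/Zorin-Kranich machinery by hand.
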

\begin{proof}
By \cite[Prop. 3.21]{LePoly}, 
there exist a nilpotent group $G'$, a homomorphism $\pi:G\to G'$ and a polynomial $q:G'\to F$  such that $p=q\circ\pi$. So we may see $g\mapsto p(g^{-1})=q(\pi(g^{-1}))=q(\pi(g)^{-1})$ as the composition of the three polynomial maps $\pi:G\to G'$, $\textup{inv}:G'\to G';g\mapsto g^{-1}$ and $q:G'\to F$, so by \cite[Prop. 3.22]{LePoly}, $g\mapsto p(g^{-1})$ is a polynomial.
\end{proof}

\PolyCesaroNilpotent*
\begin{proof}
We may assume $H$ is finitely generated, as \cite[Corollary 1.18]{LePoly} implies that $p_j(G)$ is contained in a finitely generated subgroup of $H$ for all $j=1,\dots,s$. The fact that the set $R$ from \Cref{EqReturnsNilpPoly} is IP$^*$ then follows from \Cref{SubIPRingWithBigLim}.

Now suppose $G$ is amenable. \cite[Theorem 1.1]{Zo16} says that the limit in \Cref{EqBigAvgReturnsPolyNil} exists and takes the same value for all left F{\o}lner sequences. The limit also has to exist and take the same value in all right F{\o}lner sequences, as the maps $g\mapsto p_i(g^{-1})$ are also polynomials by \Cref{InvertingInsidePolyIsInocuous}, and in fact the value of the limit coincides for both left and right F{\o}lner sequences, as there exist two-sided F{\o}lner sequences in $G$, i.e. sequences that are both left and right F{\o}lner (indeed, by \cite[Theorem 11.3.23]{Ba}, as $G\times G$ is amenable, there exist sequences $(F_N)$ of finite subsets of $G$ almost-invariant by the action of $G\times G$ by multiplication on both sides, i.e. $(a,b)(g)=agb^{-1}$).
Now suppose there is a left F{\o}lner sequence $(F_N)$ in $G$ such that 
\begin{equation*}
\lim_{N\to\infty}\frac{1}{|F_N|}\sum_{g\in F_N}\mu\left(\bigcap_{i=1}^sT_{p_i(g)}A\right)=0.
\end{equation*}
Then by \Cref{BadIPSystem} below there is an IP-system $g:\mathcal{F}\to G$ such that 
\begin{equation*}
\lim_{\alpha\in\mathcal{F}}\mu\left(\bigcap_{i=1}^sT_{p_i(g_\alpha)}B\right)=0,
\end{equation*}
contradicting \Cref{SubIPRingWithBigLim}.
\end{proof}

\begin{lemma}
\label{BadIPSystem}
Let $G$ be a countable amenable group with a left F{\o}lner sequence $F=(F_N)$. Let $(x(g))_{g\in G}$ be a sequence of non-negative real numbers, and suppose that 
\begin{equation}
\label{BadIPSystemEq1}
\lim_{N\to\infty}\frac{1}{|F_N|}\sum_{g\in F_N}x(g)=0.
\end{equation}
Then there exists an IP-system $g:\mathcal{F}\to G$ such that $\lim_{\alpha\in\mathcal{F}}x(g_\alpha)=0$.
\end{lemma}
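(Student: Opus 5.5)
We build the IP-system recursively, choosing each new generator so that all finite products containing it land in a set where $x$ is small. For $\varepsilon>0$ set
\[
E_\varepsilon=\{g\in G;\,x(g)<\varepsilon\}.
\]
Since $x\geq 0$, \Cref{BadIPSystemEq1} and Markov's inequality give $d_F(E_\varepsilon)=1$ for every $\varepsilon>0$. The plan is to pick $g_1,g_2,\dots$ so that every product $g_\alpha$ with $\max\alpha=n$ lies in $E_{1/n}$. Then for any $\beta\in\mathcal{F}$ and $\alpha>\beta$ we have $x(g_\alpha)<1/\max\alpha\leq 1/(\max\beta+1)$, which yields $\IPlim_{\alpha\in\mathcal{F}}x(g_\alpha)=0$.

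For the recursive step, suppose $g_1,\dots,g_{n-1}$ have been chosen. Every $\alpha$ with $\max\alpha=n$ has the form $g_\alpha=g_\gamma g_n$ with $\gamma\subseteq\{1,\dots,n-1\}$ (including $\gamma=\varnothing$). So it suffices to choose
\[
g_n\in\bigcap_{\gamma\subseteq\{1,\dots,n-1\}} g_\gamma^{-1}E_{1/n}.
\]
The key point is that left translates of a set of full density along a left F{\o}lner sequence again have full density. Indeed, for fixed $h\in G$ we have $|hF_N\Delta F_N|=o(|F_N|)$, so
\[
|h^{-1}E\cap F_N|=|E\cap hF_N|=|E\cap F_N|+o(|F_N|).
\]
Hence $d_F(h^{-1}E_{1/n})=1$ for each of the finitely many $h=g_\gamma$. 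A finite intersection of sets of density $1$ still has density $1$, so in particular it is nonempty, and $g_n$ can be chosen in it. The convention $g_\varnothing=1_G$ already forces $g_n\in E_{1/n}$ itself.

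The main subtlety is the direction of multiplication. Because the new generator is appended on the right ($g_\alpha=g_\gamma g_n$ with $\gamma<\{n\}$), the constraint on $g_n$ involves left translates $g_\gamma^{-1}E$, and these are exactly the translates that a left F{\o}lner sequence controls. That is why the ordered product convention $\prod^<$ matches the hypothesis. If $G$ were finite, $E_\varepsilon$ would eventually be all of $G$ and the construction is trivial, so nothing further needs checking.
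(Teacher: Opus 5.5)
Your proposal is correct and is essentially the paper's argument. Both choose $g_n$ recursively in the finite intersection $\bigcap_{h} h^{-1}\{x<1/n\}$, where $h$ ranges over the ordered products of $g_1,\dots,g_{n-1}$. Both then use that left translates of a density-one set along a left F{\o}lner sequence still have density one.
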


\begin{proof}
We construct by recursion a sequence $(g_n)$ of elements of $G$ such that for all $n_1<\cdots<n_k$, we have $x(g_{n_1}\cdots g_{n_k})<\frac{1}{n_k}$. Clearly we can find $g_1$ such that $x(g_1)<1$, so assume we have already found $g_1,\dots,g_{n-1}$. We need to find $g_n$ such that $x(hg_n)<\frac{1}{n}$ for all $h$ in the finite set $B_n$ of products of $g_1,\dots,g_{n-1}$ with increasing index. That is, we need $g_n\in\bigcap_{h\in B_n}h^{-1}A$, where $A=\{g\in G;x(g)<\frac{1}{n}\}$. But by \Cref{BadIPSystemEq1}, we have $d_F(A)=1$, and $d_F(h^{-1}A)=d_F(A)=1$ for all $h$, because $F$ is left F{\o}lner. So $d_F\left(\bigcap_{h\in B_n}h^{-1}A\right)=1$, which allows us to pick $g_n$ and continue the recursion.
\end{proof}

\NilpPolyBB*
\begin{proof}
By \Cref{ReturnsInDifferences} there is a m.p.s. $(X,\mathcal{B},\mu,(T_a)_{a\in H})$ and $Y\in\mathcal{B}$ such that $\mu(Y)=d^*_l(A)$ and $R^T_\mu(Y)\subseteq AA^{-1}$. By \Cref{TheBigTechnicalTheorem} it is enough to prove that 
\begin{equation*}
\lim\limits_{N\to\infty}\frac{1}{|F_N|}\sum_{g\in F_N}\mu\left(\bigcap_{j=1}^sT_{q_i(g)}Y\cap T_{p_i(g)}^{-1}Y\right)>0.
\end{equation*}
And the inequality follows from \Cref{PolyCesaroNilpotent}, as for all $j$, $q_j$ and $p_j^{-1}$ are polynomials and $q_j(1_G)=p_j^{-1}(1_G)=1_H$.
\end{proof}

\NilpPolyBxB*
\begin{proof}
Note that for $j=1,\dots,s$, the maps $p_j':G\to H\times H;g\mapsto(p_j(g),1_H)$ and $q_j':G\to H\times H;g\mapsto(1_H,q_j(g))$ are both polynomial, as the composition of a polynomial and a homomorphism is a polynomial. Moreover, for any $B\subseteq G$, $p_j'(B)q_j'(B)=p_j(B)\times q_j(B)$. So \Cref{NilpPolyBxB} follows from \Cref{NilpPolyBB} applied to the group $H\times H$ and the polynomials $p_j',q_j'$
\end{proof}

\msubsection{Proof of \Cref{BigAvgsAlongSquaresFGNilp}}
\label{Sec5.3}

The goal of this subsection is to prove \Cref{BigAvgsAlongSquaresFGNilp}, which we restate here for the convenience of the reader.

\label{HereIsrweoidsioklfsd}
\BigAvgsAlongSquaresFGNilp*

Before embarking on the proof of \Cref{BigAvgsAlongSquaresFGNilp} we need a couple of lemmas. 

\begin{lemma}
\label{SubgroupsAreNiceWrtFolners}
Let $G$ be a countable amenable group, and let $H\leq G$ be a subgroup with index $[G:H]=:n<\infty$. Then for any left (right) F{\o}lner sequence $F=(F_N)$ in $G$, the sequence $(F_N\cap H)_{N\in\mathbb{N}}$ is left (right) F{\o}lner in $H$, and 
\begin{equation}
\label{re9fsdo9redfoi}
d_F(H)=\lim_{N\to\infty}\frac{|F_N\cap H|}{|F_N|}=\frac{1}{N}.
\end{equation}
\end{lemma}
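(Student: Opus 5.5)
The plan is to decompose $G$ into right cosets of $H$ when handling left F{\o}lner sequences, and into left cosets when handling right ones; the right case then follows from the left one applied to $(F_N^{-1})$ via \Cref{FolnerProps}. We read the displayed value $\frac{1}{N}$ in \eqref{re9fsdo9redfoi} as the index $\frac1n$. So let $(F_N)$ be left F{\o}lner, and fix representatives with $G=\bigsqcup_{i=1}^n Hx_i$, where $x_1=1_G$.

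\emph{Step 1 (equidistribution among cosets).} Write $a_i(N)=|F_N\cap Hx_i|$. For $g\in G$, left multiplication by $g$ permutes the right cosets only if $H$ is normal, so we work with left translates by elements of $H$ instead. Take $h\in H$. Then $h(F_N\cap Hx_i)=hF_N\cap Hx_i$, so each right coset is preserved. To compare different cosets, pass to the normal core $K=\bigcap_{g}gHg^{-1}$. This is a finite-index normal subgroup, of index $m$ say. For $K$, left multiplication by $g$ sends $Kx$ to $gKx=Kgx$, so it permutes the cosets of $K$, and $G$ acts transitively on them. Since $|gF_N\Delta F_N|=o(|F_N|)$, we get
\[
|F_N\cap Ky|-|F_N\cap Kgy|=|gF_N\cap Kgy|-|F_N\cap Kgy|=o(|F_N|).
\]
By transitivity, all $m$ counts $|F_N\cap Ky|$ are therefore asymptotically equal to $|F_N|/m$. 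Each coset of $H$ is a union of $m/n$ cosets of $K$, so $|F_N\cap H|/|F_N|\to\frac{m/n}{m}=\frac1n$. This proves \eqref{re9fsdo9redfoi}.

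\emph{Step 2 (F{\o}lner property in $H$).} For $h\in H$ we have $h(F_N\cap H)\Delta (F_N\cap H)=(hF_N\Delta F_N)\cap H$. Its size is therefore at most $|hF_N\Delta F_N|=o(|F_N|)$. By Step 1, $|F_N\cap H|\ge |F_N|/(2n)$ for large $N$, so the ratio tends to $0$. Nonemptiness for large $N$ also follows from Step 1.

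The main obstacle is the non-normality of $H$. Left translation by a general $g$ does not permute the right cosets of $H$, and this is why we pass to the normal core $K$. Once that reduction is made, the rest is bookkeeping.
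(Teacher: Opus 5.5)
Your proof is correct, including reading $\frac1N$ as $\frac1n$ and reducing the right case to the left one via $F_N^{-1}\cap H=(F_N\cap H)^{-1}$. It does, however, take a detour that the paper avoids.

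The paper partitions $G$ into \emph{left} cosets $h_1H,\dots,h_nH$. Each $h_iH$ is a left translate of $H$, so left F{\o}lner invariance gives $d_F(h_iH)=d_F(H)$ directly. The paper argues by contradiction, passing to a subsequence along which $d_F(H)$ exists. Summing over the $n$ cosets then forces the common value to be $\frac1n$. The F{\o}lner property in $H$ is then exactly your Step~2.

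The obstacle you single out, non-normality of $H$, arises only because you chose right cosets $Hx_i$, which are not left translates of $H$. Left multiplication already permutes the left cosets transitively, via $xH\mapsto (gx)H$, with no normality needed. Your passage to the normal core $K$ repairs this correctly: the comparison of $|F_N\cap Ky|$ with $|F_N\cap Kgy|$ and the count $[H:K]=m/n$ are both fine. But it is unnecessary. Running your own argument with left cosets of $H$ in place of cosets of $K$ gives the paper's proof in direct form.

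Your version does buy two things:
\begin{itemize}
\item It gives a direct estimate $|F_N\cap H|/|F_N|=\frac1n+o(1)$, with no subsequence or contradiction step.
\item It spells out the right-F{\o}lner case, which the paper leaves implicit after proving only the left case.
\end{itemize}
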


\begin{proof}
We prove the result for left F{\o}lner sequences. By way of contradiction assume that \Cref{re9fsdo9redfoi} does not hold. Passing if needed to a subsequence of $(F_N)$, we may assume $d_F(H)=\lambda$ for some $\lambda\neq\frac{1}{N}$. Since $(F_N)$ is a left F{\o}lner sequence, $d_F(gH)=d_F(H)$ for all $g\in G$. Thus, if $h_1H,\dots,h_nH\in G$ are all the left cosets of $H$ in $G$, we have
\begin{equation*}
1=d_F(G)=d_F\left(\bigsqcup_{i=1}^nh_iH\right)=\sum_{i=1}^nd_F(h_iH)=\sum_{i=1}^n\lambda=\lambda n.
\end{equation*}
So $\lambda=\frac{1}{n}$, a contradiction. Once we have \Cref{re9fsdo9redfoi}, it follows that $(F_N\cap H)$ is a F{\o}lner sequence in $H$, because for all $h\in H$,
\begin{equation*}
\lim_{N\to\infty}\frac{|F_N\cap H\Delta h(F_N\cap H)|}{|F_N\cap H|}=\lim_{N\to\infty}n\cdot\frac{|(F_N\Delta hF_N)\cap H|}{|F_N|}=n\cdot0=0.\qedhere
\end{equation*}
\end{proof}

Let $\mathcal{T}_n(Q)$ be the family of strictly upper triangular $n\times n$ matrices over some integral domain $Q$, so that if $I$ is the $n\times n$ identity matrix, $T\mapsto I+T$ is a bijection from $\mathcal{T}_n(Q)$ to $\mathrm{UT}_n(Q)$. Also, for any matrix $M\in M_n(Q)$ and any $p(x)=a_0+a_1x+\cdots+a_dx^d\in Q[x]$, we denote $p(M)=a_0I+a_1M+\cdots+a_dM^d$.
\begin{lemma}
\label{342rewdfsc89ioeorwdfs9oikl}
Let $T,S\in\mathcal{T}_n(Q)$ and let $p(x)=a_1x+\dots+a_kx^k\in Q[x]$. If $a_1\neq 0$, then $p(T)=p(S)$ implies $T=S$. 
\end{lemma}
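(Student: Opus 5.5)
We argue via the filtration of $\mathcal{T}_n(Q)$ by "superdiagonal level". For $m\geq1$ let $\mathcal{T}_n^{(m)}(Q)$ be the set of matrices $M\in\mathcal{T}_n(Q)$ with $M_{ij}=0$ whenever $j-i<m$. Then $\mathcal{T}_n^{(1)}=\mathcal{T}_n(Q)$ and $\mathcal{T}_n^{(n)}=\{0\}$. Moreover $\mathcal{T}_n^{(a)}\mathcal{T}_n^{(b)}\subseteq\mathcal{T}_n^{(a+b)}$, and each $\mathcal{T}_n^{(m)}$ is an additive subgroup. We will show, by induction on $m$, that $T-S\in\mathcal{T}_n^{(m)}$ for every $m=1,\dots,n$. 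The case $m=n$ gives $T=S$.

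The base case $m=1$ is trivial. For the inductive step, assume $D:=T-S\in\mathcal{T}_n^{(m)}$ with $m<n$. For each $k\geq2$, write $T^k-S^k$ as a telescoping sum:
\begin{equation*}
T^k-S^k=\sum_{r=0}^{k-1}T^{r}\,D\,S^{k-1-r}.
\end{equation*}
This identity holds in any ring and does not need $T$ and $S$ to commute. Each summand has $r+(k-1-r)=k-1\geq1$ factors from $\mathcal{T}_n^{(1)}$ and one factor from $\mathcal{T}_n^{(m)}$, so it lies in $\mathcal{T}_n^{(m+1)}$. Consequently
\begin{equation*}
0=p(T)-p(S)=a_1D+\sum_{k\geq2}a_k(T^k-S^k)\in a_1D+\mathcal{T}_n^{(m+1)},
\end{equation*}
which means $a_1D\in\mathcal{T}_n^{(m+1)}$.

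Now consider an entry $(i,j)$ with $j-i=m$. There, $a_1D_{ij}=0$. Since $Q$ is an integral domain and $a_1\neq0$, this forces $D_{ij}=0$. All entries with $j-i<m$ already vanish by the inductive hypothesis, so $D\in\mathcal{T}_n^{(m+1)}$, which completes the induction.

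The only step that needs any care is the noncommutative expansion of $T^k-S^k$. The telescoping identity takes care of it, and it is also exactly where the hypothesis $p(0)=0$ enters: there is no constant term, so every higher-order term carries at least one extra strictly upper triangular factor. The integral-domain hypothesis is used only to cancel $a_1$; if $a_1$ were a zero divisor the argument, and the statement itself, could fail.
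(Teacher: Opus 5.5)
Your proof is correct, but it takes a different route from the paper.

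\textbf{The paper's route.} The paper builds a truncated compositional inverse: a polynomial $q(x)=b_1x+\dots+b_{n-1}x^{n-1}$ with $q(p(x))\equiv x$ modulo $x^n$. Its coefficients are found recursively, starting from $b_1=1/a_1$. Since $M^n=0$ for every $M\in\mathcal{T}_n(Q)$, this gives $q(p(M))=M$. So $p$ has an explicit polynomial left inverse on $\mathcal{T}_n(Q)$, which is more information than injectivity. Only powers of a single matrix ever appear, so the non-commutativity of $T$ and $S$ never has to be confronted.

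\textbf{What that costs.} The paper divides by powers of $a_1$. It asserts $q\in Q[x]$, but in its own application ($Q=\mathbb{Z}$, $p(x)=2x+x^2$, coming from the squaring map) one has $b_1=1/2$. So that argument tacitly works over the fraction field of $Q$.

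\textbf{What your route buys.} Your filtration argument uses the telescoping identity to handle the non-commuting pair $T,S$. The only place you use $a_1$ is cancelling it in an integral domain, so you never leave $Q$. The same argument works verbatim over any commutative ring in which $a_1$ is a non-zero-divisor.

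\textbf{One correction to your closing remark.} The hypothesis $p(0)=0$ plays no role: a constant term would cancel in $p(T)-p(S)$ anyway. What places the higher-order terms in $\mathcal{T}_n^{(m+1)}$ is simply that $k\geq 2$.
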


\begin{proof}
It is enough to check that there is a (unique) polynomial $q(x)=b_1x+\dots+b_{n-1}x^{n-1}\in Q[x]$ such that 
$q(p(x))-x$
 is a multiple of $x^n$. Indeed, as all matrices $M\in\mathcal{T}_n(Q)$ satisfy $M^n=0$, we have $q(p(M))=M$ for all $M$, so $p:\mathcal{T}_n(Q)\to\mathcal{T}_n(Q)$ is injective.
We find the coefficients $b_i$ of $q$ by recursion. Let $c_i$ be the coefficients of $p(q(x))$, so that
\begin{equation*}
q(p(x))=b_1p(x)+\cdots+b_{n-1}p(x)^{n-1}=c_1x+c_2x^2+\cdots.
\end{equation*}
Note that for all $k\geq1$, $c_k$ is given by $b_ka_1^k$ plus some expression involving the coefficients $a_i$ and $b_j$, for $j<k$. As $a_1\neq0$, we can find the coefficients $b_k$ by recursion, starting with $b_1=1/a_1$, in such a way that $c_1=1$ and $c_k=0$ for $k=2,\dots,n-1$.
\end{proof}

\begin{proof}[Proof of \Cref{BigAvgsAlongSquaresFGNilp}] We assume $G$ is torsion free (the general case is taken care of at the end of the proof). In this case, by a theorem of Malcev (see \cite{Mal}, Theorem 6 and the discussion before Equation 15), $G$ is isomorphic to some subgroup of $\mathrm{UT}_n(\mathbb{Z})$ for big enough $n$. We henceforth identify \(G\) with this subgroup of \(\mathrm{UT}_n(\mathbb Z)\). For $N\in\mathbb{N}$, let
\[A_N=
\begin{pmatrix}
1 & 2a_{12} & 2a_{13} & \cdots & 2a_{1n} \\
0 & 1 & 2a_{23} & \cdots & 2a_{2n} \\
0 & 0 & 1 & \cdots & 2a_{3n} \\
\vdots & \vdots & \vdots & \ddots & \vdots \\
0 & 0 & 0 & \cdots & 1
\end{pmatrix};a_{ij}\in\mathbb{Z},
-N^{10^{j-i}}\leq a_{ij}\leq N^{10^{j-i}}.
\]
Similarly, let
\[B_N=
\begin{pmatrix}
1 & 4b_{12} & 4b_{13} & \cdots & 4b_{1n} \\
0 & 1 & 4b_{23} & \cdots & 4b_{2n} \\
0 & 0 & 1 & \cdots & 4b_{3n} \\
\vdots & \vdots & \vdots & \ddots & \vdots \\
0 & 0 & 0 & \cdots & 1
\end{pmatrix};b_{ij}\in\mathbb{Z},
-N^{10^{j-i}}\leq b_{ij}\leq N^{10^{j-i}}.
\]
Then $A:=\bigcup_NA_N$ and $B:=\bigcup_NB_N$ are subgroups of $\mathrm{UT}_n(\mathbb{Z})$.
Let $A^G:=A\cap G,B^G=B\cap G,A_N^G:=A_N\cap G,B_N^G=B_N\cap G$.
Note that, as $|A_N|,|B_N|$ grow polynomially,   $\underline{\lim}_{N\to\infty}(|A_N^G|\cdot|B_N^G|)^{1/N}=1$. Therefore, there exists a sequence $(N_k)_{k\in\mathbb{N}}$ going to infinity such that 
\begin{equation*}
\lim_{k\to\infty}\frac{|A_{N_k+1}^G|}{|A_{N_k}^G|}
=
\lim_{k\to\infty}\frac{|B_{N_k+1}^G|}{|B_{N_k}^G|}=1.
\end{equation*}

Moreover, given $g\in A^G$, for big enough $N$ we have $gA_N^G\subseteq A_{N+1}^G$. This implies that $\left(A_{N_k}^G\right)_{k\in\mathbb{N}}$ is a left F{\o}lner sequence in $A^G$, as for $g\in A^G$,
\begin{equation*}
\lim_{k\to\infty}\frac{|A_{N_k}^G\Delta gA_{N_k}^G|}{|A_{N_k}^G|}
=
2\lim_{k\to\infty}\frac{|gA_{N_k}^G\setminus A_{N_k}^G|}{|A_{N_k}^G|}
\leq 
2\lim_{k\to\infty}\frac{|A_{N_{k+1}}^G\setminus A_{N_k}^G|}{|A_{N_k}^G|}.
\end{equation*}
Similarly, $\left(B_{N_k}^G\right)_{k\in\mathbb{N}}$ is a left F{\o}lner sequence in $B^G$. 

We now consider the squaring map $\textup{Sq}:\mathrm{UT}_n(\mathbb{Z})\to\mathrm{UT}_n(\mathbb{Z});g\mapsto g^2$. Let $I$ be the $n\times n$ identity matrix. Then for each $n\times n$ strictly upper triangular matrix $T$, we have 
\begin{equation*}
\textup{Sq}(I+T)=(I+T)^2=I+2T+T^2.
\end{equation*}
Therefore $\textup{Sq}(A)\subseteq B$, and by \Cref{342rewdfsc89ioeorwdfs9oikl} $\textup{Sq}$ is injective. More specifically, for big enough $N$ we have $\textup{Sq}(A_N)\subseteq B_{N+1}$. Therefore, $\textup{Sq}(A_N^G)\subseteq B_{N+1}^G$, and thus
\begin{align*}
\lim_{k\to\infty}\frac{|\textup{Sq}(A_{N_k}^G)\Delta B_{N_k+1}^G|}{|B_{N_k+1}^G|}
&=
\lim_{k\to\infty}\frac{|B_{N_k+1}^G\setminus\textup{Sq}(A_{N_k}^G)|}{|B_{N_k+1}^G|}
=
\lim_{k\to\infty}\frac{|B_{N_k+1}^G|-|A_{N_k}^G|}{|B_{N_k+1}^G|}
=0.
\end{align*}
Therefore, for any m.p.s. $(X,\mathcal{B},\mu,(T_g)_{g\in G})$ and any $A\in\mathcal{B}$, we have
\begin{equation*}
\lim_{k\to\infty}\frac{1}{|A_{N_k}^G|}\sum_{g\in A_{N_k}^G}\mu(A\cap T_g^2A)
=
\lim_{k\to\infty}\frac{1}{|B_{N_k+1}^G|}\sum_{g\in B_{N_k+1}^G}\mu(A\cap T_gA)
\geq
\mu(A)^2,
\end{equation*}
where the inequality follows from \Cref{VonNeumannmuASquaredIntro}. Thus, by \Cref{ZK16MainThm}, for any F{\o}lner sequence $(F_N)$ in $A^G$ we have
\begin{equation*}
\lim_{N\to\infty}\frac{1}{|F_N|}\sum_{g\in F_N}\mu(A\cap T_g^2A)\geq\mu(A)^2.
\end{equation*}
Observe that the (finite) index $l$ of $A^G$ in $G$ satisfies
\begin{equation*}
l=[G:A\cap G]\leq[\mathrm{UT}_n(\mathbb{Z}):A]=2^{\frac{n(n-1)}{2}}.
\end{equation*}
So by \Cref{SubgroupsAreNiceWrtFolners}, for any left F{\o}lner sequence $(F_N)$ in $G$, $(F_N\cap A^G)$ is a F{\o}lner sequence in $A^G$, with $\lim_{N\to\infty}\frac{|F_N\cap A^G|}{|F_N|}=\frac{1}{l}$. So, for any m.p.s. $(X,\mathcal{B},\mu,(T_g)_{g\in G})$ and any $A\in\mathcal{B}$, we have
\begin{equation*}
\lim_{N\to\infty}\frac{1}{|F_N|}\sum_{g\in F_N}\mu(A\cap T_g^2A)\geq
\lim_{N\to\infty}\frac{1}{l|F_N\cap A^G|}\sum_{g\in F_N\cap A^G}\mu(A\cap T_g^2A)\geq\frac{\mu(A)^2}{2^{\frac{n(n-1)}{2}}}.
\end{equation*}

We are done proving \Cref{BigAvgsAlongSquaresFGNilp} in the case where $G$ is torsion free. If $G$ has torsion, then $G$ is still a quotient of some torsion free nilpotent group by a normal subgroup $G_1$. For example, if $G$ has nilpotency class $c$ and has a set of $r$ generators, then there is a surjective homomorphism $\phi:G_1\to G$, where $G_1:=F_{r,c}$ is the free nilpotent group with $r$ generators and of nilpotency class $c$ (the group $F_{r,c}$ is torsion free, see \cite[Pp. 114]{Mag}). So the result follows from \Cref{4refdpoer9pdfol} below.
\end{proof}

\begin{definition}
For $\varepsilon,\delta>0$, we say that a group $G$ has the \emph{strong $(\varepsilon,\delta)$-SAR property} if for any m.p.s. $(X,\mathcal{B},\mu,(T_g)_{g\in G_1})$, $Y\in\mathcal{B}$ with $\mu(Y)\geq\varepsilon$ and any left F{\o}lner sequence $(F_N)$ in $G$, we have
\begin{equation*}
\overlim_{N\to\infty}\frac{1}{|F_N|}\sum_{g\in F_N}\mu(Y\cap T_g^2Y)\geq\delta.
\end{equation*}
\end{definition}

For countable nilpotent groups, the strong $(\varepsilon,\delta)$-SAR property is equivalent to the $(\varepsilon,\delta)$-SAR property, due to \Cref{ZK16MainThm}.

\begin{prop}
\label{4refdpoer9pdfol}
Let $\varepsilon,\delta>0$ and let $H\trianglelefteq G$ be countable amenable groups, with $K=\frac{G}{H}$. If $G$ has the strong $(\varepsilon,\delta)$-SAR property, then so does $K$.
\end{prop}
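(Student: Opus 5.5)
The strategy is to pull everything back along the quotient map $\pi:G\to K$. Fix a m.p.s. $(X,\mathcal{B},\mu,(S_k)_{k\in K})$, a set $Y\in\mathcal{B}$ with $\mu(Y)\geq\varepsilon$, and a left F{\o}lner sequence $(F_N)$ in $K$. We must show
\[
\overlim_{N}\frac{1}{|F_N|}\sum_{k\in F_N}\mu(Y\cap S_k^2Y)\geq\delta .
\]
First, define the $G$-action $T_g:=S_{\pi(g)}$. This is a measure preserving action of $G$, and since $\pi(g^2)=\pi(g)^2$ it satisfies $T_g^2=S_{\pi(g)}^2$. The task therefore reduces to producing a left F{\o}lner sequence $(E_N)$ in $G$ whose pushforward under $\pi$ reproduces the averages along $(F_N)$. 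More precisely, we want, for every bounded function $a:K\to[0,1]$,
\[
\frac{1}{|E_N|}\sum_{g\in E_N}a(\pi(g))-\frac{1}{|F_N|}\sum_{k\in F_N}a(k)\to 0 .
\]
We will apply this with $a(k)=\mu(Y\cap S_k^2Y)$. The strong SAR property of $G$ applied to $(E_N)$ then gives the claimed bound for $K$ immediately.

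To build $(E_N)$, fix a left F{\o}lner sequence $(L_M)$ of the amenable group $H$, which exists since subgroups of amenable groups are amenable. Choose a section $s:K\to G$ with $\pi\circ s=\mathrm{id}$, and set
\[
E_{N}=\bigcup_{k\in F_N}s(k)L_{M},
\]
where $M=M(N)$ is chosen large depending on $N$. The union is disjoint and each fiber has the same size $|L_M|$. Hence the pushforward of the normalized counting measure on $E_N$ is exactly the uniform measure on $F_N$, and the averaging identity above holds exactly, not just asymptotically.

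The main obstacle is checking that $(E_N)$ is left F{\o}lner in $G$. Fix $g\in G$ and $k\in F_N$ with $\pi(g)k\in F_N$. Write $k'=\pi(g)k$. Then $g\,s(k)=s(k')h_{g,k}$ for some $h_{g,k}\in H$, so
\[
g\,s(k)L_M=s(k')\,h_{g,k}L_M .
\]
This set is almost equal to $s(k')L_M$ provided $|h_{g,k}L_M\Delta L_M|$ is small relative to $|L_M|$. The difficulty is that $h_{g,k}$ ranges over the finite set $\{h_{g,k}:k\in F_N\}$, which depends on $N$. We handle this with a diagonal choice: first fix $N$, then choose $M(N)$ so that $L_M$ is $1/N$-invariant under the finite set of all $h_{g_i,k}$ with $i\leq N$ and $k\in F_N$, where $(g_i)$ enumerates $G$. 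Left invariance of $L_M$ is exactly what is needed here, because the $h$'s act on the left of $L_M$.

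With this choice, the terms with $k$ and $\pi(g)k$ both in $F_N$ contribute an error of at most $\frac{1}{N}|F_N||L_M|$. The remaining terms number at most $|F_N\Delta\pi(g)F_N|\cdot|L_M|$, which is $o(|F_N||L_M|)$ because $(F_N)$ is left F{\o}lner. Together these give $|gE_N\Delta E_N|/|E_N|\to0$, which completes the proof.
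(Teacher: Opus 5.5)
Your proof is correct and follows the paper's argument: pull the $K$-action back to $G$ via $T_g=S_{\pi(g)}$, form $E_N=\bigcup_{k\in F_N}s(k)L_{M(N)}$ with $M(N)$ growing fast, and apply the strong SAR property of $G$ (the paper argues by contradiction, but it is the same argument). Your handling of the F{\o}lner property through the elements $h_{g,k}=s(\pi(g)k)^{-1}g\,s(k)$ spells out the step that the paper only sketches as ``similar to'' its semidirect-product lemma.
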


\begin{proof}
Suppose for contradiction that there is a left F{\o}lner sequence $(K_N)$ in $K$ and a m.p.s. $(X,\mathcal{B},\mu,(T_k)_{k\in K})$ and $Y\in\mathcal{B}$ such that $\mu(Y)\geq\varepsilon$ and
\begin{equation*}
\overlim_{N\to\infty}\frac{1}{|K_N|}\sum_{k\in K_N}\mu(Y\cap T_k^2Y)<\delta.
\end{equation*}
We abuse notation and denote $T_g=T_{gH}$ for $g\in G$, so that $(T_g)_{g\in G}$ is a m.p.s. For each $k\in K$ choose a representative $g_k\in G$, so that $g_kH=k$, denote $K_N'=\{g_k;k\in K_N\}$ and let $(H_M)_{M\in\mathbb{N}}$ be a left F{\o}lner sequence in $H$.
Then, an argument similar to the proof of \Cref{438erwdfiso98iorwe9} shows that, if a sequence $(M(N))$ of natural numbers grows fast enough, then the sequence $(F_N)$ given by
\begin{equation*}
F_N=K_N'H_{M(N)}={\textstyle\bigcup_{k\in K_N}}g_kH_{M(N)}
\end{equation*}
is left F{\o}lner in $G$. But this contradicts $G$ having the $(\varepsilon,\delta)$-SAR property, as 
\begin{equation*}
\overlim_{N\to\infty}\frac{1}{|F_N|}\sum_{g\in F_N}\mu(Y\cap T_g^2Y)=\overlim_{N\to\infty}\frac{1}{|K_N|}\sum_{k\in K_N}\mu(Y\cap T_k^2Y)<\delta.\qedhere
\end{equation*}
\end{proof}

%% file: S5-Solvable.tex
\section{More groups with the SAR property}
\label{SecSemiProds} 
In this section we develop techniques for proving symmetric averaging recurrence for classes of groups beyond the abelian and finitely generated nilpotent cases. 
In \Cref{SecSemiprodsButTrue} we prove that, in a sense, the SAR property is preserved under taking semidirect products. In \Cref{SecMatricesFields}, we prove that \(\textup{GL}_n(Q)\) has the SAR property whenever \(n\geq 2\) and \(Q\) is a tower of finite fields (these are precisely the fields \(Q\) for which \(\textup{GL}_n(Q)\) is amenable).

\subsection{Semidirect products}
\label{SecSemiprodsButTrue}
In this subsection, we show that if a group with the SAR property acts on an abelian group, then the resulting semidirect product also has the SAR property.

Let \(H\) and \(K\) be amenable groups, and let \(\phi\colon K\to\textup{Aut}(H)\) be a homomorphism. Recall that the semidirect product \(G=H\rtimes_\phi K\) is a group with underlying set \(H\times K\) and operation
\begin{equation}
\label{SemiprodOperationReminder}
(h,k)(h',k')=(h\phi_k(h'),kk').
\end{equation}

We identify $H,K$ with the subgroups $H\times\{1_K\}$ and $\{1_H\}\times K$ in $G$. 

\begin{lemma}
\label{438erwdfiso98iorwe9}
Let $(H_M)_{M\in\mathbb{N}},(K_N)_{N\in\mathbb{N}}$ be left F{\o}lner sequences in $H,K$ respectively. If a sequence $(M(N))_{N\in\mathbb{N}}$ of natural numbers grows fast enough, then the sequence $(F_N)_{N\in\mathbb{N}}$ given below is left F{\o}lner in $G$:
\begin{equation*}
F_N=K_N\cdot H_{M(N)}={\textstyle\bigcup_{k\in K_N}}\phi_k(H_{M(N)})\times\{k\}.
\end{equation*}
\end{lemma}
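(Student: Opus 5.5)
We need to show that for every $g=(h,k)\in G$,
\[
\frac{|gF_N\,\Delta\, F_N|}{|F_N|}\to0 .
\]
Since $(h,1_K)(1_H,k)=(h,k)$ and the Følner property is preserved under composition (by the triangle inequality $|abF\Delta F|\le|aF\Delta F|+|bF\Delta F|$, using $|a(bF)\Delta aF|=|bF\Delta F|$), it suffices to treat $g=(1_H,k_0)$ and $g=(h_0,1_K)$ separately. Choose an enumeration $(k_j)$ of $K$ and $(h_j)$ of $H$; the plan is to pick $M(N)$ depending on the finite data of $K_N$ and on $N$.

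\textbf{Elements of $K$.} Write $F_N=\{(\phi_k(x),k);k\in K_N,x\in H_{M(N)}\}$. Since $\phi_k$ is a bijection, $|F_N|=|K_N|\,|H_{M(N)}|$. For $k_0\in K$,
\[
(1,k_0)(\phi_k(x),k)=(\phi_{k_0k}(x),k_0k).
\]
Hence $(1,k_0)F_N=\bigcup_{k\in k_0K_N}\phi_k(H_{M(N)})\times\{k\}$, so each fibre is either fully shared or fully missing. This gives
\[
|(1,k_0)F_N\,\Delta\, F_N|=|k_0K_N\,\Delta\, K_N|\cdot|H_{M(N)}|,
\]
and the ratio tends to $0$ by the Følner property of $(K_N)$, for any choice of $M(N)$.

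\textbf{Elements of $H$.} For $h_0\in H$,
\[
(h_0,1)(\phi_k(x),k)=(h_0\phi_k(x),k)=(\phi_k(\phi_k^{-1}(h_0)x),k).
\]
So the fibre over $k$ becomes $\phi_k\bigl(\phi_k^{-1}(h_0)H_{M}\bigr)$, and the symmetric difference in that fibre has size $|\phi_k^{-1}(h_0)H_M\,\Delta\, H_M|$. Therefore
\[
\frac{|(h_0,1)F_N\,\Delta\, F_N|}{|F_N|}=\frac{1}{|K_N|}\sum_{k\in K_N}\frac{|\phi_k^{-1}(h_0)H_{M(N)}\,\Delta\, H_{M(N)}|}{|H_{M(N)}|}.
\]
This average is at most the maximum over $k\in K_N$.

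\textbf{Choice of $M(N)$.} The key step is to choose $M(N)$ so that for every $k\in K_N$ and every $j\le N$,
\[
\frac{|\phi_k^{-1}(h_j)H_{M(N)}\,\Delta\, H_{M(N)}|}{|H_{M(N)}|}<\frac1N .
\]
This is possible because it involves only finitely many elements $\phi_k^{-1}(h_j)\in H$, and $(H_M)$ is Følner in $H$. With this choice, the ratio above is $<1/N$ once $N\ge j$, which handles $h_0=h_j$. The phrase "grows fast enough" means exactly that $M(N)$ is at least this threshold, and the same property holds for any larger $M(N)$.

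The only mild obstacle is making sure the bound is uniform over $k\in K_N$ despite the twisting by $\phi_k$. Since $K_N$ is finite, this is handled by the finite maximum in the choice of $M(N)$. No abelianness of $H$ is needed.
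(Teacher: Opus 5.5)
Your proof is correct and follows the paper's argument. You reduce to elements of $K$ and of $H$, note that left translation by $k_0\in K$ moves whole fibres (so the ratio is exactly $|k_0K_N\,\Delta\, K_N|/|K_N|$), and choose $M(N)$ so that the fibrewise ratios for $h_1,\dots,h_N$ are below $1/N$ uniformly over the finitely many $k\in K_N$. The differences are cosmetic: you justify the reduction to the two kinds of generators via the triangle inequality, and you unwind $\phi_k(H_M)\,\Delta\, h_0\phi_k(H_M)=\phi_k\bigl(H_M\,\Delta\,\phi_k^{-1}(h_0)H_M\bigr)$ directly, where the paper cites the fact that automorphisms preserve F{\o}lner sequences.
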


\begin{proof}
It is enough to check that $\lim_{N\to\infty}\frac{|F_N\Delta kF_N|}{|F_N|}=0$ for all $k\in K$ and $\lim_{N\to\infty}\frac{|F_N\Delta hF_N|}{|F_N|}=0$ for all $h\in H$. For any $k\in K$, letting $F_{N,M}=K_NH_M$, we have
\begin{align*}
\frac{|F_{N,M}\Delta kF_{N,M}|}{|F_{N,M}|}
=\frac{|(K_N\Delta kK_N)H_{M}|}{|K_N|\cdot|H_{M}|}=
\frac{|K_N\Delta kK_N|}{|K_N|}\xrightarrow{N\to\infty}0.
\end{align*}
We now choose the sequence $(M(N))$. By \Cref{FolnerProps}.\ref{AutosPreserveFolner}, $(\phi_k(H_N))_{N\in\mathbb{N}}$ is a left F{\o}lner sequence in $H$ for all $k\in K$. Thus, for any fixed $N$ and $h\in H$,
\begin{align*}
\lim_{M\to\infty}\frac{|F_{N,M}\Delta hF_{N,M}|}{|F_{N,M}|}
&=
\lim_{M\to\infty}
\frac{\left|\bigcup_{k\in K_N}(\phi_k(H_M)\Delta h\phi_k(H_M))\cdot\{k\}\right|}{|K_N|\cdot|H_M|}
\\
&=
\frac{1}{|K_N|}\sum_{k\in K_N}
\lim_M\frac{|\phi_k(H_M)\Delta h\phi_k(H_M)|}{|H_M|}=0.
\end{align*}
Thus, if $(h_n)$ is an enumeration of $H$, it is enough to let $M(N)$ be an integer $M$ such that, for all $n=1,\dots,N$, we have $\frac{|F_{N,M}\Delta h_nF_{N,M}|}{|F_{N,M}|}<\frac{1}{N}$.
\end{proof}

\begin{remark}
The F{\o}lner sequence $F_N$ from \Cref{438erwdfiso98iorwe9} (and thus in \ref{BigSqAvgsInSemiprods}, \ref{TowersSemiprodsPositiveAvgs}) can be taken to be a two-sided F{\o}lner sequence, as long as $(K_N)$ is a two-sided F{\o}lner sequence in $K$ and $(H_M)$ is `central in $G$', in the sense that for all $g\in G$ we have
\begin{equation}
\label{CentralFolnerSeq}
\lim_{M\to\infty}\frac{|gH_M\Delta H_Mg|}{|H_M|}=0.
\end{equation}
Moreover, for any two countable amenable groups $H\trianglelefteq G$ there is a two-sided F{\o}lner sequence in $H$ which is central in $G$. Indeed, letting $\mathcal{G}$ be the group of bijections $f:H\to H$ of the form $f(h)=g_1hg_2$, for $g_1,g_2\in G$ satisfying $g_1Hg_2=H$, then $\mathcal{G}$ is amenable (it can be seen as a subgroup of a homomorphic image of $G\times G^{\textup{op}}$), so there must be a F{\o}lner sequence $(H_M)$ in $H$ that is asymptotically invariant with respect to the action of $\mathcal{G}$ (see \cite[Theorem 11.3.23]{Ba}). We then obtain \Cref{CentralFolnerSeq} by letting $g_1=g=g_2^{-1}$.
\end{remark}

\BigSqAvgsInSemiprods*
\begin{proof}
Let $(X,\mathcal{B},\mu,(T_g)_{g\in G})$ be a m.p.s. and $A\in\mathcal{B}$ such that $\mu(A)\geq\varepsilon$. Let $(H_M)_{M\in\mathbb{N}},(K_N)_{N\in\mathbb{N}}$ be left F{\o}lner sequences in $H,K$ respectively such that
\begin{equation*}
\lim_{N\to\infty}\frac{1}{|K_N|}\sum_{k\in K_N}\mu(A\cap T_k^2A)\geq\delta.
\end{equation*}

For each $N,M\in\mathbb{N}$ let $F_{N,M}=K_N\cdot H_{M}=\cup_{k\in K_N}\phi_k(H_M)\times\{k\}$. Any $g\in F_{N,M}$ is of the form $k\cdot h=(\phi_k(h),k)$, where $h\in H_{M}$ and $k\in K_{N}$. Therefore, 
\begin{equation*}
g^2=(\phi_k(h),k)\cdot(\phi_k(h),k)=(\phi_k(h)\phi_k^2(h),k^2).
\end{equation*}
Now let $(X,\mathcal{B},\mu,(T_g)_{g\in G})$ be a m.p.s. and let $A\in\mathcal{B}$ satisfy $\mu(A)\geq\varepsilon$. Then, letting $A_k=A\cap T_k^2A$ for all $k\in K$, we have
\begin{align*}
\frac{1}{|F_{N,M}|}\sum_{g\in F_{N,M}}\mu(A\cap T_g^2A)
&=
\frac{1}{|K_N|}\sum_{k\in K_N}
\frac{1}{|H_M|}\sum_{h\in H_M}
\mu(A\cap T_{\phi_k(h)\phi_k^2(h)}T_k^2A)
\\
&\geq
\frac{1}{|K_N|}\sum_{k\in K_N}
\frac{1}{|H_M|}\sum_{h\in H_M}
\mu(A_k\cap T_{\phi_k(h)\phi_k^2(h)}A_k)
\end{align*}
As $H$ is abelian, for each fixed $k\in K$ the map $h\to\phi_k(h)\phi_{k^2}(h)$ is an endomorphism of $H$. Thus, $\left(T_{\phi_k(h)\phi_k^2(h)}\right)_{h\in H}$ is an action of $H$ on $(X,\mathcal{B},\mu)$ by measure-preserving maps. So, for each fixed $k\in K$ we have, by \Cref{LinearAvgsAreBig},
\begin{equation*}
\lim_M
\frac{1}{|H_M|}\sum_{h\in H_M}
\mu(A_k\cap T_{\phi_k(h)\phi_k^2(h)}A_k)\geq\mu(A_k)^2.
\end{equation*}
If a sequence $(M(N))_{N\in\mathbb{N}}$ of natural numbers grows fast enough, then by \Cref{438erwdfiso98iorwe9} the sequence $(F_{N,M(N)})_{N\in\mathbb{N}}$ is left-F{\o}lner and
\begin{align*}
\liminf_N\frac{1}{|F_{N,M(N)}|}\mu(A\cap T_g^2A)&\geq\lim_{N\to\infty}\frac{1}{|K_N|}\sum_{k\in K_N}\mu(A_k)^2\\
&=
\lim_{N\to\infty}\frac{1}{|K_N|}\sum_{k\in K_N}\mu(A\cap T_k^2A)^2\\
&\geq\lim_{N\to\infty}\left(\frac{1}{|K_N|}\sum_{k\in K_N}\mu(A\cap T_k^2A)\right)^2\geq\delta^2.
\end{align*}
We can now take a subsequence of $(F_{N,M(N)})_{N\in\mathbb{N}}$ which guarantees the existence of the limit in \Cref{EqDefWeakEpsDeltaAvgs}. This concludes the proof.
\end{proof}

\begin{cor}
\label{TowersSemiprodsPositiveAvgs}
Let $G$ be a countable group with a sequence $G=G_k\supseteq G_{k-1}\supseteq\cdots\supseteq G_1\supseteq G_0=\{e\}$, such that $G_i=H_i\rtimes_{\phi_i}G_{i-1}$, for some abelian group $H_{i-1}$ and some homomorphism $\phi_i:G_{i-1}\to\textup{Aut}(H_i)$. Then for all $\varepsilon>0$, $G$ has the $\left(\varepsilon,\varepsilon^{2^k}\right)$-SAR property.
\end{cor}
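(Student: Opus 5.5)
The plan is induction on $k$, applying \Cref{BigSqAvgsInSemiprods} once at each level of the tower. The exponents work out because squaring $\delta$ at each step gives $\left(\varepsilon^{2^{i}}\right)^2=\varepsilon^{2^{i+1}}$. I will prove by induction on $i\in\{0,\dots,k\}$ that $G_i$ has the $\left(\varepsilon,\varepsilon^{2^{i}}\right)$-SAR property for every $\varepsilon>0$. The case $i=k$ is the statement.

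For the base I will start at $i=1$ rather than at the trivial group. Since $G_0=\{e\}$, the group $G_1=H_1\rtimes G_0$ is isomorphic to the abelian group $H_1$. By \Cref{BigIntsAbelianSquares}, which comes from \Cref{VonNeumannmuASquaredIntro} applied to the action $(T_g^2)_{g\in G_1}$, every abelian group has the $(\varepsilon,\varepsilon^2)$-SAR property for all $\varepsilon>0$, using any left F{\o}lner sequence. That is exactly $\varepsilon^{2^1}$. (The case $i=0$ also holds trivially, with $F_N=\{e\}$ and $\mu(A)\geq\varepsilon=\varepsilon^{2^0}$. One could start there and apply \Cref{BigSqAvgsInSemiprods} with $K$ trivial, but going through $G_1$ avoids any fuss about F{\o}lner sequences in a finite group.)

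For the inductive step, suppose $G_{i-1}$ has the $\left(\varepsilon,\varepsilon^{2^{i-1}}\right)$-SAR property. We have $G_i=H_i\rtimes_{\phi_i}G_{i-1}$ with $H_i$ abelian; the statement's index $H_{i-1}$ is evidently a typo for $H_i$. Applying \Cref{BigSqAvgsInSemiprods} with $H=H_i$, $K=G_{i-1}$ and $\delta=\varepsilon^{2^{i-1}}$ gives that $G_i$ has the $\left(\varepsilon,\delta^2\right)=\left(\varepsilon,\varepsilon^{2^{i}}\right)$-SAR property, which closes the induction.

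The only point needing care is the hypotheses of \Cref{BigSqAvgsInSemiprods}. The groups must be countable, which holds because they are subgroups of the countable group $G$. The groups must also be amenable, so that the SAR property makes sense. I will argue amenability inductively in the same way: abelian groups are amenable, and semidirect products of amenable groups are amenable, which \Cref{438erwdfiso98iorwe9} makes concrete by building a F{\o}lner sequence $K_NH_{M(N)}$. Otherwise there is no real obstacle; the corollary is a direct iteration of \Cref{BigSqAvgsInSemiprods}.
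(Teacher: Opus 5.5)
Your proof is correct and is the argument the paper intends: the corollary is stated without a separate proof, as a $k$-fold iteration of \Cref{BigSqAvgsInSemiprods} along the tower starting from the abelian (or trivial) case, with $\delta$ squaring at each step to give $\varepsilon^{2^k}$. The only bookkeeping is what you already flagged (the $H_{i-1}$/$H_i$ typo and countability and amenability of each $G_i$), plus one observation: some $G_i$, even $G_1\cong H_1$, may be finite, so starting at $G_1$ does not avoid finite groups, but this is harmless under the paper's convention that F{\o}lner sequences in finite groups are eventually the whole group.
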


Combining \Cref{ProdsInRetsMeasurableIntro} and \Cref{TowersSemiprodsPositiveAvgs}, we obtain:

\begin{cor}
\label{BBAA-1TowersSemiprods}
Let $G$ be a group constructed as in \Cref{TowersSemiprodsPositiveAvgs}. Then for all $A\subseteq G$ there is $B\subseteq G$ with $d^*_l(B)\geq(d^*_l(A))^{2^k}$ and $BB\subseteq AA^{-1}$.
\end{cor}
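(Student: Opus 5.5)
The plan has three steps: transfer $A$ to a measure-preserving system with \Cref{ReturnsInDifferences}, get SAR-type averages on $G$ from \Cref{TowersSemiprodsPositiveAvgs}, and convert them into $B$ with \Cref{ProdsInRetsMeasurable}.

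\emph{Transfer.} If $d^*_l(A)=0$ take $B=\varnothing$, so assume $d^*_l(A)>0$. By \Cref{ReturnsInDifferences} there is a m.p.s.\ $(X,\mathcal{B},\mu,(T_g)_{g\in G})$ and $Y\in\mathcal{B}$ with $\mu(Y)=d^*_l(A)$ and $R_\mu^T(Y)\subseteq AA^{-1}$. It then suffices to find $B$ with $d^*_l(B)\geq\mu(Y)^{2^k}$ and $BB\subseteq R_\mu^T(Y)$.

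\emph{SAR averages.} Set $\varepsilon=\mu(Y)$. By \Cref{TowersSemiprodsPositiveAvgs}, $G$ has the $(\varepsilon,\varepsilon^{2^k})$-SAR property. Hence there is a left F{\o}lner sequence $F=(F_N)$ in $G$ with
\begin{equation*}
\lim_{N\to\infty}\frac{1}{|F_N|}\sum_{g\in F_N}\mu\left(T_g^{-1}Y\cap T_gY\right)=\lim_{N\to\infty}\frac{1}{|F_N|}\sum_{g\in F_N}\mu\left(Y\cap T_g^2Y\right)\geq\mu(Y)^{2^k}.
\end{equation*}
This is the key input. Behind it sits an induction along the tower using \Cref{BigSqAvgsInSemiprods}: the base case is the trivial group $G_0$, which has the $(\varepsilon,\varepsilon)$-SAR property, and each abelian extension squares $\delta$.

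\emph{Extracting $B$.} Apply \Cref{ProdsInRetsMeasurable} (equivalently \Cref{ProdsInRetsMeasurableIntro}) to this sequence $F$. It gives $B\subseteq G$ with $\overline{d}_F(B)\geq\mu(Y)^{2^k}$ and $BB\subseteq R_\mu^T(Y)\subseteq AA^{-1}$. Since $F$ is left F{\o}lner, $d^*_l(B)\geq\overline{d}_F(B)$ by the definition of $d^*_l$ in \Cref{DefupperBanachdensity}, which gives the bound $d^*_l(B)\geq d^*_l(A)^{2^k}$.

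\emph{Expected obstacle.} The only delicate point is bookkeeping. The F{\o}lner sequence depends on the set $Y$, so the density is controlled along one sequence only. This is exactly why the conclusion is phrased with $d^*_l(B)$ rather than $\overline{d}_F(B)$ for an arbitrary $F$. I would also check that the index conventions in \Cref{TowersSemiprodsPositiveAvgs} give exactly $k$ squarings, hence the exponent $2^k$.
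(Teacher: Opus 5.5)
Your proposal is correct and follows the paper's route. The paper obtains this corollary by combining \Cref{TowersSemiprodsPositiveAvgs} with \Cref{ProdsInRetsMeasurableIntro}, leaving implicit the passage via \Cref{ReturnsInDifferences} from $A$ to a set $Y$ with $\mu(Y)=d^*_l(A)$; you spell that step out, and your remarks that the F{\o}lner sequence depends on $Y$ (hence the conclusion is stated with $d^*_l(B)$) and that the tower induction from the trivial group gives exactly $k$ squarings are accurate.
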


\begin{remark}
\Cref{BigSqAvgsInSemiprods} still holds if for some fixed $r\in\mathbb{N}$ we consider averages of the expressions $\mu(A\cap T_g^{r}A)$ instead of $\mu(A\cap T_g^{2}A)$, with the same proof above. This is because for all $g\in F_{N,M}$, $g=k\cdot h$, we have
\begin{equation*}
g^r=(\phi_k(h),k)^r=(\phi_k(h)\phi_k^2(h)\cdots\phi_k^r(h),k^r).
\end{equation*}
This allows us to obtain, via \Cref{ProdsInReturnsk=1}, results of the form $B^nB^m\subseteq AA^{-1}$ for any $n,m\in\mathbb{Z}$, instead of $BB\subseteq AA^{-1}$, in \Cref{BBAA-1TowersSemiprods}.
\end{remark}

\HnRHasBiSquareAverages*
\begin{proof}
Note that we can express $\mathrm{UT}_n(R)$ as an internal semidirect product
$\mathrm{UT}_n(R)=H_n\rtimes K_n\cong R^{n-1}\rtimes\mathrm{UT}_{n-1}(R)$, where $H_n,K_n\leq\mathrm{UT}_n(R)$ are as follows.
\begin{align*}
K_n&=
\left\{
\begin{pmatrix}
A&0\\0&1
\end{pmatrix};A\in \mathrm{UT}_{n-1}(R)
\right\}\\
H_n&=\left\{
\begin{pmatrix}
\textup{I}&v\\0&1
\end{pmatrix};v\in R^{n-1}
\right\}
\end{align*}
So, by \Cref{TowersSemiprodsPositiveAvgs} and \Cref{ZK16MainThm}, for any m.p.s. $(X,\mathcal{B},\mu,(T_g)_{g\in\mathrm{UT}_n(R)})$, $A\in\mathcal{B}$ and any left F{\o}lner sequence $(F_N)$,
\begin{equation*}
\lim_{N\to\infty}\frac{1}{|F_N|}\sum_{g\in F_N}\mu(A\cap T_g^2A)\geq\mu(A)^{2^k}.
\end{equation*}
We are done by \Cref{ProdsInRetsMeasurableIntro}.
\end{proof}

We conclude this subsection with a question whose affirmative resolution would extend the proof of \Cref{BigSqAvgsInSemiprods} from semidirect products \(G=H\rtimes_\phi K\) to arbitrary group extensions, thereby yielding an analogue of \Cref{BigSqAvgsInSemiprods} for all solvable groups.
\begin{question}
Let $H\trianglelefteq G$ be amenable groups, let $K=G/H$ have a F{\o}lner sequence $(F_N)_{N\in\mathbb{N}}$ such that, for all m.p.s. $(X,\mathcal{B},\mu,(T_k)_{k\in K})$ and every $Y\in\mathcal{B}$ with $\mu(Y)>0$, we have 
\begin{equation*}
\overlim_{N\in\mathbb{N}}\frac{1}{|F_N|}\sum_{k\in F_N}\mu(Y\cap T_k^2Y)>0.
\end{equation*}
Is it true that, for all m.p.s. $(X,\mathcal{B},\mu,(T_g)_{g\in G})$ and $Y\in\mathcal{B}$ such that $\mu(Y)>0$, we can choose representatives $(g_k)_{k\in K}$ in $G$ (so $g_kH=k$ for all $k\in K$) such that
\begin{equation*}
\overlim_{N\to\infty}\frac{1}{|F_N|}\sum_{k\in F_N}\mu(Y\cap T_{g_k}^2Y)>0?
\end{equation*}
\end{question}

\subsection{Amenable groups of matrices over fields}
\label{SecMatricesFields}
In this subsection we present a family of non-solvable
amenable groups which have the SAR property; namely, groups of the form $\textup{GL}_n(Q)$, where $Q$ is a countable field which a union of a tower of finite fields and $n\in\mathbb{N}$. 

\begin{lemma}
\label{WhichGLnsAreAmenable}
If $n\geq2$ and $Q$ is a field, the following are equivalent:
\begin{enumerate}[label=(\arabic*)]
\item\label{QUnionFinFields} $Q$ is the increasing union of a sequence of finite fields, $Q=\bigcup_NQ_N$.
\item\label{QAlgOverFp} For some $p\geq2$, $Q$ is isomorphic to an algebraic extension of the finite field $\mathbb{F}_p$ of $p$ elements.
\item\label{GLnQAmenable} The group $G:=\textup{GL}_n(Q)$ is amenable.
\end{enumerate}
\end{lemma}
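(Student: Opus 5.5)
I will prove the cycle of implications \ref{QUnionFinFields}$\Rightarrow$\ref{GLnQAmenable}$\Rightarrow$\ref{QAlgOverFp}$\Rightarrow$\ref{QUnionFinFields}, with $Q$ countable as in the surrounding discussion.

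\ref{QAlgOverFp}$\Rightarrow$\ref{QUnionFinFields}: if $Q$ is algebraic over $\mathbb{F}_p$, enumerate $Q=\{x_1,x_2,\dots\}$ and let $Q_N=\mathbb{F}_p(x_1,\dots,x_N)$. Each $Q_N$ is a finitely generated algebraic extension of $\mathbb{F}_p$, hence a finite field, and $Q=\bigcup_N Q_N$ is an increasing union.

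\ref{QUnionFinFields}$\Rightarrow$\ref{GLnQAmenable}: we have $\textup{GL}_n(Q)=\bigcup_N\textup{GL}_n(Q_N)$, an increasing union of finite groups, so $G$ is locally finite. Locally finite countable groups are amenable, with explicit left F{\o}lner sequence $F_N=\textup{GL}_n(Q_N)$: for fixed $g$, eventually $g\in F_N$, and then $gF_N=F_N$. In the uncountable case one uses instead that amenability passes to directed unions, via invariant means.

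\ref{GLnQAmenable}$\Rightarrow$\ref{QAlgOverFp}: this is the main step, and I argue by contrapositive. Suppose $Q$ is not algebraic over a finite prime field. Then either $\textup{char}\,Q=0$, so $\mathbb{Q}\subseteq Q$, or $\textup{char}\,Q=p$ and $Q$ contains a transcendental $t$ over $\mathbb{F}_p$. In both cases I want a non-abelian free subgroup of $\textup{SL}_2(Q)\le\textup{GL}_n(Q)$, embedding block-diagonally with an identity block, which uses $n\ge2$. Since subgroups of amenable groups are amenable and $\mathbf{F}_2$ is not amenable, this contradicts \ref{GLnQAmenable}.

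The free subgroups are the following.
\begin{itemize}
\item In characteristic $0$, the Sanov matrices $\begin{pmatrix}1&2\\0&1\end{pmatrix}$ and $\begin{pmatrix}1&0\\2&1\end{pmatrix}$ generate a free group inside $\textup{SL}_2(\mathbb{Z})\subseteq\textup{SL}_2(Q)$.
\item In characteristic $p$, take $\begin{pmatrix}1&t\\0&1\end{pmatrix}$ and $\begin{pmatrix}1&0\\t&1\end{pmatrix}$ in $\textup{SL}_2(\mathbb{F}_p[t])$. I will prove these generate a free group by the ping-pong lemma for the action on the projective line over $\mathbb{F}_p((t^{-1}))$, using the valuation $-\deg$. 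The two ping-pong sets are $\{|x|>1\}$ and $\{|x|<1\}$. A nonzero power of the first matrix is $x\mapsto x+kt$ with $k\in\mathbb{F}_p^\times$, which maps $\{|x|<1\}$ into $\{|x|>1\}$. The second matrix is handled symmetrically.
\end{itemize}

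The step I expect to need most care is this positive-characteristic ping-pong. The concern is that the generators have order $p$, so nonzero powers only range over $k\in\mathbb{F}_p^\times$. I will check that the ping-pong inequality still holds for these $k$; the resulting group should be $\langle a,b\rangle\cong\mathbb{Z}_p*\mathbb{Z}_p$. Such a group is not amenable when $p\ge3$, since it contains a free subgroup of finite index. For $p=2$, $\mathbb{Z}_2*\mathbb{Z}_2$ is the infinite dihedral group, which is amenable, so I will instead use the generators with entries $t$ and $t^2$ or similar. Alternatively, I can invoke the Tits alternative: a linear group over a field is amenable iff it is virtually solvable. $\textup{SL}_2$ of an infinite field is not virtually solvable, since an infinite field makes $\textup{SL}_2(Q)$ Zariski-dense in the simple group $\textup{SL}_2$. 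This gives the contradiction directly and handles all characteristics uniformly. I would present the Tits-alternative route as the main argument and the explicit matrices as illustration.
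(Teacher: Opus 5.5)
Your explicit ping-pong argument works, but the Tits-alternative route, which you say you would present as the main argument, rests on a false statement. The claim ``a linear group over a field is amenable iff it is virtually solvable'' fails in characteristic $p>0$. Tits' dichotomy (non-abelian free subgroup or virtually solvable) holds for arbitrary linear groups in characteristic $0$, but in characteristic $p$ only for \emph{finitely generated} ones. In general one only gets ``free subgroup or solvable-by-locally-finite''.

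The counterexample is the very group in (1)$\Rightarrow$(3). The group $\mathrm{SL}_2(\overline{\mathbb{F}}_p)\le\mathrm{GL}_n(\overline{\mathbb{F}}_p)$ is locally finite, hence amenable, and Zariski dense in $\mathrm{SL}_2$. Yet it is not virtually solvable, since $\mathrm{PSL}_2(\overline{\mathbb{F}}_p)$ is an infinite simple group. Your Tits argument uses only that $Q$ is infinite and never the transcendental element $t$. It would therefore ``prove'' that $\mathrm{GL}_n(Q)$ is non-amenable for every infinite field, contradicting the lemma itself. To rescue it, you would have to apply Tits to a finitely generated subgroup built from $t$ and show that subgroup is not virtually solvable, which is essentially the ping-pong computation again.

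Your explicit route is the paper's proof. In characteristic $0$ it uses the Sanov matrices. In characteristic $p$ it uses ping-pong for the two unipotent subgroups; the paper takes entries $kt^{-1}$ with the $t$-adic valuation, which is your setup after $t\leftrightarrow t^{-1}$. This gives $\mathbb{Z}_p*\mathbb{Z}_p$, hence a non-abelian free subgroup, when $p\ge3$. Promote this to the main argument.

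The case $p=2$ must be made precise rather than ``entries $t$ and $t^2$ or similar''. Replacing $t$ by $t^2$ in both generators still yields $\mathbb{Z}_2*\mathbb{Z}_2\cong D_\infty$. As in the paper, enlarge one side to the Klein four-group $\Gamma_1=\{\begin{pmatrix}1&kt+lt^2\\0&1\end{pmatrix}:k,l\in\mathbb{F}_2\}$ and keep $\Gamma_2=\{I,\begin{pmatrix}1&0\\t&1\end{pmatrix}\}$. Each nontrivial element of $\Gamma_1$ acts as $x\mapsto x+c$ with $|c|\ge|t|>1$, so your ping-pong sets still work. Since $|\Gamma_1|=4\ge3$, you get $\langle\Gamma_1,\Gamma_2\rangle\cong(\mathbb{Z}_2)^2*\mathbb{Z}_2$. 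The kernel of its natural map onto $(\mathbb{Z}_2)^2\times\mathbb{Z}_2$ is free of rank $3$.

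Finally, in (2)$\Rightarrow$(1) derive countability of $Q$ from (2) (an algebraic extension of $\mathbb{F}_p$ is countable) instead of assuming it. Likewise, (1) already forces $Q$ to be countable, so no uncountable case arises in (1)$\Rightarrow$(3).
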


Examples of amenable groups of the form $\textup{GL}_n(Q)$ were considered in \cite[Page 128]{Dye}.

\begin{proof}
If $\textup{char}(Q)=0$, then the three statements are false, as in that case $G$ contains a copy of $\textup{GL}_2(\mathbb{Z})$, and a classical application of the ping pong lemma implies that the matrices $\begin{pmatrix}
1&2\\0&1
\end{pmatrix},\begin{pmatrix}
1&0\\2&1
\end{pmatrix}$ generate a free subgroup of rank $2$.

Suppose now that $\textup{char}(Q)=:p>0$. Then \ref{QAlgOverFp} implies \ref{QUnionFinFields} because any algebraic extension of $\mathbb{F}_p$ is countable (as there are only countably many polynomials in $\mathbb{F}_p$, each with finitely many roots) and \ref{QUnionFinFields} implies \ref{GLnQAmenable} because if $Q$ is an increasing union of finite fields $\bigcup_NQ_N$, then $G_N=\textup{GL}_2(Q_N)$ is a two-sided F{\o}lner sequence in $G$. 
Finally, let us assume that \ref{QAlgOverFp} is false and prove that $G$ contains a subgroup isomorphic to the free group generated by two elements, so it is not amenable. Let\footnote{The definitions of $A,B,G_A,G_B$ were obtained from a conversation with ChatGPT5.}
\begin{align*}
A&=\{(r_1,r_2)\in Q^2;v(r_1)<v(r_2)\};\quad G_A=\left\{\begin{pmatrix}
1&kt^{-1}\\0&1
\end{pmatrix};k\in\mathbb{F}_p\right\}\\
B&=\{(r_1,r_2)\in Q^2;v(r_1)>v(r_2)\};\quad G_B=\left\{\begin{pmatrix}
1&0\\kt^{-1}&1
\end{pmatrix};k\in\mathbb{F}_p\right\}.
\end{align*}
The subgroups $G_A,G_B$ are isomorphic to $\mathbb{F}_p$, and for any $g_A=\begin{pmatrix}
1&kt^{-1}\\0&1
\end{pmatrix}\in G_A$ with $k\neq0$ and any $\begin{pmatrix}
r_1\\r_2
\end{pmatrix}\in B$, so that $v(r_1)>v(r_2)$, we have $g_A\begin{pmatrix}
r_1\\r_2
\end{pmatrix}=\begin{pmatrix}
r_1+kt^{-1}r_2\\r_2
\end{pmatrix}\in G_A$, as $v(r_1+t^{-1}r_2)=v(r_2)-1<v(r_2)$. Similarly, for any $g_B\in G_B\setminus\{\textup{Id}\}$ and any $v\in A$, we have $g_Bv\in B$. Thus, by the ping-pong lemma (see e.g. \cite[II.24]{Ha}), the subgroups $G_A$ and $G_B$ freely generate a subgroup isomorphic to the free product $G_A*G_B$, which by by \cite[I.1, Proposition 4]{Se} contains a free group of rank $2$ if $p\geq3$. If $p=2$, the same argument works considering instead of $G_A$ the subgroup $G_A'=\left\{\begin{pmatrix}
1&kt^{-1}+lt^{-2}\\0&1
\end{pmatrix};k,l\in\mathbb{F}_p
\right\}$.
\end{proof}

\begin{theorem}
\label{GLnQPosErgAvgsAlongSquares}
If $Q$ can be expressed as an increasing union of finite fields, $Q=\bigcup_{N\in\mathbb{N}}Q_N$, then $\textup{GL}_n(Q)$ has the SAR property.
\end{theorem}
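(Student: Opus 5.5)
We would use the Følner sequence already found in the proof of \Cref{WhichGLnsAreAmenable}, namely $F_N=\textup{GL}_n(Q_N)$. Since $Q=\bigcup_N Q_N$ is an increasing union, the $F_N$ are finite subgroups increasing to $G=\textup{GL}_n(Q)$. For fixed $g\in G$ we have $gF_N=F_N=F_Ng$ once $N$ is large, so $(F_N)$ is a two-sided Følner sequence. This reduces the problem to a purely finite statement. For every finite group $\Gamma=\textup{GL}_n(\mathbb{F}_q)$, every m.p.s.\ $(X,\mathcal{B},\mu,(T_g)_{g\in\Gamma})$ and every $Y\in\mathcal{B}$ we want
\begin{equation*}
\frac{1}{|\Gamma|}\sum_{g\in\Gamma}\mu(Y\cap T_g^2Y)\geq c_n\,\mu(Y)^2,
\end{equation*}
with $c_n$ independent of $q$; the introduction suggests $c_n=1/n!$. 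We would apply this to the restricted actions of $\Gamma=F_N$ and then pass to a subsequence of $(F_N)$ along which the averages converge. This yields the $(\varepsilon,c_n\varepsilon^2)$-SAR property, and the combinatorial statement then follows from \Cref{ProdsInRetsMeasurableIntro} and \Cref{ReturnsInDifferences}.

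For the finite inequality we would work with the Koopman representation $U$ of $\Gamma$ on $L^2(X)$ and set $f=1_Y$. Let $r(h)=|\{g\in\Gamma: g^2=h\}|$. Then
\begin{equation*}
\frac{1}{|\Gamma|}\sum_{g}\mu(Y\cap T_g^2Y)=\frac{1}{|\Gamma|}\sum_{h}r(h)\langle f,U_hf\rangle .
\end{equation*}
By the Frobenius--Schur count, $r=\sum_{\chi\in\textup{Irr}(\Gamma)}\nu(\chi)\chi$, where $\nu(\chi)\in\{1,0,-1\}$ is the Frobenius--Schur indicator. Moreover, $\frac{\chi(1)}{|\Gamma|}\sum_h\overline{\chi(h)}U_h$ is the orthogonal projection $P_\chi$ onto the $\chi$-isotypic component. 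Since $r$ is real-valued, we may replace $\chi$ by $\overline{\chi}$. The average therefore equals
\begin{equation*}
\sum_\chi \frac{\nu(\chi)}{\chi(1)}\|P_\chi f\|^2 .
\end{equation*}
The trivial character contributes $\|P_1f\|^2\geq\mu(Y)^2$, exactly as in \Cref{InnerProdWithProjection}. Hence everything comes down to controlling the characters with negative indicator.

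The main obstacle is this sign issue, which is exactly what fails for $Q_8$ in \Cref{54terfd90reodfpslk}. First we would invoke the known fact (Gow; Klyachko) that every irreducible character of $\textup{GL}_n(\mathbb{F}_q)$ has Frobenius--Schur indicator $1$. If this is available in the needed generality, every term above is nonnegative and we even get the constant $1\geq 1/n!$. If only a weaker input is available, we would instead use a subgroup argument in the spirit of the proof of \Cref{BigAvgsAlongSquaresFGNilp}. The aim there is to cover $\Gamma$, up to a proportion bounded below in terms of $n$, by conjugates of subgroups (split tori times unipotent radicals, indexed by the Weyl group $S_n$) on which squaring is close to a group homomorphism onto a subgroup of comparable size. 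On each such piece, \Cref{LinearAvgsAreBig} applied to the image subgroup gives $\mu(Y)^2$, and averaging over the $n!$ Weyl-group types is where we expect the factor $1/n!$ to appear. We would try the character-theoretic route first and use the subgroup decomposition only if the indicator result does not cover all $q$, including characteristic $2$.
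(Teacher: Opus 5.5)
Your character-theoretic route works and differs genuinely from the paper's, but the representation-theoretic input is misstated. It is false that every irreducible character of $\mathrm{GL}_n(\mathbb{F}_q)$ has Frobenius--Schur indicator $1$. For $q>3$, the characters $\lambda\circ\det$, with $\lambda$ a non-real character of $\mathbb{F}_q^\times$, are non-real and so have indicator $0$. As far as I know, the Gow and Klyachko results you have in mind concern the indicator twisted by the transpose--inverse involution (equivalently, models of $\mathrm{GL}_n(\mathbb{F}_q)$), not the ordinary indicator. Your identity $\frac{1}{|\Gamma|}\sum_{g\in\Gamma}\mu(Y\cap T_g^2Y)=\sum_\chi\frac{\nu(\chi)}{\chi(1)}\|P_\chi f\|^2$ is correct. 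What it actually needs is only $\nu(\chi)\neq -1$ for every $\chi$, i.e.\ that every self-dual irreducible representation of $\mathrm{GL}_n(\mathbb{F}_q)$ is orthogonal. That is a known but deep theorem, valid for all $q$ including $q$ even; it can be deduced, e.g., from the fact that all irreducible characters of $\mathrm{GL}_n(\mathbb{F}_q)$ have Schur index $1$. With this input every term is nonnegative, and you get $\frac{1}{|F_N|}\sum_{g\in F_N}\mu(Y\cap T_g^2Y)\geq\|P_1 1_Y\|^2\geq\mu(Y)^2$ for every $N$. You therefore do not need the fallback.

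The paper uses no representation theory. With $G_N=\mathrm{GL}_n(Q_N)$, it keeps only the split regular semisimple elements. For each unordered frame of lines $B$ in $Q_N^n$, it takes the split torus $D_N^B\cong(Q_N^\times)^n$ and its subset $E_N^B$ of elements with $n$ distinct eigenvalues. These sets $E_N^B$ are pairwise disjoint, and each has density tending to $1$ in $D_N^B$. There are $|G_N|/(n!(|Q_N|-1)^n)$ such tori, so the union of the $E_N^B$ fills a proportion tending to $1/n!$ of $G_N$. On each abelian torus squaring is a homomorphism, so the average of $\mu(Y\cap T_g^2Y)$ over $D_N^B$ is at least $\mu(Y)^2$. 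Discarding everything else gives $\mu(Y)^2/n!$.

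The paper's argument is elementary and self-contained but loses the factor $1/n!$. Yours rests on a deep fact about $\mathrm{GL}_n(\mathbb{F}_q)$, but yields the optimal $(\varepsilon,\varepsilon^2)$-SAR property, hence $\overline{d}_F(B)\geq d_l^*(A)^2$ in the combinatorial corollary. It also isolates the exact obstruction (irreducibles of quaternionic type, as in $Q_8$), and applies verbatim to any group exhausted by finite subgroups without such representations.

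Your fallback sketch is essentially the paper's proof, with one correction. The pieces should be the split tori themselves, restricted to regular elements so that they are disjoint. Squaring is not a homomorphism on a torus times a unipotent radical, and $1/n!=1/[N(T):T]$ enters as the asymptotic density of split regular semisimple elements.
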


\begin{proof}
Consider the two-sided F{\o}lner sequence $(G_N)=(\textup{GL}_n(Q_N))$. We use the following notation:
\begin{itemize}
    \item Let $\mathfrak{B}_N$ be the family of bases of $(Q_N)^n$ modulo the equivalence relation generated by $(v_1,\dots,v_n)=(w_1,\dots,w_n)$ iff the union of $1$-dimensional subspaces generated by the vectors $v_i$ coincides with the analog for $w_i$. 
    \item For each `basis' $B\in\mathfrak{B}_N$ let $D_N^B\subseteq G$ be the abelian subgroup of matrices which are diagonal in the basis $B$. Thus, $D_N^B\cong (Q_N^\times)^n$ as a group.
    \item For each $B\in\mathfrak{B}_N$ let $E_N^B\subseteq D_N^B$ be the matrices with $n$ distinct eigenvalues; let $E_N=\bigcup_{B\in\mathfrak{B}_N}E_N^B$.
\end{itemize}

For distinct \(B,B'\in\mathfrak{B}_N\), the sets \(E_N^B\) and \(E_N^{B'}\) are disjoint. Hence, $E_N$ is the disjoint union $\bigsqcup_{B\in\mathfrak{B}_N}E_N^B$.
Furthermore,
\[
\lim_{N\to\infty}\inf_{B\in\mathfrak{B}_N}
\frac{|E_N^B|}{|D_N^B|}=1,\textup{ and }
\lim_{N\to\infty}\frac{|E_N|}{|G_N|}
\geq \frac1{n!}.
\]
To verify the latter inequality, first set \(M=|Q_N|\). Then, given \(\varepsilon>0\), for all sufficiently large \(N\), we have
\begin{multline*}
|E_N|=\sum_{B\in\mathfrak{B}_N}\left|E_N^B\right|\geq(1-\varepsilon)\sum_{B\in\mathfrak{B}_N}M^n=
(1-\varepsilon)\frac{\left|\textup{GL}_n(Q_N)\right|}{n!\cdot(M-1)^n}\cdot M^n
\\
=(1-\varepsilon)\frac{|G_N|}{n!(M-1)^n}\cdot M^n\geq(1-\varepsilon)\frac{|G_N|}{n!}.
\end{multline*}
A similar computation yields that, for big enough $N$, $|E_N|\leq(1+\varepsilon)\frac{|G_N|}{n!}$. Thus, for any m.p.s. $(X,\mathcal{B},\mu,(T_g)_{g\in G})$ and any $A\in\mathcal{B}$,
\begin{align}
\notag\overlim_{N\to\infty}\frac{1}{|G_N|}\sum_{g\in G_N}\mu(A\cap T_g^2A)
&\geq\overlim_{N\to\infty}\frac{1}{n!|E_N|}\sum_{g\in E_N}\mu(A\cap T_g^2A)\\
\notag&\geq\frac{1}{n!}\overlim_{N\to\infty}\inf_{B\in\mathfrak{B}_N}
\frac{1}{|E_N^B|}\sum_{g\in E_N^B}\mu(A\cap T_g^2A)\\
\notag&=\frac{1}{n!}\overlim_{N\to\infty}\inf_{B\in\mathfrak{B}_N}
\frac{1}{|D_N^B|}\sum_{g\in D_N^B}\mu(A\cap T_g^2A)\\
&\geq\frac{\mu(A)^2}{n!}.\label{r9efosdlk}
\end{align}
In the last step we used that for each $B\in\mathfrak{B}_N$, $D_N^B$ is a finite abelian group and $(T_g^2)_{g\in D_N^B}$ is a m.p.s., so $\frac{1}{|D_N^B|}\sum_{g\in D_N^B}\mu(A\cap T_g^2A)\geq\mu(A)^2$ by \Cref{LinearAvgsAreBig}.
\end{proof}

\begin{proof}[Proof of \Cref{BBinAA-1SomeNonSolvableMatrixGroups}]
We proved that $\textup{GL}_n(Q)$ is amenable in \Cref{WhichGLnsAreAmenable}; the second part of \Cref{BBinAA-1SomeNonSolvableMatrixGroups} follows from Inequality \ref{r9efosdlk}.
\end{proof}

%% file: S6-Counterexamples.tex
\section{Counterexamples}
\label{SecCounterexamples}
In this section we explain in detail the counterexamples mentioned in 
\Cref{Rmk113,Rmk120}.

We identify the integer Heisenberg group, $\mathrm{UT}_3(\mathbb{Z})$, with $\mathbb{Z}^3$ with the product operation
\begin{equation*}
(x,y,z)\cdot(x',y',z')=(x+x',y+y'+xz',z+z').
\end{equation*}

\begin{theorem}
\label{CounterBBinA-1A}
There exists $A\subseteq \mathrm{UT}_3(\mathbb{Z})$ such that $d_l^*(A)=1$ but, for all $B_1,B_2\subseteq \mathrm{UT}_3(\mathbb{Z})$ such that $d_l^*(B_1),d_l^*(B_2)>0$, $B_1B_2$ is not contained in $A^{-1}A$.
\end{theorem}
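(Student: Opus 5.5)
The plan is to build $A$ as a union of left F{\o}lner sets that are very far from being right F{\o}lner, along the lines suggested in \Cref{Rmk113}. Then show that $A^{-1}A$ is too thin to contain a set of positive left upper Banach density.

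\textbf{Reduction.} The reduction is elementary. If $B_1B_2\subseteq A^{-1}A$ and $b\in B_2$, then $B_1b\subseteq A^{-1}A$. By \Cref{FolnerProps}\ref{LeftAndRightUBD} we have $d_l^*(B_1b)=d_l^*(B_1)>0$. So it suffices to construct $A$ with
\begin{equation*}
d_l^*(A)=1\quad\textup{and}\quad d_l^*(A^{-1}A)=0 .
\end{equation*}

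\textbf{Construction.} Using the coordinates above, left multiplication by $(a,b,c)$ sends $(x,y,z)$ to $(x+a,\,y+b+az,\,z+c)$. Right multiplication by $(a,b,c)$ sends it to $(x+a,\,y+b+xc,\,z+c)$.
\begin{enumerate}
\item Take boxes
\begin{equation*}
F_N=\{(x,y,z);\ |x|\leq L_N,\ |y|\leq M_N,\ |z|\leq N\}.
\end{equation*}
The $y$-shift $az$ under left multiplication is at most $|a|N\ll M_N$, so $(F_N)$ is left F{\o}lner whenever $N\ll M_N$ and $L_N\to\infty$.
\item Impose $L_N\gg M_N$. Then the right $y$-shift $xc$ is typically much larger than $M_N$, so $(F_N)$ is far from right F{\o}lner.
\item Put $A=\bigcup_N g_NF_N$ for very sparse translates $g_N$. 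The sequence $(g_NF_N)$ is still left F{\o}lner and is contained in $A$, which gives $d_l^*(A)=1$.
\end{enumerate}

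\textbf{Shape of $A^{-1}A$.} Next I compute $A^{-1}A$. Using $(x,y,z)^{-1}=(-x,-y+xz,-z)$, a diagonal block $F_N^{-1}F_N$ consists of elements $(x'-x,\,y'-y+x(z-z'),\,z'-z)$. These have $|z|\leq 2N$, $|x|\leq 2L_N$ and $|y|\lesssim M_N+L_NN$. The cross blocks $F_N^{-1}g_N^{-1}g_MF_M$ with $N\neq M$ will be pushed far away, and made arbitrarily sparse, by choosing the $g_N$ inductively with rapidly growing coordinates.

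\textbf{Main obstacle.} The hard step is proving $d_l^*(A^{-1}A)=0$ against an \emph{arbitrary} left F{\o}lner sequence, not just boxes. Such sequences may be elongated in any direction, so a naive volume comparison in boxes is insufficient.

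My first attempt will exploit invariance under the central direction together with the $z$-shift. Left translates by $(0,0,c)$ and by central elements $(0,t,0)$ preserve $d_l^*$. I will try to choose the parameters (for instance $M_N$ much smaller than $L_NN$, with the diagonal blocks occupying only a vanishing proportion of the central fibres over each $(x,z)$) so that, for large $K$, the translates $(0,t_i,0)A^{-1}A$, $i=1,\dots,K$, are pairwise almost disjoint on every region where $A^{-1}A$ is nonempty. A left F{\o}lner set seeing density $\delta$ in $A^{-1}A$ would then see density about $K\delta>1$ in the union, a contradiction. Working on fibres of the central coordinate $y$ is natural, since left multiplication by central elements acts by pure translation there.

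If this fails, the fallback is to use the correspondence in \Cref{Returns=Differences} to turn a positive-density subset of $A^{-1}A$ into a measurable return set, and contradict the thinness in the $y$-direction that way. I expect most of the work to be in tuning $L_N,M_N,g_N$ so that either argument goes through.
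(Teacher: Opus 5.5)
\textbf{Gap: the reduction target is impossible.} For any $a\in A$ we have $a^{-1}A\subseteq A^{-1}A$. By the same translation invariance in \Cref{FolnerProps} that you invoke, $d_l^*(a^{-1}A)=d_l^*(A)$. Hence $d_l^*(A^{-1}A)\geq d_l^*(A)=1$ for every $A$ with $d_l^*(A)=1$.

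Your own construction shows this directly. Since $1\in F_N$,
\[
A^{-1}A\supseteq (g_NF_N)^{-1}(g_NF_N)=F_N^{-1}F_N\supseteq F_N,
\]
and $(F_N)$ is left F{\o}lner. So no choice of $L_N,M_N,g_N$ can make the translates $(0,t_i,0)A^{-1}A$ almost disjoint. Your fallback also has nothing to work with, because $A^{-1}A$ is not thin with respect to left density.

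Put differently, $B_1=a^{-1}Ab^{-1}$ and $B_2=\{b\}$ always satisfy $B_1B_2\subseteq A^{-1}A$ with $d_l^*(B_1)=1$. The whole content of the theorem is that $B_2$ must \emph{also} be large, and your first step discards exactly that hypothesis.

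\textbf{What is missing.} You need an argument that uses the largeness of $B_1$ and $B_2$ jointly, against a set $A^{-1}A$ that is large but rigidly structured. The paper takes $A=\bigcup_N A_N$ with boxes at wildly separated scales $k_{4N},\dots,k_{4N+3}$, where $k_{N+1}=k_N^{k_N}$. Then any element of $A_N^{-1}A_M$ with nonzero $z$-coordinate reveals from $|x|,|y|$ alone whether $M<N$, $M>N$ or $M=N$.

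The proof then runs as follows.
\begin{enumerate}
\item Applying \Cref{BigSetsIntersectLeftTranslates} to $B_1\times B_2$ gives a \emph{common} $k$ with $d_l^*((0,0,k)B_i\cap B_i)>0$ for $i=1,2$.
\item With \Cref{MostPointsHaveBigCoords}, this yields points $h_i$ and $h_i'=(0,0,k)h_i$ in $B_i$, with the coordinates of $h_1$ enormously larger than those of $h_2$.
\item The products $h_1h_2$ and $h_1'h_2$ share their first two coordinates but differ by $k$ in $z$. The product $h_1h_2'$ differs from $h_1'h_2$ by $a_1k$ in $y$.
\item A case analysis on $M$ versus $N$ shows these three elements cannot all lie in $A^{-1}A$.
\end{enumerate}

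Nothing in your proposal plays the role of this common-recurrence step, and without it there is no way to bring the size of $B_2$ into play.
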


\begin{remark}
\label{DensityBFromTheOtherSide}
For the set $A$ from \Cref{CounterBBinA-1A}, we cannot have $B_1B_2\subseteq A^{-1}A$ for sets $B_i$ such that $d_r^*(B_i)>0$, because then we would have $d_l^*(B_i^{-1})>0$ and 
\begin{equation*}
B_2^{-1}B_1^{-1}=(B_1B_2)^{-1}\subseteq (A^{-1}A)^{-1}=A^{-1}A.
\end{equation*}
\end{remark}

Before proving \Cref{CounterBBinA-1A}, we need two lemmas. The first one is nothing else but a variant of the classical Poincaré recurrence lemma.

\begin{lemma}
\label{BigSetsIntersectLeftTranslates}
Let $G$ be a countable amenable group and let $E\subseteq G$ satisfy $d^*_l(E)>0$. Then for any $g\in G$ there exists $k\in\mathbb{N}$ such that $d^*_l(E\cap g^kE)>0$.
\end{lemma}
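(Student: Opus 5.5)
We plan to reduce the statement to Poincaré recurrence for a single transformation, using the correspondence principle of \Cref{Returns=Differences}. First, by \Cref{FolnerProps}\ref{UBDisamax} we fix a left F{\o}lner sequence $F$ with $d_F(E)=d^*_l(E)>0$. Then \Cref{Returns=Differences}\ref{Returns=Differences2} supplies a m.p.s. $(X,\mathcal{B},\mu,(T_g)_{g\in G})$, a set $Y\in\mathcal{B}$ with $\mu(Y)=d_F(E)>0$, and a subsequence $F'$ of $F$ such that
\begin{equation*}
d_{F'}(E\cap hE)=\mu(Y\cap T_hY)\quad\text{for all }h\in G.
\end{equation*}
Because $F'$ is again a left F{\o}lner sequence, this yields $d^*_l(E\cap hE)\geq d_{F'}(E\cap hE)=\mu(Y\cap T_hY)$ for every $h\in G$.

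Next we fix $g\in G$ and apply the classical Poincaré recurrence theorem to the single measure-preserving map $S=T_g$, noting that $T_{g^k}=S^k$. The sets $S^kY$, $k\geq0$, all have measure $\mu(Y)>0$ and lie in a probability space, so they cannot be pairwise almost disjoint. Hence there are $0\leq i<j$ with $\mu(S^iY\cap S^jY)>0$. Since $S^i$ is measure-preserving, we may set $k=j-i\geq1$ and obtain $\mu(Y\cap S^kY)>0$. Combining this with the first step gives $d^*_l(E\cap g^kE)\geq\mu(Y\cap T_{g^k}Y)>0$, which is the claim.

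The only delicate point is the correspondence step. We must make sure that the identity $d_{F'}(E\cap hE)=\mu(Y\cap T_hY)$ is used with the same left-translation convention that appears in the statement ($g^kE$ rather than $Eg^k$), and that the subsequence $F'$ is still left F{\o}lner. The second point is immediate. The first matches the definition $R^l_F(A)=\{g;\overline{d}_F(A\cap gA)>0\}$ used in \Cref{Returns=Differences}, so the translation conventions agree.

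There is also a direct combinatorial alternative that avoids the correspondence principle. The sets $g^kE$ all have $d_F(g^kE)=d_F(E)$ along the left F{\o}lner sequence $F$. If every intersection $g^iE\cap g^jE$ with $i\neq j$ had $d_F$-density zero, then finite additivity along $F$ would force $m\cdot d_F(E)\leq1$ for all $m$, a contradiction. This variant needs some care about upper versus lower densities, so the ergodic route is the one we intend to follow.
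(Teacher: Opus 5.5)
Your proof is correct, but it takes a different route from the paper's. The paper argues directly and finitarily on the F{\o}lner sets. It chooses a left F{\o}lner sequence with $\overline{d}_F(E)\geq\varepsilon=\frac12 d^*_l(E)$ and takes $K\approx 3/\varepsilon$. By left invariance, for infinitely many $N$ each translate $g^kE$, $1\leq k\leq K$, occupies a proportion $>\varepsilon/2$ of $F_N$, so some pair $g^{i_N}E$, $g^{j_N}E$ overlaps in a proportion $\geq\delta(\varepsilon)$ of $F_N$. Translating back by $g^{-j_N}$ and pigeonholing over the finitely many possible values of $i_N-j_N$ gives a fixed $k$ with $\overline{d}_F(E\cap g^kE)>0$; a negative $k$ is turned positive by a left translation, since $d^*_l$ is left invariant. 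This is essentially your ``combinatorial alternative'', and your worry about upper versus lower densities there is unnecessary. You chose $F$ with $d_F(E)=d^*_l(E)$ existing as a limit, so left invariance gives $d_F(g^kE)=d_F(E)$ and $\overline{d}_F(g^iE\cap g^jE)=\overline{d}_F(E\cap g^{j-i}E)$. The inclusion--exclusion (Bonferroni) inequality along $F_N$ then yields $m\,d_F(E)\leq 1$ immediately. Your main route instead moves to the correspondence system and applies Poincar\'e recurrence to $T_g$. It is cleaner and gives the explicit bound $k\leq\lfloor 1/d^*_l(E)\rfloor$. However, it rests on the full ``in fact'' clause of the correspondence lemma, namely a subsequence $F'$ with $d_{F'}(E\cap hE)=\mu(Y\cap T_hY)$ for every $h$. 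The paper imports that clause from an external reference, and it requires $G$ infinite, which holds here because ``countable'' means countably infinite in the paper. The paper's argument is elementary and self-contained, which suits a lemma it describes as a variant of Poincar\'e recurrence.
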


\begin{proof}
Let $\varepsilon=\frac{1}{2}d^*_l(E)$, $K=\frac{3}{\varepsilon}$, and choose a F{\o}lner sequence $(F_N)$ in $G$ such that $\overline{d}_F(E)\geq\varepsilon$. As $\lim_{N\to\infty}\frac{|F_N\Delta gF_N|}{|F_N|}=0$, for big enough $N$ we must have $|g^kE\cap F_N|=|E\cap g^{-1}F_N|>\frac{\varepsilon}{2}|F_N|$ for all $k=1,\dots,K$. The sets $g^kE\cap F_N$, $k=1,\dots,K$, cannot be pairwise disjoint, in particular there is some constant $\delta=\delta(\varepsilon)>0$ and distinct $1\leq i_N<j_N\leq K$ such that $|g^{i_N}E\cap g^{j_N}E\cap F_N|>\delta|F_N|$. So, if $N$ is big enough, $|g^{i_N-j_N}E\cap E\cap F_N|>\frac{\delta}{2}|F_N|$. Therefore for some value of $k\in\{-K,\cdots,K\}$, we have $|E\cap g^{k}E\cap F_N|>\frac{\delta}{2}|F_N|$ for infinitely many $N$, which implies $d^*(E\cap g^{k}E)>0$.
\end{proof}

\begin{lemma}
\label{MostPointsHaveBigCoords}
For all $N\in\mathbb{N}$, the set $W_N=\{(x,y,z)\in \mathrm{UT}_3(\mathbb{Z});\min(|x|,|y|,|z|)<N\}$ satisfies $d_l^*(W_N)=d_r^*(W_N)=0$.
\end{lemma}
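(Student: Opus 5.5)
The plan is to write $W_N$ as a finite union of ``coordinate slices'' and show that each slice has zero left and right upper Banach density. Concretely,
\begin{equation*}
W_N=\bigcup_{|c|<N}\left(X_c\cup Y_c\cup Z_c\right),
\end{equation*}
where $X_c=\{(x,y,z);x=c\}$, $Y_c=\{(x,y,z);y=c\}$ and $Z_c=\{(x,y,z);z=c\}$.

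First I will reduce to the slices. For any fixed Følner sequence $F$, $\overline{d}_F$ is subadditive on finite unions. Taking suprema, $d^*_l$ and $d^*_r$ are subadditive as well. So it suffices to prove $d^*_l(E)=d^*_r(E)=0$ for each $E\in\{X_c,Y_c,Z_c\}$.

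Next I will find, for each slice, a single element that translates it by one unit on both sides. From the product formula $(x,y,z)\cdot(x',y',z')=(x+x',y+y'+xz',z+z')$:
\begin{itemize}
\item The element $a=(1,0,0)$ gives $a^kX_c=X_{c+k}$ and $X_ca^k=X_{c+k}$, since the first coordinate is additive.
\item The element $b=(0,0,1)$ gives $b^kZ_c=Z_{c+k}=Z_cb^k$, since the third coordinate is additive.
\item The element $e=(0,1,0)$ is central, because its first and third coordinates vanish, so both cross terms $xz'$ are zero. Hence $e^kY_c=Y_ce^k=Y_{c+k}$.
\end{itemize}
In each case the translates $E,tE,\dots,t^{K-1}E$ are pairwise disjoint slices, and they coincide with the right translates $Et^j$.

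Now I will run the disjoint-translates argument. Fix a left Følner sequence $F$. For every $g$ we have $\overline{d}_F(gE)=\overline{d}_F(E)$, because $|gE\cap F_N|=|E\cap g^{-1}F_N|$ and $|g^{-1}F_N\Delta F_N|=o(|F_N|)$. This holds for the upper density along $F$ itself, not only for the supremum in \Cref{FolnerProps}.\ref{LeftAndRightUBD}. By \Cref{FolnerProps}.\ref{UBDisamax}, I may pass to a subsequence of $F$ along which $d_F(E)=\overline{d}_F(E)$. Along it, the translates still have densities $d_F(t^jE)=d_F(E)$, by the same $o(|F_N|)$ estimate. Finite additivity of density on the disjoint sets $t^jE$, $j=0,\dots,K-1$, then gives $K\,\overline{d}_F(E)\le 1$. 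Since $K$ is arbitrary, $\overline{d}_F(E)=0$, and so $d^*_l(E)=0$. The right density follows by the mirror argument with right Følner sequences and the right translates $Et^j$, which are equally disjoint. Alternatively, I can use $d^*_r(E)=d^*_l(E^{-1})$: the inverse $(x,y,z)^{-1}=(-x,-y+xz,-z)$ maps $X_c,Z_c$ to $X_{-c},Z_{-c}$, but it does not map $Y_c$ to a slice, so the direct right-translate argument is cleaner.

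The only point needing care is the $y$-slices, since $y$ is not a homomorphism. This is handled by the observation that $(0,1,0)$ is central, so it shifts $y$ by one under both left and right multiplication. Apart from that, the proof is bookkeeping with Følner invariance and subadditivity.
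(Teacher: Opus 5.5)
Your proof is correct and follows the paper's route. Like the paper, you split $W_N$ into finitely many coordinate slices and use an element whose powers carry each slice to pairwise disjoint slices on both sides; for $Y_c$ this is the central element $(0,1,0)$, a case the paper only calls ``similar''. The one difference is that you prove the bound $K\,\overline{d}_F(E)\le 1$ directly where the paper cites \Cref{BigSetsIntersectLeftTranslates}, and your passage to a subsequence is unnecessary, since $|t^jE\cap F_N|=|E\cap F_N|+o(|F_N|)$ already gives the bound along $F$ itself.
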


\begin{proof}
If is enough to prove that for all $n\in\mathbb{N}$, the sets $X_n:=\{n\}\times\mathbb{Z}\times\mathbb{Z},Y_n:=\mathbb{Z}\times\{n\}\times\mathbb{Z}$ and $Z_n:=\mathbb{Z}\times\mathbb{Z}\times\{n\}$ have left and right upper Banach density $0$, as $W_N$ is a finite union of such sets. So fix $n\in\mathbb{N}$.

Let $g=(1,0,0)\in \mathrm{UT}_3(\mathbb{Z})$. Then the sets $(g^kX_n)_{k\in\mathbb{N}}$ are pairwise disjoint, and so are the sets $(X_ng^k)_{k\in\mathbb{N}}$, as the points in $g^kX_n$ (or $X_ng^k$) have first coordinate $n+k$. So by \Cref{BigSetsIntersectLeftTranslates} we have $d^*_l(X_n)=d_r^*(X_n)=0$. The proof for $Y_n,Z_n$ is similar.
\end{proof}

Throughout the rest of this section, $[a,b]$ denotes the interval $\{x\in\mathbb{Z};a\leq x\leq b\}$. Moreover, when $x,a\in\mathbb{R}$ and we say `$x$ is close to $a$', we mean that $\frac{|x-a|}{|a|}<10^{-100}$. Similarly, when $\mathbf{x}=(x_1,\dots,x_n),\mathbf{a}=(a_1,\dots,a_n)\in\mathbb{R}^n$ and we say `$\mathbf{x}$ is close to $\mathbf{a}$', we mean that $x_i$ is close to $a_i$ for all $i=1,\dots,n$.

\begin{proof}[Proof of \Cref{CounterBBinA-1A}]
Let $(k_N)_{N\in\mathbb{N}}$ be given by $k_1=1000$ and $k_{N+1}=k_N^{k_N}$, and consider the left F{\o}lner sequence $(A_N)$ given by
\begin{equation*}
A_N=[k_{4N+3},k_{4N+3}+N]\times[k_{4N+2},k_{4N+2}+k_{4N+1}]\times[k_{4N},k_{4N}+N]
\end{equation*}
We define $A=\bigcup_{N\geq10}A_N$, so that $d_l^*(A)=1$. We have 
\begin{equation*}
A^{-1}A=\bigcup_{N,M\geq10}A_N^{-1}A_M.
\end{equation*}
For any $g_N=(x_N,y_N,z_N)\in A_N$ and $g_M=(x_M,y_M,z_M)\in A_M$, we have
\begin{align*}
g_N^{-1}g_M&
=
(-x_N,-y_N+x_Nz_N,-z_N)(x_M,y_M,z_M)\\
&=(x_M-x_N,y_M-y_N-x_N(z_M-z_N),z_M-z_N).
\end{align*}
We have the following three possibilities for the coordinates of $g_N^{-1}g_M$:
\begin{enumerate}[label=(\arabic*)]
    \item\label{M<N} $M<N$. Then $
g_N^{-1}g_M$ is close to $(-k_{4N+3},k_{4N+3}\cdot k_{4N},-k_{4N})$.
\item\label{M>N} $M>N$. Then $
g_N^{-1}g_M$ is close to $(k_{4M+3},k_{4M+2},k_{4M})$.
\item\label{M=N} $M=N$. Let $u=z_M-z_N\in[-N,N]$. Then either $u=0$, so $g_N^{-1}g_M\in\mathbb{Z}\times\mathbb{Z}\times\{0\}$, or $u\neq0$, in which case
\begin{equation*}
g_N^{-1}g_M\in[-N,N]\times[-uk_{4N+3}-2k_{4N+1},-uk_{4N+3}+2k_{4N+1}]\times\{u\}.
\end{equation*}
\end{enumerate}
Given a point $(x,y,z)\in\mathrm{UT}_3(\mathbb{Z})$ such that $z\neq0$, and the knowledge that $(x,y,z)\in A_N^{-1}A_M$ for some $N,M\geq10$, we can determine whether $M<N,M>N$ or $M=N$ just using $|x|$ and $|y|$: if $|x|>|y|$, then $M>N$, if $|x|<|y|<x^2$, then $M<N$, and if $x^2<|y|$, then $M=N$. Similarly, $\max(M,N)$ can be deduced from $|x|,|y|$.

Suppose for the sake of contradiction that there exist sets $B_1,B_2\subseteq \mathrm{UT}_3(\mathbb{Z})$ such that $d_l^*(B_1),d_l^*(B_2)>0$ and $B_1B_2\subseteq A^{-1}A$. By \Cref{BigSetsIntersectLeftTranslates} applied to $B_1\times B_2\subseteq \mathrm{UT}_3(\mathbb{Z})\times \mathrm{UT}_3(\mathbb{Z})$, there is some $k\in\mathbb{N}$ such that $d_l^*((0,0,k)B_1\cap B_1)>0$ and $d_l^*((0,0,k)B_2\cap B_2)>0$. So by \Cref{MostPointsHaveBigCoords}, for $i=2,1$ we may find points $h_i=(a_i,b_i,c_i)\in B_i$ such that $|a_2|,|b_2|,|c_2|>(10k)^{100}$ and $|a_1|,|b_1|,|c_1|>\max(|a_2|,|b_2|,|c_2|)^{100}$, and such that $h_i':=(0,0,k)h_i\in B_i$. We will reach a contradiction using that the following points are in $B_1B_2\subseteq A^{-1}A$:
\begin{align*}
h_1h_2=(a_1,b_1,c_1)(a_2,b_2,c_2)&=(a_1+a_2,b_1+b_2+a_1c_2,c_1+c_2)\\
h_1'h_2=(a_1,b_1,c_1+k)(a_2,b_2,c_2)
&=
(a_1+a_2,b_1+b_2+a_1c_2
,c_1+c_2+k)\\
h_1h_2'=(a_1,b_1,c_1)(a_2,b_2,c_2+k)&=
(a_1+a_2,b_1+b_2+a_1c_2+a_1k,c_1+c_2+k)
\end{align*}
The $z$-coordinates of $h_1h_2,h_1'h_2,h_1h_2'$ are not $0$. Let $N,M\in\mathbb{N}$ be such that, for some $g_N=(x_N,y_N,z_N)\in A_N,g_M=(x_M,y_M,z_M)\in A_M$, we have $h_1h_2=g_N^{-1}g_M\in A_N^{-1}A_M$. That is,
\begin{equation}
\label{39reospd9owerdlks1}
(a_1+a_2,b_1+b_2+a_1c_2,c_1+c_2)=(x_M-x_N,y_M-y_N-x_N(z_M-z_N),z_M-z_N).
\end{equation}
There are three cases:
\begin{enumerate}[label=(\alph*)]
    \item $M<N$. Then, as the first two coordinates of $h_1'h_2$ are equal to those of $h_1h_2$, we must have $h_1'h_2\in A_N^{-1}A_L$ for some $L<N$, or in coordinates, for some $(x_L,y_L,z_L)\in A_L$ and $(x_N',y_N',z_N')\in A_N$ we have
\begin{equation}
\label{39reospd9owerdlks2}
(a_1+a_2,b_1+b_2+a_1c_2,c_1+c_2+k)=(x_L-x_N',y_L-y_N'-x_N'(z_L-z_N'),z_L-z_N').
\end{equation}
So comparing \Cref{39reospd9owerdlks1,39reospd9owerdlks2} we obtain
\begin{align}
x_M-x_N&=x_L-x_N'\label{4rewdsf1}\\
y_M-y_N-x_N(z_M-z_N)&=y_L-y_N'-x_N'(z_L-z_N')\label{4rewdsf2}\\
z_M-z_N+k&=z_L-z_N'\label{4rewdsf3}
\end{align}
Substituting \Cref{4rewdsf3} into \Cref{4rewdsf2}, we obtain 
\begin{equation*}
y_M-y_N-x_N(z_M-z_N)=y_L-y_N'-x_N'(z_M-z_N)-kx_N',
\end{equation*}
and then solving for $kx_N'$ and using \Cref{4rewdsf1}, we obtain
\begin{align}
kx_N'&=y_L-y_N'+(x_N-x_N')(z_M-z_N)-y_M+y_N;\notag\\
kx_N'&=
y_L-y_N'+(x_M-x_L)(z_M-z_N)-y_M+y_N,\label{ContraCase1}
\end{align}
a contradiction because $kx_N'$ is much bigger in absolute value than all the other terms in \Cref{ContraCase1}.

\item $M>N$. Then 
\begin{equation*}
(a_1+a_2,b_1+b_2+a_1c_2,c_1+c_2)\textup{ is close to }(k_{4M+3},k_{4M+2},k_{4M}). 
\end{equation*}
As $|a_1|>\max(|a_2|,|b_2|,|c_2|)^{100}$, we in fact have that $a_1$ is close to $k_{4M+3}$. So, as $b_1+b_2+a_1c_2$ is close to $k_{4M+2}$, we must have that $b_1+b_2+a_1c_2+a_1k$ is close to $k\cdot k_{4M+3}$. That is, the point $(x,y,z):=h_1h_2'\in A_{N'}^{-1}A_M'$ (for some $N',M'$) satisfies that $x$ is close to $k_{4M+3}$ and $y$ is close to $k\cdot k_{4M+3}$. So we have $z\neq0$ and $|x|<|y|<x^2$, which places $h_1h_2'$ in case \ref{M<N} above, contradicting the fact that $x,y$ have the same sign.

\item $M=N$. As $u:=c_1+c_2\neq0$, we must have 
\begin{multline*}
(a_1+a_2,b_1+b_2+a_1c_2,c_1+c_2)\\\in[-N,N]\times[-uk_{4N+3}-2k_{4N+1},-uk_{4N+3}+2k_{4N+1}]\times\{u\},
\end{multline*}
for some $N\geq|u|$. As the first two coordinates of $h_1'h_2$ coincide with those of $h_1h_2$, the element $h_1'h_2$ also is in $A^{-1}_NA_N$, that is,
\begin{multline*}
(a_1+a_2,b_1+b_2+a_1c_2,c_1+c_2+k)\\\in[-N,N]\times[-(u+k)k_{4N+3}-2k_{4N+1},-(u+k)k_{4N+3}+2k_{4N+1}]\times\{u+k\}.
\end{multline*}
This is impossible, because the two intervals centered at $-uk_{4N+3}$ and $-(u+k)k_{4N+3}$ and with radius $2k_{4N+1}$ are disjoint, so $b_1+b_2+a_1c_2$ cannot be in both of them at the same time.\qedhere
\end{enumerate}
\end{proof}

\begin{theorem}
\label{CounterBxBinA-1A}
There is a set $A\subseteq \mathrm{UT}_3(\mathbb{Z})\times \mathrm{UT}_3(\mathbb{Z})$ such that $d^*_l(A)=1$ and we do not have $B_1\times B_2\subseteq A^{-1}A$ for any $B_1,B_2\subseteq \mathrm{UT}_3(\mathbb{Z})$ such that $d^*_l(B_i)>0$.
\end{theorem}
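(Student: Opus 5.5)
The plan is to reduce \Cref{CounterBxBinA-1A} to \Cref{CounterBBinA-1A} by pulling back along the group multiplication. Let $A_0\subseteq\mathrm{UT}_3(\mathbb{Z})$ be the set constructed in \Cref{CounterBBinA-1A}, with $d_l^*(A_0)=1$, and set $G=\mathrm{UT}_3(\mathbb{Z})$. The natural candidate is
\begin{equation*}
A=\{(x,y)\in G\times G;\ xy\in A_0\}.
\end{equation*}
This is the noncommutative analogue of the set $A_2$ used after \Cref{B+BinA-AAbelian}. If $(g_1,h_1),(g_2,h_2)\in A$, then
\begin{equation*}
(g_1,h_1)^{-1}(g_2,h_2)=(g_1^{-1}g_2,\,h_1^{-1}h_2),
\end{equation*}
and the relevant products here are not directly related. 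So I will modify the construction to make the map compatible with $A^{-1}A$.

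Since $u\mapsto u^{-1}$ exchanges left and right densities and would spoil the left density, I will use the multiplication map $m(x,y)=xy$ and look for the right pairing. I take instead
\begin{equation*}
A=\{(x,y);\ x^{-1}y\in A_0\}
\end{equation*}
and try to show that $B_1\times B_2\subseteq A^{-1}A$ forces a relation of the form $B_1'B_2'\subseteq A_0^{-1}A_0$.

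Rather than insisting on a clean algebraic pullback, the more robust route is to repeat the construction of \Cref{CounterBBinA-1A} directly in $G\times G$. Take $A=\bigcup_{N\ge 10}A_N\times A_N'$, where $A_N$ are the boxes from that proof and $A_N'$ are boxes at a different, interleaved scale (for example using $k_{8N+j}$ instead of $k_{4N+j}$). This is a union of left F{\o}lner boxes in $G\times G$, so $d_l^*(A)=1$.

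An element of $A^{-1}A$ then has the form $(g_N^{-1}g_M,\,g_N'^{-1}g_M')$ with the same pair $(N,M)$ in both coordinates. Suppose $B_1\times B_2\subseteq A^{-1}A$ with $d^*_l(B_i)>0$. By \Cref{BigSetsIntersectLeftTranslates} and \Cref{MostPointsHaveBigCoords}, I pick $h_1\in B_1$ and $h_2\in B_2$ with huge coordinates, and also $h_1'=(0,0,k)h_1\in B_1$ and $h_2'=(0,0,k)h_2\in B_2$. The case analysis of \Cref{CounterBBinA-1A} (the cases $M<N$, $M>N$, $M=N$) then applies to the first coordinate of each of $(h_1,h_2)$, $(h_1',h_2)$ and $(h_1,h_2')$, and the ``close to'' estimates are used to read off $(N,M)$ from $|x|,|y|$.

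The key point is that the pair $(N,M)$ is coupled across the two coordinates. Fixing $h_2$ and moving $h_1$ to $h_1'$ keeps the second coordinate $h_2$, so it pins down the same pair $(N,M)$. In the first coordinate, however, $h_1'$ differs from $h_1$ by a central-direction shift $k$. Exactly as in cases (a)--(c) of \Cref{CounterBBinA-1A}, this either produces an equation like \Cref{ContraCase1}, in which $kx_N'$ dominates all the other terms, or forces the middle coordinate to lie in two disjoint intervals.

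The main obstacle I expect is the case $M=N$. There the first coordinate alone is compatible with a shift, because both $A_N^{-1}A_N$ sets contain points with $z$-coordinate $0$ and small first coordinates. I would handle it by using the second coordinate, whose magnitude determines $\max(N,M)$ via $k_{8N+j}$, together with the disjointness of the intervals centred at $-uk_{4N+3}$ and $-(u+k)k_{4N+3}$. If that fails, I would choose $k$ so that both translates share the same $k$, and then apply the argument of \Cref{CounterBBinA-1A} to the first coordinate with $N$ fixed by the second.
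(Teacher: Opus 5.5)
\textbf{There is a genuine gap, in the case $M>N$.} Your set $A=\bigcup_N A_N\times A_N'$ with coupled scales is the right one; the paper uses $A=\bigcup_{N\ge10}A_{2N}\times A_{2N+1}$. Your observation that an element of $A^{-1}A$ lies in $A_N^{-1}A_M\times A_N'^{-1}A_M'$ with the same $(N,M)$ in both coordinates is also the key fact. The problem is how you try to reach a contradiction.

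In $G\times G$ there is no product $h_1h_2$. The points $(h_1,h_2)$, $(h_1',h_2)$, $(h_1,h_2')$ only give you, in each coordinate separately, an element $g$ and its left translate $(0,0,k)g$, which has the same $x,y$ and $z$ shifted by $k$. This suffices when $g$ falls in case $M<N$ or $M=N$, because cases (a) and (c) of \Cref{CounterBBinA-1A} only use $h_1h_2$ and $h_1'h_2=(0,0,k)h_1h_2$. Those cases need no coupling at all, so $M=N$ is not the obstacle you expected.

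It gives nothing in case $M>N$. If $g=g_N^{-1}g_M$ with $g_M=(x_M,y_M,z_M)$, then $(0,0,k)g=g_N^{-1}\tilde g_M$ with $\tilde g_M=(x_M,\,y_M+kx_N,\,z_M+k)$. For typical $g_M$ we have $\tilde g_M\in A_M$, since $kx_N\approx k\,k_{4N+3}$ is negligible against the width $k_{4M+1}$ of the $y$-range, and $M\gg k$. In \Cref{CounterBBinA-1A}, case (b) was killed by $h_1h_2'=h_1(0,0,k)h_2$, whose $y$-coordinate acquires the extra term $a_1k$. That is a genuinely multiplicative effect, and it has no analogue in the Cartesian product, where the first coordinate of $(h_1,h_2')$ is just $h_1$. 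So your argument stalls exactly when $h_1$ and $h_2$ both lie in case $M>N$ at coupled scales. Separately, $h_2$ does not pin down $(N,M)$: it only determines which case occurs and $\max(N,M)$.

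\textbf{The fix is to use the coupling itself, not the $(0,0,k)$-shifts.} In all three cases, once $z_1\neq0$, the size of $|y_1|$ determines $\max(N,M)$ up to bounded ambiguity, and hence bounds $|y_2|$. For the paper's indexing: if $k_L\le|y_1|<k_{L+1}$ then $|y_2|\le k_{L+10}$; adjust the indices for your choice of $A_N'$. Fix one $h_1\in B_1$ with $z_1\ne0$. By \Cref{MostPointsHaveBigCoords}, $B_2$ contains points with $z_2\neq0$ and $|y_2|$ arbitrarily large, which contradicts $\{h_1\}\times B_2\subseteq A^{-1}A$. This is the paper's proof, and it needs neither \Cref{BigSetsIntersectLeftTranslates} nor any shifts by $(0,0,k)$.
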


\begin{proof}
Let $k_N,(A_N)$ be as in the proof of \Cref{CounterBBinA-1A} and let $A=\bigcup_{N\geq10}A_{2N}\times A_{2N+1}$. Thus, 
\begin{align*}
A^{-1}A&={\textstyle\bigcup_{N,M\geq10}}(A_{2N}\times A_{2N+1})^{-1}(A_{2M}\times A_{2M+1})\\
&={\textstyle\bigcup_{N,M\geq10}}\left(A_{2N}^{-1}A_{2M}\right)\times\left(A_{2N+1}^{-1}A_{2M+1}\right).
\end{align*}
Let $(g_1,g_2)=((x_1,y_1,z_1),(x_2,y_2,z_2))\in A^{-1}A$. According to the values of $N,M$ such that $(g_1,g_2)\in\left(A_{2N}^{-1}A_{2M}\right)\times\left(A_{2N+1}^{-1}A_{2M+1}\right)$, we have three cases (see cases \ref{M<N},\ref{M>N},\ref{M=N} in \Cref{CounterBBinA-1A}):
\begin{enumerate}[label=(\arabic*)]
    \item $M<N$. Then $(y_1,y_2)$ is close to $(k_{8N+3}\cdot k_{8N},k_{8N+7}\cdot k_{8N+4})$.
    \item $M>N$. Then $(y_1,y_2)$ is close to $(k_{8M+3},k_{8M+7})$.
    \item $M=N$. Then, if $z_1,z_2\neq0$, $(y_1,y_2)$ is close to $(-z_1k_{4N+3},-z_2k_{4N+7})$.
\end{enumerate}
In particular, if $z_1,z_2\neq0$, then for each fixed value of $y_1$, the possible values of $y_2$ are bounded; more concretely, letting $L\in\mathbb{N}$ be such that $k_L\leq |y_1|<k_{L+1}$, we cannot have $|y_2|>k_{L+10}$. But suppose we have $B_1,B_2\subseteq \mathrm{UT}_3(\mathbb{Z})$ such that $d^*_l(B_i)>0$ and $B_1\times B_2\subseteq A^{-1}A$. By \Cref{MostPointsHaveBigCoords} we can find $(x_1,y_1,z_1)\in B_1$ such that $z_1\neq0$, and points $(x_2,y_2,z_2)\in B_2$ such that $z_2\neq0$ and $y_2$ is arbitrarily large, a contradiction.
\end{proof}

Finally, we prove the following result mentioned in \Cref{SecIntro}.

\begin{prop}
\label{UpperAndLowerUBDDiffer}
If a countable amenable group $G$ has an element $a$ with infinite conjugacy class, then there exists a set $A\subseteq G$ such that $d^*_l(A)=1$ but $d^*_r(A)\leq\frac{1}{2}$.
\end{prop}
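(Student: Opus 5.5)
The construction will follow the hint in \Cref{Rmk113}: take $A=\bigcup_N F_N$ for a carefully chosen left F{\o}lner sequence $(F_N)$ that is far from being right F{\o}lner. Then automatically $d^*_l(A)\geq \overline{d}_{(F_N)}(A)=1$. The work is to force $d^*_r(A)\leq\frac12$. The natural mechanism is to find, for a fixed $h\in G$, that $A\cap Ah$ is small in right Banach density. If $d^*_r(A\cap Ah)=0$, then for any right F{\o}lner sequence $(\Phi_N)$ we have
\[
2\,\overline{d}_{\Phi}(A)=\overline{d}_\Phi(A)+\overline{d}_\Phi(Ah)\leq 1+\overline{d}_\Phi(A\cap Ah)=1,
\]
using $d_\Phi(Ah)=d_\Phi(A)$ along right F{\o}lner sequences (a subsequence argument handles the limsup). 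This gives $d^*_r(A)\leq\frac12$. Rather than insisting on density exactly $0$, I would build $A$ so that $A\cap Ah$ is contained in a set which is ``sparse'' for right averages.

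\textbf{Choice of $h$ and of $F_N$.} Since $a$ has infinite conjugacy class, the conjugates $g^{-1}ag$ take infinitely many values, so we can choose $g_1,g_2,\dots$ with the elements $g_N^{-1}ag_N$ pairwise distinct. I would take $h=a$. Start from any left F{\o}lner sequence $(\Phi_N)$. Right-translating keeps it left F{\o}lner, so I set $F_N=\Phi_{m_N}g_N$ with $m_N$ increasing and $g_N$ chosen inductively. For $x\in F_N$ we have $xa=\phi g_N a=\phi\,(g_Na g_N^{-1})\,g_N$. So $F_N\cap F_Na$ is governed by $\Phi_{m_N}\cap \Phi_{m_N}(g_Nag_N^{-1})$, and $g_N$ should be chosen so that $c_N:=g_Nag_N^{-1}$ makes this overlap tiny. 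Since there are infinitely many conjugates, we can take $c_N\notin \Phi_{m_N}^{-1}\Phi_{m_N}$, which gives $F_N\cap F_Na=\varnothing$. (Here one should use conjugates $g a g^{-1}$; the infinite class condition is symmetric.) The translates $g_N$ should also be chosen so that the sets $F_N$ are spread far apart, for example so that $F_N\cap F_Ma=\varnothing$ for $M\neq N$. Each step avoids only finitely many elements, so the induction can continue.

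\textbf{Concluding.} With these choices $A\cap Aa=\bigcup_{N,M}(F_N\cap F_Ma)=\varnothing$, and the displayed inequality gives $d^*_r(A)\leq\frac12$ for every right F{\o}lner sequence. The main obstacle is arranging the cross terms $F_N\cap F_Ma$ to be empty for all pairs simultaneously. At stage $N$ we need $\Phi_{m_N}g_N$ to avoid the finite set $\bigcup_{M<N}F_Ma\cup\bigcup_{M<N}F_Ma^{-1}$. We also need $g_Nag_N^{-1}\notin\Phi_{m_N}^{-1}\Phi_{m_N}$. The first condition excludes only finitely many $g_N$, namely those in $\Phi_{m_N}^{-1}(\text{finite set})$. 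The second condition holds for all $g_N$ outside finitely many cosets of the centralizer $C(a)$, and $C(a)$ has infinite index because the conjugacy class of $a$ is infinite. So the real check is that an infinite-index subgroup's finitely many cosets together with a finite set do not cover $G$, which is immediate. Because $\overline{d}_{(F_N)}(A)=1$ still holds, we get $d^*_l(A)=1$.
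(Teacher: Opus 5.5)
Your proposal is correct and follows essentially the same route as the paper. You take $A=\bigcup_N F_N$ with $F_N$ right translates of a left F{\o}lner sequence, choosing each translating element outside finitely many left cosets of the infinite-index centralizer $C_a$ and outside a finite set so that $A\cap Aa=\varnothing$, which forces $d^*_r(A)\le\frac12$. Where you differ, it is only in being more explicit: you justify that the excluded set cannot cover $G$ directly (finitely many cosets of an infinite-index subgroup plus a finite set), rather than via upper Banach density zero, and you write out the avoidance sets $\Phi_{m_N}^{-1}(F_Ma\cup F_Ma^{-1})$ and the subsequence argument for the right density, two steps the paper only sketches.
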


\begin{proof}
The centralizer of $a$, $C_a:=\{g\in G;a=gag^{-1}\}$, is a subgroup of infinite index. This means that there are infinitely many disjoint left/right cosets of $C_a$, so $d^*_l(C_a)=d^*_r(C_a)=0$ (see the proof of \Cref{BigSetsIntersectLeftTranslates}). So by \Cref{FolnerProps}, for all $g\in G$ we have $d^*_l(C_ag)=d^*_r(C_ag)=0$. 

Now let $(F_N)$ be a left F{\o}lner sequence in $G$. For each $N\in\mathbb{N}$, the set $E_N:=\{g\in G;F_Ng_N\cap F_Ng_Na\neq\varnothing\}$ satisfies $d^*_l(E)=d^*_r(E)=0$. This is because 
\begin{equation*}
E={\textstyle\bigcup_{x,y\in F_N}}\{g\in G;xg=yga\}
={\textstyle\bigcup_{x,y\in F_N}}\{g\in G;y^{-1}x=gag^{-1}\},
\end{equation*}
and each of the sets $\{g\in G;y^{-1}x=gag^{-1}\}$ is either empty or a left coset of $C_a$ with left and right upper Banach density $0$.
This means that we can construct by recursion elements $g_N$, $N\in\mathbb{N}$, such that $F_Ng_N\cap F_Mg_Ma=\varnothing$ for all $M,N\in\mathbb{N}$; it suffices to choose each $g_N$ outside $E_N\cup\bigcup_{n<N}F_ng_nF_N^{-1}$. Then, the set $A=\bigcup_NF_Ng_N$ satisfies $d^*_l(A)=1$, as $d_{(F_Ng_N)}(A)=1$ and $(F_Ng_N)$ is a left F{\o}lner sequence, but $d^*_r(A)\leq\frac{1}{2}$ because $A\cap Aa=\varnothing$, so $A$ cannot have density $>\frac{1}{2}$ in any right F{\o}lner sequence.
\end{proof}

%% file: S7-Questions.tex
\section{Questions/future directions}
\label{SecQuests}

In this section we collect and discuss some questions which are motivated by the results of this paper. 

We start with a weaker version of \Cref{BigQuest}, for which a positive answer may be easier to obtain.
\begin{question}
\label{DoAllAmenableGroupsHaveBigAvgsInSquares}
Let $G$ be a countable amenable group, $(X,\mathcal{B},\mu,(T_g)_{g\in G})$ a m.p.s. and $A\in\mathcal{B}$. Does there always exist a left F{\o}lner sequence $(F_N)$ of $G$ such that
\begin{equation}
\label{LimIntsSquareTrans}
\lim_{N\to\infty}\frac{1}{|F_N|}\sum_{g\in F_N}\mu(A\cap T_g^2A)>0?
\end{equation}
Equivalently, is there $\varepsilon>0$ such that $d^*_l\left(\{g\in G;\mu(A\cap T_g^2A)>\varepsilon\}\right)>0$?
\end{question}
A positive answer to \Cref{DoAllAmenableGroupsHaveBigAvgsInSquares} would imply, via \Cref{ProdsInRetsMeasurableIntro}, that for every set $A\subseteq G$ with $d^*_l(A)>0$ there is $B\subseteq G$ such that $d^*_l(B)>0$ and $BB\subseteq AA^{-1}$. A natural question, which we now formulate in terms of unitary actions, is whether the limit in \Cref{LimIntsSquareTrans} is always well defined:
\begin{question}
\label{IsUnitaryLimitAlongSquaresAlwaysDefined}
Given a countable amenable group $G$, is it true that the limit
\begin{equation*}
\lim_{N\to\infty}\frac{1}{|F_N|}\sum_{g\in F_N}\langle\xi,U_g^2\xi\rangle.
\end{equation*}
exists for any left F{\o}lner sequence $(F_N)$ in $G$, any unitary actions $(U_g)_{g\in G}$ of $G$ on a Hilbert space $\mathcal{H}$ and any $\xi\in \mathcal{H}$? Equivalently, does the function $f:G\to\mathbb{C};g\mapsto\langle\xi,U_g^2\xi\rangle$ have the same integral with respect to all left invariant means on $G$?
\end{question}

A potentially promising approach to resolving Questions \ref{DoAllAmenableGroupsHaveBigAvgsInSquares} and \ref{IsUnitaryLimitAlongSquaresAlwaysDefined} involves the so called compact-weakly mixing decomposition (in its modern form, it goes back to Godement \cite{God}), which we now briefly review. Let $U=(U_g)_{g\in G}$ be a unitary action  of a countable amenable group $G$ on a Hilbert space $\mathcal{H}$.  Then, $\mathcal{H}$ decomposes as an orthogonal sum of two $U$-invariant subspaces, $\mathcal{H}=\mathcal{H}_{\textup{c}}\oplus \mathcal{H}_{\textup{w.m.}}$, where:
\begin{itemize}
    \item For any $v\in \mathcal{H}_{\textup{c}}$, the orbit $Gv$ has compact closure. 
    For any $v\in \mathcal{H}_{\textup{c}}$ and $\varepsilon>0$, the set $R_{v}^\varepsilon=\{g\in G;\|v-U_gv\|<\varepsilon\}$ is a Bohr neighborhood\footnote{The Bohr topology $\mathcal{T}_B$ on a topological group $G$ is the initial topology on $G$ induced by the family of all continuous homomorphisms from $G$ to compact Hausdorff groups.
    } of $1_G$, in particular it satisfies $d_*^r(R_{v}^\varepsilon)=d_*^l(R_{v}^\varepsilon)>0$ (as $R_{v}^\varepsilon=\left(R_{v}^\varepsilon\right)^{-1}$, and finitely many left translates of $R_{v}^\varepsilon$ cover all $G$ by compactness of $\overline{Gv}$).
    \item For any $v\in \mathcal{H}_{\textup{w.m.}}$, $w\in \mathcal{H}$ and $\delta>0$, the set 
    \begin{equation*}
    W_{v,w}^\delta:=\{g\in G;\left|\langle w,U_gv\rangle\right|>\delta\}
    \end{equation*}
     satisfies $d_l^*(W_{v,w}^\delta)=d_r^*(W_{v,w}^\delta)=0$.
\end{itemize}
For details see for example \cite[Theorem 2.24]{KL16}.

\begin{question}
\label{QuestionCWMDecomp}
Let $(U_g)_{g\in G}$ be a unitary action of a countable amenable group $G$ on a Hilbert space $\mathcal{H}$, and let $\xi\in\mathcal{H}_{\textup{w.m.}}$. 
\begin{enumerate}[label=(\alph*)]
    \item\label{QuestionCWMDecompa} Let $\delta>0$, $B\subseteq G$ a Bohr neighborhood of $1_G$ and let
    \begin{equation*}
    W_\delta:=\{g\in G;\langle\xi,U_g^2\xi\rangle>-\delta\}.
    \end{equation*}
     Is it true that $d_F(W_\delta\cap B)>0$ for all left F{\o}lner sequences $F$ in $G$?
    \item\label{QuestionCWMDecompb} If the answer to \ref{QuestionCWMDecompa} is negative, do we at least have $d^*_l(W_\delta\cap B)>0$?
    \item\label{QuestionCWMDecompc} 
    Is it true that $
    \lim_{N\to\infty}\frac{1}{|F_N|}\sum_{g\in F_N}\langle\xi,U_g^2\xi\rangle$
    exists for any left F{\o}lner sequence $(F_N)$ in $G$?
\end{enumerate}
\end{question}

Positive answers to \Cref{BigQuest,DoAllAmenableGroupsHaveBigAvgsInSquares,IsUnitaryLimitAlongSquaresAlwaysDefined} follow from positive answers to \Cref{QuestionCWMDecomp} \ref{QuestionCWMDecompa},\ref{QuestionCWMDecompb},\ref{QuestionCWMDecompc} respectively. To see this, consider the compact–weakly mixing decomposition to the unitary representation of \(G\) on \(L^2(X)\) induced by any measure-preserving action of \(G\) on \(X\).

\begin{remark}
It is not true in general that if $(U_g)_{g\in G}$ is a weakly mixing action on a Hilbert space $\mathcal{H}$, then for any F{\o}lner sequence $(F_N)$ in $G$ and any $\xi\in \mathcal{H}$ we have 
\begin{equation*}
\lim_{N\to\infty}\frac{1}{|F_N|}\sum_{g\in F_N}\langle U_g^2v,v\rangle\geq0.
\end{equation*}
To obtain a counterexample, let $\mathbb{H}$ be the algebra of quaternions (which we may see as a complex vector space of dimension $2$), and let $Q_8=\{\pm1,\pm i,\pm j,\pm k\}\subseteq\mathbb{H}$ be the quaternion group. Note that six out of the eight elements $a\in Q_8$ satisfy $a^2=-1$. The action of $G=Q_8\times\mathbb{Z}_2^\infty$ on the Hilbert space $\mathcal{H}:=\bigoplus_{g\in\mathbb{Z}_2^\infty}\mathbb{H}$ given by $(a,g)((\xi_{h})_{h\in\mathbb{Z}_2^\infty})=(a\xi_{g+h})_{h\in\mathbb{Z}_2^\infty}$ is weakly mixing. Moreover, for all $v\in \mathcal{H}$ and any F{\o}lner sequence $(F_N)_{N\in\mathbb{N}}$ in $G$, we have $\frac{1}{|F_N|}\sum_{g\in F_N}\langle v,U_g^2v\rangle=\frac{-\|v\|^2}{2}$.
\end{remark}

Recall that if $G$ is a countable group, then a function $f:G\to\mathbb{C}$ is  positive definite if and only if it can be expressed as $f(g)=\langle\xi,U_g\xi\rangle$, where $(U_g)_{g\in G}$ is a unitary representation of $G$ on a Hilbert space $\mathcal{H}$ and $\xi\in\mathcal{H}$ (see for example \cite[Section 3.3]{Fol}). In particular, for any m.p.s. $(X,\mathcal{B},\mu,(T_g)_{g\in G})$ and $Y\in\mathcal{B}$, the function $g\mapsto\mu(Y\cap T_gY)$ is positive definite. However, functions of the form $g\mapsto\mu(Y\cap T_g^2Y)$ need not be positive definite.

To see why, first note that if $f:G\to\mathbb{C}$ is a positive definite function and some element $h$ satisfies $f(s)=f(1_G)$, then we must have $f(gs)=f(g)$ for all $g\in G$.\footnote{If $f$ is expressed as $g\mapsto\langle\xi,U_g\xi\rangle$, then $f(s)=f(1_G)$ implies $\langle \xi,U_s\xi\rangle=\|\xi\|^2$, so $U_s\xi=\xi$, so $U_{gs}\xi=U_g\xi$ for all $g\in G$, so $f(g)=f(gs)$ for all $g\in G$.}
Let $\mathbf{D}=\{z\in\mathbb{C};|z|\leq1\}$ be the unit disk equipped with normalized Lebesgue measure $\mu$. Consider the action of the dihedral group $D_\infty=\langle r,s|s^2=1,rs=sr^{-1}\rangle$ on $\mathbf{D}$ given by $rz=iz,sz=\overline{z}$. Then for $A=\{z\in\mathbf{D};\textup{Re}(z)\geq0\}$, the function $f:D_\infty\to\mathbb{C}$; $f(g)=\mu(A\cap T_g^2A)$ is not positive definite. Indeed, the element $s$ satisfies $f(s)=f(1_G)=\frac{1}{2}$, however $f(r)=0\neq\frac{1}{2}=f(rs)$.

\begin{definition}
For each $a\in[0,1]$, 
let $f_{\overline{d}}(a)$ be the supremum of the set of values $b\in[0,1]$ such that, for all $A\subseteq\mathbb{Z}$ such that $d^*(A)\geq a$, there is $B\subseteq\mathbb{Z}$ such that $\overline{d}(B)\geq b$ and $B+B\subseteq A-A$.

If $\mu$ is the usual Lebesgue measure on the torus $\mathbb{T}=\mathbb{R}/\mathbb{Z}$, then let $f_{\mu}(a)$ be the supremum of the set of values $b\in[0,1]$ such that, for all measurable $A\subseteq\mathbb{T}$ such that $\mu(A)\geq a$, there is some measurable $B\subseteq\mathbb{T}$ such that $\mu(B)\geq b$ and $B+B\subseteq A-A$.
\end{definition}

\begin{question}
What is the asymptotic growth rate of $f_{\overline{d}}(a)$ when $a\to0$? What about $f_{\mu}(a)$?
\end{question}

\begin{remark}
\label{OptimalBForB+BinA-A?}
One can show that $0\leq a<\frac{1}{2}$ we have $a^2\leq f_{\overline{d}}(a)\leq a$. The inequality $a^2\leq f_{\overline{d}}(a)$ follows from \Cref{BxBinAA^-1}. One can check that $f_{\overline{d}}(a)\leq a$ by letting $A=\{n\in\mathbb{Z};n\pi\in(0,a)\textup{ mod }1\}$ and using the theorem of Kneser stating that, for each open subset $X,Y$ of the torus $\mathbb{T}=\mathbb{R}/\mathbb{Z}$, we have $\mu(X+Y)\geq\min(1,\mu(X)+\mu(Y))$ (see \cite[Theorem 1]{Kn}). So, if $B\subseteq\mathbb{Z}$ satisfies $B+B\subseteq A-A$, then for all $\varepsilon>0$ the $\varepsilon$-neighborhood of $\{\pi b\textup{ mod }1;b\in B\}\subseteq\mathbb{T}$ must have measure $\leq a+2\varepsilon$, which implies that $\overline{d}(B)\leq a$.
\end{remark}

We conclude this section (and the paper) with a question which deals with a rather natural analogue of \Cref{B+BinA-AinZ} in $\mathbb{R}$.
\begin{question}
\label{QuestionUncountableB+BinA-A}
Call a set $A\subseteq\mathbb{R}$ large if $\nu(A)>0$ for some translation invariant mean $\nu$ in $(\mathbb{R},\mathcal{P}(\mathbb{R}))$. Is it true that if $A\subseteq\mathbb{R}$ is large, then for some large set $B\subseteq\mathbb{R}$ one has $B+B\subseteq A-A$?
\end{question}

